\pdfoutput=1	
\documentclass[
	final%
	,12pt%
	,pagesize%
	,headings=normal%
	,paper=a4%
	,parskip=false%
	,headsepline=true%
	,abstract=true%
]{scrartcl}

\addtokomafont{sectioning}{\normalfont\bfseries}
\setkomafont{title}{\normalfont}
\setkomafont{subtitle}{\normalfont}
\setkomafont{author}{\normalfont}
\setkomafont{date}{\normalfont}
\addtokomafont{pageheadfoot}{\scshape\small}
\setkomafont{caption}{\footnotesize}

\setkomafont{captionlabel}{\usekomafont{caption}\bfseries}
\setcapindent{0pt} 
\usepackage{etoolbox}
\newtoggle{narrowpage}
\settoggle{narrowpage}{false}

\AtBeginEnvironment{abstract}{\footnotesize}

\usepackage{xpatch}
\makeatletter
\patchcmd{\@maketitle}{\huge}{\Large}{}{}
\makeatother

\usepackage[utf8]{inputenc}
\usepackage[T1]{fontenc}
\usepackage{csquotes}
\usepackage[english]{babel}
\usepackage{caption}
\usepackage{subcaption}
\captionsetup{subrefformat=parens}

\usepackage[final]{graphicx}
\makeatletter
\def\input@path{{Pictures/}}
\makeatother
\graphicspath{%
{./}%
{./Pictures/}%
{./Pictures/FreedmanHeWang_1648E-Hs-ProjGrad/}%
{./Pictures/HelixKnot_1940E-Hs-ProjGrad/}%
}
\usepackage[usenames,dvipsnames]{xcolor}
\usepackage{tikz-cd}
\usepackage[draft]{fixme}
\usepackage[nointlimits]{amsmath}
\usepackage[varg]{txfonts}
\usepackage{amssymb,mathtools,mathrsfs}
\usepackage{upgreek}
\usepackage{braket}
\usepackage{cancel}
\usepackage{siunitx}
\usepackage{aliascnt} %
\usepackage[amsmath,thmmarks,hyperref,thref]{ntheorem}
\usepackage[expansion=true,protrusion=true]{microtype}
\usepackage{xkeyval}
\usepackage[normalem]{ulem}

\setkomafont{caption}{\footnotesize}
\setkomafont{captionlabel}{\usekomafont{caption}}

\usepackage[final,%
	pdftex,%
	bookmarks,%
	bookmarksdepth=3,%
	breaklinks=true,%
	colorlinks=true,%
	urlcolor=NavyBlue,%
	linkcolor=NavyBlue,%
	citecolor=ForestGreen,%
]{hyperref}%
\usepackage[all]{hypcap}

\addto\extrasenglish{%
}

\usepackage{xparse}
\usepackage{xspace}

\usepackage[nocompress]{cite}

\hyphenation{Lip-schitz pa-ra-me-tri-zed pa-ra-me-tri-za-tion}
\newcommand{\Diff}[2]{D^{#1}_{#2}}
\newcommand{\diff}[2]{\delta^{#1}_{#2}}
\newcommand{\multop}{H}
\newcommand{\singularmeasure}{\mu}
\newcommand{\mass}{M}

\newcommand{\isometry}{f}
\newcommand{\Geodesic}{\eta}
\newcommand{\prx}{\pi_1}
\newcommand{\pry}{\pi_2}

\newcommand{\Energy}{\cE}
\newcommand{\Triangulation}{\cT}
\newcommand{\subT}{{\cT}}
\newcommand{\Polygon}{P}
\newcommand{\OtherPolygon}{Q}
\newcommand{\Edges}{E}
\newcommand{\Vertices}{V}
\newcommand{\Edge}{I}
\newcommand{\OtherEdge}{J}

\newcommand{\EdgeLengths}{\ell}

\newcommand{\MaxRadius}{h}

\newcommand{\conf}[1]{\bigparen{\sigma_{#1}- \tfrac{\DomDim}{p_{#1}}}}

\newcommand{\Circle}{{\mathbb{T}}}
\newcommand{\Torus}{{\Circle^2}}

\newcommand{\AmbDim}{m}
\newcommand{\AmbSpace}{{\R^{\AmbDim}}}

\newcommand{\DomDim}{n}

\newcommand{\lot}{\on{l.o.t.}}

\DeclareDocumentCommand{\LineElement}{ o o }{
	\IfValueTF{#1}{
	  \omega_{#1}\IfValueTF{#2}{(#2)}{}
	}{
	  \omega\IfValueTF{#2}{(#2)}{}
	}
}

\DeclareDocumentCommand{\LineElementC}{ o }{
	\IfValueTF{#1}{
	  \omega_\Curve(#1)
	}{
	  \omega_\Curve
	}
}

\newcommand{\Tangent}{\tau}
\newcommand{\Curve}{\gamma}

\newcommand{\OtherCurve}{\eta}
\newcommand{\SaddlePointMatrix}{\mathcal{A}}

\newcommand{\ConfSpace}{\mathcal{C}}
\newcommand{\TargetSpace}{\mathcal{N}}
\newcommand{\ConstraintMap}{\varPhi}
\newcommand{\ConstraintMfld}{\cM}

\newcommand{\Morphism}{F}

\newcommand{\hilberts}{{s}} 				%
\newcommand{\hilbertss}{{\sigma}}				%
\newcommand{\diffoffset}{{\nu}}

\newcommand{\strongs}{{\hilberts+\diffoffset}}		%
\newcommand{\strongss}{{\hilbertss+\diffoffset}}	%
\newcommand{\strongp}{p}								%

\newcommand{\weaks}{{\hilberts-\diffoffset}}		%
\newcommand{\weakss}{{\hilbertss-\diffoffset}}		%
\newcommand{\weakp}{q}								%

\newcommand{\Xnorm}[1]{\cX,#1}
\newcommand{\Hnorm}[1]{\cH,#1}
\newcommand{\Ynorm}[1]{\cY,#1}

\newcommand{\XC}{\cX}
\newcommand{\XCg}{\cX}
\newcommand{\XCgd}{\cX\dual}
\newcommand{\XCd}{\cX\dual}

\newcommand{\HC}{\cH}
\newcommand{\HCg}{\cH}
\newcommand{\HCgd}{\cH\dual}
\newcommand{\HCd}{\cH\dual}

\newcommand{\YC}{\cY}
\newcommand{\YCg}{\cY}
\newcommand{\YCgd}{\cY\dual}
\newcommand{\YCd}{\cY\dual}

\newcommand{\ZCg}{\cZ}

\newcommand{\XN}{\cX\TargetSpace}
\newcommand{\XNg}{\cX\TargetSpace}

\newcommand{\HN}{\cH\TargetSpace}
\newcommand{\HNg}{\cH\TargetSpace}
\newcommand{\HNgd}{\cH\dual\TargetSpace}
\newcommand{\HNd}{\cH\dual\TargetSpace}

\newcommand{\YN}{\cY\TargetSpace}
\newcommand{\YNg}{\cY\TargetSpace}
\newcommand{\YNgd}{\cY\dual\TargetSpace}
\newcommand{\YNd}{\cY\dual\TargetSpace}

\newcommand{\ZNg}{\cZ\TargetSpace}

\DeclareDocumentCommand{\RX}{ O{\,} O{\,} O{} }{\cX_{#2}#3#1}
\DeclareDocumentCommand{\RH}{ O{\,} O{\,} O{} }{\cH_{#2}#3#1}
\DeclareDocumentCommand{\RY}{ O{\,} O{\,} O{} }{\cY_{#2}#3#1}
\DeclareDocumentCommand{\RT}{ O{\,} O{\,} O{} }{T_{#2}#3#1}
\DeclareDocumentCommand{\RZ}{ O{\,} O{\,} O{} }{\cZ_{#2}#3#1}
\DeclareDocumentCommand{\T}{ O{\,} O{} }{T_{#1}#2}

\DeclareDocumentCommand{\Ri}{ O{} }{i_{#1}}
\DeclareDocumentCommand{\Rj}{ O{} }{j_{#1}}
\DeclareDocumentCommand{\RI}{ O{} }{\cI_{#1}}
\DeclareDocumentCommand{\RJ}{ O{} }{\cJ_{#1}}
\DeclareDocumentCommand{\RK}{ O{} }{\cK_{#1}}
\DeclareDocumentCommand{\Rg}{ O{} }{g_{#1}}

\newcommand{\SoboSlobo}{Sobolev--Slobodecki\u{\i}\xspace}

\DeclareDocumentCommand{\Sobo}{ O{} O{} o o}{
	\IfValueTF{#3}{
	  \IfValueTF{#4}{
	  	W^{#1}_{#2}(#3;#4)
	  }{
	  	W^{#1}_{#2}(#3)
	  }
	}{
	  W^{#1}_{#2}
	}
}
\DeclareDocumentCommand{\SoboC}{ O{} O{} }{\Sobo[#1][#2][\Circle][\AmbSpace]}

\DeclareDocumentCommand{\Holder}{ O{} O{} o o}{
	\IfValueTF{#3}{
	  \IfValueTF{#4}{
	  	C^{#1}_{#2}(#3;#4)
	  }{
	  	C^{#1}_{#2}(#3)
	  }
	}{
	  C^{#1}_{#2}
	}
}
\DeclareDocumentCommand{\HolderC}{ O{} O{} }{\Holder[#1][#2][\Circle][\AmbSpace]}

\DeclareDocumentCommand{\Lebesgue}{ O{} O{} o o}{
	\IfValueTF{#3}{
	  \IfValueTF{#4}{
	  	L^{#1}_{#2}(#3;#4)
	  }{
	  	L^{#1}_{#2}(#3)
	  }
	}{
	  L^{#1}_{#2}
	}
}
\DeclareDocumentCommand{\LebesgueC}{ O{} O{} }{\Lebesgue[#1][#2][\Circle][\AmbSpace]}

\newcommand{\nospaceperiod}{\makebox[0pt][l]{\,.}}

\newcommand{\qand}{\quad \text{and} \quad}

\DeclareMathOperator{\essinf}{ess\,inf}

\DeclareMathOperator{\LandO}{O}

\DeclareMathOperator{\argmin}{arg\,min}

\newcommand{\adj}{^{*\!}}
\newcommand{\dual}{'^{\!}}

\newcommand{\pinv}{^{\dagger\!}}

\newcommand{\cB}{{\mathcal{B}}}

\newcommand{\cD}{{\mathcal{D}}}
\newcommand{\cE}{{\mathcal{E}}}
\newcommand{\cF}{{\mathcal{F}}}

\newcommand{\cH}{{\mathcal{H}}}
\newcommand{\cI}{{\mathcal{I}}}
\newcommand{\cJ}{{\mathcal{J}}}
\newcommand{\cK}{{\mathcal{K}}}

\newcommand{\cM}{{\mathcal{M}}}

\newcommand{\cT}{{\mathcal{T}}}
\newcommand{\cU}{{\mathcal{U}}}

\newcommand{\cX}{{\mathcal{X}}}
\newcommand{\cY}{{\mathcal{Y}}}
\newcommand{\cZ}{{\mathcal{Z}}}

\newcommand{\dd}{{\on{d}}}

\newcommand{\at}{|}

\newcommand{\pd}{\partial}

\newcommand{\ceq}{\coloneqq}
\newcommand{\qec}{\eqqcolon}
\newcommand{\R}{{\mathbb{R}}}

\newcommand{\N}{\mathbb{N}}
\newcommand{\Z}{{\mathbb{Z}}}

\DeclareMathOperator{\id}{id}

\newcommand{\abs}[1]{\left\lvert#1\right\rvert} %
\newcommand{\nabs}[1]{\lvert{#1}\rvert} %
\newcommand{\bigabs}[1]{\big\lvert{#1}\big\rvert} %
\newcommand{\Bigabs}[1]{\Big\lvert{#1}\Big\rvert} %

\newcommand{\nnorm}[1]{\lVert{#1}\rVert}
\newcommand{\bignorm}[1]{\big\lVert{#1}\big\rVert}

\newcommand{\innerprod}[1]{\left\langle #1 \right\rangle}
\newcommand{\ninnerprod}[1]{\langle #1 \rangle}
\newcommand{\biginnerprod}[1]{\big\langle #1\big\rangle}
\newcommand{\Biginnerprod}[1]{\Big\langle #1 \Big\rangle}

\newcommand{\seminorm}[1]{\left[#1\right]}
\newcommand{\nseminorm}[1]{[{#1}]}

\newcommand{\paren}[1]{\left(#1\right)}
\newcommand{\nparen}[1]{(#1)}
\newcommand{\bigparen}[1]{\big(#1\big)}

\newcommand{\Bigparen}[1]{\Big(#1\Big)}

\newcommand{\bigbrackets}[1]{\big[#1\big]}

\newcommand{\Bigbrackets}[1]{\Big[#1\Big]}

\newcommand{\intervaloo}[1]{\left]#1\right[}
\newcommand{\intervalco}[1]{\left[#1\right[}
\newcommand{\intervalcc}[1]{\left[#1\right]}
\newcommand{\intervaloc}[1]{\left]#1\right]}

\newcommand{\bigintervalcc}[1]{{\big[#1\big]}}

\newcommand{\on}[1]{\operatorname{#1}}

\DeclareMathOperator{\ima}{im}

\DeclareMathOperator{\supp}{supp}   %
\DeclareMathOperator{\End}{End}    %

\DeclareMathOperator{\pr}{pr}
\DeclareMathOperator{\grad}{grad}

\newcommand{\mynewtheorem}[4] %
{
\newaliascnt{#1}{#2}
\newtheorem{#1}[#1]{#3}
\aliascntresetthe{#1}
\expandafter\def\csname #1autorefname\endcsname{%
#4%
}%
}

\newtheorem{theorem}{Theorem}[section]
\mynewtheorem{lemma}{theorem}{Lemma}{Lemma} 
\mynewtheorem{proposition}{theorem}{Proposition}{Proposition} 
\mynewtheorem{corollary}{theorem}{Corollary}{Corollary}
\mynewtheorem{quest}{theorem}{Question}{Question}

\mynewtheorem{problem}{theorem}{Problem}{Problem}

\theoremstyle{break}
\mynewtheorem{btheorem}{theorem}{Theorem}{Theorem} 
\mynewtheorem{blemma}{theorem}{Lemma} {Lemma}
\mynewtheorem{bcorollary}{theorem}{Corollary}{Corollary}
\mynewtheorem{bproblem}{theorem}{Problem}{Problem}

\theoremstyle{plain}
\theorembodyfont{\normalfont}
\mynewtheorem{definition}{theorem}{Definition}{Definition}
\mynewtheorem{example}{theorem}{Example}{Example}
\mynewtheorem{remark}{theorem}{Remark}{Remark}

\theoremstyle{break}
\mynewtheorem{bdefinition}{theorem}{Definition}{Definition}
\mynewtheorem{bexample}{theorem}{Example}{Example}
\mynewtheorem{bremark}{theorem}{Remark}{Remark}
\theoremheaderfont{\itshape}
\theorembodyfont{\upshape}
\theoremstyle{nonumberplain}
\theoremseparator{.}
\theoremsymbol{\ensuremath{\Box}}
\newtheorem{proof}{\textsc{Proof}}

\title{Sobolev Gradients for the Möbius Energy}
\author{Philipp Reiter\thanks{Chemnitz University of Technology,
Faculty of Mathematics, 09107 Chemnitz, Germany, 
\href{mailto:reiter@math.tu-chemnitz.de}{reiter@math.tu-chemnitz.de}
} {} and Henrik Schumacher\thanks{(corresponding author) Institute for Mathematics, RWTH Aachen University, Templergraben 55, 52062 Aachen, Germany, \href{mailto:schumacher@instmath.rwth-aachen.de
}{schumacher@instmath.rwth-aachen.de}}}

\begin{document}

\maketitle

\begin{abstract}
\begin{small}
Aiming at optimizing the shape of closed embedded curves within prescribed isotopy classes, we use a gradient-based approach to approximate stationary points of the Möbius energy. The gradients are computed with respect to Sobolev inner products similar to the $W^{3/2,2}$-inner product. This leads to optimization methods that are significantly more efficient and robust than standard techniques based on $L^2$-gradients.
\\

\noindent
\textbf{MSC-2020 classification:} 
  49Q10; %
  53A04; %
  58D10  %
\end{small}
\end{abstract}

\section{Introduction}\label{sec:Introduction}

Let $\Curve \colon \Circle \to \AmbSpace$ be a sufficiently smooth embedding\footnote{
In many cases we consider curves $\gamma$ being differentiable a.e.\ but not necessarily $C^{1}$.
Therefore we will always assume an \emph{embedding} to be a $C^{0}$-embedding.
Furthermore, $\gamma$ is \emph{immersed} (or \emph{regular})
if $\essinf\abs{\gamma'}>0$.} of the circle $\Circle$ into Euclidean space.
Its Möbius energy~\cite{MR1259363,MR1098918} is defined as
\begin{align}
	\Energy(\Curve) \ceq \int_\Circle \int_\Circle
	\paren{
		\frac{1}{\nabs{\Curve(x)-\Curve(y)}^2}	- \frac{1}{\varrho_\Curve^2(x,y)}
	}\, \nabs{\Curve'(x)}\, \nabs{\Curve'(y)} \, \dd x \, \dd y
	,
	\label{eq:MoebiusEnergy}
\end{align}
where $\varrho_\Curve(x,y)$ denotes the length of the shortest arc of $\Curve$ connecting $\Curve(x)$ and $\Curve(y)$.

The original motivation~\cite{MR928412} was to define an energy that measures
complexity or ``entangledness'' of a given curve.
One may expect that minimization will unravel the initial configuration to a state of less complexity.
Ideally, this should also preserve topological properties, in particular the isotopy class.
By definition, an isotopy class is a path component in the space of \emph{embedded} curves.
The Möbius energy was designed to erect infinite energy barriers that separate isotopy classes within the space of curves.
The term $\nabs{\Curve(x)-\Curve(y)}^{-2}$  blows up whenever a self-contact emerges,
lending itself as contact barrier for  modeling impermeability of curves and rods.
Moreover, this term promotes the spreading of the geometry, which indeed leads to the desired unfurling.
Subtracting the second term $\varrho_\Curve^{-2}(x,y)$ guarantees that the energy is finite for sufficiently smooth embeddings.
This way, any time-continuous descent method like, e.g., a gradient flow, will necessarily preserve the isotopy class.\footnote{Strictly speaking, this is not the full picture: Being scaling-invariant, the Möbius energy does not penalize pull-tight of small knotted arcs (see \cite[Thm.~3.1]{MR1195506}), which is in fact a change of topology.}
Another pleasant feature of the Möbius energy is that its critical points enjoy higher smoothness.

\begin{figure}[t]
\begin{center}
\newcommand{\filebasename}{FreedmanHeWang_1648E_NagasawaEnergy_}
\newcommand{\inc}[2]{\begin{tikzpicture}%
    \node[inner sep=0pt] (fig) at (0,0) {\includegraphics[	trim = 0 0 0 0, 
	clip = true,  
	angle = 0,
	width = 0.24\textwidth]{\filebasename#1}};
	\node[above right= -.3ex] at (fig.south west) {\begin{footnotesize}(#2)\end{footnotesize}};%
\end{tikzpicture}}
\capstart
\inc{000000}{0}%
\inc{000005}{5}%
\inc{000010}{10}%
\inc{000020}{20}%
\\	
\inc{000060}{60}%
\inc{000100}{100}%
\inc{000120}{120}%
\inc{000130}{130}%
\\
\inc{000160}{160}%
\inc{000175}{175}%
\inc{000185}{185}%
\inc{000200}{200}%
\end{center}
\caption[FreedmanHeWang]{
Discrete Sobolev gradient descent subject to edge length constraint and barycenter constraint.
The isotopy class is maintained along the iteration
which is the crucial feature of a knot energy.
As initial condition, we use a ``difficult'' configuration proposed
in~\protect\cite{MR1259363} (1648 edges; numbers in parentheses indicate the iteration steps).
The global minimizer (the round circle) is reached after about 200 iterations.
The curves have perceived constant thickness in the plots while a coordinate cross
serves as a reference for the respective scaling factor.
See also \protect\autoref{fig:Benchmark} for a comparison to further optimization methods; the present one is ``$W^{3/2,2}$ projected gradient, explicit''.
}
\label{fig:FreedmanHeWangKnot}
\end{figure}

In this paper we propose a new concept of numerical optimization techniques
for the large family of self-repulsive energies
by discussing the prototypical case of the Möbius energy.
Due to the nonlocal point-point interactions (which manifest themselves in the occurrence of a double integral), any evaluation of the energy or its gradient is rather expensive;
this renders the numerical optimization a challenging task.
The key idea of our approach is to introduce a special geometric variant of the metric of the Sobolev space $\Sobo[3/2,2]$ that discourages movement of an embedded curve in regions of near self-contact.
Contrary to black-box approaches, our method allows us to minimize the Möbius energy of even quite complicated starting configuration within only a few hundred iterations (see \autoref{fig:FreedmanHeWangKnot} and \autoref{fig:Knot10}).
As illustrated in \autoref{fig:GradientPlots},
computing gradients with respect to this metric
allows for choosing significantly larger step sizes
compared to the $L^{2}$- or even the $\Sobo[3/2,2]$-metric.
This is in agreement with the interpretation of $\Sobo[3/2,2]$-gradient descent as a coarse discretization of an \emph{ordinary} differential equation.
In contrast to full discretization (i.e., in space and time) of a general (transient) partial differential equation, an ordinary differential equation does not require any mesh-dependent bound on the time step size for stability.
Consequently, our gradient descent scheme requires only few iteration steps, even for fine spatial resolution. This makes it, besides from being robust, particularly efficient.
This is demonstrated by the performance comparison in \autoref{fig:Benchmark}.

Potential applications for self-repulsive energies are manifold as they can be employed as barriers
for shape optimization problems and physical simulation with self-contact:
They arise, for instance, 
in mechanics~\cite{CAN,GRvdM,kroemer-valdman,SvdH,SvdH18,vdHNGT}
and
in molecular biology \cite{CS1,CS2,GPL2,GPL1,maddocks,MRM}.
The Möbius energy can also be considered as differentiable relaxation of \emph{curve thickness}.
For example, as reported in~\cite{ISI:A1996VT33600042}, the speed of migration
of knotted DNA molecules undergoing gel electrophoresis
seems to be proportional to the average crossing number of the corresponding maximizers
of curve thickness.
Software tools for the maximization of thickness or equivalently, for the minimization of ropelength, have been developed in \cite{MR1702021} (\emph{SONO}) and \cite{MR2802724} (\emph{ridgerunner}).
Further potential fields of applications for repulsive energies include
computer graphics~\cite{BWRAG,ST},
packing problems~\cite{MR2869515,MR2807140},
the modeling of
coiling and kinking of submarine communications cables~\cite{coyne,zajac},
and even solar coronal structures~\cite{MR3171936}.

\begin{figure}[t]
\begin{center}
\newcommand{\inc}[2]{\begin{tikzpicture}
    \node[inner sep=0pt] (fig) at (0,0) {\includegraphics[	
    trim = 140 40 320 140, 
	clip = true,  
	width = 0.32\textwidth
	]{#1}};
	\node[above right= -.3ex] at (fig.south west) {\begin{footnotesize}#2\end{footnotesize}};    
\end{tikzpicture}
}
\capstart
	\inc{Gradient_L2}{$L^2$}%
	\inc{Gradient_Hs_pure}{pure $\Sobo[3/2,2]$}%
	\inc{Gradient_Hs}{geometric $\Sobo[3/2,2]$}
\end{center}
\caption{%
We visualize different gradients as vector fields along a given curve.
The $L^2$-gradient is pathologically concentrated on regions of near self-contact. Consequently, one has to pick tiny step sizes to prevent self-collision.
The pure $\Sobo[3/2,2]$-gradient behaves much better in the sense that
it is more uniformly distributed along the curve. 
However, this can still be improved considerably by adding a lower order term to the inner product that discourages movement in regions of near self-contact, cf.{} \protect\autoref{prop:MetricDefinition}.
}
\label{fig:GradientPlots}
\end{figure}

\subsection*{Previous work}

Since its invention by O'Hara~\cite{MR1098918,MR1195506,MR1986069}
and the very influential paper by Freedman, He, and Wang~\cite{MR1259363},
the Möbius energy has been studied by many authors.
Detailed investigations on its derivatives have been performed in \cite{MR3461038,MR1733697,MR3394390}.
Existence of minimizers in prime knot classes has been established in \cite{MR1259363}.
Invariance of the energy under conformal transformations of $\AmbSpace$ has been studied in \cite{MR1470730,MR1259363,MR1702037,MR3915937}.
Smoothness of minimizers has been established in \cite{MR1259363,MR1733697},
while smoothness and even analyticity of \emph{all} critical points has finally been shown in \cite{MR3461038} and \cite{Blatt2018}.
Except for the global minimizer~\cite{MR1259363}
and first results on critical points in nontrivial
prime knot classes~\cite{blatt2020,MR1246479},
almost nothing is known on the geometry of the energy space.
In light of the Smale conjecture (proven by Hatcher~\cite{MR701256}),
it would be of great interest to know
whether some gradient flow of the Möbius energy
actually defines a retract of the unknots to the round circles.
The $\Lebesgue[2]$-gradient flow of the Möbius energy has been studied in \cite{MR2875646,blatt2016gradient,MR1733697}.

\begin{figure}[t!]
\capstart
\begin{center}
\includegraphics[width=\textwidth]{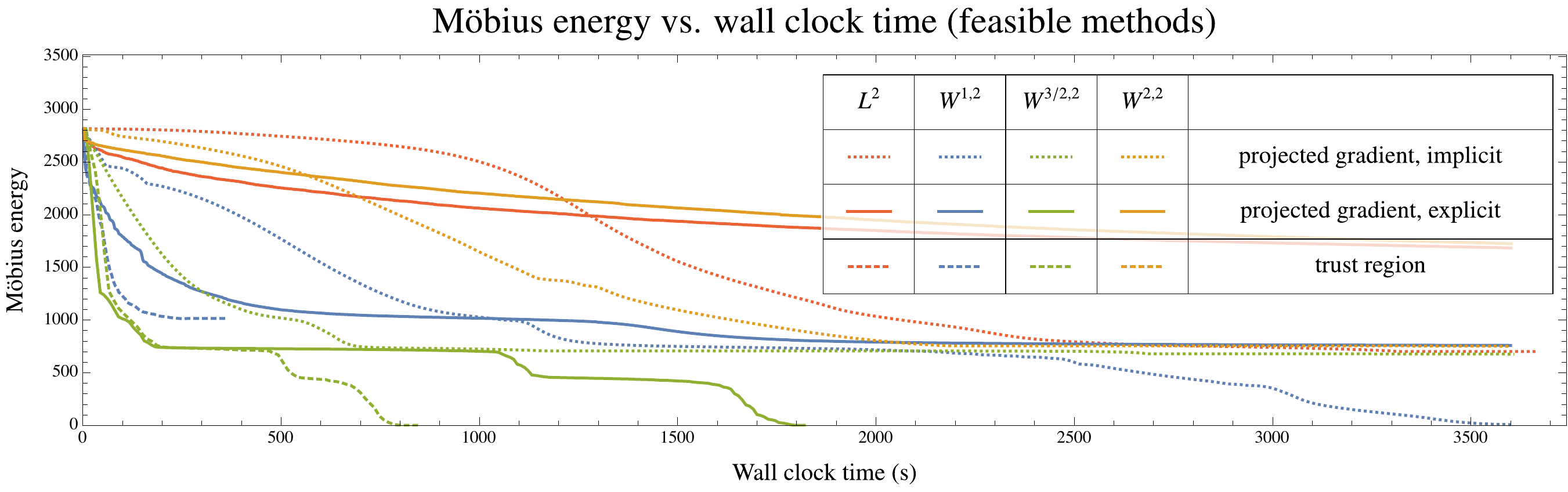}\\
\includegraphics[width=\textwidth]{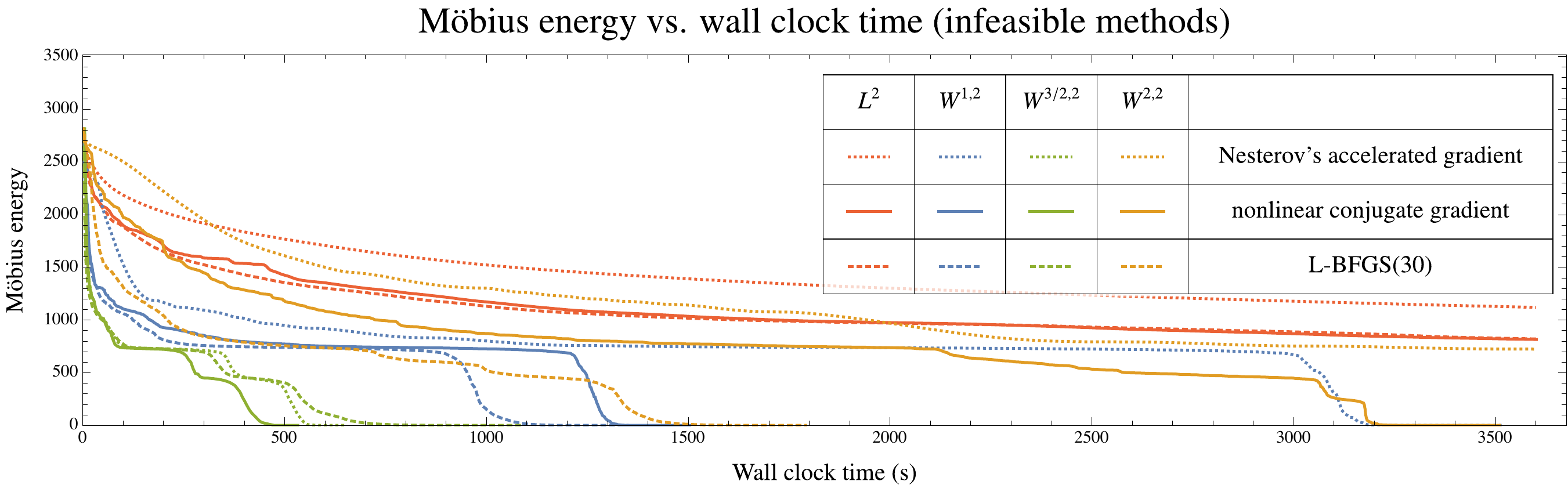}
\end{center}
\caption{
Exemplary performance comparison between several feasible (top) and infeasible (bottom) optimization methods and with respect to various Sobolev metrics, applied to the initial configuration from \protect\autoref{fig:FreedmanHeWangKnot} (1648 edges). ``Feasible'' means that the constraints were respected in each iteration step (up to a certain tolerance, of course). ``Infeasible'' means that a penalty formulation was used in place of hard constraints.
Each dataset corresponds to a combination of an optimization method (encoded by line dashing) and a Sobolev-metric (encoded by color; e.g., green corresponds to our Sobolev metric)
that have been employed to compute gradients.
We see that apart from the implicit projected gradient descent (which generally does not work well in this context), all optimization methods perform best in conjuction with our $\Sobo[3/2,2]$-metric.
All experiments were implemented in Mathematica\textsuperscript{\tiny\textregistered} and ran single-threaded for 60 minutes on an Intel\textsuperscript{\tiny\textregistered} Xeon\textsuperscript{\tiny\textregistered} E5-2690~v3. 
Further details will be provided in \protect\autoref{sec:OptimizationMethods}.}
\label{fig:Benchmark}
\end{figure}

Various numerical methods have been devised for discretizing and minimizing the Möbius energy
\cite{MR1246479,MR1470748,MR1702037,SimonMinimumDistanceEnergy},
partially with error analysis~\cite{MR2221528,MR2718624,MR3268981}.
A recently proposed scheme also preserves conformal invariance~\cite{2019arXiv191107024B,2018arXiv180907984B}.

The Möbius energy has also inspired the development of similar so-called \emph{knot energies}
\cite{MR1317077,MR1692638,MR2668877,MR2902275,MR3105400}
and higher-dimensional generalizations \cite{KvdM,MR3936491,MR1702037,OHara2020,MR2197957,MR2739778,MR3078345}.

\begin{figure}[t!]
\capstart
\begin{center}
\includegraphics[width=\textwidth]{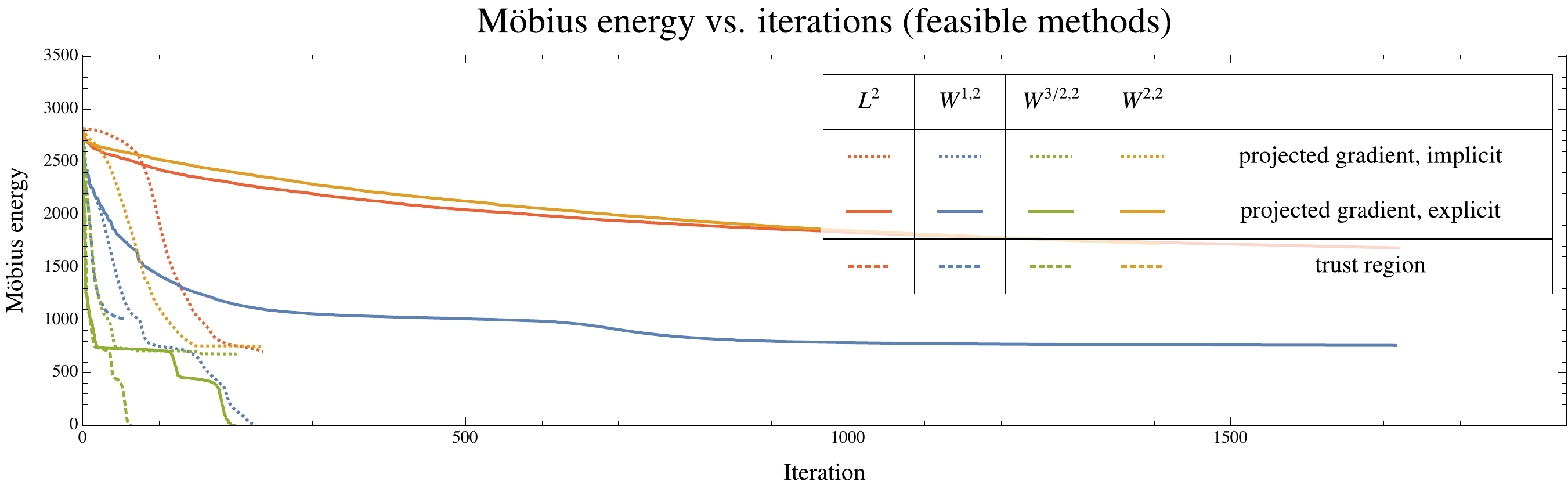}\\
\includegraphics[width=\textwidth]{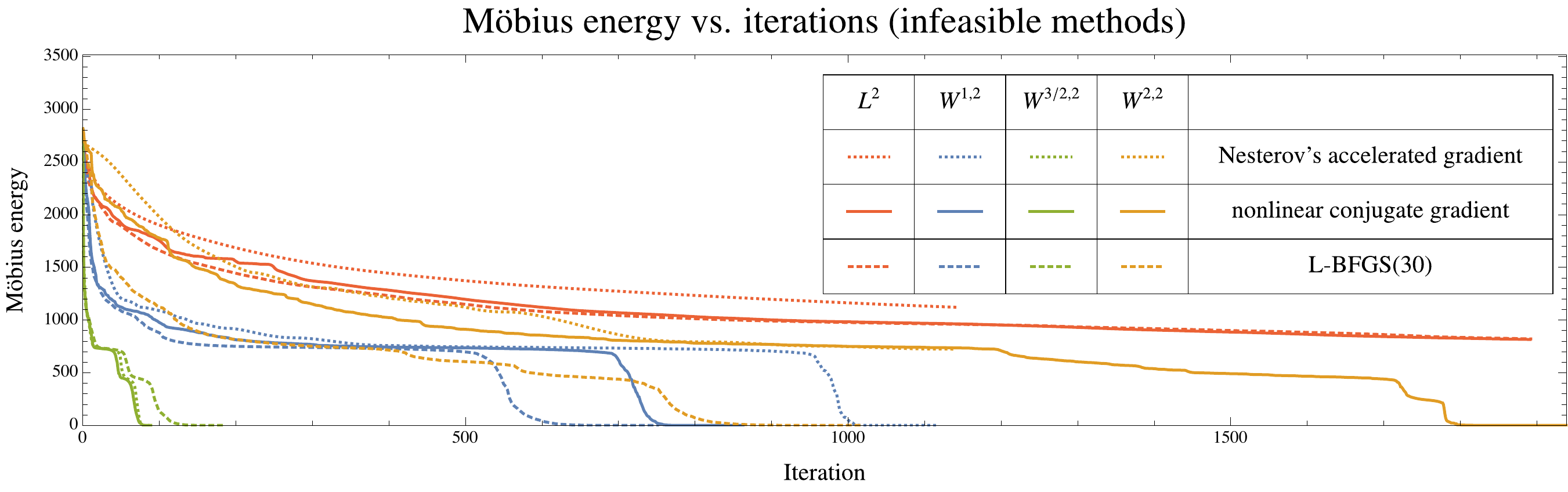}
\end{center}
\caption{As a matter of fact, the statistics in \protect\autoref{fig:Benchmark} highly rely on the hardware.
To provide a more independent comparison, we here plot the number of iteration steps versus the values of the Möbius energy attained after that time.
Of course, as the effort involved for performing a single iteration step differs among the methods discussed here,
it is debatable whether this is a meaningful unit after all.}
\label{fig:Benchmark2}
\end{figure}

Both theoretical and numerical results have been
obtained on linear combinations of the bending energy
and the Möbius energy~\cite{MR2729316,MR1098918,MR1657477}.
More generally, in order to find minimizers
of an elastic energy within an isotopy class,
each knot energy can be employed in two ways: either as regularizer as it was done, e.g., in \cite{2018arXiv180402206B,2019arXiv191107024B,MR3800032,GRvdM,gilsbach2021,HS1,MR3725391,MR3461787};
or by using it to encode a hard bound into the domain, which was done with the knot thickness in \cite{HS2,MR2029004,MR1674767}.

The applicability of self-avoiding energies is heavily limited by their immense cost:
Typical discretizations replace the double integrals by double sums which leads to a computational complexity of at least $\Omega((N \cdot \AmbDim)^2)$ for evaluating the discrete Möbius energy and its derivative, where $N \cdot \AmbDim$ is the number of degrees of freedom of the discretized geometry (e.g., the number of vertices of a polygonal line times the dimension of the ambient space).
This issue can be mended by sophisticated kernel compression techniques, see \cite{10.1145/3439429}.
In this article, however, we focus on another issue that is more related to mathematical optimization, namely the fact that, for $N \to \infty$, the discretized optimization problems become increasingly ill-conditioned.
It is well-known that the convergence rate of many gradient-based optimization methods (method of steepest descent, nonlinear conjugate gradient method, and also more sophisticated quasi-Newton methods like L-BFGS) is very sensitive to the condition number of the Hessian of the energy (at a minimum) on the one hand, and the inner product that is used to compute the gradients on the other hand.
The Hessian of the Möbius energy is deeply related to the fractional Laplacian $(-\Delta)^{3/2}$ which is a differential operator of order three, cf.~\cite{MR1733697}.
Thus the condition number of the discrete problem grows like $O(h^{-3})$ where $h$ denotes the typical length of an edge in the discretization.
In practice, this results in a rapid increase of the number of optimization iterations to ``reach the minimizer'' when the discretization is refined (i.e., for $h \to 0$).
Combined with the immense cost of evaluating $\Energy$ and $D\Energy$, this leads to a prohibitively high cost of minimizing $\Energy$ with black-box optimization routines (see \autoref{fig:Benchmark} and \autoref{fig:Benchmark2}).

In particular, this issue applies to the \emph{explicit Euler} time discretization scheme for the $L^2$-gradient flow of the Möbius energy.
Denoting the discretized energy by $\Energy_h$, the next time iterate $\Curve_{(t + \Delta t)}$ is computed from the current iterate $\Curve_t$ by solving
\begin{align*}
	\biginnerprod{\tfrac{\Curve_{(t + \Delta t)} - \Curve_t}{\Delta t }, \varphi}_{L^2_{\Curve_t}} + D \Energy_h(\Curve_{t}) \, \varphi = 0
	\quad
	\text{for all discrete vector fields $\varphi \colon \Circle \to \AmbSpace$}
\end{align*}
where $\biginnerprod{u,v}_{L^2_{\Curve_t}}=\int_{\Circle}\innerprod{u(x),v(x)}\abs{\Curve'(x)}\dd x$.
This can also be reinterpreted as method of steepest descent with respect to the (discretized) $L^2$-gradient and with step size $\Delta t>0$. Here the ill-conditioning manifests itself in the \emph{Courant--Friedrichs--Lewy condition}: 
As the $L^2$-gradient flow is a system of third order para\-bolic partial differential equations, the step size has to be truncated to $\Delta t = \LandO(h^3)$ in order to make this scheme stable.
This is also why a line search that enforces the \emph{Armijo condition} (also referred to as \emph{first Wolfe condition}), cf.~\cite[Chapter~3]{MR2244940},
will typically lead to tiny step sizes, rendering the method impractical for optimization (see \autoref{fig:Benchmark}).
It is well-known that the Courant--Friedrichs--Lewy condition can be circumvented by implicit time integration schemes.
For example, in the \emph{implicit Euler} or \emph{backward Euler} scheme, one determines the next iterate $\Curve_{(t + \Delta t)}$ by solving the equation
\begin{align*}
	\biginnerprod{\tfrac{\Curve_{(t + \Delta t)} - \Curve_t}{\Delta t}, \varphi}_{L^2_{\Curve_t}} + D \Energy_h(\Curve_{(t + \Delta t)}) \, \varphi = 0
	\quad
	\text{for all discrete vector fields $\varphi \colon \Circle \to \AmbSpace$.}
\end{align*}

Standard techniques for solving this nonlinear equation, e.g., Newton's method, require solving multiple 
linearizations of the above equation
and thus involve the Hessian $DD\Energy_h$ \emph{in each time iteration}. Moreover, the linearization has to be recomputed whenever the step size $\Delta t$ changes,
which makes it nontrivial to set up an adaptive time stepping scheme.
This explains why implicit time integrators turn out to be rather inefficient optimization schemes (see \autoref{fig:Benchmark}).
If one allows oneself to employ second derivatives of $\Energy_h$, applying Newton's method (and its damped or regularized derivates) for solving $D\Energy_h(\Curve_*) = 0$ in the first place would lend itself as a more efficient optimization algorithm.
However, it is well-known that Newton's method does not necessarily perform well when applied far away from critical points.

\subsection*{Sobolev gradients}

These problems can be overcome by optimization methods based on Sobolev gradients which are defined in terms of a Sobolev metric $G$ that is ``natural'' for the Möbius energy.
Blatt~\cite{MR2887901} characterized the energy space of the Möbius energy $\Energy$ as  $\Sobo[1,\infty][][\Circle][\AmbSpace] \cap \Sobo[3/2,2][][\Circle][\AmbSpace]$,
cf.\@ \autoref{theo:EnergySpace}.
Here and in the following, $\Sobo[s,p]$ denotes the \emph{\SoboSlobo space} of functions with ``$s$ fractional derivatives in $\Lebesgue[p]$'' if $s \not \in \Z$ and a conventional Sobolev space for $s \in \Z$.
This result points to the fact that $D \Energy$ is a nonlinear differential operator of order $2 \cdot \frac{3}{2} = 3$,
which has already been observed by He~\cite{MR1733697}.
So morally, a suitable inner product $G$ should be of the form
\begin{align*}
	G(u,w) 
	\textstyle
	\ceq 
	\int_\Circle \ninnerprod{ (- \Delta)^{3/4} \, u(x), (- \Delta)^{3/4} \, w(x)} \, \dd x
	.
\end{align*}
Then the $G$-gradient $\grad (\Energy) \at_\Curve$ at $\Curve$ can be defined by the following weak formulation:
\begin{align*}
	G(\grad (\Energy) \at_\Curve, w)
	\ceq
	D\Energy(\Curve)\,w
	\quad
	\text{for all $w \in \Holder[\infty][][\Circle][\AmbSpace]$}.	
\end{align*}
So, at least formally, the $G$-gradient satisfies the equation
\begin{align*}
		\grad (\Energy) \at_\Curve = (- \Delta)^{-3/2} \, D\Energy(\Curve).
\end{align*}
By a somewhat naive counting of fractional derivatives, the right hand side is a nonlinear differential operator of order zero. 
Hence there is a chance that $\grad (\Energy) \at_\Curve$ resides in the same Banach space as $\Curve$ so that $\grad(\Energy)$ would be a vector field.
Then the evolution equation
\begin{align}\label{eq:ode}
	\pd_t \Curve_t 
	=
	 - \grad (\Energy) \at_{\Curve_{t}}
\end{align}
would actually be an \emph{ordinary} differential equation.
Indeed, this turns out to be true and is part of our main result (see \autoref{theo:ProjectedGradients}).
This seems to imply that no Courant--Friedrichs--Lewy condition applies to the discretized problem, so that the number of gradient descent iterations ``to reach the minimum'' is quite insensitive to the mesh resolution. 
At least, this is what we observed in our experiments.

Since the inner product $G$ involves a choice of a Riemannian metric on the para\-metrization domain (line element and Laplacian), 
it is even more natural to define a $\Curve$-dependent family $\Curve \mapsto G_\Curve$ of inner products.
With the Riesz operator $\RI \at_\Curve \, u \ceq G_\Curve(u, \cdot)$, the $G$-gradient can then be expressed by
\begin{align}
	\grad (\Energy) \at_\Curve = (\RI \at_\Curve)^{-1} \; D\Energy(\Curve).
	\label{eq:DefinitionofGradient}
\end{align}
There are plenty of possible choices for $\RI$.
Most important is that $\RI \at_\Curve$ is an elliptic pseudo-differential operator of order three.
All compact perturbations of $\RI$ that are positive-definite will lead to the operator with the same qualitative properties.
In particular, we are not limited to the exact fractional Laplacian; this gives us the freedom to pick an $\RI \at_\Curve$ that is computationally more amenable.
Up to lower order terms, we design $G$ such that it resembles the $\Sobo[3/2,2]$-Gagliardo inner product, replacing intrinsic distances by (the easier computable) secant distances (see \autoref{prop:MetricDefinition}).
For a curve parametrized by arc length  (i.e., $\nabs{\Curve'} = 1$) and up to lower order terms, it reads
\begin{align}
	G_{\Curve}(u,w)
	&=
	\textstyle
	\int_\Circle	\int_\Circle
		\Biginnerprod{
			\frac{u'(x) - u'(y)}{ \nabs{\Curve(x)-\Curve(y)}^{1/2}}
			, 
			\frac{w'(x) - w'(y)}{ \nabs{\Curve(x)-\Curve(y)}^{1/2}}
		}	
		\, \frac{\dd x\, \dd y}{\nabs{\Curve(x) - \Curve(y)}}
	+
	\lot
	\label{eq:G}
\end{align}
where in case of a curve~$\Curve$ parameterized by arc length the lower-order terms are given by
\begin{align*}
	\MoveEqLeft
	\textstyle
	\lot 
	= \int_{\Circle} \int_{\Circle} 
 	\Biginnerprod{
 		\frac{u(x)-u(y)}{\abs{\Curve(x)-\Curve(y)}^{1/2}}
 		,
 		\frac{w(x)-w(y)}{\abs{\Curve(x)-\Curve(y)}^{1/2}}
 	}
 	\, 
 	\Bigparen{
 		\frac{1}{\abs{\Curve(x)-\Curve(y)}^{2}}
 		-
 		\frac{1}{\varrho_{\Curve}(x,y)^{2}}
 	}
 	\,
 	\frac{\dd x \, \dd y}{\abs{\Curve(x)-\Curve(y)}}
 	\\
 	&\qquad
	\textstyle
 	+ 
 	\Biginnerprod{ \int_\Circle u(x) \,\dd x, \int_\Circle w(y) \, \dd y}
\end{align*}
and $\varrho_{\Curve}$ denotes the geodesic distance introduced
in \eqref{eq:geoddist} below. Here the first summand is essentially the $\Sobo[1/2,2]$-Gagliardo inner product with the energy density as additional weight.

Indeed, even if $\Curve$ is not parametrized by arc length, a more detailed analysis reveals that $\RI \at_\Curve$ has (up to a constant) the same principal symbol as $(-\Delta_\Curve)^{3/2}$ where $\Delta_\Curve$ is the Laplace-Beltrami operator with respect to the Riemannian metric 
on $\Circle$ induced by the embedding $\Curve$ (see the proof of \autoref{prop:MetricDefinition}).

\begin{figure}[t]
\newcommand{\file}{HelixKnot_1940E_NagasawaEnergy_}
\newcommand{\inca}[2]{\begin{tikzpicture}
    \node[inner sep=0pt] (fig) at (0,0) {\includegraphics{\file#1}};
	\node[below right= .25ex] at (fig.north west) {\begin{footnotesize}(#2)\end{footnotesize}};    
\end{tikzpicture}
}
\newcommand{\incb}[2]{\begin{tikzpicture}
    \node[inner sep=0pt] (fig) at (0,0) {\includegraphics{\file#1}};
	\node[above right= .25ex] at (fig.south west) {\begin{footnotesize}(#2)\end{footnotesize}};    
\end{tikzpicture}
}
\begin{center}
\presetkeys{Gin}{
	trim = 0 0 0 0, 
	clip = true,  
	angle = 0,
	width = 0.24\textwidth
}{}
\capstart	
	\inca{000000}{0}%
	\inca{000010}{10}%
	\inca{000040}{40}%
	\inca{000080}{80}%
	\\	
	\incb{000090}{90}%
	\incb{000100}{100}%
	\incb{000120}{120}%
	\incb{000140}{140}%
	\\		
	\incb{000160}{160}%
	\incb{000180}{180}%
	\incb{000190}{190}%
	\incb{000200}{200}%
\end{center}
\caption{Discrete Sobolev gradient descent as in \protect\autoref{fig:FreedmanHeWangKnot} starting at another difficult configuration (1940 edges).}
\label{fig:HelixKnot}
\end{figure}

\subsection*{As Riemannian as you can get}

The overarching idea behind all this is to consider $(\ConfSpace,G)$ as a Riemannian manifold and $\Energy \colon \ConfSpace \to \R$ as a smooth function.
Here $\ConfSpace$ denotes a Banach manifold of immersed embedded curves which will be defined in \eqref{eq:confspace} below.
If $\grad (\Energy)$ is a well-behaved vector field on $\ConfSpace$, various optimization techniques that work on Riemannian manifolds can be utilized to minimize $\Energy$.
This is actually a long standing dream of differential geometers: 
to apply Riemannian geometry to an infinite-dimensional space of shapes.
Such Sobolev inner products and their geodesics have been studied from a geometrical point of view, e.g., in
\cite{MR2888014,MR4073208,MR2201275}.
It has observed that $W^{1,2}$-inner products work well in the numerical treatment of full dimensional elasticity and of membrane energies such as the area functional for surfaces or the length functional for curves \cite{Neuberger:1997:SobolevGradients,MR1246481,MR3915940}. Moreover, it is known that $W^{2,2}$-inner products provide good preconditioning for bending energies such as Bernoulli's elastic energy of curves, Kirchhoff's thin shell energy, the Willmore energy and Helfrich-type energies \cite{Eckstein:2007:GSF:1281991.1282017,doi:10.1111/cgf.12450,MR3915940,1703.06469}.
Various standard optimization schemes (e.g, nonlinear conjugate gradient, Nesterov's accelerated gradient, L-BFGS, trust region) can be sped up significantly by using the ``right'' notion of gradient (see \autoref{fig:Benchmark} and \autoref{fig:Benchmark2}). This is because these methods exploit that the gradient field is (locally) Lipschitz continuous with respect to the employed metric.

Alas, the story here is not \emph{that} simple,
because there is no Morrey embedding from the energy space $\Sobo[3/2,2][][\Circle][\AmbSpace]$ to $\Sobo[1,\infty][][\Circle][\AmbSpace]$
and any open $\Sobo[3/2,2]$-neigh\-bor\-hood of an
embedded arc-length parametrized $\Sobo[3/2,2]$-curve
may contain non-embedded curves
or curves with vanishing or infinite derivative.
Therefore, Fréchet differentiability
of the Möbius energy could only be established
with respect to the somewhat artificial
$\Sobo[3/2,2]\cap\Sobo[1,\infty]$-topology~\cite{MR3461038}.
This problem can be resolved by working in the slightly smaller Banach space $\RX{} \!\ceq \Sobo[3/2+\diffoffset,p][][\Circle][\AmbSpace]$ with suitable $\diffoffset>0$ and $p \geq 2 $.
Then $\RX$ embeds into $C^{1}$ and
the \emph{configuration space}
\begin{align}\label{eq:confspace}
	\ConfSpace \ceq \set{\Curve \in \SoboC[\strongs,\strongp] | \text{$\Curve$ is an immersed embedding}}
\end{align}
is an open subset of $C^{1}$.

\begin{figure}[t]
\begin{center}
\newcommand{\file}{Knot_0010_E3000_NagasawaEnergy_}
\newcommand{\inc}[2]{\begin{tikzpicture}%
    \node[inner sep=0pt] (fig) at (0,0) {\includegraphics[	trim = 0 0 0 0, 
	clip = true,  
	angle = 0,
	width = 0.19\textwidth]{\file#1}};
	\node[above right= -.3ex] at (fig.south west) {\begin{footnotesize}(#2)\end{footnotesize}};%
\end{tikzpicture}}
\capstart	
\inc{000000}{0}%
\inc{000001}{1}%
\inc{000002}{2}%
\inc{000003}{3}%
\inc{000004}{4}%
\\
\inc{000005}{5}%
\inc{000010}{10}%
\inc{000015}{15}%
\inc{000020}{20}%
\inc{000100}{100}%
\end{center}
\caption[FreedmanHeWang]{
Discrete Sobolev gradient descent as in \protect\autoref{fig:FreedmanHeWangKnot} within the nontrivial knot class $7_2$, using the method ``$W^{3/2,2}$ projected gradient, explicit''. The initial configuration has 3000 edges and was randomly generated with \emph{KnotPlot}~\protect\cite{knotplot}.}
\label{fig:Knot10}
\end{figure}

We construct the Riesz isomorphism $\RI\at_\Curve $
as an elliptic pseudo-differential operator of order three, and we show in \autoref{prop:MetricDefinition} that it gives rise to a generalized Riesz isomorphism
$\RJ\at_\Curve \colon \RX[][] \to  \RY[][][\dual]$
where
$\RY \ceq \Sobo[3/2-\diffoffset,q][][\Circle][\AmbSpace]$,
with the Hölder conjugate $q \ceq (1-1/p)^{-1}$ of $p$.
Notice that $\RJ\at_\Curve$ does no longer identify $\RX$ with its dual space as 
$\RX \subsetneq \RY$, thus $\RY[][][\dual] \subsetneq  \RX[][][\dual]$.
So one of our major tasks (see \autoref{theo:DEnergy}) will be to establish that
$D\Energy(\Curve) \in \RY[][][\dual]$ whenever $\Curve \in \ConfSpace$.
Moreover, we show that $D\Energy$ is locally Lipschitz continuous as a mapping $\ConfSpace \to \RY[][][\dual]$, leading to our first main result:

\begin{theorem}\label{theo:Gradient}
The \emph{gradient} $\grad(\Energy)$ of $\Energy$ defined by \eqref{eq:DefinitionofGradient}
is a well-defined, locally Lipschitz continuous vector field on the configuration space $\ConfSpace$ (with respect to the norm on $\RX[][]$).
Moreover, it satisfies
$
	D\Energy(\Curve) \, \grad(\Energy) \at_\Curve \geq 0
$
with equality if and only if $D\Energy(\Curve) = 0$.
\end{theorem}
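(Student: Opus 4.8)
The plan is to deduce \autoref{theo:Gradient} from the two structural results that precede it. \autoref{prop:MetricDefinition} supplies, for every $\Curve \in \ConfSpace$, the generalized Riesz isomorphism $\RJ\at_\Curve \colon \RX[][] \to \RY[][][\dual]$ representing $G_\Curve$, i.e.\ $(\RJ\at_\Curve\, u)\, w = G_\Curve(u,w)$ for all $u,w \in \RX[][]$; and \autoref{theo:DEnergy} gives $D\Energy(\Curve) \in \RY[][][\dual]$ for $\Curve \in \ConfSpace$ together with local Lipschitz continuity of $\Curve \mapsto D\Energy(\Curve)$ as a map $\ConfSpace \to \RY[][][\dual]$. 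With these at hand, \eqref{eq:DefinitionofGradient} reads $\grad(\Energy)\at_\Curve = (\RJ\at_\Curve)^{-1}\, D\Energy(\Curve)$, and since $(\RJ\at_\Curve)^{-1}$ maps $\RY[][][\dual]$ onto $\RX[][]$ this is a well-defined element of $\RX[][]$. Because $\ConfSpace$ is open in the Banach space $\RX[][]$ (the embedding $\RX[][]\hookrightarrow C^1$ makes the $C^1$-open set $\ConfSpace$ also $\RX[][]$-open), the assignment $\Curve \mapsto \grad(\Energy)\at_\Curve$ is a section of the trivial tangent bundle $T\ConfSpace = \ConfSpace \times \RX[][]$, hence a vector field.

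For the regularity claim I would write $\grad(\Energy)$ as the composition of $\Curve \mapsto \bigl( (\RJ\at_\Curve)^{-1}, D\Energy(\Curve) \bigr) \in \RMor{\RY[][][\dual]}{\RX[][]} \times \RY[][][\dual]$ with the bounded bilinear (hence smooth) evaluation $(A,\xi) \mapsto A\, \xi$. The second component is locally Lipschitz by \autoref{theo:DEnergy}. For the first, one first extracts from the explicit bilinear form underlying $G_\Curve$ (cf.~\eqref{eq:G} and the displays around it) that $\Curve \mapsto \RJ\at_\Curve \in \RMor{\RX[][]}{\RY[][][\dual]}$ is locally Lipschitz: its kernel is assembled from $\Curve$, $\Curve'$, the chord lengths $\abs{\Curve(x)-\Curve(y)}$, and the geodesic distances $\varrho_\Curve$, all of which depend Lipschitz-continuously on $\Curve$ on a small $\RX[][]$-ball about a fixed curve, using $\ConfSpace \hookrightarrow C^1$ to keep the chord lengths uniformly bounded below (a bi-Lipschitz estimate) and $\abs{\Curve'}$ inside a fixed compact subset of $(0,\infty)$. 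Since $\RJ\at_\Curve$ is an isomorphism for every $\Curve$ by \autoref{prop:MetricDefinition} and $\Curve\mapsto\RJ\at_\Curve$ is continuous, a Neumann-series argument shows the inverses $(\RJ\at_\Curve)^{-1}$ stay uniformly bounded near any given curve; as inversion is real-analytic on the open set of invertible operators, $\Curve \mapsto (\RJ\at_\Curve)^{-1}$ is locally Lipschitz into $\RMor{\RY[][][\dual]}{\RX[][]}$. Composing yields that $\grad(\Energy) \colon \ConfSpace \to \RX[][]$ is locally Lipschitz.

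It remains to verify the descent property. Put $u \ceq \grad(\Energy)\at_\Curve \in \RX[][]$, so that $\RJ\at_\Curve\, u = D\Energy(\Curve)$ by \eqref{eq:DefinitionofGradient}. Then
\begin{align*}
	D\Energy(\Curve)\, \grad(\Energy)\at_\Curve
	= (\RJ\at_\Curve\, u)\, u
	= G_\Curve(u,u)
	\geq 0,
\end{align*}
since $G_\Curve$ is a positive-definite inner product (on $\HC \supseteq \RX[][]$). If equality holds then $G_\Curve(u,u) = 0$, hence $u = 0$, hence $D\Energy(\Curve) = \RJ\at_\Curve\, u = 0$; conversely $D\Energy(\Curve) = 0$ gives $\grad(\Energy)\at_\Curve = (\RJ\at_\Curve)^{-1}\, 0 = 0$ and thus equality. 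I expect the only genuinely technical point to be the local Lipschitz dependence $\Curve \mapsto \RJ\at_\Curve$ in operator norm — i.e.\ upgrading pointwise Lipschitz and boundedness bounds on the chord-length-weighted Gagliardo-type kernels to a Lipschitz bound on the induced bounded bilinear forms $\RX[][] \times \RY[][] \to \R$ — whereas the isomorphism property of $\RJ\at_\Curve$ and the $\RY[][][\dual]$-regularity of $D\Energy$ are provided by \autoref{prop:MetricDefinition} and \autoref{theo:DEnergy}.
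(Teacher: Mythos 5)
Your proposal is correct and takes essentially the same route as the paper: there, \autoref{theo:Gradient} is likewise obtained by combining \autoref{theo:DEnergy} (which gives $D\Energy(\Curve)\in\cY\dual$ depending locally Lipschitz continuously on $\Curve$) with \autoref{prop:MetricDefinition} (which gives the continuously invertible, locally Lipschitz family $\cJ\at_\Curve\colon\cX\to\cY\dual$), so that $\grad(\Energy)\at_\Curve=(\cJ\at_\Curve)^{-1}\,D\Energy(\Curve)$ is a locally Lipschitz vector field, and the sign and equality statements follow exactly as you argue from the identification $\ninnerprod{\cJ\at_\Curve\,u,u}=\ninnerprod{\cI\at_\Curve\,u,u}\ge 0$ provided by the commutative diagram \eqref{eq:j'J=Ii}. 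Your sketched re-derivation of the Lipschitz dependence of $\Curve\mapsto\cJ\at_\Curve$ and the Neumann-series argument for the inverses only spell out details that are already asserted in, or are routine consequences of, \autoref{prop:MetricDefinition}.
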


Combined with the Picard--Lindelöff theorem,
this statement guarantees the short-time existence of the gradient flow, both for the downward and the upward direction.

\bigskip

In \autoref{sec:Constraints},
we deal also with equality constraints, i.e.,
with Banach submanifolds of the form
$\ConstraintMfld \ceq \set{\Curve \in \ConfSpace | \ConstraintMap(\Curve) = 0}$,
where $\ConstraintMap \colon \ConfSpace \to \TargetSpace$ is a suitable submersion,
namely the constraint of constant speed and vanishing barycenter, cf.\ \eqref{eq:constraint},
into a further Banach space~$\TargetSpace {}=\Sobo[\strongss,\strongp][][\Circle][\R] \oplus \AmbSpace$.
We formulate a linear saddle point system for determining the projected gradient $\grad_{\ConstraintMfld} (\Energy|_\ConstraintMfld)\at_\Curve$ and analyze when the system is solvable. 
We perform the analysis for a concrete set of constraints (fixed barycenter and parametrization by arc length),
but we also try to outline which steps have to be taken for more general constraints.
Finally, \autoref{prop:SubmanifoldTheorem} will establish our second main result:

\begin{theorem}[Projected gradient]\label{theo:ProjectedGradients}
The \emph{projected gradient} $\grad_\ConstraintMfld(\Energy|_\ConstraintMfld)$ of $\Energy|_\ConstraintMfld$ defined by
\begin{align*}
	G_\Curve \bigparen{
		\grad_\ConstraintMfld(\Energy|_\ConstraintMfld)\at_\Curve,
		w
	} 
	\ceq 
	D(\Energy \at_\ConstraintMfld)(\Curve) \, w
	\quad
	\text{for all $w \in \Holder[\infty][][\Circle][\AmbSpace]$ with $D\ConstraintMap(\Curve) \, w = 0$}
\end{align*}
is a well-defined, locally Lipschitz continuous vector field on $\ConstraintMfld$.
The gradient satisfies
$
	D(\Energy|_\ConstraintMfld)(\Curve) \; \grad_\ConstraintMfld(\Energy|_\ConstraintMfld) \at_\Curve \geq 0
$
with equality if and only if $D(\Energy|_\ConstraintMfld)(\Curve) = 0$.
\end{theorem}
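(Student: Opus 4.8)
The plan is to establish \autoref{prop:SubmanifoldTheorem}, which subsumes \autoref{theo:ProjectedGradients}, by representing the projected gradient as a component of the solution of a linear saddle point system and carrying over the reasoning behind \autoref{theo:Gradient} to the constrained case (the unconstrained statement being the special case $\ConstraintMap \equiv 0$). Since $\ConstraintMap \colon \ConfSpace \to \TargetSpace$ is a submersion, $\ConstraintMfld = \ConstraintMap^{-1}(0)$ is a $C^{1}$-Banach submanifold of $\ConfSpace$ whose tangent space at $\Curve$ is the closed, complemented subspace $\Ker D\ConstraintMap(\Curve) \subset \XC$, and $D(\Energy|_\ConstraintMfld)(\Curve)$ is the restriction of $D\Energy(\Curve)$ to it. Introducing a Lagrange multiplier $\lambda$ in a suitable space $\Lambda$ — essentially the dual of the ``weak'' target space $\TargetSpace_{\mathrm{w}} \supsetneq \TargetSpace$ into which $D\ConstraintMap(\Curve)$ extends continuously from $\YC$, reflecting that the constant-speed constraint loses one derivative — the defining weak equation for $v \ceq \grad_\ConstraintMfld(\Energy|_\ConstraintMfld)\at_\Curve$ becomes the block system
\begin{align*}
	\SaddlePointMatrix_\Curve
	\begin{pmatrix} v \\ \lambda \end{pmatrix}
	\ceq
	\begin{pmatrix}
		\RJ\at_\Curve & D\ConstraintMap(\Curve)\adj \\
		D\ConstraintMap(\Curve) & 0
	\end{pmatrix}
	\begin{pmatrix} v \\ \lambda \end{pmatrix}
	=
	\begin{pmatrix} D\Energy(\Curve) \\ 0 \end{pmatrix}
	,
\end{align*}
where $\RJ\at_\Curve \colon \XC \to \YCd$ is the generalized Riesz isomorphism of \autoref{prop:MetricDefinition} and $D\Energy(\Curve) \in \YCd$ by \autoref{theo:DEnergy}.

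The core step is to show that $\SaddlePointMatrix_\Curve$ is a topological isomorphism from $\XC \times \Lambda$ onto $\YCd \times \TargetSpace_{\mathrm{w}}$, with bounds locally uniform in $\Curve$. Injectivity is easy: pairing the first row with $v$ and using $D\ConstraintMap(\Curve)\,v = 0$ yields $G_\Curve(v,v) = 0$, hence $v = 0$, and then $D\ConstraintMap(\Curve)\adj\lambda = 0$ forces $\lambda = 0$ because $D\ConstraintMap(\Curve)$ is onto. Surjectivity I would obtain from the Petrov--Galerkin form of Brezzi's saddle point theorem: its first hypothesis, well-posedness of the $(1,1)$-block on the constraint kernel, is supplied by \autoref{prop:MetricDefinition}, i.e., by $\RJ\at_\Curve$ being an isomorphism between the \emph{distinct} spaces $\XC \subsetneq \YC$ of the scale; its second hypothesis is an inf--sup (Ladyzhenskaya--Babu\v{s}ka--Brezzi) estimate, namely that $D\ConstraintMap(\Curve)$ is bounded below on a topological complement of its kernel in $\XC$, with constant staying away from zero on $\ConfSpace$-neighborhoods. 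For the concrete constraint this reduces, modulo the finite-rank barycenter part and lower-order terms, to surjectivity with norm control of the first-order map $v \mapsto \ninnerprod{\Curve',v'}$ onto the mean-zero subspace of $\Sobo[\strongss,\strongp][][\Circle][\R]$, which one can solve explicitly on $\Circle$ using $\essinf\abs{\Curve'}>0$ and periodicity. I expect this to be the main obstacle, mostly because of the scale-of-spaces bookkeeping: one must verify that $D\ConstraintMap(\Curve)\adj$ maps $\Lambda$ into $\YCd$ and not merely into $\XCd$, and that all estimates are locally uniform in $\Curve$.

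Granting solvability, the regularity claim follows by composition, as in the proof of \autoref{theo:Gradient}: the maps $\Curve \mapsto \RJ\at_\Curve$ and $\Curve \mapsto D\ConstraintMap(\Curve)$ are locally Lipschitz by \autoref{prop:MetricDefinition} and the smooth dependence of $\ConstraintMap$ on $\Curve$, the map $\Curve \mapsto D\Energy(\Curve) \in \YCd$ is locally Lipschitz by \autoref{theo:DEnergy}, and inverting a locally uniformly invertible family of bounded operators is locally Lipschitz via a Neumann series. Hence $(v,\lambda) = \SaddlePointMatrix_\Curve^{-1}(D\Energy(\Curve),0)$, and in particular $v$, depends locally Lipschitz continuously on $\Curve \in \ConstraintMfld$; since $v \in \XC$ and $D\ConstraintMap(\Curve)\,v = 0$ by construction, $v$ lies in the tangent space $\Ker D\ConstraintMap(\Curve)$, so $\grad_\ConstraintMfld(\Energy|_\ConstraintMfld)$ is a genuine locally Lipschitz vector field on $\ConstraintMfld$.

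Finally, the descent inequality is immediate. Both $G_\Curve$ and $D\Energy(\Curve)$ are continuous on $\XC$ (the latter since $D\Energy(\Curve) \in \YCd$ and $\XC \hookrightarrow \YC$ by \autoref{theo:DEnergy}), and the smooth constrained directions are dense in $\Ker D\ConstraintMap(\Curve)$, so the identity $G_\Curve(v,w) = D\Energy(\Curve)\,w$ extends from smooth $w$ to all $w \in \Ker D\ConstraintMap(\Curve)$. Choosing $w = v$ gives $D(\Energy|_\ConstraintMfld)(\Curve)\,v = D\Energy(\Curve)\,v = G_\Curve(v,v) \geq 0$, and since $G_\Curve$ is positive definite this vanishes if and only if $v = 0$, which by the variational characterization of $v$ is equivalent to $D\Energy(\Curve)|_{\Ker D\ConstraintMap(\Curve)} = D(\Energy|_\ConstraintMfld)(\Curve) = 0$.
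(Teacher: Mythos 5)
Your overall architecture — weak formulation with a Lagrange multiplier in $\YNgd$, the block system with $\RJ[\ConfSpace]\at_\Curve$ in the $(1,1)$-slot and $D\ConstraintMap(\Curve)$, $\RY[\ConstraintMap][\Curve][\dual]$ in the off-diagonal slots, and local Lipschitz continuity of the gradient field by composition — is the same as the paper's, and your injectivity and descent arguments (pairing with $v$ and using positivity of the form through the Hilbert pivot) are fine. The gap is in the solvability step. You invoke a Petrov--Galerkin Brezzi theorem and assert that its first hypothesis, invertibility of the $(1,1)$-block restricted to the kernel pair $\Ker(\RX[\ConstraintMap][\Curve]) \times \Ker(\RY[\ConstraintMap][\Curve])$, is ``supplied by \autoref{prop:MetricDefinition}''. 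It is not: \autoref{prop:MetricDefinition} gives invertibility of $\RJ[\ConfSpace]\at_\Curve \colon \XCg \to \YCgd$ on the \emph{full} spaces, and in this non-coercive Banach duality setting (where $\XCg \subsetneq \YCg$ and no Hilbertable structure is available on $\XCg$) global invertibility of the $(1,1)$-block together with surjectivity of the constraint maps does \emph{not} imply invertibility of the Galerkin restriction to the kernels, equivalently of the Schur complement $\RX[\ConstraintMap][\Curve]\,(\RJ[\ConfSpace]\at_\Curve)^{-1}\,\RY[\ConstraintMap][\Curve][\dual]$. Already in finite dimensions one can have $A$ invertible and $B$ surjective while $B\,A^{-1}B\transp$ is singular, e.g.\ $A = \left(\begin{smallmatrix} 0 & 1 \\ 1 & 0 \end{smallmatrix}\right)$, $B = (1\;\; 0)$; only coercivity of $A$ rescues that implication, and coercivity is exactly what is missing here. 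Worse, the kernel-restricted invertibility you take as given is precisely the assertion of \autoref{prop:SubmanifoldTheorem} (that $\RJ[\ConstraintMfld]\at_\Curve$ is an isomorphism), i.e.\ the statement one is trying to prove, so as written the core step of your argument is circular.

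What closes this hole in the paper is a direct proof that the Schur complement is invertible (\autoref{lem:Saddlepointmatrix}, \autoref{lem:BstarBinv}), and it uses structure beyond what you cite: a right inverse $B$ of $D\ConstraintMap(\Curve)$ compatible with the whole scale $\XC$, $\HC$, $\YC$ (\autoref{lem:RightInverse}), generalized adjoints built from the Riesz isomorphisms of domain \emph{and} target (\autoref{prop:MetricDefinition2}), the observation that $T \ceq \RY[B][\Curve][\dual]\,(\RJ[\ConfSpace]\at_\Curve)\,\RX[B][\Curve]$ is a compact perturbation of $\RJ[\TargetSpace]\at_\Curve$ and hence Fredholm of index zero, and injectivity of $T$ obtained by a diagram chase into the Hilbert space $\HCg$, where $\ninnerprod{\RI[\ConfSpace]\at_\Curve\,\cdot,\cdot}$ is a genuine scalar product. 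To repair your proposal you must prove, not assume, this kernel-restricted inf--sup estimate (equivalently, invertibility of the Schur complement), and some compactness or Fredholm input of the above kind appears unavoidable. A secondary inaccuracy: the solvability of $v \mapsto \ninnerprod{\Curve',v'}$ on $\Circle$ is not governed by a mean-zero condition on the datum; the obstruction is the finite-dimensional closing-up condition for the reconstructed periodic field, which \autoref{lem:RightInverse} resolves via invertibility of $\varTheta_\Curve = \int_\Circle \pr_\Curve^\perp\,\LineElement_\Curve$, using that a closed curve cannot be a straight line.
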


Invoking the Picard--Lindelöff theorem again, we conclude that both the downward and the upward gradient flows of $\Energy|_\ConstraintMfld$ exist for short times.

\medskip

The question of long-time existence is much more involved.
Following the way paved by Knappmann et al.~\cite{knappmann-etal2019}
for a subfamily of integral Menger curvature functionals,
one may derive this property in the case of subcritical Hilbert spaces.
These correspond to the functionals obtained by replacing the squares in \eqref{eq:MoebiusEnergy} by powers $\alpha\in(2,3)$.
Due to the fact that the general case where $p\ne2$ seems to be ``degenerate'' analogously to the $p$-Laplacian
it seems unclear whether long-time existence can be established
also for the setting discussed in this article.

\subsection*{Future directions}

The present study demonstrates the design
of a minimization scheme being both robust and efficient
which is based on a metric that is tailored to the structure of
a geometric nonlocal functional modeling self-avoidance.

The general strategy outlined in this paper
applies to a large range of functionals on curves and
surfaces of arbitrary dimension and codimension.
We stress the fact that the arguments given below mainly rely on analytical features of a functional defined on fractional
Sobolev spaces
rather than on geometric peculiarities, except for the metric itself which has to be chosen carefully depending on the respective problem.

Although the definition of the M\"obius energy has been motivated by the electrostatic energy~\cite{MR1195506}, it is admittedly not a physical quantity in the first place.
However, it seems to be an appropriate candidate to demonstrate the
general approach while avoiding too much technicalities as,
from an analyst's perspective, it is the most elementary smooth knot energy.

Even more importantly, one may find minimizers
of physical functionals such as, e.g., the bending energy or the Helfrich energy
within prescribed isotopy classes by
a regularization approach, cf.~\cite{GRvdM}.
In this context one may choose the regularizer to be a smooth repulsive functional
which approximates the (reciprocal) thickness
such as the tangent-point potential
which has been employed e.g.\ in~\cite{2018arXiv180402206B}.
In combination with the technique described
in the present paper, one may greatly improve
not only the performance but also the complexity
of the objects (i.e., isotopy types) that can be dealt with.

The higher-dimensional case as well as the adaption
of this technique to other functionals is work in progress \cite{YuSchumacherCrane}.

\section{Preliminaries}\label{sec:Preliminaries}

\subsection*{General notation}

Throughout, we let $\Circle \ceq \set{x \in \R^2 | \nabs{x} = (2\,\uppi)^{-1}}$ be the round circle with a fixed orientation and normalized to have total length $\nabs{\Circle} = 1$.
We will make use of the identification $\Circle\cong\R/\Z$ whenever convenient.
Moreover, we write $\Torus = \Circle \times \Circle$ for the Cartesian product of the circle with itself
and denote by $\prx \colon \Torus \to \Circle$ and $\pry \colon \Torus \to \Circle$ the Cartesian projections onto the first and second factor, respectively.
We denote the canonical intrinsic distance function on $\Circle$ by
\begin{align*}
	d_\Circle(x,y) \ceq (2 \, \uppi)^{-1} \, \abs{\measuredangle(x,y)}
	=
	(2 \, \uppi)^{-1}\arccos\bigparen{(2 \, \uppi)^{2}\innerprod{x,y}} \in \bigintervalcc{0,\tfrac{1}{2}} 
	\qquad\text{for $x$, $y \in \Circle$}
\end{align*}
and the canonical line measure by $\dd x$ or $\dd y$.
Each sufficiently smooth immersed embedding $\Curve \colon \Circle \to \AmbSpace$ induces
a \emph{line element} $\LineElementC(x)$ and a \emph{unit tangent field} $\Tangent_\Curve$ via
\begin{align*}
	\LineElementC(x) \ceq \nabs{\Curve'(x)} \, \dd x
	\qand
	\Tangent_\Curve(x) \ceq \tfrac{\Curve'(x)}{\nabs{\Curve'(x)}}.
\end{align*}
Moreover $\Curve $ induces two further distance functions that we have to distinguish: The \emph{secant distance} $\nabs{\triangle \Curve}(x,y) \ceq \nabs{\Curve(x) - \Curve(y)}$ and the \emph{geodesic distance} $\varrho_\Curve$; more precisely,
\begin{align}\label{eq:geoddist}
	\textstyle
	\varrho_\Curve(x,y) 
	\ceq 
	\int_{I_\Curve(x,y)} \LineElement_\Curve
	,
	\quad
	I_\Curve(x,y)
	\ceq
	\argmin \set{
		\int_{J} \LineElement_\Curve
		|
		\text{$J \subset \Circle$ conn., $\partial J = \{x,y\}$}
		},
\end{align}
where $I_\Curve(x,y)$ denotes the shortest arc that connects $x$ and $y$.
Since $\Curve$ is immersed, $d_\Circle$ and $\varrho_\Curve$ are equivalent.
We point out that this equivalence extends to
$\nabs{\triangle \Curve}$
if the embedding $\Curve$ is sufficiently smooth, e.g., of class $\Holder[1,\alpha]$ with $\alpha \in \intervaloo{0,1}$
or $W^{1+\sigma,r}$ with  $\sigma - 1/r \ge 0$, cf.~\cite[Lemma~2.1]{MR2887901}.
In this case $\Curve$ is bi-Lipschitz continuous
and the measures $\dd x$ and $\LineElementC$ are equivalent as well, i.e., there are $c_1$, $c_2 >0$ such that $c_1 \, \dd x \leq \LineElementC(x) \leq c_2 \, \dd x$ holds for all $x \in \Circle$. This implies that also the Lebesgue norms 
\begin{align*}
	\nnorm{u}_{\Lebesgue[p]} \ceq \Bigparen{\textstyle \int_\Circle \nabs{u(x)}^p \, \dd x}^{1/p}
	\qand
	\nnorm{u}_{\Lebesgue[p][\Curve]} \ceq \Bigparen{\textstyle \int_\Circle \nabs{u(x)}^p \, \LineElementC (x)}^{1/p}
\end{align*}
for $1\leq p < \infty$ and any measurable function $u \colon \Circle \to \AmbSpace$ are equivalent.
We also employ this notation for bivariate measurable functions $U:\Circle^{2}\to\AmbSpace$, letting
\begin{align*}
	\nnorm{U}_{\Lebesgue[p]} \ceq \Bigparen{\textstyle \int_{\Circle^{2}} \nabs{U(x,y)}^p \, \dd x\, \dd y}^{1/p}
	\qand
	\nnorm{U}_{\Lebesgue[p][\Curve]} \ceq \Bigparen{\textstyle \int_{\Circle^{2}} \nabs{U(x,y)}^p \, \LineElementC (x)\, \LineElementC (y)}^{1/p}.
\end{align*}
Likewise, for $	0 < \sigma <1$ and $1\leq p < \infty$, the \SoboSlobo seminorms
\begin{align*}
	\nseminorm{u}_{\Sobo[\sigma,p]} 
	\ceq 
	\Bigparen{\textstyle 
		\int_{\Circle^{2}} \Bigabs{\frac{u(x) - u(y)}{d_\Circle(x,y)^\sigma}}^p  \frac{\dd x \, \dd y}{d_\Circle(x,y)}
	}^{1/p}
	\;\;\text{and}\;\;
	\nseminorm{u}_{\Sobo[\sigma,p][\Curve]} 
	\ceq 
	\Bigparen{\textstyle 
		\int_{\Circle^{2}} \bigabs{\frac{u(x) - u(y)}{\nabs{\triangle \Curve(x,y)}^\sigma}}^p \, \frac{\LineElementC(x)\,\LineElementC(y)}{\nabs{\triangle \Curve(x,y)}}
	}^{1/p}	
\end{align*}
and the induced norms $\nnorm{u}_{\Sobo[\sigma,p]}  \ceq \nseminorm{u}_{\Sobo[\sigma,p]} + \nnorm{u}_{\Lebesgue[p]}$ and $\nnorm{u}_{\Sobo[\sigma,p][\Curve]}  \ceq \nseminorm{u}_{\Sobo[\sigma,p][\Curve]} + \nnorm{u}_{\Lebesgue[p][\Curve]}$
are equivalent, respectively.
In all what follows, we will frequently make use of the following $\Curve$-dependent measures and operators:
\begin{align}
	\varOmega_\Curve(x,y) &\ceq \LineElementC(x)\,\LineElementC(y),
	&
	\singularmeasure_\Curve&\ceq \tfrac{\varOmega_\Curve}{\nabs{\triangle \Curve}},	
	\label{eq:Measures}
	\\
	\triangle u(x,y) &\ceq u(x) - u(y),
	&
	\diff{\sigma}{\Curve} u &\ceq \tfrac{\triangle u}{\nabs{\triangle \Curve}^\sigma}.
	\label{eq:Operators}
\end{align}
For example, the $\Curve$-dependent \SoboSlobo\ seminorm can be written much more economically as
$
	\seminorm{u}_{\Sobo[\sigma,p][\Curve]}
	=\nnorm{ \diff{\sigma+1/p}{\Curve} u}_{\Lebesgue[p][\Curve]}
	=
	\nnorm{ \diff{\sigma}{\Curve} u}_{\Lebesgue[p][\singularmeasure_\Curve]},
$
where $\Lebesgue[p][\singularmeasure_\Curve][\Torus][\AmbSpace]$ denotes the Lebesgue space with respect to $\singularmeasure_\Curve$ and $\nnorm{\cdot}_{\Lebesgue[p][\singularmeasure_\Curve]}$ its associated norm.

We define $\Sobo[s,p]$-seminorms for $1<s<2$ by concatenating the $\Sobo[s-1,p]$-seminorms with suitable differential operators of first order:
\begin{align*}
	\nseminorm{u}_{\Sobo[s,p]}
	\ceq 
	\nseminorm{u'}_{\Sobo[s-1,p]}
	\qand
	\nseminorm{u}_{\Sobo[s,p][\Curve]}
	\ceq 
	\nseminorm{\cD_\Curve u}_{\Sobo[s-1,p][\Curve]},
	\quad
	\text{where}
	\quad	
	\cD_\Curve u \ceq \tfrac{u'}{\nabs{\Curve'}}.
\end{align*}
Here, the differential operator $\cD_\Curve$ can be interpreted as \emph{derivative with respect to arc length}.
Provided that $\Curve$ is a sufficiently smooth immersed embedding,
$\nnorm{u}_{\Sobo[s,p]} 
\ceq 
\nseminorm{u}_{\Sobo[s,p]} 
+ 
\nnorm{u}_{\Lebesgue[p]}$
and
$\nnorm{u}_{\Sobo[s,p][\Curve]} \ceq 
\nseminorm{u}_{\Sobo[s,p][\Curve]} + 
\nnorm{u}_{\Lebesgue[p][\Curve]}$
are equivalent and both topologize the \SoboSlobo space
\begin{align*}
	\Sobo[s,p][][\Circle][\AmbSpace]
	\ceq \set{ u \in \Sobo[1,p][][\Circle][\AmbSpace] | \nseminorm{u}_{\Sobo[s,p]}< \infty}.
\end{align*}
More precisely, the norm $\nnorm{\cdot}_{\Sobo[s,p][\Curve]}$ is well-defined and equivalent to $\nnorm{\cdot}_{\Sobo[s,p]}$ if $\Curve$ is an immersed embedding of class $\Sobo[S,P][][\Circle][\AmbSpace]$ 
provided that one of the conditions for the ``product rule'' \autoref{lem:MultiplicationLemma} are met for $\sigma_1 = S-1$, $p_1 = P$, $\sigma_2 = s-1$, $p_2 = p$.

\subsection*{Spaces}\label{sect:spaces}

Our initial motivation to consider $\Sobo[3/2,2]$-inner products for optimization is the following characterization of the \emph{energy space} of the Möbius energy, i.e., of the smallest space that contains all finite-energy configurations:

\begin{theorem}[Blatt \protect\cite{MR2887901}]\label{theo:EnergySpace}
Let $\Curve \in \Sobo[1,\infty][][\Circle][\AmbSpace]$ be an embedded immersed curve parametrized by arc length, i.e., $\nabs{\Curve'(x)} =1$ for a.e.\@ $x$.
Then one has
$
 	\Energy(\Curve) < \infty
$
if and only if
$\Curve \in \Sobo[3/2,2][][\Circle][\AmbSpace]$.
\end{theorem}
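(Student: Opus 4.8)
The plan is to follow Blatt's argument \cite{MR2887901}, whose core is a pointwise identity near the diagonal of $\Torus$ together with a localization. Since $\Curve$ is parametrized by arc length, $\varrho_\Curve = d_\Circle$, and for $x$, $y$ on a short sub-arc $I_\Curve(x,y)$ — which I identify with an interval of $\R$ of length $d_\Circle(x,y)$ — the fundamental theorem of calculus expresses $\triangle \Curve(x,y)$ as the integral of $\Curve'$ over $I_\Curve(x,y)$ (using that $\Curve$ is Lipschitz). Expanding $\innerprod{\Curve'(s),\Curve'(t)} = 1 - \tfrac12 \nabs{\Curve'(s)-\Curve'(t)}^2$ (valid since $\nabs{\Curve'}\equiv 1$) then yields the elementary identity
\begin{align*}
	\frac{1}{\nabs{\triangle \Curve}(x,y)^2} - \frac{1}{\varrho_\Curve(x,y)^2}
	&=
	\frac{d_\Circle(x,y)^2 - \nabs{\triangle \Curve}(x,y)^2}{\nabs{\triangle \Curve}(x,y)^2\, d_\Circle(x,y)^2}
	\\
	&=
	\frac{\tfrac12 \int_{I_\Curve(x,y)}\!\int_{I_\Curve(x,y)} \nabs{\Curve'(s)-\Curve'(t)}^2 \,\dd s\,\dd t}{\nabs{\triangle \Curve}(x,y)^2\, d_\Circle(x,y)^2},
\end{align*}
whose numerator is manifestly nonnegative (the chord does not exceed the arc).

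Next I would discard the region $\set{d_\Circle \ge \delta}$ for a fixed small $\delta > 0$: there the $\Energy$-integrand is nonnegative and bounded, because $\Curve$ is a $C^0$-embedding of a compact set (so $\nabs{\triangle \Curve}$ is bounded below) and $\varrho_\Curve = d_\Circle \ge \delta$, while the $\Sobo[3/2,2]$-integrand $\nabs{\Curve'(x)-\Curve'(y)}^2\, d_\Circle(x,y)^{-2}$ is at most $4\,\delta^{-2}$ on a set of finite measure. Hence both finiteness statements reduce to their near-diagonal pieces. On the near-diagonal piece I would substitute the identity above, use Fubini to exchange the outer variables $(x,y)$ with the inner ones $(s,t)$, and apply the one-dimensional kernel estimate $\int_0^{\delta}\!\int_0^{\delta} (a+b+\ell)^{-4}\,\dd a\,\dd b \asymp \ell^{-2}$ with $\ell = \nabs{s-t}$. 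Granting that $\nabs{\triangle \Curve}(x,y)^2\, d_\Circle(x,y)^2 \asymp d_\Circle(x,y)^4$ near the diagonal, this makes the near-diagonal part of $\Energy(\Curve)$ comparable to the near-diagonal part of $\nseminorm{\Curve'}_{\Sobo[1/2,2]}^2 = \nseminorm{\Curve}_{\Sobo[3/2,2]}^2$, which is the assertion (the complementary far-diagonal part of the latter seminorm being finite since $\Curve' \in \Lebesgue[\infty]$).

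It remains to control the denominator $\nabs{\triangle \Curve}\, d_\Circle$ near the diagonal, and here the two implications split. The estimate $\nabs{\triangle \Curve}(x,y) \le d_\Circle(x,y)$ is free, so ``$\Energy(\Curve) < \infty \Rightarrow \Curve \in \Sobo[3/2,2]$'' follows immediately: the near-diagonal part of $\Energy$ dominates a constant times that of $\nseminorm{\Curve'}_{\Sobo[1/2,2]}^2$. For the converse I need the matching lower bound $\nabs{\triangle \Curve}(x,y) \ge c\, d_\Circle(x,y)$ near the diagonal, which does \emph{not} follow from $\Curve \in \Sobo[1,\infty]$ alone — a Lipschitz embedding can fold back at arbitrarily small scales. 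This is precisely where the hypothesis $\Curve \in \Sobo[3/2,2]$ is indispensable: it places $\Curve'$ in the borderline Sobolev class whose embedding $\Sobo[1/2,2][][\Circle][\AmbSpace] \hookrightarrow \on{VMO}$ forces the mean-square oscillation of $\Curve'$ over sub-arcs of length $r$ to tend to $0$ uniformly as $r \to 0$ (being controlled by a tail of the convergent Gagliardo integral of $\Curve'$), hence $\nabs{\triangle \Curve}(x,y)/d_\Circle(x,y) \to 1$ uniformly as $d_\Circle(x,y) \to 0$; this is exactly the bi-Lipschitz estimate for $\Sobo[3/2,2]$-embeddings recorded after \eqref{eq:geoddist}. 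I expect this uniform near-diagonal bi-Lipschitz bound — together with the care needed to make the Fubini exchange and the kernel estimate rigorous over the circle rather than over a line — to be the main technical obstacle; once it is in hand, the computation closes.
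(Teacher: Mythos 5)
Your proposal is correct, and it is essentially the proof of Blatt that the paper only cites rather than reproves: the arc-length identity $\varrho_\Curve^2-\nabs{\triangle\Curve}^2=\tfrac12\int_{I_\Curve^2}\nabs{\Curve'(s)-\Curve'(t)}^2\,\dd s\,\dd t$, the near-/far-diagonal splitting with Tonelli and the kernel estimate $\iint(a+b+\ell)^{-4}\,\dd a\,\dd b\asymp\ell^{-2}$, and the borderline $\Sobo[1/2,2]\hookrightarrow\on{VMO}$ oscillation argument giving the near-diagonal bi-Lipschitz bound for the harder implication are precisely the ingredients of \cite[Lemma~2.1 and Theorem~1.1]{MR2887901}. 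The same algebraic identity is what the authors themselves import from that source in \autoref{lem:IntegrationbyParts}, so your reconstruction matches the cited proof; I see no gaps.
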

Moreover, provided that $\Curve$ has a certain minimal regularity, the differential of $\Energy$ has been characterized as a nonlinear, nonlocal ``differential operator`` of order $3$ in the sense that $D\Energy(\Curve)$ is a distribution with three derivatives less than $\Curve$ (see \cite{MR1733697}).
We will see this also in \autoref{theo:DEnergy} below.
As indicated in the introduction,
instead of working with the energy space $\SoboC[3/2,2][] \cap \SoboC[1,\infty][]$,
we prefer spaces of curves with slightly higher regularity.
In the first place, we avoid some technicalities
effected by the critical scaling of $\Sobo[1/2,2][]$
(see~\cite{MR3799622})
related to discontinuous tangents,
in particular with respect to product rules.
Here and in the following, we fix parameters $\hilberts$, $\diffoffset$, and $\strongp$ satisfying
\begin{align}
	\hilberts >1,
	\quad	
	\diffoffset >0,
	\quad
	1 < \hilberts - \diffoffset < \hilberts + \diffoffset <2,
	\quad
	\strongp \in \intervalco{2,\infty},
	\qand
	\strongs - \tfrac{1}{\strongp} >1
	.
	\label{eq:Parameters}
\end{align}
In fact, we will soon focus on the case $\hilberts = \frac{3}{2}$ only.
Moreover, we think of $\diffoffset$ being close to $0$ and of $\strongp$ being close to $2$.
By the Morrey embedding theorem~\cite[Theorem~6.5]{MR2944369},
the space $\SoboC[\strongs,\strongp]$ embeds continuously into $\HolderC[1,\alpha]$
where $\alpha \ceq \strongs  - 1 - 1/\strongp \in \intervaloo{0,1}$.
Thus, the \emph{configuration space}
$\ConfSpace$ defined in~\eqref{eq:confspace}
is well-defined and an open subset of $\SoboC[\strongs,\strongp]$.
We consider the Banach spaces
\begin{align*}
	\XC \ceq \SoboC[\strongs,\strongp],
	\quad
	\HC \ceq \SoboC[\hilberts,2],
	\qand
	\YC \ceq \SoboC[\weaks,\weakp],	
\end{align*}
where $q \ceq (1- 1/p)^{-1}$ denotes the Hölder conjugate of $p$.
For $\Curve \in \ConfSpace$, we will equip these spaces with the norms
\begin{align}
	\nnorm{\cdot}_{\Xnorm{\Curve}} 
 	\ceq 
 	\nnorm{\cdot }_{\Sobo[\strongs,\strongp][\Curve]},
	\quad
	\nnorm{\cdot}_{\Hnorm{\Curve}} 
	\ceq 
	\nnorm{\cdot }_{\Sobo[\hilberts,2][\Curve]},
	\quad\text{and}\quad
	\nnorm{\cdot}_{\Ynorm{\Curve}} 
	\ceq 
	\nnorm{\cdot }_{\Sobo[\weaks,\weakp][\Curve]}.
	\label{eq:XHYNorms}
\end{align}
Their continuous dual spaces will be denoted by $\XCd$, $\HCd$, and $\YCd$.
Since $\ConfSpace \subset \XC$ is an open set, its tangent space $T_\Curve \ConfSpace$ is identical to $\XC$ for each $\Curve \in \ConfSpace$.
By the Sobolev embedding theorem, the canonical embeddings
\begin{align}\label{eq:canonical-embeddings}
	\Ri[\ConfSpace] \colon \XC \hookrightarrow \HC
	\qand
	\Rj[\ConfSpace] \colon \HC \hookrightarrow \YC
\end{align}
are well-defined and continuous with dense images.
We point out that $\HC$ is a Hilbert space; suitable scalar products on this space will play a pivotal role in defining the Sobolev gradients of the Möbius energy (see Section~\ref{sec:Metric}).

There are several reasons for picking the parameters $\diffoffset$ and $\strongp$ as in \eqref{eq:Parameters}:
So far, 
it is only clear that $\strongp \geq 2$ and $\diffoffset \geq 0$ are necessary for the existence of the continuous embeddings $\Ri[\ConfSpace]$ and $\Rj[\ConfSpace]$ while
$\strongs - 1/\strongp >1$ is necessary for the Morrey embedding $\ConfSpace \hookrightarrow \SoboC[1,\infty]$. 
In addition to that, we require $\diffoffset > 0$ in order to be able to use certain product rules for bilinear maps of the form
$B \colon \Sobo[\strongs,\strongp] \times \Sobo[\weaks,\weakp] \to \Sobo[\weaks,\weakp]$ 
and
$B \colon \Sobo[\strongs,\strongp] \times \Sobo[\hilberts,2] \to \Sobo[\hilberts,2]$ 
as discussed in \autoref{lem:MultiplicationLemma}.
Indeed, the requirements $\strongs - 1/\strongp >1$ and $\diffoffset >0$ allow us to treat all occurring nonlinearities in a satisfactory way.
The condition $\strongp < \infty$ guarantees that all involved Banach spaces are reflexive and separable.

\section{Energy}\label{sec:Energy}

From now on, if not stated otherwise, we fix $\hilberts = \frac{3}{2}$ and suppose that $\diffoffset >0$ and $\strongp\geq 2$.
Our principal aim in this section is to investigate the Möbius energy 
\begin{align}
	\Energy \colon \ConfSpace \to \R,
	\qquad
	\Energy(\Curve) 
	\ceq 
	\textstyle
	\int_\Torus
	E(\Curve)\, \varOmega_\Curve
	\quad
	\text{where}
	\quad
	E(\Curve)
	\ceq	
	\frac{1}{\nabs{\triangle \Curve}^2}	- \frac{1}{\varrho_\Curve^2}
	\label{eq:EnergyDensity}
\end{align}
along with its first two derivatives.
The first two variations of the Möbius energy have been discussed under various regularity assumptions before, cf.~\cite{MR3461038,MR1733697,MR3394390}.
The first variation is typically given in terms of principal-value integrals.
Here, by keeping everything in weak (or variational) formulation, we can work with very low regularity assumptions and avoid principal-value integrals altogether.

\begin{theorem}\label{theo:DEnergy}
The following statements hold true:
\begin{enumerate}
	\item \label{item:OHaraisFrechetDiffable}
	The Möbius energy $\Energy \colon \ConfSpace \to \R$ is Fréchet differentiable.
	\item \label{item:DOHaraisExtendable}
	The linear functional $\RX[\Energy][\Curve] \ceq D\Energy (\Curve) \in \XCgd$ can be continuously extended to a functional $\RY[\Energy][\Curve] \in \YCgd$.
	In particular, this shows that $\RY[\Energy] \colon \ConfSpace \to \YCd$, $\Curve \mapsto \RY[\Energy][\Curve]$ is a (nonlinear) differential operator of order at most $(\strongs) + (\weaks) = 3$.
	\item \label{item:YEislocallyLipschitz}
	The mapping $\RY[\Energy]\colon \ConfSpace \to \YCd$ is locally Lipschitz continuous.
\end{enumerate}
\end{theorem}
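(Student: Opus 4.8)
The plan is to prove the three statements in order, deriving each from the previous one together with the product-rule machinery (\autoref{lem:MultiplicationLemma}) and the equivalence of the various $\Curve$-dependent norms established in the Preliminaries.

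\textbf{Fréchet differentiability (\ref{item:OHaraisFrechetDiffable}).} First I would write $\Energy(\Curve) = \int_\Torus E(\Curve)\,\varOmega_\Curve$ and split the integrand into the singular part $\nabs{\triangle\Curve}^{-2}$ and the geodesic-distance part $\varrho_\Curve^{-2}$. Since $\ConfSpace$ is an open subset of $\XC \hookrightarrow \HolderC[1,\alpha]$, every $\Curve \in \ConfSpace$ is bi-Lipschitz, so $\nabs{\triangle\Curve}$, $\varrho_\Curve$, and $d_\Circle$ are all comparable; this is what keeps the integrand and its formal variations integrable on $\Torus$. I would regard $\Curve \mapsto E(\Curve)(x,y)$ and $\Curve \mapsto \varOmega_\Curve(x,y)$ as smooth maps into a suitable function space on $\Torus$ (using that $v \mapsto \nabs{v}^{-2}$ is smooth away from the origin and the "Taylor remainder" estimate $\nabs{\triangle\Curve(x,y)} \gtrsim d_\Circle(x,y)$ uniformly for $\Curve$ near a fixed curve), then differentiate under the double integral. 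The derivative $D\Energy(\Curve)\,w$ then has the expected form: one term from varying the kernels (producing factors like $\ninnerprod{\triangle\Curve,\triangle w}\,\nabs{\triangle\Curve}^{-4}$ and the corresponding variation of $\varrho_\Curve$) and one term from varying the line elements $\varOmega_\Curve$, which brings in $\cD_\Curve w$ via $\pd_\Curve\varOmega$. To get a bona fide Fréchet derivative I would estimate the remainder $\Energy(\Curve+w)-\Energy(\Curve)-D\Energy(\Curve)\,w = o(\nnorm{w}_{\XC})$ by expanding each kernel to first order with integral remainder and bounding everything by $\Sobo$-norms of $w$ on $\Torus$; the cancellation built into the Möbius integrand (the subtraction of $\varrho_\Curve^{-2}$) is exactly what makes the $d_\Circle(x,y)\to 0$ behaviour harmless.

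\textbf{Extension to $\YCd$ (\ref{item:DOHaraisExtendable}).} Here the point is to rewrite $D\Energy(\Curve)\,w$ so that at most $\hilberts=3/2$ derivatives fall on $w$ and the remaining $3/2$ derivatives are "hidden" in $\Curve$-dependent coefficients of class $\XC$. The natural move is a symmetrization/integration-by-parts in the $\Torus$-integral: using the operators $\triangle$, $\diff{\sigma}{\Curve}$ from \eqref{eq:Operators}, write the principal part of $D\Energy(\Curve)\,w$ as a Gagliardo-type bilinear pairing $\int_\Torus \ninnerprod{\diff{\sigma_1}{\Curve}(\cD_\Curve\Curve),\,\diff{\sigma_2}{\Curve}(\cD_\Curve w)}\,(\dots)\,\singularmeasure_\Curve$ with exponents adding up to $3$, i.e. $\weaks$ derivatives on $w$ and $\strongs$ on $\Curve$, plus genuinely lower-order remainder terms. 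Then $\abs{D\Energy(\Curve)\,w}\lesssim \nnorm{\Curve}_{\Xnorm{\Curve}}\,\nnorm{w}_{\Ynorm{\Curve}}$ follows from Hölder's inequality on $\Lebesgue[\strongp][\singularmeasure_\Curve]\times\Lebesgue[\weakp][\singularmeasure_\Curve]$ (since $1/\strongp+1/\weakp=1$) together with the product rules of \autoref{lem:MultiplicationLemma}, which allow the $\XC$-coefficients to multiply the $\YC$-factor without leaving $\YC$ — this is precisely why the parameter constraints \eqref{eq:Parameters}, especially $\diffoffset>0$ and $\strongs-1/\strongp>1$, were imposed. Because $\HolderC[\infty]$ is dense in $\YC$ and $D\Energy(\Curve)\in\XCgd$ agrees with this bounded functional on $\XC$ (dense in $\YC$), the extension $\RY[\Energy][\Curve]\in\YCgd$ is unique, and the order statement $(\strongs)+(\weaks)=3$ is immediate from the shape of the pairing.

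\textbf{Local Lipschitz continuity (\ref{item:YEislocallyLipschitz}).} For two curves $\Curve_0,\Curve_1$ in a small $\XC$-ball I would estimate $\nnorm{\RY[\Energy][\Curve_1]-\RY[\Energy][\Curve_0]}_{\YCd}=\sup_{\nnorm{w}_{\YC}\le 1}\abs{(\RY[\Energy][\Curve_1]-\RY[\Energy][\Curve_0])\,w}$ by differentiating along the segment $\Curve_t\ceq(1-t)\Curve_0+t\Curve_1$, i.e. $(\RY[\Energy][\Curve_1]-\RY[\Energy][\Curve_0])\,w=\int_0^1 DD\Energy(\Curve_t)(\Curve_1-\Curve_0,w)\,\dd t$, and showing $\abs{DD\Energy(\Curve)(v,w)}\lesssim \nnorm{v}_{\XC}\,\nnorm{w}_{\YC}$ uniformly on the ball. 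The second derivative $DD\Energy$ has the same structural form as $D\Energy$ — Gagliardo-type pairings with total differentiation order $3$, now with two "light" slots and coefficients still in $\XC$ (hence $C^{1}$) — so the same Hölder-plus-product-rule estimate applies, with constants depending only on the bi-Lipschitz constants and $C^{1}$-norms of $\Curve$ on the ball, which are uniformly controlled. Alternatively, and perhaps more cleanly, one can avoid $DD\Energy$ by writing $\RY[\Energy][\Curve_1]\,w-\RY[\Energy][\Curve_0]\,w$ directly and replacing each $\Curve_1$-dependent coefficient by the corresponding $\Curve_0$-one plus a difference, then telescoping; each difference carries a factor estimated by $\nnorm{\Curve_1-\Curve_0}_{\XC}$ via the mean value theorem applied to the smooth kernel maps, while the remaining factors stay bounded. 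The main obstacle in the whole proof is \ref{item:DOHaraisExtendable}: finding the right symmetrized/integrated-by-parts form of $D\Energy(\Curve)\,w$ so that the derivative count splits exactly as $\strongs+\weaks=3$ and every coefficient genuinely lands in a space to which the product rules \autoref{lem:MultiplicationLemma} apply — once that algebraic bookkeeping is done, (\ref{item:OHaraisFrechetDiffable}) and (\ref{item:YEislocallyLipschitz}) follow by the same estimates packaged slightly differently.
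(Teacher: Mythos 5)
There is a genuine gap, and it sits exactly at the point your proposal treats as routine: differentiating the geodesic part of the integrand in $\Curve$. The map $\Curve \mapsto \varrho_\Curve(x,y)$ is \emph{not} differentiable in $\Curve$ for all $(x,y)$, because $\varrho_\Curve(x,y)$ is the length of the \emph{shorter} of the two arcs joining $x$ and $y$, and which arc is shorter can switch under an arbitrarily small perturbation $u$ whenever $\varrho_\Curve(x,y)$ is close to half the total length. For a fixed pair $(x,y)$ with $\varrho_\Curve(x,y)$ strictly less than half the length this is harmless, but the set of pairs for which the minimizing arc $I_{\Curve+tu}(x,y)$ changes for some $t\in[0,1]$ has positive measure (of order $\nnorm{\cD_\Curve u}_{\Lebesgue[\infty]}$), and on that set the pointwise first-order Taylor expansion of $E(\Curve)(x,y)$ with the candidate derivative $F_1$, as well as the ``integral remainder'' bound via a second derivative, is simply invalid. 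So ``differentiate under the double integral and expand each kernel to first order with integral remainder'' does not go through as stated; the cancellation coming from subtracting $\varrho_\Curve^{-2}$ cures the diagonal singularity but has nothing to do with this off-diagonal non-smoothness. The paper's proof spends most of its effort precisely here: it splits $\Torus$ into a ``good'' set $U(u)$ (where the minimizing arc is stable along the segment $\Curve+tu$, so Taylor with the second-variation bound applies) and a ``bad'' set $V(u)$, and then proves two separate facts --- that $\Curve\mapsto\varrho_\Curve$ is locally Lipschitz into $\Lebesgue[\infty]$ (Claim~4) and that $V(u)$ has measure $\LandO(\nnorm{\cD_\Curve u}_{\Lebesgue[\infty]})$ (Claim~5) --- so that the bad-set contribution is quadratic in $\nnorm{u}$. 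Without this (or an equivalent device) neither your proof of (\ref{item:OHaraisFrechetDiffable}) nor your proof of (\ref{item:YEislocallyLipschitz}) is complete: the same obstruction invalidates writing $\RY[\Energy][\Curve_1]w-\RY[\Energy][\Curve_0]w=\int_0^1 DD\Energy(\Curve_t)(\Curve_1-\Curve_0,w)\,\dd t$ along the whole segment, and the paper again has to split off $V(u)$ and exploit that there $\nabs{\triangle\Curve}$ and $\varrho_\Curve$ are bounded away from zero so the integrand is Lipschitz in $\Curve$.

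Apart from this, your outline of (\ref{item:DOHaraisExtendable}) is in the right spirit and close to the paper's route: the paper also symmetrizes the principal part of $D\Energy(\Curve)w$ into a Gagliardo-type pairing with $\strongs$ derivatives on $\Curve$ and $\weaks$ on $w$ and estimates it by Hölder in $\Lebesgue[\strongp][\singularmeasure_\Curve]\times\Lebesgue[\weakp][\singularmeasure_\Curve]$, and treats the terms weighted by the energy density separately. But be aware that these two estimates are themselves nontrivial lemmas in the paper (\autoref{lem:PrincipalPartofEnergyDerivative} and \autoref{lem:IntegrationbyParts}): the first needs the identity rewriting $\tfrac{2}{\varrho_\Curve}\int_{I_\Curve}\ninnerprod{\cD_\Curve u,\cD_\Curve w}\,\LineElementC-2\ninnerprod{\triangle u/\varrho_\Curve,\triangle w/\varrho_\Curve}$ as a double integral of differences of $\cD_\Curve u$, $\cD_\Curve w$ over $I_\Curve^2$ plus a change of variables via the arc-length exponential map, and the second needs the identity $2\varrho_\Curve^2-2\nabs{\triangle\Curve}^2=\int_{I_\Curve^2}\nabs{\Tangent_\Curve(s)-\Tangent_\Curve(t)}^2\varOmega_\Curve(s,t)$ together with a Besov-type embedding for $\Tangent_\Curve$; "Hölder plus the product rule" alone does not produce them. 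So the bookkeeping you defer is exactly where the analytic work lies, and the geodesic-distance issue above must be confronted before parts (\ref{item:OHaraisFrechetDiffable}) and (\ref{item:YEislocallyLipschitz}) can be considered proved.
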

\begin{proof}
We are going to show that the energy density
$E \colon \ConfSpace \to \Lebesgue[1][\Curve][\Torus][\R]$
is Fr\'{e}chet differentiable.
This will also imply that $\Energy$ is Fréchet differentiable with derivative identical to the linear form $\RX[\Energy][\Curve] \in \XCd$ defined by
\begin{align}
	\RX[\Energy][\Curve] \, u \ceq
	\textstyle
	\int_\Torus DE(\Curve)\, u \, \varOmega_\Curve
	+
	\int_\Torus
	E(\Curve)\,
	\bigparen{
		\ninnerprod{\cD_\Curve \Curve, \cD_\Curve u} \circ \prx
		+
		\ninnerprod{\cD_\Curve \Curve, \cD_\Curve u} \circ \pry
	} \, \varOmega_\Curve
	.
	\label{eq:DEnergy}
\end{align}
We do so by following a ``shoot first ask questions later'' approach.	
To this end, we first investigate pointwise derivatives of $E(\Curve)$.
For $k \in \N_0$ and $u_1, \dotsc,u_k \in \XCg$, we abbreviate
\begin{align*}
	F_k(\Curve;u_1,\dotsc,u_k)(x,y) &\ceq D^k \bigparen{\Curve \mapsto E(\Curve)(x,y)}(\Curve) \, (u_1,\dotsc,u_k)
	\qand
	\\
	G_k(\Curve;u_1,\dotsc,u_k)(x,y) 
	&\ceq 
	\textstyle
	\int_{I_\Curve(x,y)} D^k \bigparen{\Curve \mapsto \omega_\Curve}(\Curve) \, (u_1,\dotsc,u_k).
\end{align*}
Recall that the $\Sobo[s+\diffoffset,p]$-norm dominates the $\Holder[1]$-norm.
Thus, due to the definition of the geodesic distance in~\eqref{eq:geoddist}, for each point $(x,y)$ in the open set
\begin{align*}
	\varSigma \ceq \set{(x,y) \in \Torus | x \neq y \; \text{and} \; \varrho_\Curve(x,y) < \ell}
	\quad
	\text{where}
	\quad
	\textstyle
	\ell \ceq \frac{1}{2} \int_\Circle \LineElement_\Curve,
\end{align*}
there is an open neighborhood $\cU(x,y)$ of $\ConfSpace$ such that $\Curve \mapsto I_\Curve$ is constant on $\cU(x,y)$.
Consequently, sufficiently small perturbations of $\Curve$ do not affect the integration domain of $G_k(\Curve;\cdots)$.
Utilizing the formulas
\begin{align}
	D(\Curve \mapsto \LineElement_\Curve)(\Curve) \, u
	=
	\ninnerprod{\cD_\Curve \Curve, \cD_\Curve u} \LineElement[\Curve],
	\quad
	D(\Curve \mapsto \cD_\Curve v)(\Curve) \, u
	=
	- \ninnerprod{\cD_\Curve \Curve, \cD_\Curve u}  \cD_\Curve v,
	\label{eq:DerivativeomegaandD}
\end{align}
we obtain
\begin{align*}
	G_1(\Curve;u_1)
	&= 
	\textstyle	
	\int_{I_\Curve} \ninnerprod{\cD_\Curve \Curve , \cD_\Curve u_1} \, \LineElement_\Curve,
	\qand
	\\
	G_2(\Curve;u_1,u_2)
	&=
	\textstyle
	\int_{I_\Curve} 
		\bigparen{ \ninnerprod{\cD_\Curve u_1,\cD_\Curve u_2} - \ninnerprod{\cD_\Curve \Curve , \cD_\Curve u_1}\, \ninnerprod{\cD_\Curve \Curve , \cD_\Curve u_2}} 
	\, \LineElement_\Curve
	.
\end{align*}
By pointwise differentiation at $(x,y) \in \varSigma$ and by observing that $\varSigma$ has full measure, we are lead to the following identities which hold almost everywhere on $\Torus$:
\begin{align*}
	F_1(\Curve;u_1)
	&=
	2\, \Bigparen{
		\tfrac{1}{\varrho_\Curve^4} \, %
		{\varrho_\Curve \, G_1(\Curve;u_1)}
		-
		\tfrac{1}{\nabs{\triangle \Curve}^4} \, \ninnerprod{\triangle \Curve , \triangle u_1}
	}
	\qand
	\\
	F_2(\Curve;u_1,u_2)
	&=
	8\,
	\seminorm{
		\tfrac{1}{\nabs{\triangle \Curve}^6} \, \ninnerprod{\triangle \Curve , \triangle u_1}\, \ninnerprod{\triangle \Curve , \triangle u_2}
		- 
		\tfrac{1}{\varrho_\Curve^6} \, %
		{\varrho_\Curve \, G_1(\Curve;u_1)%
		\cdot\varrho_\Curve \, G_1(\Curve;u_2)}
	}
	\\
	&\quad
	-
	2\,
	\seminorm{
		\tfrac{1}{\nabs{\triangle \Curve}^4} \, \ninnerprod{\triangle u_1 , \triangle u_2}
		-
		\tfrac{1}{\varrho_\Curve^4} \, 
		\Bigparen{G_1(\Curve;u_1) \, G_1(\Curve;u_2) + \varrho_\Curve \, G_2(\Curve;u_1,u_2)}		
	}
	.
\end{align*}
Claim~1 below will imply that $F_1(\Curve;u_1)$ is indeed a candidate for $DE(\Curve) \, u_1$. 
Moreover, it guarantees that the right hand side of \eqref{eq:DEnergy} makes sense even if one replaces $u \in \XCg$ by $w \in \YCg$ so that $\RX[\Energy][\Curve] \in \XCgd$ has a unique continuous extension to an element $\RY[\Energy][\Curve] \in \YCgd$.

\textbf{Claim 1:} 
\emph{There exists a $\Curve$-dependent $C \geq 0$
such that $\nnorm{F_1(\Curve;u_1)}_{\Lebesgue[1][\Curve]} \leq C \, \nnorm{u_1}_{\Sobo[s-\diffoffset,q][\Curve]}$ holds for all $u_1 \in  \XCg$.}
\newline
We split $F_1$ as follows:
\begin{align*}
	F_1(\Curve;u_1)
	=
	2 \, 
	\tfrac{1}{\varrho_\Curve^4} \,
	\Bigparen{
		(\varrho_\Curve \, G_1(\Curve;u_1))  - \ninnerprod{\triangle \Curve , \triangle u_1}
	}		
	- 2 \, 
	\Bigparen{
		\tfrac{1}{\nabs{\triangle \Curve}^4}
		-
		\tfrac{1}{\varrho_\Curve^4}	
	}
	\, \ninnerprod{\triangle \Curve , \triangle u_1}
	.
\end{align*}
The desired bound for the first summand is derived in \autoref{lem:PrincipalPartofEnergyDerivative}.
The second summand can be treated with \autoref{lem:IntegrationbyParts} because it has the form
$2\, \cB^{\alpha,\beta}_\Curve(\Curve,u)$ with $\alpha = 0$ and $\beta = 2$.

We would like to use the $\Lebesgue[1]$-norm of $F_2$ to bound remainder terms of Taylor expansions. This will make use of the following claim.

\textbf{Claim~2:} 
\emph{There exists a number $\varXi(\Curve) {}>{} 0$, continuous in $\Curve$, such that\,
for all $u_1$, $u_2 \in \XCg$, we have
 $\nnorm{F_2(\Curve;u_1,u_2)}_{\Lebesgue[1][\Curve]} \leq \varXi(\Curve) \, \nnorm{u_1}_{\Xnorm{\Curve}} \,  \nnorm{u_2}_{\Ynorm{\Curve}}$.}
\newline
We may split $F_2(\Curve;u_1,u_2)$ into the following four summands:
\begin{align}
	&
	8 \, \Bigparen{
		\tfrac{1}{\nabs{\triangle \Curve}^6} - \tfrac{1}{\varrho_\Curve^6}
	} \, \ninnerprod{\triangle \Curve , \triangle u_1}\, \ninnerprod{\triangle \Curve , \triangle u_2}
	\label{eq:F2-1}
	\\
	&\qquad
	- 2 \, \Bigparen{
		\tfrac{1}{\nabs{\triangle \Curve}^4} - \tfrac{1}{\varrho_\Curve^4}
	} \, \ninnerprod{\triangle u_1 , \triangle u_2}	
	\label{eq:F2-2}
	\\
	&\qquad			
	+
	8 \, \tfrac{1}{\varrho_\Curve^6} \, \Bigparen{
		\ninnerprod{\triangle \Curve , \triangle u_1}\, \ninnerprod{\triangle \Curve , \triangle u_2}  - \varrho_\Curve \, G_1(\Curve;u_1) \, \varrho_\Curve \, G_1(\Curve;u_2)}
	\label{eq:F2-3}
	\\
	&\qquad				
	+ 2 \, \tfrac{1}{\varrho_\Curve^4} \, 
	\Bigparen{
		{G_1(\Curve;u_1) \, G_1(\Curve;u_2) + \varrho_\Curve \, G_2(\Curve;u_1,u_2)}
		-	
		\ninnerprod{\triangle u_1 , \triangle u_2} 
	}
	\label{eq:F2-4}	
	.
\end{align}
Here, \eqref{eq:F2-1} and \eqref{eq:F2-2} are again of the type discussed in \autoref{lem:IntegrationbyParts}, namely with $\alpha = 0$, $\beta = 4$ and $\alpha = 0$, $\beta = 2$, respectively. 
We may factorize~\eqref{eq:F2-3} as
\begin{align*}
	8\Bigparen{
		\Biginnerprod{\tfrac{\triangle \Curve}{\varrho_\Curve}, \tfrac{\triangle u_1}{\varrho_\Curve}}  + 				\tfrac{G_1(\Curve;u_1)}{\varrho_\Curve}
	}
	\cdot	
	\Bigparen{	
	\tfrac{1}{\varrho_\Curve^4} \, 	
	\bigparen{
		\ninnerprod{\triangle \Curve , \triangle u_2}  - \varrho_\Curve \, G_1(\Curve;u_2)
	}
	}
\end{align*}
to discover that we have discussed its second factor already.
Up to a $\gamma$-dependent constant, its first factor is bounded by $2 \, \nnorm{\cD_\Curve u_1}_{\Lebesgue[\infty]}$, thus dominated by $\nnorm{u_1}_{\Xnorm{\Curve}}$.
With $H \ceq \int_{I_\Curve}\!\ninnerprod{\cD_\Curve u_1 , \cD_\Curve u_2} \, \LineElement[\Curve]$, we can split \eqref{eq:F2-4} into the following two summands:
\begin{align}
	\tfrac{2}{\varrho_\Curve^4} \, 
	\bigbrackets{
		\varrho_\Curve \, H
		-
		\ninnerprod{\triangle u_1 , \triangle u_2}
	}
	+ \tfrac{2}{\varrho_\Curve^4} \, 
	\bigbrackets{
		G_1(\Curve;u_1) \, G_1(\Curve;u_2) 
		-
		\varrho_\Curve \, (H - G_2(\Curve;u_1,u_2))
	}
	.
	\label{eq:Energy1}
\end{align}
The first summand of \eqref{eq:Energy1} can be treated with \autoref{lem:PrincipalPartofEnergyDerivative}.
With $\varphi_i \ceq \ninnerprod{\cD_\Curve \Curve , \cD_\Curve u_i}$
and the identities
$\varrho_\Curve = \int_{I_\Curve} \LineElement[\Curve](s)$  
and
$H-G_2(\Curve;u_1,u_2) = \int_{I_\Curve} \varphi_1(t) \, \varphi_2(t)\, \LineElement[\Curve](t)$
the second summand of \eqref{eq:Energy1} simplifies to
\begin{align*}
	\MoveEqLeft
	\textstyle
	\tfrac{2}{\varrho_\Curve^4} \, 
	{
		{\int_{I_\Curve} \varphi_1(s) \, \LineElement[\Curve](s)}
		\,
		{\int_{I_\Curve} \varphi_2(t) \, \LineElement[\Curve](t)}
		-	
		{\int_{I_\Curve} \LineElement[\Curve](s)}
		\,
		{\int_{I_\Curve} \varphi_1(t) \, \varphi_2(t) \, \LineElement[\Curve](t)}
	}
	\\
	&=
	\textstyle	
	\tfrac{1}{\varrho_\Curve^4} \, 
	\int_{I_\Curve^2}
	 	\nparen{\varphi_1(s)- \varphi_1(t)} \, \varphi_2(t)
	\,\varOmega_\Curve(s,t)
	+
\tfrac{1}{\varrho_\Curve^4} \, 
	\int_{I_\Curve^2}
	 	\nparen{\varphi_1(t)- \varphi_1(s)} \, \varphi_2(s)
	\,\varOmega_\Curve(t,s)	
	\\
	&=
	\textstyle		
	-	
	\tfrac{1}{\varrho_\Curve^4} \, 
	\int_{I_\Curve^2}
	 	(\triangle \varphi_1) \, (\triangle \varphi_2)
	\,\varOmega_\Curve.
\end{align*}
Now the same techniques as in \autoref{lem:PrincipalPartofEnergyDerivative} and the product rule \autoref{lem:MultiplicationLemma} imply
\begin{align*}
	\textstyle
	\int_{\Torus}
	\frac{1}{\varrho_\Curve(x,y)^{4}}
	\,
	\bigabs{\textstyle
	\int_{I_\Curve^2(x,y)}
		\triangle \varphi_1  \, \triangle \varphi_2
	\, \varOmega_\Curve
	}
	\,
	\varOmega_\Curve	(x,y)
	\leq
	C \, \nseminorm{u_1}_{\Sobo[s+\diffoffset,p][\Curve]} \, \nseminorm{u_2}_{\Sobo[s-\diffoffset,q][\Curve]},
\end{align*}
which proves the claim.

\textbf{Claim~3:} \emph{$E$ is Fréchet differentiable with $DE(\Curve) \,u = F_1(\Curve;u)$.}
\newline
It suffices to show that there is a $C \geq 0$ such that
$
	\nnorm{
		E(\Curve + u) - E(\Curve) - F_1(\Curve;u)
	}_{\Lebesgue[1][\Curve]}
	\leq C \, \nnorm{u}_{\Xnorm{\OtherCurve}}^2
$
holds for all sufficiently small $u \in \XCg$.
Because $\ConfSpace \subset \XCg$ is open and $\varXi$ is continuous, we may find an $\varepsilon >0$ such that 
for all $\nnorm{u}_{\Xnorm{\Curve}} < \varepsilon$
we have
$\OtherCurve \ceq \Curve+u \in \ConfSpace$ and $\varXi(\OtherCurve) \leq 	2 \, \varXi(\Curve)$. 
By shrinking $\varepsilon$ if necessary, we may achieve that all the  densities $\varOmega_{\OtherCurve}$ and the norms $\nnorm{\cdot}_{\Xnorm{\OtherCurve}}$ are equivalent for all such $u$, i.e., there are $c>0$ and $\varLambda>1$ such that
\begin{gather}
	(1 -  c\, \nnorm{\cD_\Curve u}_{\Lebesgue[\infty]}) \, \LineElement[\Curve]
	\leq 
	\LineElement[\OtherCurve]
	\leq
	(1 + c\, \nnorm{\cD_\Curve u}_{\Lebesgue[\infty]})  \, \LineElement[\Curve]
	\label{eq:DensityEquivalence}	
	\\
	\varLambda^{-1} \, \varOmega_{\Curve}
	\leq 
	\varOmega_{\OtherCurve}
	\leq
	\varLambda \, \varOmega_{\Curve},
	\quad 
	\text{and}
	\quad
	\varLambda^{-1} \, \nnorm{\cdot}_{\Xnorm{\OtherCurve}}
	\leq 
	\nnorm{\cdot}_{\Xnorm{\Curve}}
	\leq 
	\varLambda \, \nnorm{\cdot}_{\Xnorm{\OtherCurve}}
	.
	\label{eq:DEnormequivalence}
\end{gather}
For the remainder of the proof, we let $u \in \XCg$ be of length $\nnorm{u}_{\Xnorm{\Curve}} < \varepsilon$ and abbreviate $\OtherCurve \ceq \Curve + u$ and $\Curve_t \ceq \Curve + t\, u$.

\begin{figure}[t!]
\capstart
\begin{center}
\begin{footnotesize}
\begin{subfigure}{0.235\textwidth}%
	\def\svgwidth{\textwidth}
	\input{BadSet1_pdf.tex}%
	\subcaption{}
	\label{fig:BadSet1}%
\end{subfigure}%
\begin{subfigure}{0.235\textwidth}%
	\def\svgwidth{\textwidth}
	\input{BadSet2_pdf.tex}%
	\subcaption{}
	\label{fig:BadSet2}%
\end{subfigure}%
\begin{subfigure}{0.235\textwidth}%
	\def\svgwidth{\textwidth}
	\input{BadSet3_pdf.tex}%
	\subcaption{}
	\label{fig:BadSet3}%
\end{subfigure}%
\begin{subfigure}{0.295\textwidth}%
	\def\svgwidth{\textwidth}
	\input{BadSet_pdf.tex}	
	\subcaption{}
	\label{fig:BadSet0}%
\end{subfigure}%
\end{footnotesize}
\end{center}
\caption{Illustration why handling $\Curve \mapsto \varrho_\Curve$ correctly is so difficult. 
\subref{fig:BadSet1} Two points $\Curve(x)$ and $\Curve(y)$ on a circular curve and the geodesic arc $\Curve(I_\Curve(x,y))$ connecting them.
\subref{fig:BadSet2} A displacement vector field $u$ along $\Curve$.
\subref{fig:BadSet3} The geodesic arc $\OtherCurve(I_\OtherCurve(x,y))$ connecting $\OtherCurve(x)$ and $\OtherCurve(y)$ on the displaced curve $\OtherCurve = \Curve + u$. Observe that $I_\Curve(x,y) \neq I_\OtherCurve(x,y)$, so $(x,y)$ belongs to the ``bad'' set $V(u)$.
\subref{fig:BadSet0} The points $(x,y)$, $(y,x)$ and the ``bad'' set $V(u)$, plotted over the relief of $\varrho_\Curve$ in the parameterization domain~$\Torus$.}
\label{fig:BadSet}
\end{figure}

Now we split the integration domain 
$\Torus = U(u) \cup V(u)\cup \set{(x,x) | x \in \Circle} $ into a ``good'' part $U(u)$,  a ``bad'' part $V(u)$, and an ``ugly`` part, the diagonal of $\Torus$ (cf.~\autoref{fig:BadSet}). Since the latter is a null set, it may be neglected.
The other two parts are defined as follows:
\begin{align*}
	U(u)
	&\ceq
	\set{ 
		(x,y) \in \Torus | 
		\text{
			$x \neq y$
			and 	
			for all $t \in \intervalcc{0,1}$:
			$I_{\Curve_t}(x,y) = I_{\Curve}(x,y)$
		}
	}
	\qand
	\\
	V(u) 
	&\ceq 
	\set{ 
		(x,y) \in \Torus | 
		\text{
			there is a $t \in \intervalcc{0,1}$: $I_{\Curve_t}(x,y) \neq I_{\Curve}(x,y)$
		}
	}.	
\end{align*}
On the ``good'' part, we may apply Taylor's theorem along with  Claim~2 and \eqref{eq:DEnormequivalence} to obtain:
\begin{align*}
	\MoveEqLeft
	\textstyle
	\int_{U(u)} 
		\nabs{
			E(\OtherCurve) - E(\Curve) - F_1(\Curve; u)
		}
	\, \varOmega_{\Curve }
	\leq \textstyle 
		\int_{U(u)} 
		\int_0^1 \nabs{F_2(\Curve_t ; u,u)} \, \dd t
		\, \varOmega_{\Curve}
	\\
	&\leq \textstyle 
	\varLambda^2 
	\int_0^1 
		\int_{U(u)} \nabs{F_2(\Curve_t ; u,u)} \, \varOmega_{\Curve_t}
	\, \dd t
	\leq \textstyle 
	\varLambda^2 
	\int_0^1 \varXi(\Curve_t) \, \nnorm{u}_{\Xnorm{\Curve_t}}^2 \, \dd t
	\leq 2 \, \varXi(\Curve) \, \varLambda^4 \, \nnorm{u}_{\Xnorm{\Curve}}^2.
\end{align*}
Here the first estimate is only admissible on the ``good'' set $U(u)$ as the intrinsic distance~\eqref{eq:geoddist} is differentiable. However,
we cannot argue this way on the ``bad'' set $V(u)$.
Instead, we observe that $V(u)$ has positive distance from the diagonal.
Thus, by Claim~4 below, $\Curve \mapsto E(\Curve)|_{V(u)}$ is Lipschitz continuous as a map into $\Lebesgue[\infty][][V(u)][\R]$.
Together with Claim~5 below, which states that $V(u)$ is a small set, 
the triangle inequality 
$
\nabs{E(\OtherCurve) - E(\Curve) - F_1(\Curve; u)} 
	\leq  
	\nabs{E(\OtherCurve) - E(\Curve)} 
	+
	\nabs{F_1(\Curve; u)}
$ leads us to
\begin{align*}
	\MoveEqLeft
	\textstyle
	\int_{V(u)} 
		\nabs{
			E(\OtherCurve) - E(\Curve) - F_1(\Curve; u)
		}
	\, \varOmega_\Curve
	\leq C \, \nnorm{\cD_\Curve u}_{\Lebesgue[\infty]}^2,
\end{align*}
which proves the claim.

\textbf{Claim~4:}
\emph{$\Curve \mapsto \varrho_\Curve$ is locally Lipschitz continuous as a mapping into $\Lebesgue[\infty]$.}

As $\Torus\setminus\varSigma$ is a set of measure
zero, we may restrict our attention to $(x,y) \in \varSigma$.
For $\OtherCurve \ceq \Curve + u$ and
$(x,y) \in \varSigma$ there are two cases: The first case is $I_{\OtherCurve}(x,y) = I_{\Curve}(x,y)$. Then the bound $\nabs{\varrho_{\OtherCurve}(x,y)-\varrho_\Curve(x,y)} \leq C \,\nnorm{\cD_\Curve  u}_{\Lebesgue[\infty]}$ follows from the differentiability of $\LineElementC$.
The second case $I_{\OtherCurve}(x,y) \neq I_{\Curve}(x,y)$ is a bit more elaborate.
We abbreviate $I \ceq I_\Curve(x,y)$ and denote its complement by $J \ceq \Circle \setminus I$.
With \eqref{eq:DensityEquivalence}, we obtain
\begin{align*}
	\textstyle
	\int_{J} \LineElement_{\Curve} - c \, \nnorm{\cD_\Curve u}_{\Lebesgue[\infty]} \int_{J} \LineElement_{\Curve}
	\leq 
	\int_{J} \LineElement_{\OtherCurve}
	\leq
	\int_{I} \LineElement_{\OtherCurve}	
	\leq
	\int_{I} \LineElement_{\Curve} + c \, \nnorm{\cD_\Curve u}_{\Lebesgue[\infty]} \int_{I} \LineElement_{\Curve}.
\end{align*}
Here we used \eqref{eq:DensityEquivalence} for the first and the third inequality.
This shows
$
	\nabs{
		\int_{J} \LineElement_{\Curve}
		\!
		- 
		\!		
		\int_{I} \LineElementC
	}
	=
		\int_{J} \LineElement_{\Curve}	
		-
		\int_{I} \LineElement_{\Curve}
	\leq
	2 \, c \, \ell \, \nnorm{\cD_\Curve u}_{\Lebesgue[\infty]}
$.
By \eqref{eq:DensityEquivalence}, we obtain
$
	\nabs{
		\int_{J} \LineElement_{\OtherCurve}
		\!
		-
		\! 
		\int_{J} \LineElement_{\Curve}
	}
	\leq
	2
	\, c \,  \ell \, \nnorm{\cD_\Curve u}_{\Lebesgue[\infty]}
$.
Combining these latter two inequalities proves Claim~4:
\begin{align*}
	\MoveEqLeft
	\textstyle
	\nabs{
		\varrho_{\OtherCurve}(x,y) 
		\!
		- 
		\!
		\varrho_{\Curve}(x,y)
	}
	=
	\textstyle	
	\nabs{
		\int_{J} \LineElement[\OtherCurve]
		\!		
		-
		\!
		\int_{I} \LineElement[\Curve]
	}
	\leq
	\textstyle	
	\nabs{
		\int_{J} \LineElement[\OtherCurve]
		\! 	
		-
		\!		
		\int_{J} \LineElement[\Curve]
	}
	+
	\nabs{
		\int_{J} \LineElement[\Curve]
		\!		
		-
		\!		
		\int_{I} \LineElement[\Curve]
	}		
	\leq C \,\nnorm{\cD_\Curve u}_{\Lebesgue[\infty]}
	.
\end{align*}

\textbf{Claim~5:} \emph{$\int_{V(u)} \, \varOmega_\Curve \leq C \, \nnorm{\cD_\Curve u}_{\Lebesgue[\infty]}$ for $\nnorm{u}_{\Xnorm{\Curve}} < \varepsilon$.}
\newline
A Taylor expansion of the integrand leads to
\begin{align*}
	\nabs{
		\textstyle
		\int_{J} \LineElement[\Curve + t \, u]
		-
		\int_{J} \LineElement[\Curve] - \int_{J} \ninnerprod{\cD_\Curve \Curve ,\cD_\Curve (t\, u)} \,  \LineElement[\Curve]
	}
	\leq C \, \nnorm{\cD_\Curve (t\, u)}_{\Lebesgue[\infty]}^2.
\end{align*}
Now let $(x,y) \in V(u)$. Then there is a $t \in \intervalcc{0,1}$ such that
$I_{\Curve + t \, u}(x,y) = J$ and we have
\begin{align*}
	\varrho_{\Curve+ t \, u}(x,y)
	=
	\textstyle	
	\int_{J} \LineElement[\Curve + t \, u]
	&\geq
	\textstyle	
	\int_{J} \LineElement[\Curve] + \int_{J} \ninnerprod{\cD_\Curve \Curve ,\cD_\Curve (t \, u)} \,  \LineElement[\Curve]
	- 2 \, C \, \ell \, \nnorm{\cD_\Curve (t \,  u)}_{\Lebesgue[\infty]}^2
	\\
	&\geq
	\ell - t \, C \,\nnorm{\cD_\Curve  u}_{\Lebesgue[\infty]}
	\geq
	\ell - C \,\nnorm{\cD_\Curve  u}_{\Lebesgue[\infty]}
	.
\end{align*}
Together with Claim~4 this implies that $V(u)$ is contained in a narrow band around the set $\set{(x,y) | \varrho_\Curve(x,y) = \ell }$ 
whose area is proportional to $\nnorm{\cD_\Curve  u}_{\Lebesgue[\infty]}$ (cf.~\autoref{fig:BadSet0}).
	
\textbf{Claim~6:} \emph{There is a 
$C \geq 0$  such that
$\nabs{
		(\RY[\Energy][\Curve + u]) \, w
		-
		(\RY[\Energy][\Curve]) \, w
	}
	\leq
	C \, \nnorm{u}_{\Xnorm{\Curve}} \, \nnorm{w}_{\Ynorm{\Curve}}
	$
holds for all $w \in \YCg$ and all $u \in \XCg$ with $\nnorm{u}_{\Xnorm{\Curve}}< \varepsilon$.}
\newline
With the operator $\varPsi_\Curve w \ceq \bigparen{
		\ninnerprod{\cD_\Curve \Curve , \cD_\Curve w} \circ \prx 
		+ 
		\ninnerprod{\cD_\Curve \Curve , \cD_\Curve w} \circ \pry
	}$,
we may write
$
	\RY[\Energy][\Curve] \, w
	= 
	\textstyle
	\int_\Torus \bigparen{ F_1(\Curve;w) + E(\Curve) \, (\varPsi_\Curve w)} \, \varOmega_\Curve
$. 
By the triangle inequality and the fundamental theorem of calculus, we may bound $\nabs{
		(\RY[\Energy][\OtherCurve]) \, w
		-
		(\RY[\Energy][\Curve]) \, w
	}$ from above by
\begin{align}
	&
	\abs{
	\textstyle
	\int_0^1
	\frac{\dd}{\dd t}
	\,
	\Bigbrackets{		
	\int_{U(u)}
		\Bigparen{
			F_1(\Curve_t;w) 
			+ 
			E(\Curve_t) \, (\varPsi_{\Curve_t} w)
		}
	\, \varOmega_{\Curve_t}
	}
	\, \dd t
	}
	\label{eq:YELipschitz1}
	\\ 
	&\qquad
	\textstyle
	+
	\int_{V(u)} 
	\nabs{ 
		F_1(\Curve;w) 
		+ 
		E(\Curve) \, (\varPsi_\Curve w)
	} \, \varOmega_\Curve
	+
	\int_{V(u)}
	\nabs{ 
		F_1(\OtherCurve;w)
		+ 
		E(\OtherCurve) \, (\varPsi_{\OtherCurve} w)
	} 
	\, \varOmega_{\OtherCurve}
	\label{eq:YELipschitz4}
	.
\end{align}
Recall that on the ``good'' set $U(u)$ we may interchange differentiation and integration. Hence, we have
\begin{align*}
	\MoveEqLeft
	\textstyle
	\frac{\dd}{\dd t}
	\int_{U(u)}
		\Bigparen{
			F_1(\Curve_t;w) 
			+ 
			E(\Curve_t) \, (\varPsi_{\Curve_t} w)
		}
	\, \varOmega_{\Curve_t}
	\\
	&=
	\textstyle	
	\int_{U(u)}
	\Bigparen{
		F_2(\Curve_t; u, w) 
		+ 
		F_1(\Curve_t; u)  \, (\varPsi_{\Curve_t} w)
		+
		E(\Curve_t) \, ( \tfrac{\dd}{\dd t} \varPsi_{\Curve_t} w)
	}
	\, \varOmega_{\Curve_t}
	\\
	&\qquad
	\textstyle
	+
	\int_{U(u)}
	\Bigparen{
		F_1(\Curve_t;w)  
		+
		E(\Curve_t) \, (\varPsi_{\Curve_t} w)
	}
	\, (\varPsi_{\Curve_t} u)		 \, \varOmega_{\Curve_t}
	.
\end{align*}
Now it follows from the other claims above that \eqref{eq:YELipschitz1} is bounded by a multiple of
$\nnorm{u}_{\Xnorm{\Curve}} \, \nnorm{w}_{\Ynorm{\Curve}}$.
For \eqref{eq:YELipschitz4}, we exploit that $V(u)$ has finite, positive distance to the diagonal of $\Torus$: This implies that the quantities $\nabs{\triangle \Curve(x,y)}$ and $\varrho_\Curve(x,y)$ are uniformly bounded away from zero and that this remains true for sufficiently small perturbations $\eta= \Curve + u$ of $\Curve$. This is why we can 
express $\bigbrackets{\bigparen{ F_1(\Curve;w) + E(\Curve) \, (\varPsi_\Curve w)} \, \varOmega_\Curve}(x,y)$
by
\begin{align*}
	\MoveEqLeft
	\biginnerprod{
		Z_1\bigparen{\Curve'(x),\Curve'(y),\nabs{\triangle \Curve(x,y)},\varrho_\Curve(x,y)},  
		w'(x)
	}
	\, \varOmega_\Curve(x,y)
	\\
	&\qquad	
	+
	\biginnerprod{
		Z_2\bigparen{\Curve'(x),\Curve'(y),\nabs{\triangle \Curve(x,y)},\varrho_\Curve(x,y)}, 
		w'(y)
	} 
	\, \varOmega_\Curve(x,y)
	\quad
	\text{for $(x,y) \in V(u)$}
\end{align*}
with Lipschitz continuous functions $Z_1$ and $Z_2$.
So the same applies to~$\eta$, and Claim~5 shows that \eqref{eq:YELipschitz4} is bounded by
\begin{align*}
	C \, \nnorm{1_{V(u)}}_{L^p_\Curve} \, \nnorm{\cD_\Curve u}_{\Lebesgue[\infty]} \, \nnorm{w}_{\Sobo[1,q][\Curve]}
	\leq C \, \nnorm{\cD_\Curve u}_{\Lebesgue[\infty]}^{1+1/p} \, \nnorm{w}_{\Sobo[1,q][\Curve]},
\end{align*}
which finally proves the claim.
\end{proof}

\begin{remark}
In fact, a bit more is true:
The mapping $\RY[\Energy] \colon \ConfSpace \to \YCd$ is even continuously Fréchet differentiable and what we have shown in Claim~6 above is that $D (\RY[\Energy])(\Curve) \colon \XCg \times \YCg \to \R$ is a continuous bilinear form.
Now we may conclude that its second derivative must satisfy
$D^2\Energy(\Curve)(u_1,u_2) = D(\RY[\Energy])(\Curve)(u_1,\Rj[\ConfSpace] \; \Ri[\ConfSpace] \; u_2)$
for $u_1$, $u_2 \in \XCg$
where $\Ri[\ConfSpace]$ and $\Rj[\ConfSpace]$ denote the canonical embeddings defined in~\eqref{eq:canonical-embeddings}.
Although this might be relevant for optimization methods based on Newton's method and also for the implicit integration of the $L^2$-gradient flow, we do not dive into details here.
\end{remark}

\subsection*{Details}

Here we state and prove the lemmas used in the proof of \autoref{theo:DEnergy} above.
The following is our main tool for dealing with the lower order terms that occur in $\RY[\Energy][\Curve]$.

\begin{lemma}\label{lem:IntegrationbyParts}
Suppose $s = \frac{3}{2}$ and \eqref{eq:Parameters}
with $\frac{1}{p}+\frac{1}{q}=1$.
Fix $k \in \N \cup \set{0}$, $\alpha$, $\beta \in \R$.
Let $\Curve \in \ConfSpace$
be an embedded curve and
let $b \colon \nparen{\prod_{i=1}^k \R^{m_i}} \times\R^d \to \R$ be a $(k+1)$-multilinear form. For operators $L_i$, 
$K \in \set{ 
	\triangle / \varrho_\Curve, 
	\varphi \mapsto \cD_\Curve \varphi \circ \prx
	,
	\varphi \mapsto \cD_\Curve \varphi \circ \pry
}$, $i \in {1,\dotsc,k}$ consider the following multilinear form
$
	\cB^{\alpha,\beta}_\Curve
	\colon 
	(\prod_{i=1}^k \XCd) \times \YCd \to \R
$:
\begin{align*}
	\textstyle
	\cB^{\alpha,\beta}_\Curve(\varphi_1,\dotsc,\varphi_k,\psi)
	\ceq 
	\int_\Torus
	b( L_1 \varphi_1,\dotsc,L_k \varphi_k, K \psi) 
	\, 
	\frac{\varrho_\Curve^{\alpha+\beta}}{\nabs{\triangle \Curve}^\alpha}
	\,
	\Bigparen{
		\frac{1}{\nabs{\triangle \Curve}^{2+\beta}}
		-
		\frac{1}{\varrho_\Curve^{2+\beta}}
	}
	\, \varOmega_\Curve.
\end{align*}
Then $\cB^{\alpha,\beta}_\Curve$ is well-defined and
there is a continuous function $\varXi \colon \ConfSpace \to \intervalco{0,\infty}$ such that
\begin{align*}
	\textstyle
	\nabs{\cB^{\alpha,\beta}_\Curve(\varphi_1,\dotsc,\varphi_k,\psi)}
	\leq
	\nnorm{b} \,
	\varXi(\Curve) \,
	\bigparen{\prod_{i=1}^k 
	\nnorm{\cD_\Curve \varphi_i}_{\Lebesgue[\infty]} }\,
	\nnorm{\cD_\Curve \psi}_{\Sobo[s-1-\diffoffset,q][\Curve]}
	.
\end{align*}
\end{lemma}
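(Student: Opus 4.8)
The plan is to strip the multilinear form and the $\varphi_i$ down to a single scalar integral of the singular weight against $K\psi$, to rewrite that weight so that the ``extra derivative'' hidden in the bi-Lipschitz regularity of $\Curve$ becomes visible, and then to run a fractional-Sobolev pairing of the type used in \autoref{lem:PrincipalPartofEnergyDerivative} together with the product rule \autoref{lem:MultiplicationLemma}. First I would reduce: from $\nabs{b(v_1,\dotsc,v_k,w)}\le\nnorm{b}\prod_i\nabs{v_i}\,\nabs{w}$ and the bounds $\nnorm{L_i\varphi_i}_{\Lebesgue[\infty]}\le\nnorm{\cD_\Curve\varphi_i}_{\Lebesgue[\infty]}$ — for $L_i=\triangle/\varrho_\Curve$ this is $\nabs{\varphi_i(x)-\varphi_i(y)}\le\int_{I_\Curve(x,y)}\nabs{\varphi_i'}\le\varrho_\Curve(x,y)\,\nnorm{\cD_\Curve\varphi_i}_{\Lebesgue[\infty]}$, for the other two operators it is trivial — it suffices to bound $\int_\Torus\nabs{K\psi}\,\nabs{w_\Curve}\,\varOmega_\Curve$ by $\varXi(\Curve)\,\nnorm{\cD_\Curve\psi}_{W^{s-1-\nu,q}_\Curve}$, where $w_\Curve\ceq\frac{\varrho_\Curve^{\alpha+\beta}}{\nabs{\triangle\Curve}^\alpha}\bigparen{\nabs{\triangle\Curve}^{-2-\beta}-\varrho_\Curve^{-2-\beta}}$. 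Using the norm and measure equivalences from \autoref{sec:Preliminaries} I may assume $\Curve$ parametrized by arc length, and all constants below will depend only on $\nnorm{\Curve}_{\Xnorm{\Curve}}$ and $\essinf\nabs{\Curve'}>0$, hence continuously on $\Curve\in\ConfSpace$; this will deliver the continuity of $\varXi$.

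The key reformulation of the weight rests on the identity $\varrho_\Curve(x,y)^2-\nabs{\triangle\Curve(x,y)}^2=\tfrac12\iint_{I_\Curve(x,y)^2}\nabs{\Tangent_\Curve(s)-\Tangent_\Curve(t)}^2\,\omega_\Curve(s)\,\omega_\Curve(t)$, obtained by expanding $\nabs{\triangle\Curve}^2=\iint_{I_\Curve^2}\innerprod{\Curve',\Curve'}$ and $\varrho_\Curve^2=\iint_{I_\Curve^2}\nabs{\Curve'}\nabs{\Curve'}$. Combined with the elementary bound $\nabs{a^{2+\beta}-b^{2+\beta}}\le C_\beta\max(a,b)^\beta\nabs{a^2-b^2}$ and the comparability $\nabs{\triangle\Curve}\asymp\varrho_\Curve\asymp d_\Circle$ valid because $\Curve\in\ConfSpace\hookrightarrow C^{1}$ (see \autoref{sec:Preliminaries} and \cite[Lemma~2.1]{MR2887901}), this yields the $(\alpha,\beta)$-uniform pointwise estimate $\nabs{w_\Curve(x,y)}\le C_{\alpha,\beta,\Curve}\,\varrho_\Curve(x,y)^{-4}\iint_{I_\Curve(x,y)^2}\nabs{\triangle\Tangent_\Curve}^2\,\omega_\Curve\,\omega_\Curve$. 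I stress that the crude bound $\nabs{\triangle\Tangent_\Curve}\lesssim d_\Circle^{\theta}$ with $\theta\ceq s+\nu-1-1/p>0$ (possibly very small within the range \eqref{eq:Parameters}) only gives $\nabs{w_\Curve}\lesssim d_\Circle^{2\theta-2}$, which is \emph{not} integrable against $\varOmega_\Curve$; so $\triangle\Tangent_\Curve$ must be kept as a difference and the membership $\Tangent_\Curve=\cD_\Curve\Curve\in W^{s+\nu-1,p}$ genuinely used.

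For $K\psi=\triangle\psi/\varrho_\Curve$ I would interchange $\int_\Torus\dd x\,\dd y$ with $\iint_{I_\Curve(x,y)^2}\dd s\,\dd t$ — integrating over $\set{(x,y,s,t) | s,t\in I_\Curve(x,y)}$ in the other order — so that for fixed $(s,t)$ the $(x,y)$-marginal of $\varrho_\Curve^{-5}$ over $\set{(x,y)|I_\Curve(x,y)\ni s,t}$ is controlled by $d_\Circle(s,t)^{-3}$. What remains is a trilinear quantity in $\triangle\Tangent_\Curve$ (twice) and $\triangle\psi$ against powers of $d_\Circle$, which I would bound by Hölder's inequality, inserting and removing Gagliardo weights so as to reproduce $\bigseminorm{\Tangent_\Curve}_{W^{s+\nu-1,p}}^2$ and $\bigseminorm{\cD_\Curve\psi}_{W^{s-1-\nu,q}}$; the exponents close by exactly the computation in \autoref{lem:PrincipalPartofEnergyDerivative}, with the square $\nabs{\triangle\Tangent_\Curve}^2$ handled through \autoref{lem:MultiplicationLemma} — heuristically, $\Tangent_\Curve$ supplies $s+\nu-1$ fractional derivatives twice and $\cD_\Curve\psi$ supplies $s-1-\nu$, which together overcome the order-$3$ singularity of $\varrho_\Curve^{-1}w_\Curve$.

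The remaining cases $K\psi=\cD_\Curve\psi\circ\prx$ and $K\psi=\cD_\Curve\psi\circ\pry$ carry no difference in $\psi$, and this is where the integration by parts of the lemma's name enters: on the boundaryless circle one writes $\int_\Torus(\cD_\Curve\psi)(x)\,g(x,y)\,\varOmega_\Curve=\int_\Torus\psi'(x)\,g(x,y)\,\dd x\,\dd y=-\int_\Torus\psi(x)\,\pd_x g(x,y)\,\dd x\,\dd y$, and subtracting $\psi(y)$ — which drops out since $\int_\Circle\pd_x g(\cdot,y)\,\dd x=0$ — turns this into $-\int_\Torus\triangle\psi(x,y)\,\pd_x g(x,y)\,\dd x\,\dd y$. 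Differentiating the geometric part of $g$ once more raises the singularity of the kernel by one power, but this is exactly offset by the power of $\varrho_\Curve$ gained in passing from $\psi'$ to $\triangle\psi$, and $\pd_x g$ again admits a tangent-defect representation as in the second paragraph, so this case reduces to the previous one. The main obstacle is the one already flagged: the singular weight is borderline non-integrable at the admissible low regularity, so neither a pointwise/Morrey bound on $\triangle\Tangent_\Curve$ nor any single-variable norm of $\cD_\Curve\psi$ can work — one has to pair the fractional Gagliardo seminorms of $\Tangent_\Curve$ and of the integrated-by-parts $\psi$ against each other, and organizing the Fubini over the nested arcs $I_\Curve(x,y)\supseteq\set{s,t}$ correctly is the technical crux.
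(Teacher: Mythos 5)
Your reduction and your rewriting of the weight coincide with the paper's opening moves: stripping $b$ and the $\varphi_i$ via $\nnorm{L_i\varphi_i}_{\Lebesgue[\infty]}\le\nnorm{\cD_\Curve\varphi_i}_{\Lebesgue[\infty]}$, and using $\varrho_\Curve^2-\nabs{\triangle\Curve}^2=\tfrac12\iint_{I_\Curve^2}\nabs{\triangle\Tangent_\Curve}^2\,\varOmega_\Curve$ together with the boundedness of $\varrho_\Curve/\nabs{\triangle\Curve}$ (the paper packages the $\alpha,\beta$-dependence into the continuous function $\zeta(r)=r^{2+\alpha}(r^{2+\beta}-1)/(r^{2}-1)$), and you correctly note that a pointwise Hölder bound on $\triangle\Tangent_\Curve$ cannot work. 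The gap is in the core estimate, i.e.\ bounding $\int_\Torus\nabs{K\psi}\,\varrho_\Curve^{-4}\iint_{I_\Curve^2}\nabs{\triangle\Tangent_\Curve}^2\,\varOmega_\Curve$. Your route --- Fubini so that the $(x,y)$-marginal of $\varrho_\Curve^{-5}$ over $\set{(x,y) \,|\, s,t\in I_\Curve(x,y)}$ is computed first (the bound $\lesssim d_\Circle(s,t)^{-3}$ is indeed correct), then Hölder ``as in \autoref{lem:PrincipalPartofEnergyDerivative}'' with \autoref{lem:MultiplicationLemma}, aiming at $\bigseminorm{\Tangent_\Curve}^2_{\Sobo[1/2+\diffoffset,p]}\,\bigseminorm{\cD_\Curve\psi}_{\Sobo[1/2-\diffoffset,q]}$ --- does not close. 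First, $K\psi$ depends on $(x,y)$, so the marginal cannot simply be pulled out; if you remove that dependence by the only crude means available (e.g.\ $\nabs{\triangle\psi}/\varrho_\Curve\le M(\cD_\Curve\psi)(s)$ with a maximal function), the surviving $\Tangent$-factor is $\int\nabs{\triangle\Tangent_\Curve(s,t)}^2\,d_\Circle(s,t)^{-3}\,\dd t$ measured in $L^{\tilde p}$ with respect to $s$, a square-function quantity of Triebel--Lizorkin $F^{1}_{2\tilde p,2}$ type: it demands a full derivative of $\Tangent_\Curve$, one more than $\Tangent_\Curve\in\Sobo[1/2+\diffoffset,p]$ provides. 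Second, no Gagliardo seminorm of $\cD_\Curve\psi$ can arise from $K\psi=\triangle\psi/\varrho_\Curve$ (no differences of $\cD_\Curve\psi$ ever appear), and neither \autoref{lem:PrincipalPartofEnergyDerivative} (a plain $p$--$q$ duality of Gagliardo seminorms) nor the product rule supplies the missing balance of exponents.

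What actually closes the exponents in the paper is a different mechanism: one writes $K\psi(x,y)=\int_0^1\cD_\Curve\psi(\xi_x(\theta_1,X))\,\dd\theta_1$ in the exponential-map chart $y=\Geodesic_x(X)$ --- a formula that covers all three choices of $K$ at once, so no literal integration by parts is needed for $\cD_\Curve\psi\circ\prx$, $\cD_\Curve\psi\circ\pry$ --- then applies Hölder in the circle variable with exponents $\tilde q$, $\tilde p$, the Sobolev embedding $\Sobo[1/2-\diffoffset,q]\hookrightarrow\Lebesgue[\tilde q]$ for $\cD_\Curve\psi$ (only a Lebesgue norm of $\cD_\Curve\psi$ is needed), and the substitution $Y=(\theta_2-\theta_3)X$, which turns the $\Tangent$-factor into $\int\nnorm{\Tangent_\Curve\circ\Geodesic_{(\cdot)}(Y)-\Tangent_\Curve}^2_{\Lebesgue[2\tilde p][\Curve]}\,\nabs{Y}^{-2}\dd Y$, i.e.\ a Besov $B^{1/2}_{2\tilde p,2}$ norm into which $\Sobo[1/2+\diffoffset,p]$ embeds; this is exactly where the extra $\diffoffset$ is spent, and a naive modulus-of-continuity bound gives a logarithmic divergence instead. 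Your third paragraph, the literal integration by parts for $K=\cD_\Curve\psi\circ\prx$, is therefore unnecessary, and as sketched it is also unverified: $\pd_x$ of the weight produces terms carrying $\ninnerprod{\Tangent_\Curve(x),\triangle\Curve}/\nabs{\triangle\Curve}$ versus $\pm1$ which do not recombine into the difference-of-powers form without a further cancellation argument you do not supply. In sum, the proposal reproduces the paper's first reduction but is missing the decisive ingredients (exponential-map reparametrization, Hölder in one variable, Sobolev and Besov embeddings) by which the estimate is actually proved at the regularity \eqref{eq:Parameters}.
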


Here the expression $\nnorm{b}$ denotes the operator norm of the multilinear form~$b$. If $k=0$ we use the convention
$\nparen{\prod_{i=1}^0 \R^{m_i}} \times\R^d=\R^{d}$
and
$(\prod_{i=1}^0 \XCd) \times \YCd =  \YCd$.
In this case, $\cB^{\alpha,\beta}_\Curve$
only depends on~$\psi$.

\begin{proof}
We heavily rely on the techniques developed in the proof of Theorem~1.1 in \cite{MR2887901}.
With the function $\zeta \colon \intervaloo{0,\infty} \to \R$,
$
	\zeta(r) \ceq r^{2+\alpha} \, \frac{r^{2+\beta} - 1}{r^{2} - 1}
$, 
we write
\begin{align*}
	\textstyle
	\frac{\varrho_\Curve^{\alpha+\beta}}{\nabs{\triangle \Curve}^\alpha}
	\Bigparen{
		\frac{1}{\nabs{\triangle \Curve}^{2+\beta}}
		-
		\frac{1}{\varrho_\Curve^{2+\beta}}
	}
	=
	\tfrac{1}{2}
	\zeta \Bigparen{\tfrac{\varrho_\Curve}{\nabs{\triangle \Curve}}}
	\cdot
	\tfrac{1}{\varrho_\Curve^4}
	\,
	\Bigparen{
		2 \, \varrho_\Curve^2
		-
		2 \, \ninnerprod{\triangle \Curve, \triangle \Curve}
	}.
\end{align*}	
Denoting the shorter arc between $x$, $y \in \Circle$ by $I_\Curve$, we observe
\begin{align*}
	2 \, \varrho_\Curve^2 - 2 \, \ninnerprod{\triangle \Curve, \triangle \Curve}
	&=
	\textstyle
	\int_{I_\Curve^2} \bigparen{
		\nabs{\Tangent_\Curve(s)}^2
		+
		\nabs{\Tangent_\Curve(t)}^2
	} \, \varOmega_\Curve(s,t)
	-
	2 \, \int_{I_\Curve^2}
		\ninnerprod{\Tangent_\Curve(s), \Tangent_\Curve(t)}
	\, \varOmega_\Curve(s,t)
	\\
	&=
	\textstyle	
	\int_{I_\Curve^2}
		\nabs{\Tangent_\Curve(s) - \Tangent_\Curve(t)}^2
	\, \varOmega_\Curve(s,t).
\end{align*}
Since $\zeta$ is continuous and $\Curve$ is bi-Lipschitz, the factor $\zeta(\tfrac{\varrho_\Curve}{\nabs{\triangle \Curve}})$ is bounded.
Moreover, the functions $L_i \varphi_i$ are uniformly bounded.
So it suffices to bound the $\Lebesgue[1][\Curve]$-norm of 
\begin{align}
	\textstyle
	(K \psi) \cdot
	\tfrac{1}{\varrho_\Curve^4}
	\,
	\int_{I_\Curve^2}
		\nabs{\Tangent_\Curve(s) - \Tangent_\Curve(t)}^2
	\, \varOmega_\Curve(s,t)
	.
	\label{eq:IntegrationByParts2}
\end{align}
We abbreviate half the length of $\Curve$ by $\ell$ and denote by $\Geodesic_x \ceq \exp_x^\Curve \colon \intervaloo{-\ell,\ell} \to \Circle$ the Riemannian exponential map induced by $\varrho_\Curve$.
With $X$ such that $y = \Geodesic_x(X)$, we may write
$\varrho_\Curve(x,y) = \nabs{X}$
and
$K \,\psi (x,y) = \int_0^1 \cD_\Curve \psi(\xi_{x}(\theta_1 , X))\, \dd \theta_1$, where
$\xi_{x}(\theta_1 , X)$ is either $\Geodesic_{x}(\theta_1 \, X)$, $x$, or $\Geodesic_{x}(X) = y$.
Thus, we may rewrite \eqref{eq:IntegrationByParts2} as follows:
\begin{align*}
	\textstyle
	\int_{\intervalcc{0,1}^{3}}
	\cD_\Curve \psi(\xi_{x}(\theta_1 , X))
	\,
	\nabs{
		\Tangent_\Curve(\Geodesic_{x}(\theta_2 X))-\Tangent_\Curve(\Geodesic_{x}(\theta_3 X))
	}^2
	\, \tfrac{1}{\nabs{X}^2} \, \dd\theta^{3} \, \dd\theta^{2} \, \dd\theta^{1}.	
\end{align*}
By Fubini's theorem, the $\Lebesgue[1][\Curve]$-norm of \eqref{eq:IntegrationByParts2} is bounded by
\begin{align*}
	\textstyle
	\int_{\intervalcc{0,1}^{3}}
	\!
	\int_{-\ell}^{\ell}	
	\int_\Circle
	\nabs{\cD_\Curve \psi(\xi_{x}(\theta_1 , X))}
	\,
	\nabs{\Tangent_\Curve(\Geodesic_{x}(\theta_2 X))-\Tangent_\Curve(\Geodesic_{x}(\theta_3 X))}^{2}
	\, \LineElementC(x) 
	\, \tfrac{\dd X}{\nabs{X}^2}\dd\theta^{3} 
	\, \dd\theta^{2} 
	\, \dd\theta^{1}.	
\end{align*}
We employ the Hölder inequality to obtain an upper bound for this integral:
For $s=\frac{3}{2}$,s $\tilde q \ceq 2\,  p \, (p -2 + 2 \, p \, \diffoffset)^{-1} \geq 1$, we have a Sobolev embedding
$\Sobo[\hilberts-1 - \diffoffset,q]
	\hookrightarrow
	\Lebesgue[\tilde q]$ due to~\eqref{eq:Parameters}, see \autoref{lem:vectorWembedding}, thus $\nnorm{\cD_\Curve \psi}_{\Lebesgue[\tilde q][\Curve]} \leq C(\Curve) \, \nnorm{\psi}_{\Sobo[s-\diffoffset,q][\Curve]}$
with $C(\Curve)$ depending continuously on $\Curve$.
The Hölder conjugate of $\tilde q$ is $\tilde p \ceq 2 \, p \, (p + 2 - 2 \, p \, \diffoffset)^{-1}$.
Thus, we  obtain the following upper bound:
\begin{align*}
	\MoveEqLeft
	\textstyle	
	\nnorm{\cD_\Curve \psi}_{\Lebesgue[\tilde q][\Curve][\Circle]}	
	\int_{\intervalcc{0,1}^{2}}
	\!
	\int_{-\ell}^{\ell}
		\bignorm{
			\Tangent_\Curve(\Geodesic_{(\cdot)}(\theta_2 \, X))-\Tangent_\Curve(\Geodesic_{(\cdot)}(\theta_3 \, X))
		}_{\Lebesgue[2\tilde p][\Curve][\Circle]}^2
	\, \frac{\dd X}{\nabs{X}^{2}}
	\, \dd\theta_3
	\, \dd\theta_2		
	.
\end{align*}
Exploiting that $\Geodesic_{(\cdot)}(\theta_i \, X) \colon \Circle \to \Circle$ are isometries with respect to $\varrho_\Curve$, and utilizing the substitution $Y \ceq (\theta_2-\theta_3) \, X$, we may compute as follows:
\begin{align}
	\MoveEqLeft
	\textstyle
	\int_{\intervalcc{0,1}^{2}}
	\!
	\int_{-\ell}^{\ell}
		\bignorm{
			\Tangent_\Curve(\Geodesic_{(\cdot)}(\theta_2 \, X))
			-
			\Tangent_\Curve(\Geodesic_{(\cdot)}(\theta_3 \, X))
		}_{\Lebesgue[2\tilde p][\Curve][\Circle]}^2
	\, \frac{\dd X}{\nabs{X}^{2}}
	\, \dd\theta_3
	\, \dd\theta_2
	\notag
	\\
	&=
	\textstyle	
	\int_{\intervalcc{0,1}^{2}}
	\!
	\int_{-\ell}^{\ell}
		\bignorm{
			\Tangent_\Curve(\Geodesic_{(\cdot)}((\theta_2-\theta_3) \, X))-\Tangent_\Curve(\cdot)
		}_{\Lebesgue[2\tilde p][\Curve][\Circle]}^2
	\, \frac{\dd X}{\nabs{X}^{2}}
	\, \dd\theta_3
	\, \dd\theta_2	
	\notag	
	\\
	&=
	\textstyle	
	\int_{\intervalcc{0,1}^{2}}
	\!
	\int_{-\nabs{\theta_2-\theta_3} \ell}^{\nabs{\theta_2-\theta_3} \ell}
		\bignorm{
			\Tangent_\Curve(\Geodesic_{(\cdot)}(Y))-\Tangent_\Curve(\cdot)
		}_{\Lebesgue[2\tilde p][\Curve][\Circle]}^2
	\, \nabs{\theta_2-\theta_3} \, \frac{\dd Y}{\nabs{Y}^{2}}
	\, \dd\theta_3
	\, \dd\theta_2	
	\notag			
	\\
	&\leq
	\textstyle	
	\int_{-\ell}^{\ell}
		\bignorm{
			\Tangent_\Curve(\Geodesic_{(\cdot)}(Y))-\Tangent_\Curve(\cdot)
		}_{\Lebesgue[2\tilde p][\Curve][\Circle]}^2
	\, \frac{\dd Y}{\nabs{Y}^{2}}			
	\qec
	\nnorm{\Tangent_\Curve}_{B,\Curve}^2
	.
	\label{eq:SubstitutionTrick}
\end{align}
Here $\nnorm{\cdot}_{B,\Curve}$ is a natural, $\Curve$-dependent norm on the Besov space $B^{s-1}_{2 \tilde p,2}(\Circle;\AmbSpace)$.
This shows that $\varXi(\Curve) = \frac{1}{2} \, \nnorm{\zeta(\varrho_\Curve/\nabs{\triangle \Curve})}_{\Lebesgue[\infty]} \, C(\Curve) \, \nnorm{\Tangent_\Curve}_{B,\Curve}^2$.
Because of $s - 1 +\diffoffset - 1/p > s - 1 - 1/ (2 \tilde p)$
with $s=\tfrac{3}{2}$ and~\eqref{eq:Parameters}, we have a continuous Sobolev embedding $\SoboC[s - 1 +\diffoffset,p] \hookrightarrow B^{s-1}_{2 \tilde p,2}(\Circle;\AmbSpace)$ (see \cite{MR781540},~Theorem~3.3.1 or
\cite{MR1419319},~Theorem~2.4.4/1), showing that $\varXi$ is continuous.
\end{proof}

The next statement allows us to handle the principal order terms of $\RY[\Energy][\Curve]$.

\begin{lemma}\label{lem:PrincipalPartofEnergyDerivative}
Suppose $s = \frac{3}{2}$ and \eqref{eq:Parameters} with $\frac{1}{p}+\frac{1}{q}=1$.
Let $I_\Curve(x,y) \subset \Circle$ denote a shortest arc with respect to $\varrho_\Curve$ that connects $x$ and $y$.
Then the bilinear form $\cB_\Curve \colon \XCd \times \YCd\to \R$ given by
\begin{align*}
	\cB_\Curve(u,w)
	\ceq
	\textstyle
	\int_\Torus
	\Bigparen{
		\textstyle
		\tfrac{2 }{\varrho_\Curve(x,y)} \int_{I_\Curve(x,y)} \ninnerprod{\cD_\Curve u, \cD_\Curve w}\, \LineElementC
		-
		2\,
		\biginnerprod{ 
			\tfrac{\triangle u}{\varrho_\Curve} , 
			\tfrac{\triangle w}{\varrho_\Curve}
		}(x,y)
		}
	\,
	\tfrac{\varOmega_\Curve(x,y)}{\varrho_\Curve(x,y)^2}	
\end{align*}
is well-defined and bounded. More precisely, we have
$
	\nabs{\cB_\Curve(u,w)} 
	\leq 	
	\nseminorm{u}_{\Xnorm{\Curve}}\,\nseminorm{w}_{\Ynorm{\Curve}}
$.
\end{lemma}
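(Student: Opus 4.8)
The plan is to turn the bracket in the definition of $\cB_\Curve$ into the difference form $\ninnerprod{\triangle\cD_\Curve u,\triangle\cD_\Curve w}$ averaged over $I_\Curve\times I_\Curve$, and then to estimate the resulting double integral by splitting it along the Hölder line $\tfrac1p+\tfrac1q=1$, reusing the substitution trick from the proof of \autoref{lem:IntegrationbyParts}.

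\emph{Algebraic reduction.} The first step is the rewriting of the bracket. Fix $(x,y)$ and abbreviate $\varrho\ceq\varrho_\Curve(x,y)$, $I\ceq I_\Curve(x,y)$. Using, for a fixed orientation of $I$, that $\triangle u=\int_I\cD_\Curve u\,\LineElementC$, $\triangle w=\int_I\cD_\Curve w\,\LineElementC$, $\varrho=\int_I\LineElementC$, the elementary rewriting $\tfrac2\varrho\int_I f\,\LineElementC=\tfrac1{\varrho^2}\int_{I^2}\nparen{f(s)+f(t)}\,\varOmega_\Curve$, and the symmetrization $\int_{I^2}\ninnerprod{\cD_\Curve u(s),\cD_\Curve w(t)}\,\varOmega_\Curve=\int_{I^2}\ninnerprod{\cD_\Curve u(t),\cD_\Curve w(s)}\,\varOmega_\Curve$, one obtains
\begin{align*}
	\tfrac2\varrho\int_I \ninnerprod{\cD_\Curve u,\cD_\Curve w}\,\LineElementC-2\biginnerprod{\tfrac{\triangle u}{\varrho},\tfrac{\triangle w}{\varrho}}=\tfrac1{\varrho^2}\int_{I^2}\ninnerprod{\triangle\cD_\Curve u,\triangle\cD_\Curve w}\,\varOmega_\Curve,
\end{align*}
with $\triangle\cD_\Curve u$ the difference $(s,t)\mapsto\cD_\Curve u(s)-\cD_\Curve u(t)$; this mirrors the identity $2\varrho_\Curve^2-2\ninnerprod{\triangle\Curve,\triangle\Curve}=\int_{I_\Curve^2}\nabs{\Tangent_\Curve(s)-\Tangent_\Curve(t)}^2\,\varOmega_\Curve$ of \autoref{lem:IntegrationbyParts}. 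Hence
\begin{align*}
	\cB_\Curve(u,w)=\int_\Torus\tfrac1{\varrho_\Curve^4}\Bigparen{\textstyle\int_{I_\Curve^2}\ninnerprod{\triangle\cD_\Curve u,\triangle\cD_\Curve w}\,\varOmega_\Curve}\,\varOmega_\Curve,
\end{align*}
and it suffices to estimate the same integral with $\ninnerprod{\triangle\cD_\Curve u,\triangle\cD_\Curve w}$ replaced by $\nabs{\triangle\cD_\Curve u}\,\nabs{\triangle\cD_\Curve w}$.

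\emph{Substitution trick.} Next I would run the substitution machinery of \autoref{lem:IntegrationbyParts}. Writing $y=\Geodesic_x(X)=\exp_x^\Curve(X)$ with $X\in\intervaloo{-\ell,\ell}$, $\ell$ being half the length of $\Curve$, one has $\varrho_\Curve(x,y)=\nabs{X}$ and $I_\Curve(x,y)=\set{\Geodesic_x(\theta X)|\theta\in\intervalcc{0,1}}$, so with $\triangle_\theta v\ceq v(\Geodesic_x(\theta_1 X))-v(\Geodesic_x(\theta_2 X))$ the last integral becomes
\begin{align*}
	\int_\Circle\int_{-\ell}^\ell\frac{1}{\nabs{X}^2}\int_{\intervalcc{0,1}^2}\bigabs{\triangle_\theta\cD_\Curve u}\,\bigabs{\triangle_\theta\cD_\Curve w}\,\dd\theta_1\,\dd\theta_2\,\dd X\,\LineElementC(x).
\end{align*}
Applying Hölder's inequality in $x$ with exponents $p$ and $q$, and using that $x\mapsto\Geodesic_x(\theta_2 X)$ is a $\varrho_\Curve$-isometry (hence $\LineElementC$-preserving) together with $\Geodesic_x(\theta_1 X)=\Geodesic_{\Geodesic_x(\theta_2 X)}\nparen{(\theta_1-\theta_2)X}$, the inner $x$-integral is bounded by $g((\theta_1-\theta_2)X)\,h((\theta_1-\theta_2)X)$, where
\begin{align*}
	g(W)\ceq\bignorm{\cD_\Curve u(\Geodesic_{(\cdot)}(W))-\cD_\Curve u}_{\Lebesgue[p][\Curve]}\qand h(W)\ceq\bignorm{\cD_\Curve w(\Geodesic_{(\cdot)}(W))-\cD_\Curve w}_{\Lebesgue[q][\Curve]}.
\end{align*}
The substitution $W=(\theta_1-\theta_2)X$ and $\int_{\intervalcc{0,1}^2}\nabs{\theta_1-\theta_2}\,\dd\theta_1\,\dd\theta_2=\tfrac13$ then give, exactly as in \eqref{eq:SubstitutionTrick},
\begin{align*}
	\nabs{\cB_\Curve(u,w)}\leq\tfrac13\int_{-\ell}^\ell g(W)\,h(W)\,\frac{\dd W}{\nabs{W}^2}.
\end{align*}

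\emph{Splitting and conclusion.} Finally I would split $\nabs{W}^{-2}=\nabs{W}^{-(1/2+\diffoffset+1/p)}\cdot\nabs{W}^{-(1/2-\diffoffset+1/q)}$, the two exponents adding to $2$ precisely because $\tfrac1p+\tfrac1q=1$, and apply Hölder in $W$ with exponents $p$ and $q$ to bound $\int_{-\ell}^\ell g\,h\,\nabs{W}^{-2}\,\dd W$ by
\begin{align*}
	\Bigparen{\int_{-\ell}^\ell\frac{g(W)^p}{\nabs{W}^{(1/2+\diffoffset)p+1}}\,\dd W}^{1/p}\,\Bigparen{\int_{-\ell}^\ell\frac{h(W)^q}{\nabs{W}^{(1/2-\diffoffset)q+1}}\,\dd W}^{1/q}.
\end{align*}
Undoing the geodesic substitution in each factor replaces $\nabs{W}$ by $\varrho_\Curve(\cdot,\cdot)$, and since the geodesic distance dominates the chord distance, $\varrho_\Curve\geq\nabs{\triangle\Curve}$, the first factor is at most $\nseminorm{\cD_\Curve u}_{\Sobo[1/2+\diffoffset,p][\Curve]}=\nseminorm{u}_{\Xnorm{\Curve}}$ and the second at most $\nseminorm{\cD_\Curve w}_{\Sobo[1/2-\diffoffset,q][\Curve]}=\nseminorm{w}_{\Ynorm{\Curve}}$; here $\tfrac12\pm\diffoffset\in\intervaloo{0,1}$ by \eqref{eq:Parameters}, so these are genuine \SoboSlobo seminorms. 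Altogether $\nabs{\cB_\Curve(u,w)}\leq\tfrac13\,\nseminorm{u}_{\Xnorm{\Curve}}\,\nseminorm{w}_{\Ynorm{\Curve}}$, which in particular yields absolute convergence of the defining integral of $\cB_\Curve$, hence well-definedness. The only non-mechanical step is the algebraic reduction in the first part; once the bracket is written as the averaged difference form, what remains is the exponent bookkeeping forced by \eqref{eq:Parameters}, which is actually lighter here than in \autoref{lem:IntegrationbyParts}, since the two differences land directly on Gagliardo seminorms and no Besov embedding is needed.
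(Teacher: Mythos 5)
Your proposal is correct and follows essentially the same route as the paper's proof: the identical algebraic identity rewriting the bracket as $\varrho_\Curve^{-2}\int_{I_\Curve^2}\ninnerprod{\triangle\cD_\Curve u,\triangle\cD_\Curve w}\,\varOmega_\Curve$, the same geodesic-exponential substitution with $Y=(\theta_1-\theta_2)X$ borrowed from \autoref{lem:IntegrationbyParts}, and the same concluding Hölder split $\tfrac12\pm\diffoffset$ together with $\varrho_\Curve\geq\nabs{\triangle\Curve}$ to land on the $\Curve$-dependent Gagliardo seminorms. The only (immaterial) differences are that you apply Hölder in $x$ before the $W$-substitution rather than on the full product domain afterwards, and that you keep the factor $\tfrac13$, slightly sharpening the stated constant.
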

\begin{proof}
Using the following two identities
\begin{align*}
	\textstyle
	\tfrac{2}{\varrho_\Curve} \int_{I_\Curve} \ninnerprod{\cD_\Curve u, \cD_\Curve w}\, \LineElementC
	&=
	\textstyle
	\tfrac{1}{\varrho_\Curve^2} 	\int_{I_\Curve^2}
		\Bigparen{
			\ninnerprod{\cD_\Curve u(s), \cD_\Curve w(s)}
			+
			\ninnerprod{\cD_\Curve u(t), \cD_\Curve w(t)}	
		}\, \varOmega_\Curve(s,t),
	\\
	\textstyle
	2 \, \Biginnerprod{ 
		\tfrac{\triangle u}{\varrho_\Curve} , 
		\tfrac{\triangle w}{\varrho_\Curve}
	}
	&=
	\textstyle
	\tfrac{1}{\varrho_\Curve^2} 	\int_{I_\Curve^2}
		\Bigparen{ 
			\ninnerprod{\cD_\Curve u(s), \cD_\Curve w(t)}
			+
			\ninnerprod{\cD_\Curve u(t), \cD_\Curve w(s)}			
		}\, \varOmega_\Curve(s,t),
\end{align*}
we obtain
\begin{align}
	\textstyle
	\tfrac{2}{\varrho_\Curve} \int_{I_\Curve} \ninnerprod{\cD_\Curve u, \cD_\Curve w}\, \LineElementC
	-
	2\,\Biginnerprod{ 
		\tfrac{\triangle u}{\varrho_\Curve} , 
		\tfrac{\triangle w}{\varrho_\Curve}
	}
	&=
	\textstyle
	\int_{I_\Curve^2}
	\Biginnerprod{
		\tfrac{\triangle \cD_\Curve u(s,t)}{\varrho_\Curve},
		\tfrac{\triangle \cD_\Curve w(s,t)}{\varrho_\Curve}
	}
	\, \varOmega_\Curve(s,t).
	\label{eq:StrangeBilinearExpression}
\end{align}
Now we apply the same technique as in \eqref{eq:SubstitutionTrick}:
Utilizing the notation of \autoref{lem:IntegrationbyParts} and the substitutions $y = \Geodesic_x(X)$, $s = \Geodesic_x(\theta_1 \, X)$, $t = \Geodesic_x(\theta_2 \, X)$,
and
$Y = (\theta_1-\theta_2) \, X$, we arrive at:
\begin{align*}
	\cB_\Curve(u,w)
	&=
	\textstyle	
	\int_0^1 \!\! \int_0^1 \!\!
	\int_{\Circle}
	\!
	\int_{-\nabs{\theta_1-\theta_2}\ell}^{\nabs{\theta_1-\theta_2} \ell}
		\Biginnerprod{
			\tfrac{\triangle \cD_\Curve u(x,\Geodesic_x(Y))}{\nabs{Y}},
			\tfrac{\triangle \cD_\Curve w(x,\Geodesic_x(Y))}{\nabs{Y}}
		}
	\,
	\nabs{\theta_1-\theta_2} \, \dd Y \, \LineElementC[x]
	\, \dd \theta_1 \, \dd \theta_2.
\end{align*}
Thus, we can bound $\nabs{\cB_\Curve(u,w)}$ from above by
\begin{align*}
	\textstyle	
	\int_{\Circle}
	\!
	\int_{-\ell}^{\ell}
		\bigabs{
			\tfrac{\triangle \cD_\Curve u(x,\Geodesic_x(Y))}{\nabs{Y}}
		}
		\,
		\bigabs{
			\tfrac{\triangle \cD_\Curve w(x,\Geodesic_x(Y))}{\nabs{Y}}
		}
	\, \dd Y \, \LineElementC[x]	
	\leq 
	\nseminorm{u}_{\Sobo[\strongs,\strongp][\Curve]}\,\nseminorm{w}_{\Sobo[\weaks,\weakp][\Curve]}
	=
	\nseminorm{u}_{\Xnorm{\Curve}}\,\nseminorm{w}_{\Ynorm{\Curve}}.
\end{align*}
\end{proof}

\section{Metrics and Riesz isomorphisms}\label{sec:Metric}

Next to the differential of the energy, the second ingredient that one requires for defining a gradient is a Riemannian metric on the configuration space $\ConfSpace$
that has been introduced in~\eqref{eq:confspace}.
Below, we pick a suitable inner product on $\HCg$ which is essentially a geometric version of the $\Sobo[\hilberts,2]$-Gagliardo inner product $G$ from the introduction (see \eqref{eq:G}).
Throughout, we represent this inner product at the point $\Curve \in \ConfSpace$ only by its Riesz isomorphism $\RI[\ConfSpace]\at_\Curve \colon \HCg \to \HCgd$.

If $\RI[\ConfSpace]\at_\Curve$ identified the tangent space $T_\Curve \ConfSpace$ of $\ConfSpace$ at $\Curve$
with the cotangent space, i.e., its dual  $T_\Curve \dual \ConfSpace$, we could define the gradient of $\Energy$ by 
\begin{align*}
	\grad(\Energy) \at_\Curve  \ceq (\RI[\ConfSpace]\at_\Curve)^{-1} \, D\Energy(\Curve).
\end{align*}
Alas, $T_\Curve \ConfSpace = \XCg$ is \emph{not} a Hilbert space for its Hilbert space completion (with respect to $\RI[\ConfSpace]$) is $\HCg \neq \XCg$, so that there cannot be any linear isomorphism $T_\Curve \ConfSpace \to T_\Curve \dual \ConfSpace = \XCgd$ induced by a positive-definite bilinear form.
However, we have already seen in \autoref{theo:DEnergy} that $D \Energy(\Curve)$ can be interpreted as an element $\RY[\Energy][\Curve]$ in the smaller space $\YCgd \subset \XCgd$, and that $\Curve \mapsto \RY[\Energy][\Curve]$ is locally Lipschitz continuous. 
Thus, \autoref{theo:Gradient} is proven as soon as we show that $\RI[\ConfSpace]\at_\Curve$ induces an isomorphism $\RJ[\ConfSpace]\at_\Curve \colon \XCg \to \YCgd$ which depends locally Lipschitz continuously on $\Curve$. This is our goal in this section.

Depending on context, we use $\ninnerprod{\cdot, \cdot}$ for the dual pairing of a Banach space with its dual, e.g., $\ninnerprod{\cdot, \cdot} \colon \HCgd \times \HCg \to \R$ and
$\ninnerprod{\cdot, \cdot} \colon \YCgd \times \YCg \to \R$,
as well as the Euclidean inner product on~$\AmbSpace$.
Note that the canonical embeddings
$\Ri[\ConfSpace] \colon \XCg \hookrightarrow \HCg$ and
$\Rj[\ConfSpace] \colon \HCg \hookrightarrow \YCg$
introduced in~\eqref{eq:canonical-embeddings} give rise to
dual maps
$\Ri[\ConfSpace]\dual \colon \HCgd \hookrightarrow \XCgd$ and
$\Rj[\ConfSpace]\dual \colon \YCgd \hookrightarrow \HCgd$.

\begin{proposition}\label{prop:MetricDefinition}
For each $\Curve \in \ConfSpace$, with $\sigma \ceq s - 1=\tfrac{1}{2}$ and the notation from \eqref{eq:Measures}, \eqref{eq:Operators}, and~\eqref{eq:Parameters},
we define
$\RI[\ConfSpace]\at_\Curve \colon \HCg \to \HCgd$
and
$\RJ[\ConfSpace]\at_\Curve \colon \XCg \to \YCgd$
as follows:
\begin{align*}
\ninnerprod{\RI[\ConfSpace]\at_\Curve \, v_1,v_2}
	&\ceq 
	\textstyle	
	\int_\Torus
		\ninnerprod{
			\diff{\sigma}{\Curve} \cD_\Curve v_1
		, 
			\diff{\sigma}{\Curve} \cD_\Curve v_2
		}
	\, \singularmeasure_\Curve
	\\ &\qquad
	\textstyle	
	+
	\int_\Torus
		\ninnerprod{
			\diff{\sigma}{\Curve} v_1
		, 
			\diff{\sigma}{\Curve} v_2
		}
		\,
		E(\Curve)		
		\, \singularmeasure_\Curve
	+
	{\textstyle
		\ninnerprod{ \int_\Circle v_1 \, \LineElement_\Curve,\int_\Circle v_2 \, \LineElement_\Curve}
	},
	\\
	\ninnerprod{\RJ[\ConfSpace]\at_\Curve \, u,w}
	&\ceq 
	\textstyle	
	\int_\Torus
		\ninnerprod{
			\diff{\strongss}{\Curve} \cD_\Curve u
		, 
			\diff{\weakss}{\Curve} \cD_\Curve w
		}
		\, \singularmeasure_\Curve
	\\ &\qquad
	\textstyle	
	+
	\int_\Torus		
		\ninnerprod{
			\diff{\strongss}{\Curve} u
		, 
			\diff{\weakss}{\Curve} w
		}
		\,
		E(\Curve)			
		\, \singularmeasure_\Curve
	+
	{\textstyle
		\ninnerprod{ \int_\Circle u \, \LineElement_\Curve,\int_\Circle w \, \LineElement_\Curve}
	}
\end{align*}
for $v_1$, $v_2 \in \HCg$, $u \in \XCg$, and $w \in \YCg$.
These operators are well-defined, continuously invertible, and they make the following diagram commutative:
\begin{equation}
\begin{tikzcd}[]
	\XCg
		\ar[d,hook, two heads,"{\Ri[\ConfSpace]}"]
		\ar[rr, "{\RJ[\ConfSpace]\at_\Curve}"]
	&&\YCgd
		\ar[d,hook, two heads,"{\Rj[\ConfSpace]\dual}"]	
	\\
	\HCg
		\ar[rr, "{\RI[\ConfSpace]\at_\Curve}"]	
	&&\HCgd
	\nospaceperiod
\end{tikzcd}	
	\label{eq:j'J=Ii}
\end{equation}
Moreover, the mappings $\RI[\ConfSpace] \colon \ConfSpace \to L(\HCg; 	\HCgd)$,
$\Curve\mapsto\RI[\ConfSpace]\at_\Curve$,
and
$\RJ[\ConfSpace] \colon \ConfSpace \to L(\XCg; \YCgd)$,
$\Curve \mapsto \RJ[\ConfSpace]\at_\Curve$,
are of class $\Holder[1]$ and hence locally Lipschitz continuous.
\end{proposition}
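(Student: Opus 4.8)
The plan is to treat $\RI[\ConfSpace]\at_\Curve$ as a symmetric, bounded and coercive bilinear form on the Hilbert space $\HCg$, so that it is a topological isomorphism $\HCg\to\HCgd$ by the Lax--Milgram theorem, and then to deduce the assertions about $\RJ[\ConfSpace]\at_\Curve$ from this via the commuting diagram \eqref{eq:j'J=Ii} together with elliptic regularity. The first task is boundedness of both forms. For the principal (first) integrals this is just Hölder's inequality: pairing $\diff{\strongss}{\Curve}\cD_\Curve u$ with $\diff{\weakss}{\Curve}\cD_\Curve w$ against the measure $\singularmeasure_\Curve$ with exponents $\strongp$ and $\weakp$ gives $\nseminorm{u}_{\Xnorm{\Curve}}\,\nseminorm{w}_{\Ynorm{\Curve}}$ (and the Cauchy--Schwarz inequality gives $\nseminorm{v_1}_{\Hnorm{\Curve}}\,\nseminorm{v_2}_{\Hnorm{\Curve}}$ for $\RI[\ConfSpace]\at_\Curve$). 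The $E(\Curve)$-weighted integrals have the form $\cB^{2,0}_\Curve$ of \autoref{lem:IntegrationbyParts}: one checks that the weight appearing there with $\alpha=2$, $\beta=0$ equals $\tfrac{\varrho_\Curve^{2}}{\nabs{\triangle\Curve}^{2}}\bigparen{\tfrac{1}{\nabs{\triangle\Curve}^{2}}-\tfrac{1}{\varrho_\Curve^{2}}}$ and that the operators $L_1=K=\triangle/\varrho_\Curve$ reproduce $\ninnerprod{\triangle u,\triangle w}$; so the lemma (applied with $u$ in the $\Lebesgue[\infty]$-slot via the Morrey embedding $\XC\hookrightarrow C^{1}$, and --- because $\HC\not\hookrightarrow C^{1}$ --- in a variant for $\RI[\ConfSpace]\at_\Curve$, where one estimates the nonnegative $E(\Curve)$-form directly by its own Cauchy--Schwarz inequality together with a trace-type bound as in \cite{MR2887901}) furnishes the required estimates. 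The barycenter terms are continuous because $\XC$, $\HC$, $\YC\hookrightarrow\Lebesgue[1][\Curve]$. Commutativity of \eqref{eq:j'J=Ii} is then immediate: for $u\in\XCg$ and $v\in\HCg\hookrightarrow\YCg$ the integrands defining $\ninnerprod{\Rj[\ConfSpace]\dual\RJ[\ConfSpace]\at_\Curve u,v}$ and $\ninnerprod{\RI[\ConfSpace]\at_\Curve\Ri[\ConfSpace] u,v}$ coincide pointwise, since $\ninnerprod{\diff{\strongss}{\Curve}a,\diff{\weakss}{\Curve}b}=\nabs{\triangle\Curve}^{-2\sigma}\ninnerprod{\triangle a,\triangle b}=\ninnerprod{\diff{\sigma}{\Curve}a,\diff{\sigma}{\Curve}b}$ for all $a$, $b$ (using $\strongss+\weakss=2\sigma$); injectivity of $\Rj[\ConfSpace]\dual$ then gives the identity $\Rj[\ConfSpace]\dual\circ\RJ[\ConfSpace]\at_\Curve=\RI[\ConfSpace]\at_\Curve\circ\Ri[\ConfSpace]$.

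For invertibility, note that $\nabs{\triangle\Curve}\le\varrho_\Curve$ forces $E(\Curve)\ge0$, so discarding the nonnegative $E(\Curve)$- and barycenter contributions yields $\ninnerprod{\RI[\ConfSpace]\at_\Curve v,v}\ge\nseminorm{v}_{\Hnorm{\Curve}}^{2}+\bignabs{\int_\Circle v\,\LineElementC}^{2}$, which a Poincaré--Wirtinger inequality --- the $\Sobo[3/2,2]$-seminorm vanishes exactly on the constants, on which $v\mapsto\int_\Circle v\,\LineElementC$ is injective --- upgrades to $c\,\nnorm{v}_{\Hnorm{\Curve}}^{2}$. By Lax--Milgram, $\RI[\ConfSpace]\at_\Curve\colon\HCg\to\HCgd$ is a topological isomorphism. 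Injectivity of $\RJ[\ConfSpace]\at_\Curve\colon\XCg\to\YCgd$ follows at once from \eqref{eq:j'J=Ii} ($\RJ[\ConfSpace]\at_\Curve u=0\Rightarrow\RI[\ConfSpace]\at_\Curve u=\Rj[\ConfSpace]\dual\RJ[\ConfSpace]\at_\Curve u=0\Rightarrow u=0$). For surjectivity, given $f\in\YCgd$ put $v\ceq(\RI[\ConfSpace]\at_\Curve)^{-1}\Rj[\ConfSpace]\dual f\in\HCg$; the decisive ingredient is \emph{elliptic regularity}: after passing to a $\Curve$-arc-length parametrization and using the bi-Lipschitz equivalence of $d_\Circle$, $\varrho_\Curve$, $\nabs{\triangle\Curve}$ (cf.\ \cite{MR2887901}), $\RI[\ConfSpace]\at_\Curve$ is an elliptic pseudo-differential operator of order three whose principal symbol is a positive multiple of that of $(-\Delta_\Curve)^{3/2}$, so the solution $v$ of $\RI[\ConfSpace]\at_\Curve v=\Rj[\ConfSpace]\dual f$ with right-hand side in $\YCgd$ already lies in $\XCg$. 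Then $\RJ[\ConfSpace]\at_\Curve v\in\YCgd$ makes sense and, by \eqref{eq:j'J=Ii}, $\Rj[\ConfSpace]\dual(\RJ[\ConfSpace]\at_\Curve v)=\RI[\ConfSpace]\at_\Curve v=\Rj[\ConfSpace]\dual f$; injectivity of $\Rj[\ConfSpace]\dual$ gives $\RJ[\ConfSpace]\at_\Curve v=f$. Being a continuous bijection between Banach spaces, $\RJ[\ConfSpace]\at_\Curve$ is a topological isomorphism by the open mapping theorem. (Equivalently, $\RJ[\ConfSpace]\at_\Curve=\cA_\Curve+\cK_\Curve$ with $\cA_\Curve$ the principal integral and $\cK_\Curve$ compact --- the $E(\Curve)$-term factors through the embedding $\XC\hookrightarrow C^{1}$, which is compact since $\strongs-1/\strongp>1$, and the barycenter term has finite rank --- so $\RJ[\ConfSpace]\at_\Curve$ is Fredholm of index zero, and it is injective.)

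For the regularity in $\Curve$, observe that in the defining formulas the only $\Curve$-dependent ingredients are $\LineElementC$, $\cD_\Curve$, $\nabs{\triangle\Curve}$, $\singularmeasure_\Curve$ and $E(\Curve)$. The first four are rational expressions in $\Curve'$ and $\triangle\Curve$ that stay bounded away from zero on $\ConfSpace$, hence depend smoothly on $\Curve$; the $\Curve$-dependence of $E(\Curve)$ --- including the geodesic distance buried inside it --- is exactly what was handled in the proof of \autoref{theo:DEnergy}, see \eqref{eq:DerivativeomegaandD} and Claims~1--6 there. Differentiating the integrands term by term and re-running the boundedness estimates above, now invoking the product rule \autoref{lem:MultiplicationLemma} to keep everything in $L(\XCg;\YCgd)$, respectively $L(\HCg;\HCgd)$, shows that $\Curve\mapsto\RI[\ConfSpace]\at_\Curve$ and $\Curve\mapsto\RJ[\ConfSpace]\at_\Curve$ are continuously Fréchet differentiable, i.e.\ of class $\Holder[1]$; on the open set $\ConfSpace$ this gives local Lipschitz continuity.

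The main obstacle is the elliptic-regularity step: establishing that $\RI[\ConfSpace]\at_\Curve$ (equivalently, its principal part $\cA_\Curve$) is genuinely an elliptic operator of order three with principal symbol that of $(-\Delta_\Curve)^{3/2}$ in the non-Hilbertian $L^{\strongp}$-scale, i.e.\ separating the leading part of the secant-distance Gagliardo form from a lower-order, compact remainder. This is where the sharp comparison between secant and intrinsic fractional Sobolev seminorms from \cite{MR2887901} and the substitution techniques underlying \autoref{lem:PrincipalPartofEnergyDerivative} and \autoref{lem:IntegrationbyParts} do the real work; the treatment of the $E(\Curve)$-weighted terms at the Hilbert level, where $\HC\not\hookrightarrow C^{1}$, is a subsidiary technical point.
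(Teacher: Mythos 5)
Your outline gets the peripheral parts right and largely parallels the paper: boundedness of the principal terms by H\"older, treatment of the $E(\Curve)$-weighted terms via \autoref{lem:IntegrationbyParts} with $k=1$, $\alpha=2$, $\beta=0$, $L_1=K=\triangle/\varrho_\Curve$, the pointwise identity giving commutativity of \eqref{eq:j'J=Ii}, injectivity of $\RJ[\ConfSpace]\at_\Curve$ from the diagram, and the Taylor-expansion argument for $\Holder[1]$-dependence on $\Curve$; your Lax--Milgram treatment of $\RI[\ConfSpace]\at_\Curve$ is even a legitimate simplification of the paper's ``analogous'' argument. But the decisive step is asserted rather than proved. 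You invoke ``elliptic regularity'' --- that the solution $v \in \HCg$ of $\RI[\ConfSpace]\at_\Curve\, v = \Rj[\ConfSpace]\dual f$ with $f \in \YCgd$ automatically lies in $\XCg$ --- on the grounds that $\RI[\ConfSpace]\at_\Curve$ is an elliptic pseudo-differential operator of order three with principal symbol that of $(-\Delta_\Curve)^{3/2}$. That statement carries essentially all the analytic weight of the proposition and cannot be quoted off the shelf: the operator is not a classical pseudo-differential operator with smooth symbol (its ``coefficients'' are only as regular as $\Curve'$, and it is given by a singular double integral in the secant distance), and what is needed is an a priori estimate in the asymmetric scale $\XCg \to \YCgd$, i.e.\ a simultaneous gain of differentiability (from $\hilberts$ to $\strongs$) and integrability (from $2$ to $\strongp$). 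In the paper this is exactly \autoref{lem:FredholmProperty} and the ``Details'' of Section~4: replace the secant-distance principal part by the geodesic-distance operator $A_\Curve$ (the difference is compact by \autoref{lem:IntegrationbyParts} plus the compact Morrey embedding), localize the nonlocal seminorms (\autoref{lem:LocalizationNormsCircle}, \autoref{lem:LocalizationNormsReals}, including the easily overlooked off-support tail term), transport to $\R$ by an isometric arc-length chart where $A_\Curve$ coincides with the flat operator $L$, a constant multiple of $\cD\dual(-\Delta)^{\sigma}\cD$ (\autoref{lem:LocalEllipticEstimate}), show the commutators with cutoff multipliers are compact, assemble a global estimate by a partition of unity (\autoref{lem:GlobalEllipticEstimate}), and conclude with the Schauder lemma (\autoref{lem:SchauderLemma}). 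You correctly identify this as ``the main obstacle'', but diagnosing the obstacle is not the same as overcoming it.

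Your parenthetical fallback has the same hole: writing $\RJ[\ConfSpace]\at_\Curve = \cA_\Curve + \cK_\Curve$ with $\cK_\Curve$ compact yields Fredholmness of index zero only if the principal part $\cA_\Curve$ is already known to be Fredholm of index zero --- compact perturbations preserve that property, they do not create it --- and establishing it for $\cA_\Curve$ (equivalently for $A_\Curve$) is again precisely the content of \autoref{lem:FredholmProperty}. Two smaller points: the boundedness of the $E(\Curve)$-weighted term on $\HCg \times \HCg$, where no $C^1$-bound on the arguments is available, does require the extra argument you only gesture at (a Besov/H\"older-exponent splitting in the spirit of the proof of \autoref{lem:IntegrationbyParts}, with both slots in $\Lebesgue[\tilde q]$ rather than one in $\Lebesgue[\infty]$); and note that $E(\Curve)$ does contain $\varrho_\Curve$, so the $\Holder[1]$-dependence on $\Curve$ is not entirely free of the geodesic-distance issues --- your reference back to the claims in the proof of \autoref{theo:DEnergy} is the right way to handle that.
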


Here and in the following, a doubly headed arrow in the diagram~\eqref{eq:j'J=Ii} above indicates a linear operator with dense image.

\begin{proof}
\emph{Well-definedness:}
It follows  from the bi-Lipschitz continuity of $\Curve$ 
and from Hölder's inequality that the integrals 
\begin{align*}
		\textstyle
	\int_\Torus
		\ninnerprod{
			\diff{\sigma}{\Curve} \cD_\Curve v_1
			, 
			\diff{\sigma}{\Curve} \cD_\Curve v_2
		}	
		\, \singularmeasure_\Curve
	\qand
	\int_\Torus
		\ninnerprod{
			\diff{\strongss}{\Curve} \cD_\Curve u
			, 
			\diff{\weakss}{\Curve} \cD_\Curve w
		}	
		\, \singularmeasure_\Curve
\end{align*}
are well-defined and finite.
Indeed, we have by Hölder's inequality that
\begin{align*}
	\nabs{
	\textstyle
		\int_\Torus
		\ninnerprod{
			\diff{\strongss}{\Curve} \cD_\Curve u
			, 
			\diff{\weakss}{\Curve} \cD_\Curve w
		}	
		\, \singularmeasure_\Curve
	}
	&\leq
	\nnorm{\ninnerprod{
			\diff{\strongss}{\Curve} \cD_\Curve u
			, 
			\diff{\weakss}{\Curve} \cD_\Curve w
		}}_{\Lebesgue[1][\singularmeasure_\Curve]}
\\
	&\leq 
	\nnorm{\diff{\strongss}{\Curve} \cD_\Curve u}_{L_{\singularmeasure_\Curve}^{p}}
  	\nnorm{\diff{\weakss}{\Curve} \cD_\Curve w}_{L_{\singularmeasure_\Curve}^{q}}
	= 
	\seminorm{u}_{\Xnorm{\Curve}}
	\seminorm{w}_{\Ynorm{\Curve}}
\end{align*}
and, analogously,
$
	\nabs{
	\textstyle
		\int_\Torus
		\ninnerprod{
			\diff{\sigma}{\Curve} \cD_\Curve v_1
			, 
			\diff{\sigma}{\Curve} \cD_\Curve v_2
		}	
		\, \singularmeasure_\Curve
	}
	\leq
	\seminorm{v_1}_{\Hnorm{\Curve}}
	\seminorm{v_2}_{\Hnorm{\Curve}}$.
Moreover, the existence of the integrals
\begin{align*}
		\textstyle
		\int_\Torus
		\ninnerprod{
			\diff{\sigma}{\Curve} v_1
			, 
			\diff{\sigma}{\Curve} v_2
		}
		\,
		E(\Curve)
		\, \singularmeasure_\Curve
\qand
	\int_\Torus
		\ninnerprod{
			\diff{\strongss}{\Curve} u
			, 
			\diff{\weakss}{\Curve} w
		}
		\,
		E(\Curve)
		\, \singularmeasure_\Curve
\end{align*}
follows from \autoref{lem:IntegrationbyParts}
with $k=1$, $\alpha = 2$, $\beta=0$,
and $L_1 = K = \triangle/\varrho_\Curve$.
(Here we use $\sigma=\tfrac12$.)
The commutativity of \eqref{eq:j'J=Ii} follows from the pointwise identity
\begin{align*}
	\ninnerprod{\diff{\strongss}{\Curve} \varphi, \diff{\weakss}{\Curve} \psi}(x,y)
	=
	\tfrac{\ninnerprod{\varphi(x) - \varphi(y),\psi(x) - \psi(y)}}{\nabs{\Curve(x)-\Curve(y)}^{2 \sigma}}
	=
	\ninnerprod{\diff{\sigma}{\Curve} \varphi, \diff{\sigma}{\Curve} \psi}(x,y)	
\end{align*}
which holds for arbitrary functions $\varphi$, $\psi \colon \Circle \to \AmbSpace$.

\emph{Invertibility:} 
We show this only for $\RJ[\ConfSpace]\at_\Curve$, as the argument for $\RI[\ConfSpace]\at_\Curve$ is analogous.
First we observe that $\RJ[\ConfSpace]\at_\Curve$ is injective.
Indeed, let $u \in \ker(\RJ[\ConfSpace]\at_\Curve )$. Then
\begin{align*}
	0
	=
	\textstyle
	\ninnerprod{\RJ[\ConfSpace]\at_\Curve \, u ,  \Rj[\ConfSpace] \, \Ri[\ConfSpace] \, u}
	\geq 
	\nnorm{\diff{\sigma}{\Curve} \cD_\Curve u}_{\Lebesgue[2][\singularmeasure_\Curve]}^{2}
	+ 
	\nabs{\int_\Circle u \, \LineElement_\Curve}^{2}
\end{align*}
implies that $\cD_{\Curve} u$ must be constant
and that the mean value of $u$ vanishes.
But the first condition can only hold if $u$ is already constant
and the second one forces this constant to be zero.
So it suffices to show that $\RJ[\ConfSpace]\at_\Curve$ is a Fredholm operator of index $0$.
To this end, we define the operator $A_\Curve \colon \XCg \to \YCgd$ by
\begin{align}
	\textstyle
	\ninnerprod{A_\Curve \, u, w}
	\ceq 
	\int_\Torus \biginnerprod{
		\Diff{\strongss}{\Curve} \cD_\Curve u,
		\Diff{\weakss}{\Curve} \cD_\Curve u
	} \, \dd \mu_\Curve
	,
	\quad
	\text{where}
	\quad
	\Diff{\alpha}{\Curve} \varphi \ceq \frac{\triangle \varphi}{\varrho_\Curve^\alpha}.
	\label{eq:DefinitionofACurve}
\end{align}
Now we observe that
$\ninnerprod{A_\Curve \, u, w} =
	\int_\Torus \biginnerprod{
		\Diff{\strongss}{\Curve} \cD_\Curve u,
		\Diff{\weakss}{\Curve} \cD_\Curve u
	} \, \dd \mu_\Curve$
and that
\begin{align*}
	\textstyle
	\ninnerprod{\bigparen{\RJ[\ConfSpace]\at_\Curve - A_\Curve} \, u ,w}
	=
	\sum_{i=1}^2
	\sum_{j=1}^2
	(-1)^{i+j}
	\int_\Torus \ninnerprod{\cD_\Curve u \circ \pi_i,\cD_\Curve w \circ \pi_j}
	\,
	E(\Curve)
	\, \varOmega_\Curve
	+ \lot
\end{align*}
From \autoref{lem:IntegrationbyParts} we may conclude that $A_\Curve$ is a compact perturbation of $\RJ[\ConfSpace]\at_\Curve$.
So it suffices to show that $A_\Curve$ is a Fredholm operator of index $0$.
As this is a bit more involved, we defer this to \autoref{lem:FredholmProperty} below.

\emph{Fréchet differentiability:}
This can be shown by utilizing basically the same technique as in the proof of the Fréchet differentiability of the energy: first order Taylor expansion  of the integrand around the point $\Curve$ and bounding the integral of the second order remainder term (see Claim 3 in \autoref{theo:DEnergy}). In fact, the analysis here is bit easier for the geodesic distance $\varrho_\Curve$ does not appear in the definitions of $\RI[\ConfSpace]$ and $\RJ[\ConfSpace]$. For the sake of brevity, we omit the details.
\end{proof}

\subsection*{Details}

The remainder of this section is devoted to proving the following lemma. The employed techniques are fairly standard in the area of pseudo-differential operators;
only the fact that the differential operators involved here are nonlocal introduces a couple of further technicalities.
Throughout, we will suppose that $\Curve \in \ConfSpace$. Moreover, we will denote by $K \colon \cX \to \cZ$ a generic compact operator that---pretty much like the ever-expanding ``constant'' $C$---may change from line to line.

\begin{lemma}\label{lem:FredholmProperty}
The operator $A_\Curve \colon \XCg \to \YCgd$ defined in  \eqref{eq:DefinitionofACurve} is a Fredholm operator of index $0$.
\end{lemma}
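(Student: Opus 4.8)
The plan is to identify $A_\Curve$, modulo a compact operator, with an elliptic translation-invariant operator of order three on the circle, and to read off its index from the Fourier side.

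\emph{Step 1 (normalization).} Rescaling $\Curve$ multiplies $A_\Curve$ by a positive constant, and precomposing $\Curve$ with a bi-Lipschitz reparametrization $\phi$ of $\Circle$ conjugates $A_\Curve$ by the composition operators $v \mapsto v \circ \phi^{\pm 1}$; these are Banach-space isomorphisms of $\XCg$ and of $\YCg$ whenever $\phi \in \SoboC[\strongs,\strongp]$, which follows from the composition rules behind \autoref{lem:MultiplicationLemma} together with $\strongs - 1/\strongp > 1$. Since $\essinf \nabs{\Curve'} > 0$ forces the arc-length reparametrization to lie in $\SoboC[\strongs,\strongp]$, these operations affect neither the Fredholm property nor the index of $A_\Curve$, so we may assume $\nabs{\Curve'} \equiv 1$ from now on. Then $\cD_\Curve = \partial_x$, $\varOmega_\Curve = \dd x\,\dd y$, $\varrho_\Curve = d_\Circle$, and, using $\strongss + \weakss = 2\sigma = 1$,
\begin{align*}
	\ninnerprod{A_\Curve\,u,w}
	=
	\int_\Torus \frac{\ninnerprod{u'(x)-u'(y),\,w'(x)-w'(y)}}{d_\Circle(x,y)\,\nabs{\Curve(x)-\Curve(y)}}\,\dd x\,\dd y .
\end{align*}

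\emph{Step 2 (the model operator).} Replacing $\nabs{\triangle\Curve}$ by $d_\Circle$ in the denominator defines $\ninnerprod{A_0\,u,w} \ceq \int_\Torus \frac{\ninnerprod{u'(x)-u'(y),\,w'(x)-w'(y)}}{d_\Circle(x,y)^2}\,\dd x\,\dd y$. The form $\int_\Torus \nabs{v(x)-v(y)}^2\, d_\Circle(x,y)^{-2}\,\dd x\,\dd y$ equals, up to a positive constant, the $(-\Delta)^{1/2}$-quadratic form on $\Circle$; evaluated on $v = u'$ this exhibits $A_0$, up to a positive constant, as the fractional Laplacian $(-\Delta)^{3/2}$, an elliptic pseudo-differential operator of order three whose Fourier symbol vanishes at frequency $0$ and is comparable to $\nabs{k}^3$ otherwise. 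By the Mikhlin--Hörmander multiplier theorem on the torus (here $\strongp \in \intervalco{2,\infty} \subset \intervaloo{1,\infty}$), $A_0$ restricts to an isomorphism from the mean-zero subspace of $\SoboC[\strongs,\strongp]$ onto the annihilator of the constants in $\SoboC[\strongs-3,\strongp]$. Since $\strongs - 3 = -\weaks$ and $\weakp$ is the Hölder conjugate of $\strongp$, we identify $\SoboC[\strongs-3,\strongp] \cong \YCgd$; as $\Ker(A_0)$ consists precisely of the constants and $\on{ran}(A_0) = \set{f \in \YCgd | \ninnerprod{f,1} = 0}$, the operator $A_0 \colon \XCg \to \YCgd$ is Fredholm of index $0$.

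\emph{Step 3 (the perturbation is compact).} It remains to show that $A_\Curve - A_0 \colon \XCg \to \YCgd$ is compact, for then $A_\Curve = A_0 + (A_\Curve - A_0)$ is Fredholm of the same index. The two kernels differ by the scalar factor $\frac{1}{d_\Circle \nabs{\triangle\Curve}} - \frac{1}{d_\Circle^2} = \frac{d_\Circle - \nabs{\triangle\Curve}}{d_\Circle^2\,\nabs{\triangle\Curve}} \ge 0$. Combining the identity $\varrho_\Curve^2 - \nabs{\triangle\Curve}^2 = \tfrac12 \int_{I_\Curve^2} \nabs{\Tangent_\Curve(s) - \Tangent_\Curve(t)}^2 \varOmega_\Curve$ from the proof of \autoref{lem:IntegrationbyParts}, the bi-Lipschitz bound $d_\Circle \le C \nabs{\triangle\Curve}$, and $\Tangent_\Curve \in \HolderC[0,\alpha]$ with $\alpha \ceq \strongs - 1 - 1/\strongp > 0$, one obtains the pointwise estimate $0 \le \frac{1}{d_\Circle(x,y) \nabs{\triangle\Curve(x,y)}} - \frac{1}{d_\Circle(x,y)^2} \le C\, d_\Circle(x,y)^{2\alpha - 2}$. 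Hence, fixing any $\theta \in \intervaloo{0,\min(2\alpha,\strongss)}$, absorbing the surplus power $d_\Circle^{2\alpha - \theta} \le C$ and splitting the kernel by Hölder's inequality exactly as in the well-definedness part of \autoref{prop:MetricDefinition},
\begin{align*}
	\bigabs{\ninnerprod{(A_\Curve - A_0)\,u,w}}
	\le
	C \int_\Torus \frac{\nabs{u'(x)-u'(y)}\,\nabs{w'(x)-w'(y)}}{d_\Circle(x,y)^{2-\theta}}\,\dd x\,\dd y
	\le
	C\, \nseminorm{u}_{\Sobo[\strongs-\theta,\strongp]}\, \nseminorm{w}_{\Sobo[\weaks,\weakp]} .
\end{align*}
Since $\SoboC[\strongs,\strongp] \hookrightarrow \SoboC[\strongs-\theta,\strongp]$ is compact, $A_\Curve - A_0$ factors through a compact operator, hence is itself compact; this proves the lemma.

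\emph{Main obstacle.} The delicate point is the estimate of Step 3: one must ensure that replacing geodesic distance by secant distance genuinely lowers the order of the operator --- here by $2\alpha$ --- which is precisely where the surplus regularity $\diffoffset > 0$ (equivalently $\strongs - 1/\strongp > 1$ from \eqref{eq:Parameters}) is consumed, and where care is needed because the target $\YCgd$ carries the low integrability exponent $\weakp \le 2$; carrying this out sharply is exactly what the Besov-space and exponential-map substitutions in the proofs of \autoref{lem:IntegrationbyParts} and \autoref{lem:PrincipalPartofEnergyDerivative} are designed for. A secondary point is to verify, in Step 1, that the arc-length reparametrization --- which is only $\SoboC[\strongs,\strongp]$-regular --- still acts boundedly with bounded inverse on the \emph{low}-regularity space $\YCg$, which once more rests on the parameter choice \eqref{eq:Parameters}.
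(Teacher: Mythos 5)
Your argument is sound, but it takes a genuinely different route from the paper. The paper never reparametrizes and uses no Fourier analysis on $\Circle$: it shows that $A_\Curve+\mass_\Curve$ is continuously invertible for a compact $\mass_\Curve$, by combining injectivity and dense range with an elliptic estimate in the sense of the Schauder lemma (\autoref{lem:SchauderLemma}); that estimate is obtained by localizing with cut-off functions in isometric arc-length charts, comparing there with the model operator $L=\cD\dual(-\Delta)^{\sigma}\cD$ on $\R$, proving that the commutators of $A_\Curve$ with the cut-offs are compact, and patching with a partition of unity (\autoref{lem:LocalEllipticEstimate}, \autoref{lem:GlobalEllipticEstimate}). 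You instead exploit one-dimensionality globally: after passing to arc length, $\varrho_\Curve=d_\Circle$, so your frozen-coefficient operator $A_0$ is translation invariant and can be handled as a single Fourier multiplier of order three, while $A_\Curve-A_0$ is compact thanks to the pointwise gain $d_\Circle-\nabs{\triangle\Curve}\le C\,d_\Circle^{1+2\alpha}$ (from $\Tangent_\Curve\in C^{0,\alpha}$), Hölder with the exponent split you indicate, and the compact embedding $\SoboC[\strongs,\strongp]\hookrightarrow\SoboC[\strongs-\theta,\strongp]$. This buys a shorter proof with no localization or commutator estimates, at the price of being special to curves, whereas the paper's scheme is the one that transfers to settings without a global flattening chart.

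Two points in your write-up need more care, though both are fillable. First, in Step 1 the isomorphism property of $v\mapsto v\circ\phi^{\pm1}$ on $\XCg$ and on $\YCg$ does not follow from \autoref{lem:MultiplicationLemma} alone (that is a product rule, not a composition rule): you also need the elementary bi-Lipschitz change-of-variables bound for Gagliardo seminorms of order in $(0,1)$, applied to $u'\circ\phi$, before multiplying by $\phi'\in\Sobo[\strongss,\strongp][][\Circle][\R]$ via the product rule; this is exactly the pullback-invariance the paper itself invokes in the footnote within the proof of \autoref{lem:LocalEllipticEstimate}. Second, in Step 2 the $d_\Circle^{-2}$-Gagliardo form does not literally equal the $(-\Delta)^{1/2}$-form up to a constant: the multiplier of $A_0$ is $(2\uppi k)^{2}m(k)$ with $m(k)=4\uppi^{2}\nabs{k}+\LandO(1)$, so $A_0$ is a lower-order, hence compact, perturbation of a constant multiple of $(-\Delta)^{3/2}\colon\XCg\to\YCgd$; the conclusion (Fredholm of index $0$, kernel the constants) survives, but it should be argued via this compact-perturbation step --- the same device the paper uses for $(-\Delta)^{\sigma}$ versus $(\id-\Delta)^{\sigma}$ --- rather than asserted as an identity. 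With these repairs, your Step 3 estimate is correct and the proof goes through.
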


\begin{proof}
As the Fredholm property and the index are invariant under compact perturbations, it suffices to show that  $A_\Curve + \mass_\Curve$ is continuously invertible,
where we define the compact operator $\mass_\Curve \colon \cX \to \cY\dual$ by
$
	\ninnerprod{ \mass_\Curve \,u , w}_{\cY\dual,\cY}
	\ceq
	\textstyle
	\int_\Circle \ninnerprod{u , w} \, \LineElement[\Curve].
$
The operator $A_\Curve + \mass_\Curve$ is injective because of
$
	\ninnerprod{(A_\Curve + \mass_\Curve) \,u , \Rj[\ConfSpace] \, \Ri[\ConfSpace] \, u}_{\cY\dual,\cY}
	\geq \nnorm{u}_{\Lebesgue[2][\Curve]}^2
$.
Since the operator $\Rj[\ConfSpace] \, \Ri[\ConfSpace]  \colon \cX \hookrightarrow \cY$ is injective and has dense image, this also implies that $A_\Curve + \mass_\Curve$ has dense image.
By virtue of the Schauder lemma (see \autoref{lem:SchauderLemma}), it suffices to establish an \emph{elliptic estimate} of the form
$	\nnorm{u}_{\XC}
	\leq 
	C\, \bigparen{
		\nnorm{(A_\Curve + M_\Curve)\, u}_{\YC\dual}
		+
		\nnorm{K \, u}_{\cZ} 
		}$
for all $u \in \cX$ with suitable norms $\nnorm{\cdot}_{\XC}$ and $\nnorm{\cdot}_{\YC}$ that will be defined soon.
Indeed, we will show in \autoref{lem:GlobalEllipticEstimate} that
\begin{align}
	\nnorm{u}_{\XC}
	\leq 
	C\, \bigparen{
		\nnorm{A_\Curve \, u}_{\YCd}
		+
		\nnorm{K \, u}_{\cZ}
	}
	\quad
	\text{for all $u \in \XCg$.}
	\label{eq:GlobalEllipticEstimate}
\end{align}
Since $M_\Curve$ is compact, this implies the previous inequality (for different $C$, $K$ and $\cZ$).
\end{proof}

In order to prove \eqref{eq:GlobalEllipticEstimate}, it is convenient to introduce some additional notation.
The operator $A_\Curve$ is defined in terms of $\Diff{\strongss}{\Curve} = \frac{\triangle}{\varrho_\Curve^\strongss}$ and $\Diff{\weakss}{\Curve} = \frac{\triangle}{\varrho_\Curve^\weakss}$, see \eqref{eq:DefinitionofACurve}, so it is natural to consider the following local semi-norms:
For an open set $U \subset \Circle$, $0<\alpha<1$, and $1 \leq r < \infty$ and a measurable $v \colon \Circle \to \AmbSpace$, we define
\begin{align*}
	\nseminorm{v}_{\Sobo[1+\alpha,r][\varrho,\Curve][U]}
	\ceq
	\paren{
		\textstyle
		\int_{U \times U}
			\nabs{ \Diff{\alpha}{\Curve} \cD_\Curve v}^r
		\,\dd \mu_\Curve
	}^{1/r}
	\qand
	\nnorm{v}_{\Sobo[1+\alpha,r][\varrho,\Curve][U]}
	\ceq
	\nnorm{v}_{\Lebesgue[r][\Curve][U]}
	+
	\nseminorm{v}_{\Sobo[1+\alpha,r][\varrho,\Curve][U]}.
\end{align*}
For convenience, we fix $\Curve$ and define
\begin{align*}
	\nnorm{\cdot}_{\XC} 
	\ceq 
	\nnorm{\cdot}_{\Sobo[\strongs,\strongp][\varrho,\Curve][\Circle]}
	\qand
	\nnorm{\cdot}_{\YC} 
	\ceq 
	\nnorm{\cdot}_{\Sobo[\weaks,\weakp][\varrho,\Curve][\Circle]}.
\end{align*}
We point out that $\Curve \in \ConfSpace$  has always finite energy, thus $\Curve$ is bi-Lipschitz continuous, see~\cite[Thm.~2.3]{MR1195506}); 
hence the norms 
$\nnorm{\cdot}_{\XC}$, $\nnorm{\cdot}_{\YC}$
are equivalent to
$\nnorm{\cdot}_{\Xnorm{\Curve}}$, $\nnorm{\cdot}_{\Ynorm{\Curve}}$, respectively, cf.~\eqref{eq:XHYNorms}.
For a measurable function $\tilde v \colon \R \to \AmbSpace$ and an open set $\tilde U \subset \AmbSpace$, we define
\begin{align*}
	\Diff{\alpha}{} \tilde v(x,y) \ceq \frac{\tilde v(x) - \tilde  v(y)}{\nabs{x-y}^\alpha}
	\qand
	\cD \tilde v (x) = \tilde v'(x),	
\end{align*}
as well as the following semi-norms:
\begin{align*}
		\nseminorm{\tilde v}_{\Sobo[1+\alpha,r][][\tilde U]}
	\ceq
	\paren{
		\textstyle
		\int_{\tilde U \times \tilde U}
			\nabs{ \Diff{\alpha}{} \cD \tilde v}^r
		\, \frac{\dd x \, \dd y}{\nabs{x-y}}
	}^{1/r}
	\qand
	\nnorm{\tilde v}_{\Sobo[1+\alpha,r][][\tilde U]}
	\ceq
	\nnorm{\tilde v}_{\Lebesgue[r][\Curve][\tilde U]}
	+
	\nseminorm{\tilde v}_{\Sobo[1+\alpha,r][][\tilde U]}.
\end{align*}
Moreover, we use following abbreviations for the global Sobolev spaces and their norms:
\begin{align*}
	\tilde \cX \ceq \Sobo[\strongs,\strongp][][\R][\AmbSpace],
	\;\;\;
	\nnorm{\cdot}_{\tilde \cX} 
	\ceq
	\nnorm{\cdot}_{\Sobo[\strongs,\strongp][][\R]},
	\;\;\;
	\tilde \cY \ceq \Sobo[\weaks,\weakp][][\R][\AmbSpace],
	\;\;\;
	\nnorm{\cdot}_{\tilde \cY} 
	\ceq
	\nnorm{\cdot}_{\Sobo[\weaks,\weakp][][\R]}.	
\end{align*}
Via localization techniques, we will compare $A_\Curve$ to
$L \colon \tilde \cX \to \tilde \cY\dual$ given by
\begin{align}
	\ninnerprod{ L \, \tilde u, \tilde v}
	\ceq
	\textstyle
	\int_{\R \times \R}
		\ninnerprod{
			\Diff{\strongs}{} \cD \tilde u,
			\Diff{\weaks}{} \cD \tilde w,			
		}
	\, \dd \mu
	\quad
	\text{with}
	\quad
	\dd \mu (x,y) \ceq	\frac{\dd x \, \dd y}{\nabs{x-y}}.
	\label{eq:DefinitionofL}		
\end{align}
Compare this to the weak formulation of the fractional Laplacian
\begin{align*}
	(-\Delta)^{\sigma}  \colon \Sobo[\strongss,\strongp][][\R][\AmbSpace] \to \Sobo[-\strongss,\strongp][][\R][\AmbSpace] \cong (\Sobo[\weakss,\weakp][][\R][\AmbSpace])\dual
\end{align*}
which is given by
\begin{align*}
	\textstyle
	\ninnerprod{(-\Delta)^{\sigma}  \varphi, \psi}
	=
	C_\sigma
	\int_{\R \times \R}
		\ninnerprod{
			\Diff{\strongs}{} \varphi,
			\Diff{\weaks}{} \psi,			
		}
	\, \dd \mu	
	=
	C_{\sigma}
	\int_{\R}\int_{\R}
		\Biginnerprod{ 
			\frac{\varphi(x) - \varphi(y)}{\nabs{x-y}^{\sigma}}, 
			\frac{\psi(x) - \psi(y)}{\nabs{x-y}^{\sigma}}
	}
	\, \frac{\dd x\, \dd y}{\nabs{x-y}}
\end{align*}
for some $C_{\sigma} > 0$
(see e.g., \cite[Theorem 1.1]{Kwasnicki:2017:TED}).
So up to a constant, we have $L = \cD\dual (-\Delta)^\sigma \cD$,
hence $L$ is a pseudo-differential operator of order $2 \, \sigma+2=2s=3$ and its principal symbol
$P(L) \colon T\dual\, \R \cong \R \times \R \to \End(\AmbSpace)$ is (up to a constant) given by $P(L)(x,\xi) = \nabs{\xi}^{2s}\, \id_{\R^{m}} = P((\id-\Delta)^{s})(x,\xi)$.
By \cite[2.3.8]{MR781540}, the operator $L$ is continuously invertible.%
\footnote{In the source it is shown that the operator $(\id-\Delta)^{\sigma} \colon B^{t}_{p,q}\to B^{t-2\sigma}_{p,q}$
is continuously invertible
for general Besov spaces $B^{t}_{p,q}$.
Note that $W^{t,p}=B^{t}_{p,p}$ for $t\in\R_{+}\setminus\Z$ and $p\in[1,\infty)$.
Moreover, $(-\Delta)^{\sigma}$ a compact perturbation  of $(\id-\Delta)^{\sigma}$, so $(-\Delta)^{\sigma}$ is a Fredholm operator of index zero; being also positive-definite, it must be continuously invertible.}

\bigskip

The following two lemmas will help us in localizing Sobolev norms:

\begin{lemma}[Norm localization]\label{lem:LocalizationNormsCircle}
Let $0<\alpha <1$, $1 \leq r < \infty$ and let $U \subset\subset V \subset\subset \Circle$ be relatively compact, open sets.
Then there is a $C = C(\Curve,\alpha,r,U,V)>0$ such that
\begin{align*}
	\nnorm{v}_{ \Sobo[1+\alpha,r][\varrho,\Curve][V]}
	\leq
	\nnorm{v}_{ \Sobo[1+\alpha,r][\varrho,\Curve][\Circle]}
	\leq
	C \, \nnorm{v}_{ \Sobo[1+\alpha,r][\varrho,\Curve][V]}	
	\;\;
	\text{for all $v \in \Sobo[1+\alpha,r][][\Circle][\AmbSpace]$ with $\supp(v) \subset V$.	}
\end{align*}
\end{lemma}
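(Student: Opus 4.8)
The plan is to dispatch the left inequality by monotonicity and, for the right inequality, to isolate—via the support condition—the ``long‑range'' part of the global seminorm and to estimate it with a fractional Hardy inequality. The left inequality is immediate: every integral entering $\nnorm{\cdot}_{\Sobo[1+\alpha,r][\varrho,\Curve][V]}$ arises from the corresponding one in $\nnorm{\cdot}_{\Sobo[1+\alpha,r][\varrho,\Curve][\Circle]}$ by restricting a nonnegative integrand to a smaller domain ($V$ in place of $\Circle$, resp.\ $V\times V$ in place of $\Torus$). For the right inequality I would first observe that $\supp(v)$, being closed in the compact space $\Circle$ and contained in the open set $V$, is a \emph{compact} subset of $V$; hence $v$, $v'$, and $\cD_\Curve v=v'/\nabs{\Curve'}$ vanish on a neighbourhood of $\Circle\setminus V$. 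Splitting $\Torus$ into $V\times V$, $(\Circle\setminus V)\times(\Circle\setminus V)$, and the two mixed pieces, the integrand $\nabs{\Diff{\alpha}{\Curve}\cD_\Curve v(x,y)}^{r}=\nabs{\cD_\Curve v(x)-\cD_\Curve v(y)}^{r}\varrho_\Curve(x,y)^{-\alpha r}$ vanishes on $(\Circle\setminus V)\times(\Circle\setminus V)$ and reduces to $\nabs{\cD_\Curve v(x)}^{r}\varrho_\Curve(x,y)^{-\alpha r}$ on $V\times(\Circle\setminus V)$, so by symmetry of $\singularmeasure_\Curve$,
\begin{align*}
	\nseminorm{v}_{\Sobo[1+\alpha,r][\varrho,\Curve][\Circle]}^{r}
	=
	\nseminorm{v}_{\Sobo[1+\alpha,r][\varrho,\Curve][V]}^{r}
	+
	2\int_{V\times(\Circle\setminus V)}
		\frac{\nabs{\cD_\Curve v(x)}^{r}}{\varrho_\Curve(x,y)^{\alpha r}}
	\,\dd\singularmeasure_\Curve(x,y),
\end{align*}
while $\nnorm{v}_{\Lebesgue[r][\Curve][\Circle]}=\nnorm{v}_{\Lebesgue[r][\Curve][V]}$. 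It thus remains to bound the ``cross term'' by a multiple of $\nnorm{v}_{\Sobo[1+\alpha,r][\varrho,\Curve][V]}^{r}$.

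\emph{Cross term.} Here I would use that $\Curve\in\ConfSpace$ has finite energy and is therefore bi‑Lipschitz (\cite[Thm.~2.3]{MR1195506}), so that $\varrho_\Curve$ and $\nabs{\triangle\Curve}$ are comparable to $d_\Circle$ and $\LineElementC$ to $\dd x$, with constants depending on $\Curve$. With $\delta(x)\ceq\dist_\Circle(x,\Circle\setminus V)>0$, and using $1+\alpha r>1$ together with the fact that at most two points of $\Circle$ lie at any prescribed distance from $x$, one gets
\begin{align*}
	2\int_{V\times(\Circle\setminus V)}
		\frac{\nabs{\cD_\Curve v(x)}^{r}}{\varrho_\Curve(x,y)^{\alpha r}}
	\,\dd\singularmeasure_\Curve(x,y)
	\le
	C\int_{V}\nabs{\cD_\Curve v(x)}^{r}\Bigparen{\int_{\delta(x)}^{1/2}t^{-1-\alpha r}\,\dd t}\,\dd x
	\le
	C\int_{V}\nabs{\cD_\Curve v(x)}^{r}\,\delta(x)^{-\alpha r}\,\dd x .
\end{align*}
This reduces the lemma to the \emph{fractional Hardy estimate} $\int_{V}\nabs{\cD_\Curve v}^{r}\,\delta^{-\alpha r}\,\dd x\le C\,\nnorm{v}_{\Sobo[1+\alpha,r][\varrho,\Curve][V]}^{r}$.

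\emph{Hardy estimate and conclusion.} Now $V$ is a finite disjoint union of open arcs $I_{1},\dots,I_{M}$, each of length $<1$; since the nearest point of $\Circle\setminus V$ to a point of $I_{k}$ is an endpoint of $I_{k}$, in the arc‑length chart on $I_{k}$ one has $\delta=\dist(\cdot,\partial I_{k})$, and $\cD_\Curve v\at_{I_{k}}$ has compact support in $I_{k}$. I would apply on each $I_{k}$ the one‑dimensional fractional Hardy inequality for compactly supported functions,
\begin{align*}
	\int_{I_{k}}\nabs{\cD_\Curve v}^{r}\,\dist(\cdot,\partial I_{k})^{-\alpha r}
	\le
	C\bigparen{\nseminorm{\cD_\Curve v}_{\Sobo[\alpha,r][][I_{k}]}^{r}+\nnorm{\cD_\Curve v}_{\Lebesgue[r][\Curve][I_{k}]}^{r}},
\end{align*}
with $\nseminorm{\cdot}_{\Sobo[\alpha,r][][I_{k}]}$ the ordinary Gagliardo seminorm (the $\Lebesgue[r]$‑term may be dropped when $\alpha r>1$, and is harmless otherwise, the weight being locally integrable there). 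Summing over $k$, using $\sum_{k}\nseminorm{\cD_\Curve v}_{\Sobo[\alpha,r][][I_{k}]}^{r}\le\nseminorm{\cD_\Curve v}_{\Sobo[\alpha,r][][V]}^{r}$ and the bi‑Lipschitz bounds once more to compare the latter with $\nseminorm{v}_{\Sobo[1+\alpha,r][\varrho,\Curve][V]}^{r}$, gives the Hardy estimate up to an additive $\nnorm{\cD_\Curve v}_{\Lebesgue[r][\Curve][V]}^{r}$. This term I would absorb: $\nnorm{\cD_\Curve v}_{\Lebesgue[r][\Curve][V]}\le C\,\nnorm{v'}_{\Lebesgue[r][\Curve][V]}$ since $\nabs{\Curve'}$ is bounded away from $0$; since $\int_{I_{k}}v'=0$ ($v$ vanishing near $\partial I_{k}$), the fractional Poincar\'e inequality gives $\nnorm{v'}_{\Lebesgue[r][\Curve][I_{k}]}\le C\,\nseminorm{v'}_{\Sobo[\alpha,r][][I_{k}]}$; and the product rule (\autoref{lem:MultiplicationLemma}), together with the regularity of $\Curve\in\ConfSpace$ (which makes $\nabs{\Curve'}$ a bounded pointwise multiplier on the relevant \SoboSlobo space), controls $\nseminorm{v'}_{\Sobo[\alpha,r][][I_{k}]}=\nseminorm{\nabs{\Curve'}\,\cD_\Curve v}_{\Sobo[\alpha,r][][I_{k}]}$ by $C\,\nnorm{v}_{\Sobo[1+\alpha,r][\varrho,\Curve][I_{k}]}$. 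Combining all estimates yields the claim.

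\emph{Main obstacle.} The crux is the fractional Hardy inequality on the component arcs: it is the only step where the compactness of $\supp(v)$ in $V$ is genuinely used, and it is exactly what would break down if one only assumed $\supp(v)\subset\overline V$. It is classical for $\alpha r\neq1$; the critical exponent $\alpha r=1$ is more delicate (one keeps the $\Lebesgue[r]$‑term and exploits compact support), but it does not occur in the applications, where $(\alpha,r)\in\set{(\strongss,\strongp),(\weakss,\weakp)}$ and \eqref{eq:Parameters} forces $\alpha r\neq1$. The remaining work—passing back and forth between the $\varrho_\Curve$‑weighted quantities, the flat Gagliardo seminorms of $\cD_\Curve v$, and $\nseminorm{v}_{\Sobo[1+\alpha,r][\varrho,\Curve][V]}$—is routine bookkeeping resting on the bi‑Lipschitz character of $\Curve$ and the product rule.
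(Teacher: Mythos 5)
Your left inequality and the decomposition of the global seminorm into the $V\times V$ part plus twice the cross term are fine, but from there you take a genuinely different route from the paper's, and for the stated parameter range it has a gap. The paper only writes out the real-line analogue, \autoref{lem:LocalizationNormsReals}, and the decisive point there is that the support lies in a set ($\tilde V$) \emph{compactly contained} in the set ($\tilde U$) over which the local norm is taken: on $\supp(\cD\tilde v)$ the distance to $\R\setminus\tilde U$ is bounded below by an $R>0$ depending only on the two sets, so the inner integral of the kernel is a finite constant and the cross term is simply bounded by $C\,\nnorm{\cD\tilde v}_{\Lebesgue[r]}^r$, which is then dominated by the local norm (your own vanishing-mean/Poincaré observation is exactly the small step needed here) --- no Hardy inequality at all. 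The same is intended in the circle version: that is what the otherwise unused hypothesis $U\subset\subset V$ is for, and it is how the lemma is applied in \autoref{lem:LocalEllipticEstimate} (support in $W\subset\subset U$, norm taken over $U$). You instead read the support condition literally as $\supp(v)\subset V$ with the norm over the same $V$; then the gap between $\supp(\cD_\Curve v)$ and $\Circle\setminus V$ is not uniform in $v$ (supports may approach $\partial V$ while $C$ must not depend on $v$), and this is precisely what forces you into the fractional Hardy inequality.

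Under that literal reading your argument does not cover the stated hypotheses: the lemma allows all $0<\alpha<1$, $1\le r<\infty$, hence the critical case $\alpha r=1$ (e.g.\ $\alpha=\tfrac12$, $r=2$), where the one-dimensional fractional Hardy inequality fails even for smooth functions compactly supported in an arc, and it keeps failing after adding the $\Lebesgue[r]$-term on the right (logarithmic ramp functions have bounded $\Sobo[\alpha,r]$-norm but divergent weighted integral); ``exploiting compact support'' cannot repair this, since the constant must be uniform in $v$, and the remark that the critical case ``does not occur in the applications'' proves a weaker statement than the lemma. (Indeed, at $\alpha r=1$ the literal reading is essentially equivalent to the failing critical Hardy inequality --- further evidence that it is not the intended one.) A secondary wrinkle: in your absorption step you bound $\nseminorm{v'}_{\Sobo[\alpha,r]}$ via \autoref{lem:MultiplicationLemma} with multiplier $\nabs{\Curve'}\in\Sobo[\strongss,\strongp]$, whose hypotheses fail once $\alpha>\strongss$, which the lemma also allows; this is avoidable (work directly with $\cD_\Curve v$, whose $\LineElementC$-weighted mean vanishes on each component arc), but it illustrates how much unnecessary machinery the Hardy route drags in. With the intended support condition everything collapses to the short argument of \autoref{lem:LocalizationNormsReals}: on $\supp(\cD_\Curve v)$ and $y\in\Circle\setminus V$ one has $\varrho_\Curve(x,y)$ bounded below by a constant depending only on $U$, $V$, $\Curve$, so the cross term is at most $C\,\nnorm{\cD_\Curve v}_{\Lebesgue[r][\Curve]}^r$ with $C=C(\Curve,\alpha,r,U,V)$, and the circle case is even easier than the real line, there being no integrability issue at infinity.
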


\begin{lemma}[Norm localization]\label{lem:LocalizationNormsReals}
Let $0<\alpha <1$, $1 \leq r < \infty$ and let $\tilde V \subset\subset \tilde U \subset\subset \R$ be relatively compact, open sets.
Then there is a $C = C(\alpha,r,\tilde U, \tilde V)>0$ such that
\begin{align*}
	\nnorm{\tilde v}_{ \Sobo[1+\alpha,r][][\tilde U]}
	\leq
	\nnorm{\tilde v}_{ \Sobo[1+\alpha,r][][\R]}
	\leq
	C \, \nnorm{\tilde v}_{ \Sobo[1+\alpha,r][][\tilde U]}	
	\;\;
	\text{for all $\tilde v \in \Sobo[1+\alpha,r][][\R][\AmbSpace]$ with $\supp(\tilde v) \subset \tilde V$.	}
\end{align*}
\end{lemma}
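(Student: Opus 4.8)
The plan is to treat the two inequalities separately. The left inequality $\nnorm{\tilde v}_{\Sobo[1+\alpha,r][][\tilde U]} \le \nnorm{\tilde v}_{\Sobo[1+\alpha,r][][\R]}$ is immediate with constant $1$: both constituents of the norm are integrals of nonnegative integrands, taken over $\tilde U$ (resp.\ $\tilde U \times \tilde U$) on the left and over $\R$ (resp.\ $\R \times \R$) on the right, and the support condition is not needed. For the right inequality I would use $\supp(\tilde v) \subset \tilde V \subset\subset \tilde U \subset\subset \R$ from the outset. Set $\delta \ceq \dist(\overline{\tilde V}, \R \setminus \tilde U) > 0$, which is positive since $\overline{\tilde V}$ is compact and lies in the open set $\tilde U$, and $D \ceq \diam(\tilde U) < \infty$. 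Record two consequences of the support condition: $\nnorm{\tilde v}_{\Lebesgue[r][][\R]} = \nnorm{\tilde v}_{\Lebesgue[r][][\tilde U]}$, and, since the support of a distributional derivative lies in the support of the distribution and $\cD\tilde v \in \Lebesgue[r]$, also $\cD\tilde v = 0$ a.e.\ outside $\overline{\tilde V}$, so $\nnorm{\cD\tilde v}_{\Lebesgue[r][][\R]} = \nnorm{\cD\tilde v}_{\Lebesgue[r][][\tilde U]}$.

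Next I would split the Gagliardo seminorm integral along $\R\times\R = (\tilde U\times\tilde U)\cup(\tilde U\times W)\cup(W\times\tilde U)\cup(W\times W)$ with $W \ceq \R\setminus\tilde U$. The $(\tilde U\times\tilde U)$-piece is exactly $\nseminorm{\tilde v}_{\Sobo[1+\alpha,r][][\tilde U]}^r$; on $W\times W$ the integrand vanishes because $\cD\tilde v(x)=\cD\tilde v(y)=0$; and on $\tilde U\times W$ (symmetrically $W\times\tilde U$) we have $\cD\tilde v(y)=0$, so the integrand is $\nabs{\cD\tilde v(x)}^r\nabs{x-y}^{-1-\alpha r}$, which vanishes unless $x\in\overline{\tilde V}$, in which case $\nabs{x-y}\ge\delta$ for $y\in W$ and $\int_W\nabs{x-y}^{-1-\alpha r}\,\dd y\le\tfrac{2}{\alpha r}\delta^{-\alpha r}$. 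This yields
\[
	\nseminorm{\tilde v}_{\Sobo[1+\alpha,r][][\R]}^r
	\le
	\nseminorm{\tilde v}_{\Sobo[1+\alpha,r][][\tilde U]}^r
	+ \tfrac{4}{\alpha r}\,\delta^{-\alpha r}\,\nnorm{\cD\tilde v}_{\Lebesgue[r][][\tilde U]}^r .
\]

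The only step requiring an idea is to control $\nnorm{\cD\tilde v}_{\Lebesgue[r][][\tilde U]}$ by $\nseminorm{\tilde v}_{\Sobo[1+\alpha,r][][\tilde U]}$, i.e.\ a Poincaré-type bound exploiting that $\tilde v$ is supported strictly inside $\tilde U$. I would pick a measurable $B\subset\tilde U\setminus\overline{\tilde V}$ with $0<\nabs{B}<\infty$; for a.e.\ $x\in\tilde U$ and all $y\in B$ one has $\cD\tilde v(y)=0$ and $\nabs{x-y}\le D$, hence $\nabs{\cD\tilde v(x)}^r = \nabs{\cD\tilde v(x)-\cD\tilde v(y)}^r \le D^{1+\alpha r}\,\nabs{\Diff{\alpha}{}\cD\tilde v(x,y)}^r\,\nabs{x-y}^{-1}$; integrating in $y$ over $B$, then in $x$ over $\tilde U$, and enlarging $B$ to $\tilde U$ in the resulting double integral gives $\nabs{B}\,\nnorm{\cD\tilde v}_{\Lebesgue[r][][\tilde U]}^r \le D^{1+\alpha r}\,\nseminorm{\tilde v}_{\Sobo[1+\alpha,r][][\tilde U]}^r$. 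Substituting into the display above and combining with $\nnorm{\tilde v}_{\Lebesgue[r][][\R]}=\nnorm{\tilde v}_{\Lebesgue[r][][\tilde U]}$ produces $\nnorm{\tilde v}_{\Sobo[1+\alpha,r][][\R]}\le C\,\nnorm{\tilde v}_{\Sobo[1+\alpha,r][][\tilde U]}$ with $C=C(\alpha,r,\tilde U,\tilde V)$.

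I do not expect a genuine obstacle; the only thing to watch is that the strict inclusions are actually used — $\delta>0$ in the cross terms, $D<\infty$ (boundedness of $\tilde U$) and the spare room $B\subset\tilde U\setminus\overline{\tilde V}$ of positive measure in the Poincaré step — while the rest is routine bookkeeping with nonnegative integrands. The same splitting proves the circle version \autoref{lem:LocalizationNormsCircle} verbatim, additionally invoking that $\varrho_\Curve$ is comparable to the arc/secant distance on the compact sets in question, so nothing new is needed there.
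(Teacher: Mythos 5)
Your proof is correct and takes essentially the same route as the paper's: the same decomposition of the Gagliardo double integral into the $\tilde U\times\tilde U$ part plus the nonlocal cross terms, which are controlled using $\supp(\cD\tilde v)\subset\overline{\tilde V}$ and the gap $\delta>0$ making $\int \nabs{x-y}^{-1-\alpha r}\,\dd y$ finite. The only addition is that you spell out the Poincar\'e-type bound $\nnorm{\cD\tilde v}_{\Lebesgue[r][][\tilde U]}\leq C\,\nseminorm{\tilde v}_{\Sobo[1+\alpha,r][][\tilde U]}$ (using that $\cD\tilde v$ vanishes on a set of positive measure in $\tilde U$), a step the paper's concluding inequality uses without detail.
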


Their proofs are quite similar, so we show only the proof of the latter for it contains an additional difficulty.

\begin{proof} (of \autoref{lem:LocalizationNormsReals})
Since $\supp(\tilde v) \subset \tilde V \subset \tilde U$, we obviously have
\begin{align*}
	\nnorm{\tilde v}_{\Lebesgue[r][][\tilde U]}
	=
	\Bigparen{	
		\textstyle \int_{\tilde U} \nabs{ \tilde v(x)}^r \, \dd x
	}^{1/r}
	&=
	\Bigparen{	
		\textstyle \int_\R \nabs{ \tilde v(x)}^r \, \dd x
	}^{1/r}	
	= \nnorm{\tilde v}_{\Lebesgue[r][][\R]}
	\quad \text{and}
	\\
	\nseminorm{\tilde v}_{\Sobo[1+\alpha,r][][\tilde U]}
	=
	\Bigparen{
		\textstyle \int_{\tilde U \times \tilde U} \nabs{ \Diff{\alpha}{} \cD \, \tilde v }^r \, \dd \mu
	}^{1/r}
	&\leq
	\Bigparen{	
		\textstyle \int_{\R \times \R} \nabs{ \Diff{\alpha}{} \cD \, \tilde v }^r \, \dd \mu
	}^{1/r}	
	=
	\nseminorm{\tilde v}_{\Sobo[1+\alpha,r][][\R]}
	.
\end{align*}
The following identity is caused by the nonlocality of $\Diff{\alpha}{}$ and it is easily overlooked:
\begin{align*}
	\nseminorm{\tilde v}_{\Sobo[1+\alpha,r][][\R]}^r
	&=
	\textstyle
	\int_{\R \times \R}
	\nabs{ 
		\Diff{\alpha}{} \cD \,  \tilde v
	}^r
	\, \dd \mu
	=
	\textstyle
	\int_{\tilde U \times \tilde U}
	\nabs{ 
		\Diff{\alpha}{} \cD \,  \tilde v
	}^r
	\, \dd \mu
	+
	2 \, 
	\int_{\tilde V} 
	\nabs{ 
		\cD \, \tilde v(y)
	}^r
	\,
	\Bigparen{	
		\int_{\R \setminus  \tilde U}
		\, \frac	{\dd x}{\nabs{x-y}^{1+\alpha \, r}}
	}
	\, \dd y.
\end{align*}
Notice that we used here that $\supp(\cD \, \tilde v) \subset \tilde V$. 
We assumed that $\tilde V \subset\subset \tilde U$ is relatively compact, so there is an $R>0$ such that $\nabs{x-y} \geq R$ for all $x \in \R \setminus \tilde U$ and $y \in  \tilde V$. 
Because of $1+\alpha \, r>1$, we have
$
	\textstyle
	\sup_{y \in \tilde V}
	\int_{\R \setminus  \tilde U} \, \frac	{\dd x}{\nabs{x-y}^{1+\alpha \, r}}	
	\leq
	\int_R^\infty \frac{\dd t}{t^{1+\alpha \, r}}
	< \infty
$,
leading us to
\begin{align*}
	\nseminorm{\tilde v}_{\Sobo[1+\alpha,r][][\R]}^r
	\leq
	\textstyle
	\nseminorm{\tilde v}_{\Sobo[1+\alpha,r][][\tilde U]}^r
	+
	2 \, C \,
	\nnorm{\cD \, \tilde v}_{\Lebesgue[r][][\tilde V]}^r
	\leq 
	C\, \nnorm{\tilde v}_{\Sobo[1+\alpha,r][][\tilde U]}^r,
\end{align*}
which concludes the proof.
\end{proof}
	
Now we can start to show \eqref{eq:GlobalEllipticEstimate}, at least for functions $u$ with small support.

\begin{lemma}[Local elliptic estimate]\label{lem:LocalEllipticEstimate}
For each point $a \in \Circle$, there are open neighborhoods $W \subset\subset U \subset \Circle$
and a compact operator $K \colon \XC \to \cZ$ into some Banach space $\cZ$
such that
\begin{align}
	\nnorm{u}_{\XC}
	\leq 
	C \, \bigparen{ 
		\nnorm{A_\Curve \, u}_{\YC\dual}
		+
		\nnorm{K \, u}_{\cZ}		
	}
	\quad
	\text{holds for each $u \in \XCg$ with $\supp(u) \subset W$.}
	\label{eq:LocalSchauder}
\end{align}
\end{lemma}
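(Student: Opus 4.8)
The plan is to reduce the estimate for $A_\Curve$ near a point $a\in\Circle$ to the known continuous invertibility of the model operator $L=\cD\dual(-\Delta)^\sigma\cD$ on $\R$. First I would fix a chart $\kappa\colon U\to\tilde U\subset\R$ around $a$, chosen small enough that on $U$ the arc-length parametrization pulls back to (a perturbation of) the identity. More precisely, since $\Curve\in\ConfSpace$ is bi-Lipschitz and of class $\Holder[1,\alpha]$, after passing to the arc-length parameter one has $\varrho_\Curve(x,y)=\nabs{\kappa(x)-\kappa(y)}\,(1+r(x,y))$ with $r$ small and Hölder-regular on $W\times W$ for $W\subset\subset U$ sufficiently small; likewise $\cD_\Curve$ corresponds to $\cD$ up to a bounded multiplicative factor close to $1$, and the measures $\singularmeasure_\Curve$, $\dd\mu$ agree up to such a factor. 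Pick cut-off functions $\chi_W\prec\chi_U$ with $\chi_W\equiv1$ on $W$, $\supp\chi_U\subset U$, and transport everything to $\R$ via $\kappa$.

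The key step is then: for $u$ with $\supp(u)\subset W$, write $\tilde u\ceq u\circ\kappa^{-1}$ (extended by zero), so $\supp\tilde u\subset\tilde V\subset\subset\tilde U$, and compare $\ninnerprod{A_\Curve u,w}$ with $\ninnerprod{L\tilde u,\tilde w}$ for test functions $w$ supported in $W$. The difference is an integral against $\Diff{\strongss}{}\cD\tilde u$ and $\Diff{\weakss}{}\cD\tilde w$ with a coefficient built from the Hölder-small functions $r$ and the Jacobian of $\kappa$; by the product rules of \autoref{lem:MultiplicationLemma} this difference defines an operator $\XCg\to\YCgd$ of \emph{small} norm plus a genuinely compact (lower-order) remainder $K$. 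Hence, using the continuous invertibility of $L$ (the cited \cite[2.3.8]{MR781540}), the norm-localization lemmas \autoref{lem:LocalizationNormsReals} and \autoref{lem:LocalizationNormsCircle}, and the elementary fact that for $\supp(u)\subset W$ one has $\nnorm{u}_{\XC}\sim\nnorm{\tilde u}_{\tilde\cX}$,
\begin{align*}
	\nnorm{u}_{\XC}
	\leq C\,\nnorm{\tilde u}_{\tilde\cX}
	\leq C\,\nnorm{L\tilde u}_{\tilde\cY\dual}
	\leq C\,\bigparen{\nnorm{A_\Curve u}_{\YC\dual}+\nnorm{(L-A_\Curve)\tilde u}_{\tilde\cY\dual}}
	\leq C\,\bigparen{\nnorm{A_\Curve u}_{\YC\dual}+\varepsilon\,\nnorm{u}_{\XC}+\nnorm{K u}_{\cZ}}.
\end{align*}
Absorbing the $\varepsilon\,\nnorm{u}_{\XC}$-term on the left (for $W$ small enough that $\varepsilon<\tfrac12$) yields \eqref{eq:LocalSchauder}.

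The main obstacle I expect is the commutator/perturbation estimate establishing that $L-A_\Curve$, pulled back through the chart and restricted to functions supported in $W$, has operator norm that can be made arbitrarily small by shrinking $W$ — this is where the nonlocality of $\Diff{\alpha}{\Curve}$ bites, since $\Diff{\alpha}{\Curve}(\chi u)\neq\chi\,\Diff{\alpha}{\Curve}u$ and one must control the ``tail'' interactions between $W$ and $\Circle\setminus U$ (exactly the phenomenon isolated in the proof of \autoref{lem:LocalizationNormsReals}). The standard remedy is to split the kernel into a near-diagonal piece, where the Hölder-smallness of $r$ and of the chart distortion gives the small factor via \autoref{lem:MultiplicationLemma}, and a far-from-diagonal piece, which is a smoothing (hence compact) operator and gets absorbed into $K$. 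Handling the $E(\Curve)$-weighted lower-order terms of $A_\Curve$ is comparatively routine: they are of the type covered by \autoref{lem:IntegrationbyParts} and contribute only to the compact remainder. The remaining steps — choosing the chart, verifying $\nnorm{u}_{\XC}\sim\nnorm{\tilde u}_{\tilde\cX}$ on compactly supported functions, and bookkeeping the constants — are straightforward given the localization lemmas already proved.
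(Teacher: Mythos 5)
Your overall architecture (localize near $a$, transfer to $\R$, use the invertibility of the model operator $L$, treat tails/commutators as compact) matches the paper's, but your route differs at the decisive point and there it has a gap. You set up a \emph{perturbative} freezing-of-coefficients argument: $\varrho_\Curve(x,y)=\nabs{\kappa(x)-\kappa(y)}(1+r)$ with $r$ ``small'', so that $L-A_\Curve$ (transported) is small plus compact, and you close the estimate by absorbing an $\varepsilon\,\nnorm{u}_{\XC}$ term. The smallness step is not justified as written: on the fractional spaces involved, the operator norm of the error induced by a coefficient like $r$ is controlled (via \autoref{lem:MultiplicationLemma} or multiplier estimates) by a H\"older/Sobolev norm of $r$, not by its sup norm, and that norm does not shrink when you shrink $W$; turning $C^0$-smallness into operator-norm smallness here needs a genuine additional argument (splitting off the frozen value at $a$ and exploiting extra room in the exponents), which you only assert. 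Moreover, the dual-norm comparison $\nnorm{L\tilde u}_{\tilde\cY\dual}\lesssim\nnorm{A_\Curve u}_{\YC\dual}+\dotsb$ is not a pointwise triangle inequality: the $\tilde\cY\dual$-norm tests against functions supported far from the chart, and one must first insert a cutoff and split off the commutator before pulling test functions back to $\Circle$ --- this bookkeeping is the actual core of the argument and is only gestured at in your sketch.

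The irony is that the difficulty you flag as the ``main obstacle'' disappears if you push your own first step to its conclusion: $A_\Curve$ is built from the \emph{geodesic} quantities $\Diff{\alpha}{\Curve}=\triangle/\varrho_\Curve^\alpha$ and $\cD_\Curve$ (see \eqref{eq:DefinitionofACurve}), so under the arc-length chart $\isometry(x)=\pm\varrho_\Curve(a,x)$ on a geodesically convex ball one has $\varrho_\Curve(x,y)=\nabs{\isometry(x)-\isometry(y)}$ \emph{exactly}, i.e.\ $r\equiv0$, and $A_\Curve$ transfers to $L$ with no perturbation at all. This is precisely the paper's proof: choose $W\subset\subset V\subset\subset U$ as metric balls, use the isometric chart so that local norms and the bilinear form coincide on the chart domain, and handle the nonlocal tails by showing that the commutators of $A_\Curve$ and $L$ with the cutoff multiplication operators are compact; since the statement \eqref{eq:LocalSchauder} tolerates a compact remainder $K$, no smallness or absorption is ever needed. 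A further small inaccuracy: $A_\Curve$ carries no $E(\Curve)$-weighted lower-order terms --- those were already removed (as a compact perturbation of $\RJ[\ConfSpace]\at_\Curve$) before this lemma, so there is nothing of that kind left to estimate here.
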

\begin{proof}
We start by picking an isometric coordinate system around the point $a\in \Circle$.
With $L \ceq \int_\Circle \LineElementC$ denoting the curve length, we choose
$U \ceq B_{L/4}(a)$,
$V \ceq B_{L/8}(a)$, and
$W \ceq B_{L/16}(a)$,
where these balls are meant with respect to the intrinsic distance $\varrho_\Curve$.
We point out that $U$ is geodesically convex, i.e., each shortest arc between two points in $U$ is contained in $U$.
Likewise we define the following open balls  
$\tilde U \ceq B_{L/4}(0)$,
$\tilde V \ceq B_{L/8}(0)$,
and
$\tilde W \ceq B_{L/16}(0)$ 
in $\R$,
this time with respect to the standard distance on $\R$.
Now we define the isometric chart
$
	\isometry \colon U \to \tilde U
$ by
$\isometry(x) = \pm \varrho_\Curve(a,x)$, where the sign depends on whether the shortest curve from $a$ to $x$ is oriented positively ($+$) or negatively ($-$) with respect to the standard orientation of~$\Circle$.
Let  $u \in \XCg$ be a function with $\supp(u) \subset W$.
Then we can find a unique function $\tilde u \in \tilde \cX$ with $\supp(\tilde u) \subset \tilde W$ and $u = \tilde u \circ f$.\footnote{{We point out that $\isometry$ is of class $\Sobo[\strongs,\strongp] \subset \Holder[1]$. So indeed, both pullback and pushforward along it preserve the regularity of functions in the classes $\Sobo[\strongs,\strongp]$ and $\Sobo[\weaks,\weakp]$, a consequence of the chain rule \autoref{lem:ChainRule}.}}
Because $f$ is an isometry, we have $\nnorm{u}_{\Sobo[\strongs,\strongp][\varrho,\Curve][U]} = \nnorm{\tilde u}_{\Sobo[\strongs,\strongp][][\tilde U]}$.
From \autoref{lem:LocalizationNormsCircle} and from the continuous invertibility of $L \colon \tilde \cX \to \tilde \cY$, we deduce that there is a $C \geq 0$ (depending only on $\Curve$, $\strongs$, $\strongp$, $U$, and $W$) such that
\begin{align}
	\nnorm{u}_{\XC}
	=
	\nnorm{u}_{\Sobo[\strongs,\strongp][\varrho,\Curve][\Circle]} 
	\leq
	C \, \nnorm{u}_{\Sobo[\strongs,\strongp][\varrho,\Curve][U]} 
	= 
	C \, \nnorm{\tilde u}_{\Sobo[\strongs,\strongp][][\tilde U]}	
	\leq
	C \,\nnorm{\tilde u}_{\tilde \cX}
	\leq 
	C \, \nnorm{L^{-1}} \, \nnorm{L \, \tilde u}_{\tilde \cY\dual}.
	\label{eq:EllipticEstimateL}
\end{align}
Our next goal is to control $\nnorm{L \, \tilde u}_{\tilde \cY\dual}$ by 
$\nnorm{A_\Curve \, u}_{\YC}$ modulo a compact operator.
To this end, we choose a bump function $\eta \in \Sobo[\strongs,\strongp][][\Circle][\R]$ with values in $\intervalcc{0,1}$ that satisfies
$\eta(x) = 1$ for all $x \in \bar W$ and $\supp(\eta) \subset V$.
We denote by $\tilde \eta  \in \Sobo[\strongs,\strongp][][\R][\R]$
the unique function $\tilde \eta \circ \isometry = \eta$ and $\supp(\tilde \eta) \subset \tilde V$.
These functions induce the following multiplication operators:
\begin{align*}
	\multop \colon \XCg &\to \XCg, 
	&  \multop \, u &\ceq \eta \, u;
	&&
	&
	\tilde \multop \colon \tilde \cX &\to \tilde \cX, 
	&
	\tilde \multop \, \varphi &\ceq \tilde \eta \, \varphi;
	\\
	\multop \colon \YCg &\to \YCg, 
	&  \multop \, w &\ceq \eta \, w;
	&&
	&
	\tilde \multop \colon \tilde \cY &\to \tilde \cY,
	& 
	\tilde \multop \, \psi &\ceq \tilde \eta \, \psi.	
\end{align*}
Because of 
$\multop \, u = u$ and
$\tilde \multop \, \tilde u = \tilde u$, we may split $A_\Curve \, u $ and $L \, \tilde u$ into
\begin{align*}
	A_\Curve \, u
	=
	\multop\dual \, A_\Curve \, u
	+
	(A_\Curve \, \multop - 	\multop\dual \, A_\Curve) \, u	
	\qand
	L \, \tilde u
	=
	\tilde \multop\dual \, L \, \tilde u
	+
	(L \, \tilde \multop - 	\tilde \multop\dual \, L) \, \tilde u.
\end{align*}

\textbf{Claim:}
\emph{The operators $Q \ceq A_\Curve \, \multop - \multop\dual \, A_\Curve$ and $\tilde Q \ceq L \, \tilde \multop - 	\tilde \multop\dual \, L$ are compact.}
\newline
For $0<\alpha <1$ and $v \colon \Circle \to \AmbSpace$, the Leibniz rule implies
\begin{align*}
	\Diff{\alpha}{\Curve} \cD_{\Curve} (\eta \, v)
	&=	
	(\eta \circ \prx) \, (\Diff{\alpha}{\Curve} \cD_{\Curve} v)
	+
	(\Diff{\alpha}{\Curve} \eta) \, (\cD_{\Curve} v \circ \pry)
	\notag
	\\
	&\qquad
	+
	(\Diff{\alpha}{\Curve} \cD_{\Curve} \eta) \, (v \circ \prx)
	+
	(\cD_{\Curve} \eta \circ \pry) \,  (\Diff{\alpha}{\Curve} v),
\end{align*}
where $\prx$, $\pry \colon \Torus \to \Circle$  are the projections given by
$\prx(x,y) = x$ and $\pry(x,y) = y$.
This allows us to write
\begin{align*}
	\Diff{\strongss}{\Curve} \cD_\Curve (\eta \, u) 
	&= (\eta \circ \prx) \cdot \Diff{s+\diffoffset}{\Curve} \cD_\Curve u +K_1 \, u
	\quad \text{and}
	\\
	\Diff{\weakss}{\Curve} \cD_\Curve (\eta \, w) 
	&= (\eta \circ \prx) \cdot \Diff{s-\diffoffset}{\Curve} \cD_\Curve w +K_2 \, w,
\end{align*}
with compact operators $K_1 \colon \XCg \to \Lebesgue[\weakp][\singularmeasure_\Curve][\Circle^2][\AmbSpace])\dual$ and $K_2 \colon \YCg \to \Lebesgue[\weakp][\singularmeasure_\Curve][\Circle^2][\AmbSpace]$.
Now we have
\begin{align*}
	\MoveEqLeft
	\ninnerprod{(A_\Curve \, \multop - \multop\dual \, A_\Curve) \, u, w}_{\YCgd,\YCg}
	\\
	&=
	\textstyle
	\int_{\Circle^2}
	\bigparen{
	\ninnerprod{
		\Diff{\strongss}{\Curve} \cD_\Curve (\eta \, u),
		\Diff{\weakss}{\Curve} \cD_\Curve w
	}
	-
	\ninnerprod{
		\Diff{\strongss}{\Curve} \cD_\Curve u,
		\Diff{\weakss}{\Curve} \cD_\Curve (\eta \, w)
	}
	}
	\singularmeasure_\Curve
	\\
	&=
	\ninnerprod{
		K_1 u,
		\Diff{\weakss}{\Curve} \cD_\Curve w
	}_{(\Lebesgue[\weakp][\singularmeasure_\Curve])\dual,\Lebesgue[\weakp][\singularmeasure_\Curve]}
	-
	\ninnerprod{
		\Diff{\strongss}{\Curve} \cD_\Curve u,
		K_2 \, w
	}_{(\Lebesgue[\weakp][\singularmeasure_\Curve])\dual,\Lebesgue[\weakp][\singularmeasure_\Curve]}
	,
\end{align*}
thus $A_\Curve \, \multop - \multop\dual \, A_\Curve = (\Diff{\weakss}{\Curve} \cD_\Curve)\dual \, K_1 - K_2\dual \, \Diff{\strongss}{\Curve} \cD_\Curve$ is compact.
This shows the first statement. The second statement is proven analogously.

\bigskip

From this claim it follows that the ``leading term'' of $L \tilde u$ is $\tilde \multop\dual \, L \, \tilde u$. Next we are going to bound $\nnorm{\tilde \multop\dual \, L \, \tilde u}_{\tilde \cY\dual}$.
For this it suffices to test
$\tilde \multop\dual \, L \, \tilde u$ against only
such $\tilde w \in \tilde Y$ with $\supp(\tilde w) \subset V$. More precisely, 
we have
\begin{align*}
	\nnorm{\tilde \multop\dual \, L \, \tilde u}_{\tilde \cY\dual}
	=
	\sup_{\tilde w \in \tilde \cY, \; \supp(\tilde w) \subset  \tilde V}
	\frac{
		\ninnerprod{ L \, \tilde u, \tilde \eta \, \tilde w}_{\tilde\cY\dual,\tilde\cY}
	}{
		\nnorm{\tilde w}_{\tilde \cY}
	}.
\end{align*}
For every such $\tilde w$ 
there is a $w \in \YCg$ with $\supp(w) \subset V$ such that $w = \tilde w \circ \isometry$.
Since also $\tilde u$ and $\tilde \eta$ are constructed in this way from $u$ and $\eta$, we have
\begin{align*}
	\ninnerprod{L \, \tilde u, \tilde \eta \, \tilde w}_{\tilde \cY\dual,\tilde \cY}
	&=
	\textstyle
	\int_{\tilde U} \int_{\tilde U}
	\ninnerprod{
		D^\strongs \, \cD \, \tilde u ,
		D^\weaks \, \cD \, (\tilde \eta_a \, \tilde w)
	}
	\, \dd \mu
	\\
	&=
	\textstyle
	\int_{U} \int_{U}
	\ninnerprod{
		D_\Curve^\strongs \, \cD_\Curve  \, u ,
		D_\Curve^\weaks \, \cD_\Curve \, (\eta \, w)
	}
	\, \dd \mu_\Curve
	\\
	&=	
	\ninnerprod{A_\Curve \, u, \eta \, w}_{\YCgd,\YCg}
	=
	\ninnerprod{\multop\dual \, A_\Curve \, u, w}_{\YCgd,\YCg}
	=
	\ninnerprod{A_\Curve \, u, w}_{\YCgd,\YCg}	
	+
	\ninnerprod{ Q \, u, w}_{\YCgd,\YCg}	
	\\
	&\leq
	\bigparen{ 
		\nnorm{A_\Curve \, u}_{\YCd}
		+ 
		\nnorm{Q \, u}_{\YCd}
	} 
	\, 		
	\nnorm{w}_{\YC}
	.
\end{align*}
By \autoref{lem:LocalizationNormsCircle} and \autoref{lem:LocalizationNormsReals}, there is a $C \geq 0$ (depending only on $\Curve$, $\weaks$, $\weakp$, $U$, $W$, $\tilde U$, and $\tilde W$ such that
$
	\nnorm{w}_{\YC} 
	\leq 
	C \, \nnorm{w}_{\Sobo[\weaks,\weakp][\varrho,\Curve][U]}
	=
	C \, \nnorm{\tilde w}_{\Sobo[\weaks,\weakp][][\tilde U]}	
	\leq 
	C^2 \, \nnorm{\tilde w}_{\tilde \cY}
	.
$
Thus we obtain
\begin{align*}
	\nnorm{\tilde \multop\dual \, L \, \tilde u}_{\tilde \cY\dual}
	\leq C^2 \, 	\bigparen{ 
		\nnorm{A_\Curve \, u}_{\YCd}
		+ 
		\nnorm{Q \, u}_{\YCd}
	}.
\end{align*}
Combined with \eqref{eq:EllipticEstimateL}, this leads to
\begin{align*}
	\nnorm{u}_{\XC}
	&\leq	
	C \, \bigparen{
		\nnorm{A_\Curve \, u}_{\YCd}
		+ 
		\nnorm{Q \, u}_{\YCd}
		+
		\nnorm{\tilde Q \, \tilde u}_{\tilde \cY\dual}		
	}
	.
\end{align*}
Again by \autoref{lem:LocalizationNormsCircle} and \autoref{lem:LocalizationNormsReals}, the mapping $u \mapsto u = H u \mapsto \tilde u$ is continuous. 
Since $\tilde Q$ is compact, the mapping $u \mapsto \tilde Q \, \tilde u$ is compact as well.
This concludes the proof.
\end{proof}

Finally, we pieces together the local estimates from above.

\begin{lemma}[Global elliptic estimate]\label{lem:GlobalEllipticEstimate}
Let $\Curve \in \ConfSpace$. Then there are $C >0$ and a compact, linear operator $K \colon \XC \to \cZ$ into some Banach space $\cZ$ such that
\begin{align*}
	\nnorm{u}_{\XC}
	\leq 
	C\, \bigparen{
		\nnorm{A_\Curve \, u}_{\YCd}
		+
		\nnorm{K \, u}_{\cZ}
	}
	\quad
	\text{for all $u \in \cX$.}
\end{align*}
\end{lemma}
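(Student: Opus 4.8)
The plan is to assemble the global estimate from the local estimates of \autoref{lem:LocalEllipticEstimate} by a partition of unity. For every point $a \in \Circle$ let $W_a \subset\subset U_a \subset \Circle$ and the compact operator $K_a \colon \XC \to \cZ_a$ be as furnished by \autoref{lem:LocalEllipticEstimate}. Since $\Circle$ is compact, finitely many of the open sets $W_a$ cover $\Circle$; call them $W_1,\dots,W_N$, with associated compact operators $K_1,\dots,K_N$ into Banach spaces $\cZ_1,\dots,\cZ_N$. Choose a smooth partition of unity $\chi_1,\dots,\chi_N \in \Holder[\infty][][\Circle][\R]$ subordinate to this cover, i.e.\ $\supp(\chi_k) \subset W_k$ and $\sum_{k=1}^N \chi_k \equiv 1$. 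By the product rule \autoref{lem:MultiplicationLemma}, multiplication by the fixed function $\chi_k$ is a bounded linear operator on $\XCg$ and on $\YCg$, hence (by dualization) also on $\YCd$; in particular $\chi_k\, u \in \XCg$ with $\supp(\chi_k\, u) \subset W_k$ whenever $u \in \XCg$.

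Fix $u \in \XCg$. The second step is to apply \autoref{lem:LocalEllipticEstimate} to each $\chi_k\, u$ and then to peel the cutoff off the leading term. Writing $M_{\chi_k}$ for multiplication by $\chi_k$, one has the operator identity
\begin{align*}
	A_\Curve(\chi_k\, u) = M_{\chi_k}\dual\, A_\Curve\, u + Q_k\, u,
	\qquad
	Q_k \ceq A_\Curve\, M_{\chi_k} - M_{\chi_k}\dual\, A_\Curve,
\end{align*}
and the commutator $Q_k \colon \XCg \to \YCd$ is compact --- this is exactly the content of the Claim inside the proof of \autoref{lem:LocalEllipticEstimate} (apply the Leibniz rule to $\Diff{\strongss}{\Curve} \cD_\Curve(\chi_k\, u)$ and to $\Diff{\weakss}{\Curve} \cD_\Curve(\chi_k\, w)$ and invoke the compactness of the resulting lower-order multiplication operators). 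Since $\nnorm{M_{\chi_k}\dual\, A_\Curve\, u}_{\YCd} \leq \nnorm{M_{\chi_k}}_{L(\YCg)}\, \nnorm{A_\Curve\, u}_{\YCd}$, combining this with \eqref{eq:LocalSchauder} yields, for a constant $C$ independent of $u$,
\begin{align*}
	\nnorm{\chi_k\, u}_{\XC}
	\leq
	C\,\bigparen{
		\nnorm{A_\Curve\, u}_{\YCd}
		+
		\nnorm{Q_k\, u}_{\YCd}
		+
		\nnorm{K_k(\chi_k\, u)}_{\cZ_k}
	}.
\end{align*}

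Finally, summing over $k$ and using $u = \sum_{k=1}^N \chi_k\, u$ together with the triangle inequality on $\XC$ bounds $\nnorm{u}_{\XC}$ by the sum of the right-hand sides. Setting $\cZ \ceq (\YCd)^N \oplus \cZ_1 \oplus \dots \oplus \cZ_N$ and $K\, u \ceq \bigparen{Q_1 u,\dots,Q_N u, K_1(\chi_1 u),\dots,K_N(\chi_N u)}$, the operator $K \colon \XC \to \cZ$ is compact, being a finite direct sum of compact operators (the last $N$ blocks are the compact $K_k$ precomposed with the bounded multiplications $M_{\chi_k}$), and we obtain precisely $\nnorm{u}_{\XC} \leq C\,\bigparen{\nnorm{A_\Curve\, u}_{\YCd} + \nnorm{K\, u}_{\cZ}}$. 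Nothing here is genuinely hard: the only points requiring an argument are the boundedness of the cutoff multiplications on the \SoboSlobo spaces $\XCg$ and $\YCg$ (covered by \autoref{lem:MultiplicationLemma}) and the compactness of the commutators $Q_k$ (already contained in \autoref{lem:LocalEllipticEstimate}), so I expect this step to be bookkeeping rather than a real obstacle --- all the analytic substance has been spent in \autoref{lem:LocalEllipticEstimate} and the continuous invertibility of $L$.
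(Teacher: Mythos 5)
Your proposal is correct and follows essentially the same route as the paper's proof: a finite cover with local elliptic estimates, a subordinate smooth partition of unity, the compactness of the commutators $A_\Curve\, M_{\chi_k} - M_{\chi_k}\dual\, A_\Curve$ taken from the claim inside the local lemma, and assembly of the compact operator $K$ as a finite direct sum. The only (harmless) cosmetic difference is that you feed $K_k\circ M_{\chi_k}$ into $K$, which matches the estimate you actually use and is just as compact.
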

\begin{proof}
We start by covering $\Circle$ by finitely many open sets $W_i \subset\subset U_i \subset \Circle$, $i = 1, \dotsc,k$, as in \autoref{lem:LocalEllipticEstimate} such that there are local elliptic estimates of the form
\begin{align*}
	\nnorm{u}_{\XC}
	\leq 
	C_i \, \bigparen{ 
		\nnorm{A_\Curve \, u}_{\YCd}
		+
		\nnorm{K_i \, u}_{\cZ_i}		
	}
	\quad
	\text{for all $u \in \XC$ with $\supp(u) \subset W_i$},
\end{align*}
with suitable constants $C_i \geq 0$ and suitable compact operators $K_i \colon \XC \to \cZ_i$ into Banach spaces $\cZ_i$.
Now we pick a smooth partition of unity $\set{\varphi_1,\dotsc,\varphi_k} \subset \Holder[\infty][][\Circle][\intervalcc{0,1}]$ subordinate to $\set{W_1,\dotsc,W_k}$.
We denote the corresponding multiplication operators by
$\varPhi_i \colon \XCg \to \XCg$, $\varPhi_i \, u \ceq \varphi_i \, u$
and
$\varPhi_i \colon \YCg \to \YCg$, $\varPhi_i \, w \ceq \varphi_i \, w$.
Now we observe
\begin{align*}
	\nnorm{A_\Curve \, \varPhi_i\, u}_{\cY\dual}
	&\leq 
	\nnorm{\varPhi_i\dual \, A_\Curve \,  u}_{\cY\dual}
	+
	\nnorm{(A_\Curve \, \varPhi_i - \varPhi_i\dual \, A_\Curve) \, u}_{\cY\dual}	
	\\
	&\leq 
	\nnorm{\varPhi_i\dual}  \,\nnorm{A_\Curve \,  u}_{\cY\dual}	
	+
	\nnorm{(A_\Curve \, \varPhi_i - \varPhi_i\dual \, A_\Curve) \, u}_{\cY\dual}.	
\end{align*}
With the triangle inequality, we obtain 
\begin{align*}
	\nnorm{u}_{\cX}
	=
	\textstyle
	\nnorm{\sum_{i=1}^k \varphi_i \, u}_{\cX}	
	&\leq 
	\textstyle
	\sum_{i=1}^k \nnorm{\varphi_i \, u}_{\cX}
	\leq 
	\textstyle		
	\sum_{i=1}^k 	
	C_i \, 	
	\Bigparen{
		\nnorm{A_\Curve \,  \varPhi_i \, u }_{\cY\dual}
		+
		\nnorm{K_i \, \varPhi_i \, u}_{\cZ_i}
	}
	\\
	&\leq 
	\textstyle		
	\sum_{i=1}^k 	
	C_i \, 	
	\Bigparen{
		\nnorm{ \varPhi_i} \, \nnorm{A_\Curve \, u }_{\cY\dual}
		+
		\nnorm{(A_\Curve \, \varPhi_i - \varPhi_i\dual \, A_\Curve) \, u}_{\cY\dual}
		+
		\nnorm{K_i \, \varPhi_i \, u}_{\cZ_i}
	}	
	.
\end{align*}
The separate claim in the proof of \autoref{lem:LocalEllipticEstimate} shows that $A_\Curve \, \varPhi_i - \varPhi_i\dual \, A_\Curve$ is a compact operator.
So setting
$C \ceq \sum_{i=1}^k	C_i \, \nnorm{\varPhi_i} + \max(C_1, \dotsc, C_k)$,
$\cZ \ceq (\YCgd \oplus \cZ_1) \oplus \dotsm \oplus (\YCgd \oplus \cZ_k)$,
$\nnorm{(\eta_1,z_1,\dotsc,\eta_k,z_k)}_{\cZ} \ceq \nnorm{\eta_1}_{\YCgd} + \nnorm{z_1}_{\cZ_1} + \dotsm + \nnorm{\eta_k}_{\YCgd} + \nnorm{z_k}_{\cZ_k}$,
and
\begin{align*}
	K \, u 
	\ceq 
	\Bigparen{
		(A_\Curve \, \varPhi_1 - \varPhi_1\dual \, A_\Curve) \, u,
		K_1 \, u
		,
		\dotsc,
				(A_\Curve \, \varPhi_k - \varPhi_k\dual \, A_\Curve) \, u,
		K_k \, u
	}
\end{align*}
this concludes the proof.
\end{proof}

\section{Constraints}\label{sec:Constraints}
Our aim in this section is to set up constraints on the barycenter and on the parametrization of curves
and to show \autoref{theo:ProjectedGradients}, i.e., well-de\-fined\-ness of the associated projected gradient and its flow.

Here, as before, we abbreviated $\hilbertss \ceq \hilberts -1=\tfrac{1}{2}$.
By the choice of $\diffoffset$ and $\strongp$ (see \eqref{eq:Parameters}), we have $\ConfSpace \subset \SoboC[1+\strongss,\strongp] \subset \SoboC[1,\infty]$. Hence for each $\Curve \in \ConfSpace$, the functions $x \mapsto \nabs{\Curve'(x)}$ and $x \mapsto \nabs{\Curve'(x)}^{-1}$ are both members of $\Sobo[\strongss,\strongp][][\Circle][\R] \hookrightarrow \Lebesgue[\infty][][\Circle][\R]$.
By the chain rule \autoref{lem:ChainRule}, the following mapping is well-defined:
\begin{align}\label{eq:constraint}
	\ConstraintMap \colon \ConfSpace \to \Sobo[\strongss,\strongp][][\Circle][\R] \oplus \AmbSpace,
	\quad
	\ConstraintMap(\Curve) 
	\ceq 
	\Bigparen{
		\log(\nabs{\Curve'}) - \log(L),
		\textstyle
		\int_{\,\Circle} \Curve \, \LineElementC
	}
	.
\end{align}
A curve $\Curve \in \ConfSpace$ is parametrized by constant speed $L$ and has $0$ as barycenter if and only if $\ConstraintMap(\Curve) = (0,0)$. 
Our main task is to prove \autoref{prop:SubmanifoldTheorem} below; it states that the feasible set
\begin{align*}
		\ConstraintMfld \ceq \set{\Curve \in \ConfSpace | \ConstraintMap(\Curve) = (0,0)},
\end{align*}
equipped with a generalized Riesz isomorphism inherited from $\RJ[\ConfSpace]$ is almost a Riemannian manifold, at least in view of the \emph{projected} or \emph{intrinsic} gradients. \autoref{theo:ProjectedGradients} will follow from this immediately.

To this end (and in analogy to the space triple $\XC$, $\HC$, and $\YC$), we introduce the Banach space triple
\begin{align*}
	\XN \ceq \Sobo[\strongss,\strongp][][\Circle][\R] \oplus \AmbSpace,
	\;
	\HN \ceq \Sobo[\hilbertss,2][][\Circle][\R]	 \oplus \AmbSpace,
	\;
	\YN \ceq \Sobo[\weakss,\weakp][][\Circle][\R] \oplus \AmbSpace
\end{align*}
and the continuous dense injections
$
	\Ri[\TargetSpace] \colon \XN \hookrightarrow \HN
$ and
$
	\Rj[\TargetSpace] \colon \HN \hookrightarrow \YN
$.
A straight-forward computation shows that $\ConstraintMap$ is differentiable and that its derivative $D\ConstraintMap(\Curve) \colon \XCg \to \XN$ is given by
\begin{align}\label{eq:Phi-diff}
	D\ConstraintMap(\Curve) \, u = 
	\textstyle	
	\bigparen{
		\;
		\ninnerprod{\cD_\Curve \Curve, \cD_\Curve u}
		\;
		,
		\;
		\int_\Circle \nparen{
			u +\Curve \, \ninnerprod{\cD_\Curve \Curve, \cD_\Curve u}
		}\, \LineElementC
		\;
	}
	\quad
	\text{for $u \in \XCg$.}
\end{align}
\autoref{lem:MultiplicationLemma} implies that
$u \mapsto \ninnerprod{\cD_\Curve \Curve ,\cD_\Curve u}$
induces well-defined and continuous linear operators
$\XCg \to \XN$,
$\HCg \to \HN$,
and
$\YCg \to \YN$,
provided that $\diffoffset > 0$, and $\strongp \geq 2$.
With the usual convention
$\RX[\ConstraintMap][\Curve] \ceq D\ConstraintMap(\Curve)$
etc.\ we generate a triple
$(\RX[\ConstraintMap][\Curve],\RH[\ConstraintMap][\Curve],\RY[\ConstraintMap][\Curve])$ of continuous, linear operators that makes the following diagram commutative:
\begin{equation}
	\begin{tikzcd}[]
	\XCg
		\ar[d,hook, two heads,"{\Ri[\ConfSpace]}"]
		\ar[rr, "{\RX[\ConstraintMap][\Curve]}"]
	&&\XN
		\ar[d,hook, two heads,"{\Ri[\TargetSpace]}"]	
	\\
	\HCg
		\ar[d,hook, two heads,"{\Rj[\ConfSpace]}"]	
		\ar[rr, "{\RH[\ConstraintMap][\Curve]}"]	
	&&\HN
		\ar[d,hook, two heads,"{\Rj[\TargetSpace]}"]		
	\\
	\YCg
		\ar[rr, "{\RY[\ConstraintMap][\Curve]}"]	
	&&\YN
	\end{tikzcd}
	\label{eq:PhiDiagramm}
\end{equation}
By \autoref{lem:RightInverse}, the mapping $\ConstraintMap$ is a submersion. Thus the implicit function theorem implies that the set $\ConstraintMfld$ is a Banach submanifold of $\ConfSpace$.
For $\Curve \in \ConstraintMfld$ and $\RZ \in \set{\RX,\RH,\RY}$, define
$
	\RZ[\ConstraintMfld][\Curve] \ceq \ker( \RZ[\ConstraintMap][\Curve])
$.
The set $\RZ[\ConstraintMfld] \ceq \coprod_{\Curve \in \ConstraintMfld} \set{\Curve} \times \RZ[\ConstraintMfld][\Curve]$ together with the footpoint map $\pi_{\RZ[\ConstraintMfld]} \colon \RZ[\ConstraintMfld] \to \ConstraintMfld$ constitutes a smooth Banach vector bundle over $\ConstraintMfld$ and we have $T\ConstraintMfld = \RX[\ConstraintMfld]$. The Banach spaces $\RH[\ConstraintMfld][\Curve]$ and $\RY[\ConstraintMfld][\Curve]$ are the completions of $T_\Curve \ConstraintMfld = \ker( D \ConstraintMap(\Curve))$ with respect to the topologies of $\HCg$ and $\YCg$, respectively.
Via Galerkin subspace projection, we may define linear operators
$\RI[\ConstraintMfld]\at_\Curve \colon \RH[\ConstraintMfld][\Curve] \to \RH[\ConstraintMfld][\Curve][\dual]$
and
$\RJ[\ConstraintMfld]\at_\Curve \colon \RX[\ConstraintMfld][\Curve] \to \RY[\ConstraintMfld][\Curve][\dual]$
by
\begin{align*}
	\ninnerprod{\RI[\ConstraintMfld]\at_\Curve \, v_1, v_2}
	\ceq 
	\ninnerprod{\RI[\ConfSpace]\at_\Curve \, v_1, v_2}
	\qand
	\ninnerprod{\RJ[\ConstraintMfld]\at_\Curve \, u, w}
	\ceq \ninnerprod{\RJ[\ConfSpace]\at_\Curve \, u, w}
\end{align*}
for $v_1$, $v_2 \in \RH[\ConstraintMfld][\Curve]$, $u \in \RX[\ConstraintMfld][\Curve]$, and $w \in \RY[\ConstraintMfld][\Curve]$.
The mappings $\Ri[\ConfSpace]$ and $\Rj[\ConfSpace]$ induce continuous injections
$\Ri[\ConstraintMfld]\at_\Curve \colon \RX[\ConstraintMfld][\Curve] \hookrightarrow \RH[\ConstraintMfld][\Curve]$ 
and
$\Rj[\ConstraintMfld]\at_\Curve \colon \RH[\ConstraintMfld][\Curve] \hookrightarrow \RY[\ConstraintMfld][\Curve]$.
By \eqref{eq:j'J=Ii}, we have $\Rj[\ConstraintMfld]\dual \, \RJ[\ConstraintMfld] = \RI[\ConstraintMfld] \, \Ri[\ConstraintMfld]$.

We define the \emph{intrinsic gradient}
$\grad_\ConstraintMfld(\Energy|_\ConstraintMfld)\at_\Curve$ by 
\begin{align*}
	\ninnerprod{\RJ[\ConstraintMfld]\at_\Curve \grad_\ConstraintMfld(\Energy|_\ConstraintMfld)\at_\Curve, w}
	=
	D(\Energy|_\ConstraintMfld)(\Curve) \,w 
	\quad
	\text{for all $w \in \RY[\ConstraintMfld][\Curve]$}
\end{align*}
or simply by $\grad_\ConstraintMfld(\Energy|_\ConstraintMfld)\at_\Curve  \ceq (\RJ[\ConstraintMfld]\at_\Curve)^{-1} D(\Energy|_\ConstraintMfld)(\Curve)$.
Its well-definedness is established by the following theorem which states that $\ConstraintMfld$ has a ``nearly Riemannian structure''. Note that $\ConstraintMfld$ cannot support a Riemannian structure because the tangent space $T_\Curve \ConstraintMfld$ is not \emph{Hilbertable} in the sense that there is no positive-definite bilinear form whose norm topologizes $T_\Curve \ConstraintMfld$.

\begin{theorem}\label{prop:SubmanifoldTheorem}
The operators $\RI[\ConstraintMfld]\at_\Curve$ and
$\RJ[\ConstraintMfld]\at_\Curve$ define a family of
continuous and continuously invertible operators.
\end{theorem}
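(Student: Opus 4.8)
The plan is to reduce \autoref{prop:SubmanifoldTheorem} to the already-established invertibility of the ambient operators $\RI[\ConfSpace]\at_\Curve \colon \HCg \to \HCgd$ and $\RJ[\ConfSpace]\at_\Curve \colon \XCg \to \YCgd$ from \autoref{prop:MetricDefinition}, together with the submersion property of $\ConstraintMap$ from \autoref{lem:RightInverse}. The key structural fact is that $\RZ[\ConstraintMfld][\Curve]$ is a \emph{closed, complemented} subspace of $\RZ[\ConfSpace][\Curve]$ for $\RZ \in \{\RX,\RH,\RY\}$: indeed $\RX[\ConstraintMfld][\Curve] = \ker \RX[\ConstraintMap][\Curve]$ is closed since $\RX[\ConstraintMap][\Curve]$ is continuous, and it is complemented because $\ConstraintMap$ is a submersion (so $\RX[\ConstraintMap][\Curve]$ admits a bounded right inverse $\cR_\Curve$, whence $\cR_\Curve \circ \RX[\ConstraintMap][\Curve]$ and $\id - \cR_\Curve \circ \RX[\ConstraintMap][\Curve]$ are the projections onto a complement and onto $\RX[\ConstraintMfld][\Curve]$, respectively). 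The same argument, applied with $\RH[\ConstraintMap][\Curve]$ and $\RY[\ConstraintMap][\Curve]$ (which are continuous linear maps on $\HCg$, $\YCg$ by the diagram \eqref{eq:PhiDiagramm} and \autoref{lem:MultiplicationLemma}), gives closed complemented subspaces $\RH[\ConstraintMfld][\Curve]$, $\RY[\ConstraintMfld][\Curve]$ there as well.

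First I would treat $\RI[\ConstraintMfld]\at_\Curve \colon \RH[\ConstraintMfld][\Curve] \to \RH[\ConstraintMfld][\Curve][\dual]$. Since $\RH[\ConstraintMfld][\Curve]$ is a closed subspace of the Hilbert space $\HCg$, it is itself a Hilbert space, and by definition $\ninnerprod{\RI[\ConstraintMfld]\at_\Curve \, v_1,v_2} = \ninnerprod{\RI[\ConfSpace]\at_\Curve \, v_1, v_2}$ for $v_1,v_2 \in \RH[\ConstraintMfld][\Curve]$ is the restriction of the continuous, symmetric, positive-definite bilinear form $\ninnerprod{\RI[\ConfSpace]\at_\Curve \cdot,\cdot}$ defining the scalar product on $\HCg$. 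That restriction is still bounded and coercive on $\RH[\ConstraintMfld][\Curve]$ — coercivity is inherited from the ambient space, using that the $\RI[\ConfSpace]\at_\Curve$-norm is equivalent to $\nnorm{\cdot}_{\Hnorm{\Curve}}$ on all of $\HCg$ and hence on the subspace. By Lax–Milgram (or simply the Riesz representation theorem for the inherited Hilbert structure), $\RI[\ConstraintMfld]\at_\Curve$ is a continuous linear isomorphism.

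Next I would handle $\RJ[\ConstraintMfld]\at_\Curve \colon \RX[\ConstraintMfld][\Curve] \to \RY[\ConstraintMfld][\Curve][\dual]$, which is the genuinely delicate point because $\RX[\ConstraintMfld][\Curve] \neq \RY[\ConstraintMfld][\Curve]$ is not Hilbertable. The strategy is to exploit the commutative diagram $\Rj[\ConstraintMfld]\dual \, \RJ[\ConstraintMfld] = \RI[\ConstraintMfld] \, \Ri[\ConstraintMfld]$ noted just before the theorem statement, mirroring the ambient diagram \eqref{eq:j'J=Ii}. Concretely: \emph{injectivity} of $\RJ[\ConstraintMfld]\at_\Curve$ follows verbatim as in \autoref{prop:MetricDefinition} (the pairing $\ninnerprod{\RJ[\ConstraintMfld]\at_\Curve u, \Rj[\ConstraintMfld]\Ri[\ConstraintMfld] u}$ dominates $\nnorm{\diff{\sigma}{\Curve}\cD_\Curve u}_{\Lebesgue[2][\singularmeasure_\Curve]}^2 + \nabs{\int_\Circle u\,\LineElement_\Curve}^2$, forcing $u = 0$ since the Galerkin-restricted form is literally the ambient form). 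For \emph{surjectivity} I would argue that $\RJ[\ConstraintMfld]\at_\Curve$ is Fredholm of index zero and conclude from injectivity. The Fredholm property I would obtain by pulling the ambient decomposition through the projections: writing $p_{\RX[\ConstraintMfld]}$, $p_{\RY[\ConstraintMfld]}^{\dual}$ for the relevant (dual) projections, $\RJ[\ConstraintMfld]\at_\Curve = p_{\RY[\ConstraintMfld]}^{\dual}\, \RJ[\ConfSpace]\at_\Curve\, \iota_{\RX[\ConstraintMfld]}$ is a ``compression'' of the invertible operator $\RJ[\ConfSpace]\at_\Curve$ to complemented subspaces; such a compression is Fredholm of index zero precisely when the ``corner'' blocks are relatively compact, which here follows because $\RX[\ConstraintMap][\Curve]$ and $\RY[\ConstraintMap][\Curve]$ are differential operators of order strictly less than those in $\RJ[\ConfSpace]\at_\Curve$ — the same mechanism (order reduction $\Rightarrow$ compactness via Rellich) that was used throughout \autoref{lem:FredholmProperty}. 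Alternatively, and perhaps more cleanly, I would go through the Hilbert middle: by the diagram, $\RJ[\ConstraintMfld]\at_\Curve$ factors as $(\Rj[\ConstraintMfld]\dual)^{-1} \RI[\ConstraintMfld]\at_\Curve\, \Ri[\ConstraintMfld]$ wherever the right-hand side lands in $\RY[\ConstraintMfld][\Curve][\dual]$, and one checks — exactly as in the ambient case in \autoref{prop:MetricDefinition}, using \autoref{lem:IntegrationbyParts} and \autoref{lem:MultiplicationLemma} to handle the lower-order and energy-weighted terms — that $\RI[\ConstraintMfld]\at_\Curve$ maps $\Ri[\ConstraintMfld]\RX[\ConstraintMfld][\Curve]$ into the range of $\Rj[\ConstraintMfld]\dual$, giving the bounded inverse on $\RY[\ConstraintMfld][\Curve][\dual]$.

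The main obstacle will be the surjectivity/Fredholm step for $\RJ[\ConstraintMfld]\at_\Curve$: one must verify that restricting to the kernel of the constraint differential does not destroy the elliptic (pseudo-differential, order-three) structure that made $\RJ[\ConfSpace]\at_\Curve$ invertible. This is a genuine point because $\RY[\ConstraintMfld][\Curve][\dual]$ is \emph{not} simply the dual of $\RX[\ConstraintMfld][\Curve]$ paired inside a Hilbert space, so one cannot merely invoke Lax–Milgram. The cleanest route is to show that the Galerkin projection commutes with $\RJ[\ConfSpace]\at_\Curve$ modulo compact (finite-rank, even, since $\TargetSpace$-valued constraints contribute only finitely many ``modes'' beyond the $\Sobo[\strongss,\strongp]$-factor) operators; then the index-zero Fredholm property of $\RJ[\ConstraintMfld]\at_\Curve$ is inherited from that of $\RJ[\ConfSpace]\at_\Curve$, and injectivity closes the argument. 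Continuity of $\Curve \mapsto \RI[\ConstraintMfld]\at_\Curve$ and $\Curve \mapsto \RJ[\ConstraintMfld]\at_\Curve$ — and hence of their inverses — follows from the $\Holder[1]$-regularity of $\RI[\ConfSpace]$, $\RJ[\ConfSpace]$ established in \autoref{prop:MetricDefinition} together with the smooth ($\Holder[1]$, at least) dependence of the projections on $\Curve$, which in turn comes from the smoothness of $\ConstraintMap$ and the implicit function theorem already invoked for the submanifold structure.
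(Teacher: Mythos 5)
Your treatment of $\RI[\ConstraintMfld]\at_\Curve$ (restriction of the scalar product to the closed subspace $\RH[\ConstraintMfld][\Curve]$ of the Hilbert space $\HCg$, then Riesz/Lax--Milgram) and your injectivity argument for $\RJ[\ConstraintMfld]\at_\Curve$ agree with the paper's proof. The gap is in the surjectivity step. You assert that $\RJ[\ConstraintMfld]\at_\Curve = \RY[\Morphism][\Curve][\dual]\,\RJ[\ConfSpace]\at_\Curve\,\RX[\Morphism][\Curve]$, being a compression of an invertible operator to complemented subspaces, is Fredholm of index zero because the ``corner'' contributions are compact, ``finite-rank, even''. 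This is not so: the constraint target $\TargetSpace = \Sobo[\strongss,\strongp][][\Circle][\R]\oplus\AmbSpace$ is infinite-dimensional --- the constant-speed condition is a function-valued, first-order constraint, not finitely many scalars --- so $\ker(\RX[\ConstraintMap][\Curve])$ has infinite codimension, and the projections onto it are neither finite-rank nor compact corrections; they involve solving the constraint and are zeroth-order, non-smoothing operations, so no Rellich/order-counting argument applies. Moreover, a compression of an invertible operator to complemented subspaces carries no abstract index information here: the complement on the domain side is isomorphic to $\XNg$, while the complement on the codomain side (the image of $\RY[\ConstraintMap][\Curve][\dual]$ inside $\YCgd$) is isomorphic to $\YNgd$, two different infinite-dimensional Banach spaces, so ``index zero'' cannot be read off formally. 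What your sketch silently assumes is exactly the invertibility of the Schur complement $\RX[\ConstraintMap][\Curve]\,(\RJ[\ConfSpace]\at_\Curve)^{-1}\,\RY[\ConstraintMap][\Curve][\dual]\colon \YNgd \to \XNg$, equivalently the solvability of the saddle point system \eqref{eq:SaddlePointEquation}.

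That Schur-complement step is where the paper invests its real work, and it cannot be waved through: \autoref{lem:RightInverse} constructs an explicit right inverse $B$ of $D\ConstraintMap(\Curve)$ compatible with all three scales $\XC$, $\HC$, $\YC$ (using the geometry of the constraint, e.g.\ the invertibility of $\varTheta_\Curve$); \autoref{lem:BstarBinv} shows that $T \ceq \RY[B][\Curve][\dual]\,\RJ[\ConfSpace]\at_\Curve\,\RX[B][\Curve]$ is a compact perturbation of $\RJ[\TargetSpace]\at_\Curve$ by an explicit Leibniz-type expansion (this is the genuine compactness statement, and it concerns operators on the constraint space, not the corners you describe); and \autoref{lem:Saddlepointmatrix} assembles this, via generalized adjoints, into invertibility of the saddle point matrix. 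Surjectivity of $\RJ[\ConstraintMfld]\at_\Curve$ then follows by Hahn--Banach extension of $\eta$ to $\eta_0\in\YCgd$, solving \eqref{eq:SaddlePointEquation}, and discarding the Lagrange multiplier. Your alternative ``cleaner route'' through $(\Rj[\ConstraintMfld]\dual)^{-1}\,\RI[\ConstraintMfld]\at_\Curve\,\Ri[\ConstraintMfld]$ does not close the gap either: $\Rj[\ConstraintMfld]\dual$ has dense image but is not surjective, so ``checking that the image lands in the range of $\Rj[\ConstraintMfld]\dual$'' is the surjectivity claim itself in disguise. To repair the proposal you need an analogue of the right-inverse construction and the compact-perturbation analysis of the Schur complement (or some substitute argument of comparable substance).
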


\begin{proof}
Denote by $\Morphism \colon \ConstraintMfld \hookrightarrow \ConfSpace$ and by 
\begin{align*}
	\RX[\Morphism][\Curve]  \colon \RX[\ConstraintMfld][\Curve] \hookrightarrow \XCg,
	\quad
	\RH[\Morphism][\Curve]  \colon \RH[\ConstraintMfld][\Curve] \hookrightarrow \HCg,
	\qand
	\RY[\Morphism][\Curve]  \colon \RY[\ConstraintMfld][\Curve] \hookrightarrow \YCg
\end{align*}
the canonical injections
which give rise to dual maps
\begin{align*}
	\RX[\Morphism][\Curve][\dual]  \colon \XCgd \hookrightarrow \RX[\ConstraintMfld][\Curve][\dual],
	\quad
	\RH[\Morphism][\Curve][\dual]  \colon \HCgd \hookrightarrow \RH[\ConstraintMfld][\Curve][\dual],
	\qand
	\RY[\Morphism][\Curve][\dual]  \colon \YCgd \hookrightarrow \RY[\ConstraintMfld][\Curve][\dual].
\end{align*}
Observe that
$\RI[\ConstraintMfld] \at_\Curve \ceq \RH[\Morphism][\Curve][\dual] \; \RI[\ConfSpace]\at_\Curve \; \RH[\Morphism][\Curve]$
and
$\RJ[\ConstraintMfld] \at_\Curve \ceq \RY[\Morphism][\Curve][\dual] \; \RJ[\ConfSpace]\at_\Curve \; \RX[\Morphism][\Curve]$.

The Galerkin projection of a Hilbert space's Riesz isomorphism onto a closed subspace equals the Riesz isomorphism of the restricted scalar product. So the invertibility of $\RI[\ConstraintMfld]$ is straight-forward.
The nontrivial part here is to show that $\RJ[\ConstraintMfld]$ is continuously invertible. 
By the open mapping theorem, it suffices to show that $\RJ[\ConstraintMfld]$ is both injective and surjective.
Injectivity can be deduced from the injectivity of $\RI[\ConfSpace]$, $\RH[\Morphism][\Curve]$, and $ \Ri[\ConstraintMfld]$ as follows:
Let $u \in \ker(\RJ[\ConstraintMfld] \at_\Curve)$ and put 
$v \ceq \RH[\Morphism][\Curve] \; \Ri[\ConstraintMfld] \at_\Curve \; u$. Now the following shows that  $u=0$:
\begin{align*}
	\ninnerprod{ \RI[\ConfSpace]\at_\Curve \; v, v}
	&=
	\ninnerprod{ 
		\RI[\ConfSpace]\at_\Curve \; \RH[\Morphism][\Curve] \; \Ri[\ConstraintMfld] \at_\Curve \; u,
		\RH[\Morphism][\Curve] \; \Ri[\ConstraintMfld] \at_\Curve \; u
	}
	=
	\ninnerprod{ 
		(\RH[\Morphism][\Curve][\dual] \; \RI[\ConfSpace]\at_\Curve \; \RH[\Morphism][\Curve]) \; \Ri[\ConstraintMfld] \at_\Curve \; u,
		\Ri[\ConstraintMfld] \at_\Curve \; u
	}
	\\
	&=
	\ninnerprod{ 
		\RI[\ConstraintMfld]\at_\Curve \; \Ri[\ConstraintMfld] \at_\Curve \; u,
		\Ri[\ConstraintMfld] \at_\Curve \; u
	}
	=
	\ninnerprod{ 
		\RJ[\ConstraintMfld]\at_\Curve \; u,
		\Rj[\ConstraintMfld]\at_\Curve  \; \Ri[\ConstraintMfld] \at_\Curve \; u
	}
	= 0.
\end{align*}

In order to establish surjectivity of $\RJ[\ConstraintMfld]$, we fix an arbitrary $\eta \in \RY[\ConstraintMfld][\Curve][\dual]$.
By the Hahn-Banach theorem, the mapping $\RY[\Morphism][\Curve][\dual] \colon \YCgd \to \RY[\ConstraintMfld][\Curve][\dual]$ is surjective. 
Thus there is an  $\eta_0 \in \YCgd$ with 
$\RY[\Morphism][\Curve][\dual] \, \eta_0 = \eta$.
By \autoref{lem:Saddlepointmatrix} below, the saddle point problem
\begin{align}
	\begin{pmatrix}
		\RJ[\ConfSpace]\at_\Curve & \RY[\ConstraintMap][\Curve][\dual]\\
		\RX[\ConstraintMap][\Curve] &0
	\end{pmatrix}
	\begin{pmatrix}
		u_0 \\
		\lambda_0
	\end{pmatrix}
	=
	\begin{pmatrix}
		\eta_0\\
		0
	\end{pmatrix}
	\label{eq:SaddlePointEquation}
\end{align}
has a unique solution with $u_0 \in \XCg$ and $\lambda_0 \in \YNgd$.
In particular, we have $u_0 \in \ker(\RX[\ConstraintMap][\Curve])$, hence we may write $u_0 = \RX[\Morphism][\Curve] \, u$ with $u \in \RX[\ConstraintMfld][\Curve]$.
Thus, we have
\begin{align*}
	(\RJ[\ConstraintMfld]\at_\Curve) \, u
	= \RY[\Morphism][\Curve][\dual] \, (\RJ[\ConfSpace]\at_\Curve) \, \RX[\Morphism][\Curve] \, u
	= \RY[\Morphism][\Curve][\dual] \, (\RJ[\ConfSpace]\at_\Curve) \, u_0
	= \RY[\Morphism][\Curve][\dual] \, (\eta_0 - \RY[\ConstraintMap][\Curve][\dual] \,\lambda_0)
	= \eta
\end{align*}
where we used the fact that
$\RY[\Morphism][\Curve][\dual] \, \RY[\ConstraintMap][\Curve][\dual] = \RY[(\ConstraintMap\circ\Morphism)][\Curve][\dual]=0$
in the last step which follows by
$\ConstraintMap\circ\Morphism=0$ due to
$\ConstraintMfld=\ConstraintMap^{-1}(0)$.
\end{proof}

This leads us immediately to the proof of our main result \autoref{theo:ProjectedGradients}:
\begin{proof}
The proof of \autoref{prop:SubmanifoldTheorem} shows that we the gradient
$u_0 \ceq \grad_\ConstraintMfld(\Energy|_\ConstraintMfld)\at_\Curve$ can be computed
by solving \eqref{eq:SaddlePointEquation} with $\eta_0 = D\Energy(\Curve)$.
\autoref{cor:PseudoinverseOrthoprojector} below shows that 
it is also the \emph{projected gradient}, i.e.,
$\grad_\ConstraintMfld(\Energy|_\ConstraintMfld)\at_\Curve$ coincides with the
$\RI[\ConstraintMfld]\at_\Curve$-orthogonal projection of $\grad(\Energy)\at_\Curve$ onto $T_\Curve \ConstraintMfld = \ker(D \ConstraintMap(\Curve))$.
Because $D\Energy(\Curve)$ and the saddle point matrix from \eqref{eq:SaddlePointEquation} depend locally Lipschitz continuously on $\Curve$, the gradient $\grad_\ConstraintMfld(\Energy|_\ConstraintMfld)$ is a locally Lipschitz continuous vector field on $\ConstraintMfld$.
\end{proof}

\subsection*{Details}

Now, statements and proofs of the auxiliary results are in order.

\begin{lemma}[Right inverse]\label{lem:RightInverse}
The triple $(\RX[\ConstraintMap][\Curve] ,\RH[\ConstraintMap][\Curve], \RY[\ConstraintMap][\Curve])$ induced by the derivative $D\ConstraintMap(\Curve)$ allows a triple $(\RX[B][\Curve] ,\RH[B][\Curve], \RY[B][\Curve])$ of continuous right inverses such that the following diagram commutes:
\begin{equation}
\begin{tikzcd}[]
	\XNg
		\ar[d,hook, two heads,"{\Ri[\TargetSpace]}"]
		\ar[rr, "{\RX[B][\Curve]}"]
	&&\XCg
		\ar[d,hook, two heads,"{\Ri[\ConfSpace]}"]	
	\\
	\HNg
		\ar[d,hook, two heads,"{\Rj[\TargetSpace]}"]	
		\ar[rr, "{\RH[B][\Curve]}"]	
	&&\HCg
		\ar[d,hook, two heads,"{\Rj[\ConfSpace]}"]		
	\\
	\YNg
		\ar[rr, "{\RY[B][\Curve]}"]	
	&&\YCg\nospaceperiod
\end{tikzcd}
	\label{eq:RightInverseDiagram}
\end{equation}
Moreover $(\RX[B][\Curve] ,\RH[B][\Curve], \RY[B][\Curve])$ depend smoothly on $\Curve$ and in particular, they are locally Lipschitz continuous.
\end{lemma}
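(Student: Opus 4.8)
The plan is to construct the right inverse $\RH[B][\Curve]$ explicitly on the Hilbert level and then to check, using the product rules of \autoref{lem:MultiplicationLemma}, that it restricts and extends to the $\cX$- and $\cY$-levels; finally, smoothness in $\Curve$ follows from the explicit formula. Recall from \eqref{eq:Phi-diff} that
\begin{align*}
	D\ConstraintMap(\Curve)\, u
	=
	\textstyle
	\Bigparen{
		\ninnerprod{\cD_\Curve \Curve, \cD_\Curve u},
		\int_\Circle \nparen{u + \Curve \, \ninnerprod{\cD_\Curve \Curve, \cD_\Curve u}}\, \LineElementC
	}.
\end{align*}
Given a target $(g, v) \in \Sobo[\strongss,\strongp][][\Circle][\R] \oplus \AmbSpace$, I want a preimage $u$. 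The natural ansatz is to split the job: first produce a curve whose logarithmic speed derivative matches $g$, then correct the barycenter by adding a constant vector. Concretely, set $u_1 \ceq \bigparen{\int_0^{(\cdot)} e^{G}\, \dd\Geodesic} \cdot (\text{something})$ — more cleanly, since $\cD_\Curve$ is just differentiation with respect to arc length and $\ninnerprod{\cD_\Curve\Curve,\cD_\Curve u}$ is the tangential part of $\cD_\Curve u$, one can take $u_1 \ceq \Phi_\Curve(g) \, \Tangent_\Curve$ where $\Phi_\Curve(g)$ is the arc-length antiderivative of $g\,\nabs{\Curve'}$ normalized to have zero mean (well-defined on $\Circle$ because $\int_\Circle g\,\LineElementC$ can be arranged, or else absorb the defect into a lower-order term); this gives $\ninnerprod{\cD_\Curve\Curve,\cD_\Curve u_1} = g$ up to a controllable error. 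Then $u \ceq u_1 + c$ with the constant $c \in \AmbSpace$ chosen to fix the barycenter component: $c = \nabs{\Circle}^{-1}\bigparen{v - \int_\Circle (u_1 + \Curve\, g)\,\LineElementC}$. By construction $D\ConstraintMap(\Curve)\,u = (g,v)$, so $\RH[B][\Curve](g,v) \ceq u$ is a right inverse, and it is manifestly linear and continuous on each of the three Sobolev levels: the antiderivative operator maps $\Sobo[\strongss,\strongp]\to\Sobo[\strongss+1,\strongp]$, multiplication by $\Tangent_\Curve \in \Sobo[\strongss,\strongp]$ is bounded on $\Sobo[\strongss,\strongp]$ (and on $\Sobo[\hilbertss,2]$, $\Sobo[\weakss,\weakp]$) by \autoref{lem:MultiplicationLemma} since $\diffoffset > 0$ and $\strongp \geq 2$, and the barycenter correction is a rank-$\AmbDim$ operator. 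Commutativity of \eqref{eq:RightInverseDiagram} is then immediate because the same formula defines $\RX[B][\Curve]$, $\RH[B][\Curve]$, $\RY[B][\Curve]$ and the vertical arrows are just the inclusions.

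For the smooth dependence on $\Curve$: every ingredient of the formula — $\Tangent_\Curve = \Curve'/\nabs{\Curve'}$, the line element $\LineElementC$, the arc-length antiderivative (which involves $\nabs{\Curve'}$), and the barycenter normalization — is built from $\Curve'$ and $\nabs{\Curve'}^{-1}$ via the smooth (indeed analytic) maps $t\mapsto t^{-1}$ on $\intervaloo{0,\infty}$ and multiplication, composed through the chain rule \autoref{lem:ChainRule} and the product rule \autoref{lem:MultiplicationLemma}. Since $\ConfSpace$ is open in $\SoboC[\strongs,\strongp]$ and $\essinf\nabs{\Curve'} > 0$ there, these compositions are of class $C^\infty$ as maps $\ConfSpace \to L(\XNg;\XCg)$ etc., hence in particular locally Lipschitz.

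The main obstacle I anticipate is not conceptual but bookkeeping: ensuring that the arc-length antiderivative is actually well-defined as a map into $\Circle$-periodic functions of the right Sobolev class. The naive antiderivative of $g\,\nabs{\Curve'}$ is periodic only if $\int_\Circle g\,\LineElementC = 0$, which need not hold; the clean fix is to observe that the constraint map's first component, $\log\nabs{\Curve'} - \log L$, already has the property that $\int_\Circle g\,\LineElementC$ is \emph{not} free but its "defect" can be shifted into the (finite-dimensional, hence harmless) barycenter slot, or alternatively to define $\RH[B][\Curve]$ only as a right inverse modulo a compact (finite-rank) correction and then note that $D\ConstraintMap(\Curve)$ restricted to the complement of that kernel is still onto — but since the codomain's first factor is infinite-dimensional one should instead just check directly that the periodicity obstruction vanishes for the specific form \eqref{eq:Phi-diff}, using that the pairing $\ninnerprod{\cD_\Curve\Curve,\cD_\Curve u}$ integrated against $\LineElementC$ equals $\int_\Circle \cD_\Curve\bigparen{\tfrac12\nabs{\Curve}^2}'\dotsm$ — a short computation showing the image of $u\mapsto\ninnerprod{\cD_\Curve\Curve,\cD_\Curve u}$ is all of $\Sobo[\strongss,\strongp][][\Circle][\R]$. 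Once that surjectivity-at-the-level-of-speeds point is nailed down, the rest is the routine verification sketched above.
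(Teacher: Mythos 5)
There is a genuine gap, and it sits exactly at the step you call ``routine'': your ansatz $u_1 \ceq \Phi_\Curve(g)\,\Tangent_\Curve$ does not lie in $\XCg$. The antiderivative $\Phi_\Curve(g)$ is indeed in $\Sobo[\strongs,\strongp][][\Circle][\R]$, but $\Tangent_\Curve=\Curve'/\nabs{\Curve'}$ is only of class $\Sobo[\strongss,\strongp]$ (one full order less), so the product is only of class $\Sobo[\strongss,\strongp]$: writing $u_1' = g\,\nabs{\Curve'}\,\Tangent_\Curve + \Phi_\Curve(g)\,\Tangent_\Curve'$, the second term contains $\Tangent_\Curve'\in\Sobo[\strongss-1,\strongp]$, a negative-order distribution which multiplication by the merely $C^{1}$ factor $\Phi_\Curve(g)$ cannot improve. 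Hence generically $u_1\notin\Sobo[\strongs,\strongp]=\XCg$, and since $\strongss<\weaks$ it is not even in $\YCg$. Your own continuity argument betrays this: you invoke boundedness of multiplication by $\Tangent_\Curve$ \emph{on} $\Sobo[\strongss,\strongp]$, which only yields a map into $\Sobo[\strongss,\strongp]$, one order short of what the top row of \eqref{eq:RightInverseDiagram} demands. The paper's construction avoids this by putting the tangent \emph{inside} the integral: it sets $v(y)\ceq\int_{y_0}^{y}\bigparen{\Tangent_\Curve\,\xi+\pr_\Curve^\perp\,\tilde U}\,\LineElementC$, so that $\cD_\Curve v$ is exactly the prescribed $\Sobo[\strongss,\strongp]$-integrand and the antiderivative gains the full order, while $\ninnerprod{\Tangent_\Curve,\cD_\Curve v}=\xi$ still holds because the correction term is normal to $\Tangent_\Curve$.

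The second problem is the periodicity obstruction, which you notice but do not actually resolve: the defect $\int_\Circle g\,\LineElementC\neq 0$ cannot be ``absorbed into the barycenter slot'' (that is an independent component of the target and has no bearing on whether the antiderivative closes up), and the obstruction does not vanish --- e.g.\ $g\equiv 1$ is a legitimate target (realized by the scaling field $u=\Curve$) with nonzero mean. In the paper the same normal correction does double duty: its integral is $\varTheta_\Curve\,\tilde U$ with $\varTheta_\Curve\ceq\int_\Circle\pr_\Curve^\perp\,\LineElementC$, and $\varTheta_\Curve$ is invertible precisely because a \emph{closed} curve cannot have constant tangent; choosing $\tilde U=-\varTheta_\Curve^{-1}\int_\Circle\Tangent_\Curve\,\xi\,\LineElementC$ makes $v$ periodic without disturbing the first constraint component. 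This small but genuinely geometric step is absent from your sketch. (A minor further slip: the constant fixing the barycenter must be normalized by the length $L=\int_\Circle\LineElementC$, not by $\nabs{\Circle}$, since that component is weighted by $\LineElementC$.) With these two repairs --- tangent inside the integral, normal correction via $\varTheta_\Curve^{-1}$ --- the remainder of your outline (one formula serving all three levels, hence commutativity, and smoothness in $\Curve$ via the chain and product rules) does go through and matches the paper.
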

\begin{proof}
Denote by $\pr_\Curve^\perp(x) \ceq \id_{\AmbSpace} - \Tangent_\Curve(x) \otimes \ninnerprod{\Tangent_\Curve(x),\cdot}$ the orthogonal projector onto the orthogonal complement of $\Tangent_\Curve(x)$.
Fix a $\RZ \in \set{\RX,\RH,\RY}$ and a $(\xi,U) \in \ZNg$. 
Denote the length of $\Curve$ by $L = \int_\Circle \LineElement_\Curve$. 
Fix a given point $y_0 \in \Circle$ and define
\begin{align*}
	v(y)
	&\ceq 
	\textstyle
	\int_{y_0}^y \bigparen{
		\Tangent_\Curve(x) \, \xi(x) 
		+ \pr_\Curve^\perp(x) \, \tilde U
	}\, \LineElement_\Curve(x)
	\quad \text{and}
	\\
	u(y)
	&\ceq
	\textstyle 
	v(y)
	+\frac{1}{L} \bigparen{ 
		U 
		- 
		\int_\Circle (v(x) + \Curve(x) \, \xi(x)) \, \LineElement_\Curve(x)
	}
\end{align*}
with a vector $\tilde U \in \AmbSpace$ to be determined later.
This way,
the components of $D\ConstraintMap(\Curve) \, u$ stated in~\eqref{eq:Phi-diff} amount to
\begin{align}
	\ninnerprod{\cD_\Curve \Curve, \cD_\Curve u}
	=
	\ninnerprod{\Tangent_\Curve, \cD_\Curve u}
	= \xi,
	\qand
	\textstyle
	\int_\Circle \bigparen{ u +\Curve \, \ninnerprod{\cD_\Curve \Curve, \cD_\Curve u} }\, \LineElementC
	= U	
	.
	\label{eq:Constraint}	
\end{align}
Using the product rule \autoref{lem:MultiplicationLemma} (for which $p\ge2$ is crucial here), we see that
$u$ is a member of $\ZCg$, provided that we can find a $\tilde U \in \AmbSpace$ such that $u$ becomes continuous at $y = y_0$. For this, it is necessary and sufficient that
$
	\textstyle
	\int_\Circle \bigparen{
		\Tangent_\Curve \, \xi
		+ \pr_\Curve^\perp \, \tilde U
	}\, \LineElement_\Curve
	= 0.
$
Define the vector $b_\Curve(\xi) \in \AmbSpace$
and the symmetric matrix $\varTheta_\Curve \in \End(\AmbSpace)$ by
$
	b_\Curve(\xi) \ceq \textstyle \int_\Circle \Tangent_\Curve \, \xi \, \LineElement_\Curve
$
and
$
	\varTheta_\Curve \ceq \textstyle \int_\Circle  \pr_\Curve^\perp \, \LineElement_\Curve.
$
\emph{Assume} $\varTheta_\Curve$ is not invertible. Then there is a unit vector $V \in \AmbSpace$ in its kernel and we have
$
	0 = \ninnerprod{V,\varTheta_\Curve \, V}
	= 	\textstyle \int_\Circle \bigparen{ \nabs{V}^2 - \ninnerprod{\Tangent_\Curve,V}^2} \, \LineElementC
$.
But that means that $\Tangent_\Curve(x) = \pm V$ has to hold for almost every $x\in\Circle$. Since $\Tangent_\Curve$ is continuous, this implies that $\Curve$ is a straight line, which is impossible due to $\Curve$ being closed. This \emph{contradicts} our assumption and thus $\varTheta_\Curve$ must be invertible.
So we may choose $\tilde U \ceq - \varTheta_\Curve^{-1} \, b_\Curve(\xi)$ and put
$
	\RZ[B][\Curve] \, (\xi, U) \ceq u.
$
By \eqref{eq:Constraint}, $\RZ[B][\Curve]$ is indeed a right inverse of $\RZ[\ConstraintMap][\Curve]$. Finally, it is only a matter of some elementary calculus to show that $\RZ[\ConstraintMap][\Curve]$ and $\RZ[B][\Curve]$ depend smoothly on $\Curve$.
\end{proof}

In analogy to \autoref{prop:MetricDefinition}, we may equip the target space $\TargetSpace \ceq \RX[\TargetSpace]$ with the following Riesz isomorphisms. This will help us to generalize the concept of adjoint operators between Hilbert spaces.

\begin{proposition}\label{prop:MetricDefinition2}
Analogously to \autoref{prop:MetricDefinition}, we define the $\Curve$-dependent, linear operators $\RI[\TargetSpace]\at_{\Curve}  \colon \HNg \to \HNgd$ and $\RJ[\TargetSpace]\at_{\Curve}  \colon \XNg \to \YNgd$ as follows:
	\begin{align*}
		\ninnerprod{\RI[\TargetSpace] \at_{\Curve} \,(\eta_1,V_1),(\eta_2,V_2)}
		&\ceq 
		\textstyle
		\int_\Torus
			(\diff{\sigma}{\Curve} \eta_1)
			\,
			(\diff{\sigma}{\Curve} \eta_2)
		\, \singularmeasure_\Curve
		+
		\int_{\Circle}	
			\eta_1
			\,
			\eta_2
		\, 
		\LineElement_\Curve
		+ \ninnerprod{V_1,V_2},
	\\
	\ninnerprod{\RJ[\TargetSpace]\at_{\Curve}\,(\xi,U),(\psi,W)}
		&\ceq
		\textstyle		
		\int_{\Torus}
			(\diff{\strongss}{\Curve} \xi)
			\,
			(\diff{\weakss}{\Curve} \psi)
		\, \singularmeasure_\Curve
		+
		\int_{\Circle}
			\xi
			\, 
			\psi
		\, \LineElement_\Curve
		+ \ninnerprod{U,W}	
	,
\end{align*}
for $(\eta_1,V_1)$, $(\eta_2,V_2) \in \HNg$, $(\xi,U) \in \XNg$, and $(\psi,W) \in \YNg$.
These operators are well-defined, continuous, and continuously invertible, and they 
satisfy $\Rj[\TargetSpace]\dual \, \RJ[\TargetSpace] = \RI[\TargetSpace] \, \Ri[\TargetSpace]$.
Moreover, $\RI[\TargetSpace] \colon \ConfSpace \to L(\HN;\HNd)$ and $\RJ[\TargetSpace] \colon \ConfSpace \to L(\XN;\YNd)$ are of class $\Holder[1]$.
\end{proposition}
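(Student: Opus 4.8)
The plan is to mimic the proof of \autoref{prop:MetricDefinition} (and of \autoref{lem:FredholmProperty}), observing that every step becomes strictly easier here: the arc-length derivative $\cD_\Curve$ and the geodesic distance $\varrho_\Curve$ do not appear in the definitions of $\RI[\TargetSpace]$ and $\RJ[\TargetSpace]$, the order of the operators in play drops from $2\,\hilberts=3$ to $2\,\hilbertss=1$, and the $\AmbSpace$-summand is finite-dimensional. Since there are no cross terms between the $\Sobo[\hilbertss,2][][\Circle][\R]$-part (resp.\ $\Sobo[\strongss,\strongp][][\Circle][\R]$-part) and the $\AmbSpace$-part, both $\RI[\TargetSpace]\at_\Curve$ and $\RJ[\TargetSpace]\at_\Curve$ are block-diagonal, the $\AmbSpace$-block being the canonical isomorphism $\AmbSpace\to\AmbSpace\dual$, $V\mapsto\ninnerprod{V,\cdot}$. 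Hence it suffices to deal with the scalar blocks; write $\Lambda_\Curve\colon\Sobo[\strongss,\strongp][][\Circle][\R]\to(\Sobo[\weakss,\weakp][][\Circle][\R])\dual$ for the scalar part of $\RJ[\TargetSpace]\at_\Curve$, i.e.\ $\ninnerprod{\Lambda_\Curve\,\xi,\psi}=\int_\Torus(\diff{\strongss}{\Curve}\xi)\,(\diff{\weakss}{\Curve}\psi)\,\singularmeasure_\Curve+\int_\Circle\xi\,\psi\,\LineElementC$, and likewise for $\RI[\TargetSpace]\at_\Curve$. Since $\Curve\in\ConfSpace$ has finite energy it is bi-Lipschitz (cf.\ \cite[Thm.~2.3]{MR1195506}), so the $\Curve$-weighted (semi)norms are equivalent to the standard ones, and Hölder's inequality with the exponent pair $(\strongp,\weakp)$ on $\Lebesgue[1][\singularmeasure_\Curve]$ and $\Lebesgue[1][\Curve]$ gives $\nabs{\ninnerprod{\Lambda_\Curve\,\xi,\psi}}\leq\nseminorm{\xi}_{\Sobo[\strongss,\strongp][\Curve]}\,\nseminorm{\psi}_{\Sobo[\weakss,\weakp][\Curve]}+\nnorm{\xi}_{\Lebesgue[\strongp][\Curve]}\,\nnorm{\psi}_{\Lebesgue[\weakp][\Curve]}$; this establishes continuity of $\Lambda_\Curve$, hence of $\RJ[\TargetSpace]\at_\Curve\colon\XNg\to\YNgd$, and the analogous computation handles $\RI[\TargetSpace]\at_\Curve$. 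The commutativity $\Rj[\TargetSpace]\dual\,\RJ[\TargetSpace]=\RI[\TargetSpace]\,\Ri[\TargetSpace]$ follows from the pointwise identity $(\diff{\strongss}{\Curve}\xi)\,(\diff{\weakss}{\Curve}\psi)=(\diff{\hilbertss}{\Curve}\xi)\,(\diff{\hilbertss}{\Curve}\psi)$, which holds because $\strongss+\weakss=2\,\hilbertss$, together with the fact that the zeroth-order and $\AmbSpace$-terms are common to both forms. Finally, invertibility of $\RI[\TargetSpace]\at_\Curve$ is immediate: its scalar block is the Riesz isomorphism of the bilinear form $(\eta_1,\eta_2)\mapsto\int_\Torus(\diff{\hilbertss}{\Curve}\eta_1)\,(\diff{\hilbertss}{\Curve}\eta_2)\,\singularmeasure_\Curve+\int_\Circle\eta_1\,\eta_2\,\LineElementC$, which is symmetric, continuous, and — by the bi-Lipschitz bounds — coercive on $\Sobo[\hilbertss,2][][\Circle][\R]$, hence topologizes that space.

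For $\RJ[\TargetSpace]\at_\Curve$, equivalently $\Lambda_\Curve$, I would combine injectivity with the Fredholm alternative, just as in \autoref{prop:MetricDefinition}. \emph{Injectivity}: if $u\in\ker\Lambda_\Curve$, testing against the image of $u$ under $\Sobo[\strongss,\strongp][][\Circle][\R]\hookrightarrow\Sobo[\weakss,\weakp][][\Circle][\R]$ yields $0=\int_\Torus(\diff{\hilbertss}{\Curve}u)^2\,\singularmeasure_\Curve+\nnorm{u}_{\Lebesgue[2][\Curve]}^2$, which forces $u=0$ a.e., i.e.\ $u=0$ in $\Sobo[\strongss,\strongp][][\Circle][\R]$. \emph{Fredholm of index $0$ and surjectivity}: re-run the localization argument of \autoref{lem:FredholmProperty}--\autoref{lem:GlobalEllipticEstimate}, one order lower and in the scalar case. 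First, replacing the secant differences $\diff{\strongss}{\Curve},\diff{\weakss}{\Curve}$ by their geodesic counterparts $\Diff{\strongss}{\Curve},\Diff{\weakss}{\Curve}$ perturbs $\Lambda_\Curve$ only by a compact operator: the difference kernel $\nabs{\triangle\Curve}^{-2}-\varrho_\Curve^{-2}$ gains a positive power of $\varrho_\Curve$ near the diagonal because $\Curve\in C^{1,\alpha}$ with $\alpha=\strongs-1-1/\strongp>0$ — here one uses $2\,\varrho_\Curve^2-2\,\ninnerprod{\triangle\Curve,\triangle\Curve}=\int_{I_\Curve^2}\nabs{\Tangent_\Curve(s)-\Tangent_\Curve(t)}^2\,\varOmega_\Curve$, exactly as in \autoref{lem:IntegrationbyParts} — so the perturbation factors through a Rellich-compact embedding into a Sobolev space of slightly lower differentiability order. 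Second, in the isometric charts of \autoref{lem:LocalEllipticEstimate}, the geodesic operator coincides modulo compact operators with the model $\tilde\xi\mapsto\int_{\R\times\R}\frac{(\tilde\xi(x)-\tilde\xi(y))(\tilde\psi(x)-\tilde\psi(y))}{\nabs{x-y}^{2}}\,\dd x\,\dd y$, which up to a positive constant is the fractional Laplacian $(-\Delta)^{\hilbertss}\colon\Sobo[\strongss,\strongp][][\R][\R]\to(\Sobo[\weakss,\weakp][][\R][\R])\dual$ and is continuously invertible (by the source cited in the proof of \autoref{lem:FredholmProperty}, using $\strongss-1=-\weakss$). A partition of unity patches the resulting local elliptic estimates into a global one of the form $\nnorm{u}_{\Sobo[\strongss,\strongp][\Curve]}\leq C\bigparen{\nnorm{\Lambda_\Curve\,u}+\nnorm{K\,u}}$ with $K$ compact, and the Schauder lemma (\autoref{lem:SchauderLemma}) together with the injectivity just shown yields continuous invertibility. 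All of this is notationally lighter than in \autoref{prop:MetricDefinition} because no Leibniz-rule error terms for $\cD_\Curve$ arise.

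It remains to check the $\Holder[1]$-dependence of $\RI[\TargetSpace]$ and $\RJ[\TargetSpace]$ on $\Curve$; I would do this exactly as the Fréchet differentiability in \autoref{prop:MetricDefinition}, namely by a first-order Taylor expansion in $\Curve$ of the integrands $(x,y)\mapsto(\diff{\strongss}{\Curve}\xi)\,(\diff{\weakss}{\Curve}\psi)(x,y)\,\singularmeasure_\Curve(x,y)$ (and of the lower-order and $\HN$-integrands) together with a bound on the second-order remainder obtained from the product rule \autoref{lem:MultiplicationLemma}. This step is actually simpler than in \autoref{prop:MetricDefinition}: the geodesic distance $\varrho_\Curve$ — whose non-smooth dependence on $\Curve$ forced the ``good set/bad set'' splitting in \autoref{theo:DEnergy} — is absent, so no integration domain ever changes under perturbations of $\Curve$; only $\nabs{\triangle\Curve}$ and $\LineElementC$ enter, and these depend real-analytically on $\triangle\Curve$ and $\Curve'$. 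The main obstacle in the whole argument is thus the Fredholm step for $\Lambda_\Curve$ — setting up the scalar, order-one version of the secant-to-geodesic compact-perturbation estimate and verifying that the localization and partition-of-unity machinery of \autoref{lem:LocalEllipticEstimate}--\autoref{lem:GlobalEllipticEstimate} transfers to this lower-order scalar setting — but since all of those tools have already been built for the harder order-three vector-valued case, this amounts to bookkeeping rather than a genuinely new difficulty.
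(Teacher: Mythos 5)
Your proposal is correct and follows essentially the paper's own route: the paper's proof of this proposition consists precisely of the remark that it runs ``entirely along the lines of'' the proof of \autoref{prop:MetricDefinition}, and your argument is exactly that adaptation (block-diagonal reduction with the finite-dimensional $\AmbSpace$-block, Hölder for well-definedness, the pointwise identity $\strongss+\weakss=2\,\hilbertss$ for the compatibility relation, injectivity plus the scalar order-one analogue of the Fredholm/localization machinery of \autoref{lem:FredholmProperty}--\autoref{lem:GlobalEllipticEstimate}, and Taylor expansion for the $\Holder[1]$-dependence, simplified by the absence of $\varrho_\Curve$). No gaps beyond those already implicit in the paper's referenced argument.
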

The proof is entirely along the lines of the proof of \autoref{prop:MetricDefinition}.

\begin{lemma}[Saddle point matrix]\label{lem:Saddlepointmatrix}
For each $\Curve \in \ConfSpace$, the saddle point matrix
\begin{align*}
	\SaddlePointMatrix\at_\Curve \ceq
	\begin{pmatrix}
		\RJ[\ConfSpace]\at_\Curve 
		& 
		\RY[\ConstraintMap][\Curve][\dual]
		\\
		\RX[\ConstraintMap][\Curve] &0
	\end{pmatrix}
	\colon
		\XCg \oplus \YNgd
	\longrightarrow
		\YCgd \oplus \XNg
\end{align*}
is continuously invertible.
\end{lemma}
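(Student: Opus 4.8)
The plan is to invert $\SaddlePointMatrix\at_\Curve$ by block elimination, leaning on the already-established continuous invertibility of $\RJ[\ConfSpace]\at_\Curve$ (see \autoref{prop:MetricDefinition}) and \emph{not} on \autoref{prop:SubmanifoldTheorem}, whose proof in turn relies on the present lemma. Abbreviate the four blocks by $J \ceq \RJ[\ConfSpace]\at_\Curve \colon \XCg \to \YCgd$, $B \ceq \RX[\ConstraintMap][\Curve] \colon \XCg \to \XNg$, and $B\dual \ceq \RY[\ConstraintMap][\Curve][\dual] \colon \YNgd \to \YCgd$ (the Banach dual of $\RY[\ConstraintMap][\Curve] \colon \YCg \to \YN$, which extends $B$); boundedness of all four is immediate from \autoref{prop:MetricDefinition}, the formula \eqref{eq:Phi-diff}, and the product rule \autoref{lem:MultiplicationLemma}. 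From the factorization
\begin{align*}
	\SaddlePointMatrix\at_\Curve
	=
	\begin{pmatrix} \id & 0 \\ B\,J^{-1} & \id \end{pmatrix}
	\begin{pmatrix} J & 0 \\ 0 & -S \end{pmatrix}
	\begin{pmatrix} \id & J^{-1}B\dual \\ 0 & \id \end{pmatrix},
	\qquad
	S \ceq B\,J^{-1}B\dual \colon \YNgd \to \XNg,
\end{align*}
whose outer triangular factors are topological automorphisms of $\YCgd \oplus \XNg$ and of $\XCg \oplus \YNgd$ respectively, the assertion reduces to the continuous invertibility of the Schur complement $S$.

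First I would observe that $\SaddlePointMatrix\at_\Curve$ is injective. If $(u,\lambda) \in \ker(\SaddlePointMatrix\at_\Curve)$, then $Bu = 0$; pairing $Ju + B\dual\lambda = 0$ with $\Rj[\ConfSpace]\,\Ri[\ConfSpace]\,u \in \YCg$ and using commutativity of \eqref{eq:PhiDiagramm} gives $\ninnerprod{B\dual\lambda,\Rj[\ConfSpace]\Ri[\ConfSpace]u} = \ninnerprod{\lambda,\Rj[\TargetSpace]\Ri[\TargetSpace]\,Bu} = 0$, while commutativity of \eqref{eq:j'J=Ii} gives $\ninnerprod{Ju,\Rj[\ConfSpace]\Ri[\ConfSpace]u} = \ninnerprod{\RI[\ConfSpace]\at_\Curve\,\Ri[\ConfSpace]u,\Ri[\ConfSpace]u}$. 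Since $E(\Curve) \geq 0$ (the secant distance never exceeds the arclength distance), the scalar product represented by $\RI[\ConfSpace]\at_\Curve$ is positive definite on $\HCg$; hence $\Ri[\ConfSpace]u = 0$, so $u = 0$ by injectivity of $\Ri[\ConfSpace]$, and then $B\dual\lambda = 0$ forces $\lambda = 0$ because $\RY[\ConstraintMap][\Curve]$ is onto, admitting the continuous right inverse $\RY[B][\Curve]$ of \autoref{lem:RightInverse}. Reading the factorization backwards, $S$ is injective as well: if $S\lambda = 0$ then $\bigparen{-J^{-1}B\dual\lambda,\,\lambda} \in \ker(\SaddlePointMatrix\at_\Curve)$.

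It remains to prove that $S$ is a Fredholm operator of index $0$; with injectivity and the open mapping theorem this yields invertibility of $S$, hence of $\SaddlePointMatrix\at_\Curve$ via the factorization. Here I would imitate the localization-and-symbol analysis of \autoref{lem:FredholmProperty}. By \autoref{prop:MetricDefinition}, $J$ is an elliptic pseudo-differential operator of order $3$ whose principal symbol coincides, up to a positive constant $c$, with that of $(-\Delta_\Curve)^{3/2}$, so $J^{-1}$ is, modulo smoothing, an elliptic pseudo-differential operator of order $-3$; by \eqref{eq:Phi-diff}, $\RX[\ConstraintMap][\Curve]$ and $\RY[\ConstraintMap][\Curve]$ split as the first-order differential operator $v \mapsto \ninnerprod{\cD_\Curve\Curve,\cD_\Curve v}$ into the scalar Sobolev factor plus a finite-rank, $\AmbSpace$-valued contribution, and dually so does $B\dual$. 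Composing principal symbols in an isometric chart, $S$ on the scalar factor is an elliptic pseudo-differential operator of order $-1$ with principal symbol $-\,c^{-1}\abs{\xi}^{-1}\abs{\Tangent_\Curve}^2 = -\,c^{-1}\abs{\xi}^{-1} \neq 0$, while the $\AmbSpace$-blocks add only finite-rank, hence compact, terms; therefore $S$ is Fredholm, and its index vanishes since a nowhere-vanishing real scalar symbol on $\Circle$ has winding number $0$ (equivalently, $-S$ is positive modulo compacts). The hard part will be precisely this last step — making the pseudo-differential bookkeeping rigorous for these \emph{nonlocal} operators and verifying that every $\AmbSpace$-block term is genuinely compact. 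I would carry it out by reusing the localization apparatus already developed — \autoref{lem:LocalizationNormsCircle}, \autoref{lem:LocalizationNormsReals}, \autoref{lem:LocalEllipticEstimate}, \autoref{lem:GlobalEllipticEstimate} — and comparing $S$ locally to the constant-coefficient model assembled from the operator $L$ of \eqref{eq:DefinitionofL}, rather than redoing the elliptic estimates from scratch.
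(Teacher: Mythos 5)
Your opening moves coincide with the paper's: the paper likewise reduces the lemma, via the invertibility of $\RJ[\ConfSpace]\at_\Curve$ from \autoref{prop:MetricDefinition}, to the continuous invertibility of the Schur complement $S = -\RX[\ConstraintMap][\Curve]\,(\RJ[\ConfSpace]\at_\Curve)^{-1}\,\RY[\ConstraintMap][\Curve][\dual]$, and your injectivity argument for $\SaddlePointMatrix\at_\Curve$ (and hence for $S$) is sound. The genuine gap is in the one step that carries the whole lemma: your Fredholm argument for $S$ rests on treating $(\RJ[\ConfSpace]\at_\Curve)^{-1}$ as a pseudo-differential operator of order $-3$ ``modulo smoothing'' and composing principal symbols. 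Nothing in the paper provides this: \autoref{prop:MetricDefinition} and \autoref{lem:FredholmProperty} prove invertibility of $\RJ[\ConfSpace]\at_\Curve$ through an elliptic a-priori estimate (localization plus the Schauder lemma), not through a parametrix, and no composition calculus for these nonlocal, geometrically weighted operators is established anywhere. The localization lemmas you plan to reuse (\autoref{lem:LocalizationNormsCircle}--\autoref{lem:GlobalEllipticEstimate}) act on the explicit integral kernel of $A_\Curve$; they cannot be brought to bear on $S = \RX[\ConstraintMap][\Curve]\,(\RJ[\ConfSpace]\at_\Curve)^{-1}\,\RY[\ConstraintMap][\Curve][\dual]$, because $(\RJ[\ConfSpace]\at_\Curve)^{-1}$ has no explicit kernel to localize. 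The index-zero step is also shaky: ``winding number of a scalar symbol'' and ``$-S$ positive modulo compacts'' are heuristics that do not transfer readily to this asymmetric duality setting, where $S$ maps $\YNgd$ to $\XNg \subsetneq \YNg$ and no self-adjointness is available.

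The paper circumvents all of this algebraically. It introduces the generalized adjoints $\RX[\ConstraintMap][\Curve][\adj] \ceq (\RJ[\ConfSpace]\at_\Curve)^{-1}\,\RY[\ConstraintMap][\Curve][\dual]\,\RJ[\TargetSpace]\at_\Curve$ and $\RX[B][\Curve][\adj] \ceq (\RJ[\TargetSpace]\at_\Curve)^{-1}\,\RY[B][\Curve][\dual]\,\RJ[\ConfSpace]\at_\Curve$, writes $S = -\RX[\ConstraintMap][\Curve]\,\RX[\ConstraintMap][\Curve][\adj]\,(\RJ[\TargetSpace]\at_\Curve)^{-1}$, and proves invertibility of $\RX[\ConstraintMap][\Curve]\,\RX[\ConstraintMap][\Curve][\adj]$: injectivity by transporting to the Hilbert level ($\RH[\ConstraintMap][\Curve]\,\RH[\ConstraintMap][\Curve][\adj]$ is invertible because $\RH[\ConstraintMap][\Curve]$ is surjective), and surjectivity via the projector built from the explicit right inverse $\RX[B][\Curve]$ of \autoref{lem:RightInverse}, once \autoref{lem:BstarBinv} shows that $\RY[B][\Curve][\dual]\,\RJ[\ConfSpace]\at_\Curve\,\RX[B][\Curve]$ is a compact perturbation of $\RJ[\TargetSpace]\at_\Curve$ --- a computation that succeeds precisely because $B$ is given by an explicit formula, so the compact remainders can be identified by hand using the product rule and \autoref{lem:IntegrationbyParts}, with no symbol calculus at all. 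To rescue your route you would first have to construct a parametrix for $\RJ[\ConfSpace]\at_\Curve$ within a class of nonlocal operators closed under composition, with compact remainders in the right topologies; that is a substantial project not covered by the cited lemmas, so as it stands the central step of your proof is a plan rather than an argument.
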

\begin{proof}
Let $B$ be as in \autoref{lem:RightInverse} above.
As $\RJ[\ConfSpace]$ is invertible, the saddle point matrix $\SaddlePointMatrix\at_\Curve$ is invertible if and only if its Schur complement $S \ceq - \RX[\ConstraintMap][\Curve] \; (\RJ[\ConfSpace]\at_\Curve)^{-1} \, \RY[\ConstraintMap][\Curve][\dual]$ is invertible.
In analogy to the adjoint operators
$\RH[\ConstraintMap][\Curve][\adj] = (\RI[\ConfSpace]\at_\Curve)^{-1} \, \RH[\ConstraintMap][\Curve][\dual] \, (\RI[\TargetSpace]\at_\Curve) $
and
$\RH[B][\Curve][\adj] = (\RI[\TargetSpace]\at_\Curve)^{-1} \, \RH[B][\Curve][\dual]\; (\RI[\ConfSpace]\at_\Curve)$, we introduce the generalized adjoint operators 
\begin{align*}
	\RX[\ConstraintMap][\Curve][\adj] 
	&\ceq (\RJ[\ConfSpace]\at_\Curve)^{-1} \, \RY[\ConstraintMap][\Curve][\dual] \, (\RJ[\TargetSpace]\at_\Curve) 
	,
	&
	\RX[B][\Curve][\adj] 
	&\ceq (\RJ[\TargetSpace]\at_\Curve)^{-1} \, \RY[B][\Curve][\dual]\; (\RJ[\ConfSpace]\at_\Curve)
	.
\end{align*}
Observe that we may express the Schur complement as
$S = - \RX[\ConstraintMap][\Curve] \, \RX[\ConstraintMap][\Curve][\adj]\, (\RJ[\TargetSpace]\at_\Curve)^{-1}$, hence it suffices to show that $\RX[\ConstraintMap][\Curve] \, \RX[\ConstraintMap][\Curve][\adj]$ is continuously invertible.
Since $\RH[\ConstraintMap][\Curve]$ is surjective, $\RH[\ConstraintMap][\Curve]\, \RH[\ConstraintMap][\Curve][\adj]$ is invertible.
Utilizing the identities
$\Rj[\ConfSpace]\dual \, \RJ[\ConfSpace] = \RI[\ConfSpace] \, \Ri[\ConfSpace]$ and
$\Rj[\TargetSpace]\dual \, \RJ[\TargetSpace] = \RI[\TargetSpace] \, \Ri[\TargetSpace]$
as well as the diagram \eqref{eq:PhiDiagramm},
one verifies that
$\RH[\ConstraintMap][\Curve]\, \RH[\ConstraintMap][\Curve][\adj] \, \Ri[\TargetSpace] = \Ri[\TargetSpace] \, \RX[\ConstraintMap][\Curve]\, \RX[\ConstraintMap][\Curve][\adj]$.
This shows that $\RX[\ConstraintMap][\Curve]\, \RX[\ConstraintMap][\Curve][\adj]$ is injective.
By \autoref{lem:BstarBinv} below; the operator $\RX[B][\Curve][\adj] \, \RX[B][\Curve]$ is invertible. 
This allows us to define the projector $Q \ceq \RX[B][\Curve]\,(\RX[B][\Curve][\adj] \, \RX[B][\Curve])^{-1} \RX[B][\Curve][\adj]$.
With 
$\ima(\RX[\ConstraintMap][\Curve][\adj]) = \ima(Q\,\RX[\ConstraintMap][\Curve][\adj]) \oplus \ima((1-Q)\,\RX[\ConstraintMap][\Curve][\adj])$, 
$\RX[\ConstraintMap][\Curve] \, \RX[B][\Curve] = \id_{\RX[\TargetSpace]}$, 
and
$\RX[B][\Curve][\adj] \, \RX[\ConstraintMap][\Curve][\adj] = \id_{\RX[\TargetSpace]}$,
we can verify that $\RX[\ConstraintMap][\Curve] \, \RX[\ConstraintMap][\Curve][\adj]$ is surjective:
\begin{align*}
	\ima(\RX[\ConstraintMap][\Curve] \, \RX[\ConstraintMap][\Curve][\adj]) 
	&\supset \RX[\ConstraintMap][\Curve] \,(\ima(Q\,\RX[\ConstraintMap][\Curve][\adj])) 
	= \ima \nparen{\RX[\ConstraintMap][\Curve] \, Q \, \RX[\ConstraintMap][\Curve][\adj] }
	\\
	&= \ima\bigparen{\RX[\ConstraintMap][\Curve] \, \RX[B][\Curve]\,(\RX[B][\Curve][\adj] \, \RX[B][\Curve])^{-1} \RX[B][\Curve][\adj] \, \RX[\ConstraintMap][\Curve][\adj] }
	= \ima \bigparen{(\RX[B][\Curve][\adj] \,  \RX[B][\Curve])^{-1}} 
	= \XNg.
\end{align*}
Finally, the open mapping theorem implies that 
$\RX[\ConstraintMap][\Curve]\, \RX[\ConstraintMap][\Curve][\adj]$
is continuously invertible.
\end{proof}

\begin{lemma}[Invertibility of $B^*B$]\label{lem:BstarBinv}
For each $\Curve \in \ConfSpace$, the linear operator 
$\RX[B][\Curve][\adj]\;\RX[B][\Curve]$ is continuously invertible.
\end{lemma}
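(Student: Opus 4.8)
The plan is to reduce the assertion to a standard Hilbert‑space fact plus an elliptic‑regularity (equivalently Fredholm) argument. First I would note that, on the Hilbert‑space level, the generalized adjoint $\RH[B][\Curve][\adj] = (\RI[\ConfSpace]\at_\Curve)^{-1}\,\RH[B][\Curve][\dual]\,(\RI[\TargetSpace]\at_\Curve)$ coincides with the genuine adjoint of $\RH[B][\Curve]\colon \HNg\to\HCg$ with respect to the inner products whose Riesz isomorphisms are $\RI[\TargetSpace]\at_\Curve$ and $\RI[\ConfSpace]\at_\Curve$; this is immediate from the duality conventions. By \autoref{lem:RightInverse}, $\RH[B][\Curve]$ is a bounded right inverse of the bounded surjection $\RH[\ConstraintMap][\Curve]$, hence bounded below, so the form coercivity $\ninnerprod{\RH[B][\Curve][\adj]\,\RH[B][\Curve]\,v,v} = \ninnerprod{\RI[\ConfSpace]\at_\Curve\,\RH[B][\Curve]\,v,\RH[B][\Curve]\,v} \geq c\,\ninnerprod{\RI[\TargetSpace]\at_\Curve v,v}$ shows that $\RH[B][\Curve][\adj]\,\RH[B][\Curve]$ is a positive, self‑adjoint, boundedly invertible operator on $\HNg$ (this is the ``other half'' of the fact already used for $\RH[\ConstraintMap][\Curve]\,\RH[\ConstraintMap][\Curve][\adj]$ in \autoref{lem:Saddlepointmatrix}).

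Next I would transport this to $\XNg$. Using \eqref{eq:j'J=Ii}, its counterpart in \autoref{prop:MetricDefinition2}, and the commuting diagrams \eqref{eq:PhiDiagramm} and \eqref{eq:RightInverseDiagram}, one verifies the intertwining identities $\Ri[\ConfSpace]\,\RX[B][\Curve] = \RH[B][\Curve]\,\Ri[\TargetSpace]$ and $\RH[B][\Curve][\adj]\,\Ri[\ConfSpace] = \Ri[\TargetSpace]\,\RX[B][\Curve][\adj]$, whence $\Ri[\TargetSpace]\circ\bigparen{\RX[B][\Curve][\adj]\,\RX[B][\Curve]} = \bigparen{\RH[B][\Curve][\adj]\,\RH[B][\Curve]}\circ\Ri[\TargetSpace]$. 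Since every factor in $\RX[B][\Curve][\adj]\,\RX[B][\Curve] = (\RJ[\TargetSpace]\at_\Curve)^{-1}\,\RY[B][\Curve][\dual]\,\RJ[\ConfSpace]\at_\Curve\,\RX[B][\Curve]$ is bounded (by \autoref{lem:RightInverse}, \autoref{prop:MetricDefinition} and \autoref{prop:MetricDefinition2}), $\RX[B][\Curve][\adj]\,\RX[B][\Curve]$ is a well‑defined bounded operator on $\XNg$; moreover it is injective, because an element of its kernel, viewed in $\HNg$ via $\Ri[\TargetSpace]$, lies in the kernel of the invertible operator $\RH[B][\Curve][\adj]\,\RH[B][\Curve]$ and is therefore zero. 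By the open mapping theorem it then suffices to prove surjectivity.

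The \textbf{main obstacle} is exactly this surjectivity: it is a genuine regularity statement — that $(\RH[B][\Curve][\adj]\,\RH[B][\Curve])^{-1}$ maps the denser space $\XNg$ back into $\XNg$ — and does not follow formally. I would settle it by a Fredholm argument. The pairing $\ninnerprod{\RJ[\TargetSpace]\at_\Curve\,(\RX[B][\Curve][\adj]\,\RX[B][\Curve])\,(\xi,U),(\psi,W)} = \ninnerprod{\RJ[\ConfSpace]\at_\Curve\,\RX[B][\Curve]\,(\xi,U),\,\RY[B][\Curve]\,(\psi,W)}$ identifies $\RX[B][\Curve][\adj]\,\RX[B][\Curve]$, up to the isomorphism $\RJ[\TargetSpace]\at_\Curve$, with the operator induced by restricting the bilinear form $\ninnerprod{\RJ[\ConfSpace]\at_\Curve\,\cdot\,,\,\cdot}$ to the complemented ``slices'' $\ima\RX[B][\Curve]\subset\XCg$ and $\ima\RY[B][\Curve]\subset\YCg$ (complemented by $\ker\RX[\ConstraintMap][\Curve]$ and $\ker\RY[\ConstraintMap][\Curve]$ respectively, via the projectors $\RX[B][\Curve]\,\RX[\ConstraintMap][\Curve]$, $\RY[B][\Curve]\,\RY[\ConstraintMap][\Curve]$). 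Since $\RJ[\ConfSpace]\at_\Curve$ differs from the elliptic operator $A_\Curve$ of \eqref{eq:DefinitionofACurve} by a compact operator, and both $A_\Curve$ and this remainder respect the above splittings, one may run the localization of \autoref{lem:GlobalEllipticEstimate} on the slice to obtain an elliptic estimate there; hence $\RX[B][\Curve][\adj]\,\RX[B][\Curve]$ is Fredholm of index $0$ on $\XNg$, and being injective it is continuously invertible. (Alternatively: identify $\ima\RX[B][\Curve]$ with $\ima\RH[B][\Curve]\cap\XCg$, which is dense in the closed subspace $Z_\Curve\ceq\ima\RH[B][\Curve]\subset\HCg$, and $\ima\RY[B][\Curve]$ with the $\YCg$‑closure of $Z_\Curve$; this yields a Gelfand‑type triple on which $\RJ[\ConfSpace]\at_\Curve$ restricts to the extension of the Riesz isomorphism of the Hilbert space $(Z_\Curve,\RI[\ConfSpace]\at_\Curve)$, and invertibility follows exactly as in \autoref{prop:MetricDefinition}.)
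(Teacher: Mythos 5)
Your skeleton — injectivity from the Hilbert-level positivity via the intertwining $\Ri[\TargetSpace]\,\bigparen{\RX[B][\Curve][\adj]\,\RX[B][\Curve]} = \bigparen{\RH[B][\Curve][\adj]\,\RH[B][\Curve]}\,\Ri[\TargetSpace]$, then Fredholmness of index $0$, then the open mapping theorem — is in fact the same as the paper's (which writes $\RX[B][\Curve][\adj]\,\RX[B][\Curve] = (\RJ[\TargetSpace]\at_\Curve)^{-1}\,T$ with $T \ceq \RY[B][\Curve][\dual]\,\RJ[\ConfSpace]\at_\Curve\,\RX[B][\Curve]$, shows $T$ is a compact perturbation of $\RJ[\TargetSpace]\at_\Curve$, and gets injectivity from a diagram chase). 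But the step carrying all the analytic content is missing. You assert that ``$A_\Curve$ and this remainder respect the above splittings'' and that one may ``run the localization of \autoref{lem:GlobalEllipticEstimate} on the slice''. Neither holds as stated, and neither is proved. The splitting $\XCg = \ker(\RX[\ConstraintMap][\Curve]) \oplus \ima(\RX[B][\Curve])$ is \emph{not} respected by $\RJ[\ConfSpace]\at_\Curve$ or $A_\Curve$: the off-diagonal block, i.e.\ the functional $\RJ[\ConfSpace]\at_\Curve\,\RX[B][\Curve]\,\bar\xi$ tested against $w \in \ker(\RY[\ConstraintMap][\Curve])$, does not vanish — it is only compact, and proving that compactness is precisely the heart of the lemma. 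For the same reason the elliptic estimate of \autoref{lem:GlobalEllipticEstimate} does not restrict to the slice: $\nnorm{\RY[B][\Curve][\dual]\,A_\Curve\,\RX[B][\Curve]\,\bar\xi}_{\YNgd}$ only measures the functional $A_\Curve\,\RX[B][\Curve]\,\bar\xi$ against test vectors in $\ima(\RY[B][\Curve])$, whereas the estimate needs its full $\YCgd$-norm; the unmeasured part (the action on $\ker(\RY[\ConstraintMap][\Curve])$) must be shown to be compactly controlled, which is again the same off-diagonal statement. So your Fredholm claim, and with it surjectivity, is unsupported; the parenthetical Gelfand-triple variant has the identical gap, since ``invertibility follows exactly as in \autoref{prop:MetricDefinition}'' presupposes an elliptic estimate on the subspace that is not available without this input.

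What closes the gap — and what the paper actually does — is an explicit computation using the concrete right inverse from \autoref{lem:RightInverse}: $\cD_\Curve\bigparen{\RX[B][\Curve](\xi,U)} = \Tangent_\Curve\,\xi + \pr_\Curve^\perp\,\tilde U$ with $\tilde U = -\varTheta_\Curve^{-1}\,b_\Curve(\xi)$ of finite rank. Expanding $\diff{\strongss}{\Curve}\cD_\Curve u$ and $\diff{\weakss}{\Curve}\cD_\Curve w$ by the discrete Leibniz rule, the leading contribution to $\ninnerprod{T\,\bar\xi,\bar\psi}$ is $\int_\Torus (\diff{\strongss}{\Curve}\xi)\,(\diff{\weakss}{\Curve}\psi)\,\singularmeasure_\Curve$ because $\nabs{\Tangent_\Curve} = 1$, i.e.\ exactly the leading term of $\ninnerprod{\RJ[\TargetSpace]\at_\Curve\,\bar\xi,\bar\psi}$, while all remaining terms (those involving differences of $\Tangent_\Curve$, the $\pr_\Curve^\perp$-terms, and the finite-rank $\tilde U$-, $\tilde W$-contributions) are of lower order or finite rank and hence define compact operators, by estimates of the type in \autoref{lem:IntegrationbyParts} and \autoref{lem:MultiplicationLemma}. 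With that computation your argument becomes the paper's proof; without it (or an equivalent replacement), the proposal does not establish the lemma. Your preliminary reductions (the genuine-adjoint identification, boundedness below of $\RH[B][\Curve]$, the intertwining identity, and the resulting injectivity on $\XNg$) are correct.
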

\begin{proof}
We have
$\RX[B][\Curve][\adj]\;\RX[B][\Curve]=
(\RJ[\TargetSpace]\at_\Curve)^{-1} \, T$ with $T \ceq \RY[B][\Curve][\dual] \; (\RJ[\ConfSpace]\at_\Curve) \; \RX[B][\Curve]$.
Thus it suffices to show that $T$ is invertible.
Let $\bar \xi \ceq (\xi,U) \in \XNg$ and $\bar \psi \ceq (\psi,W) \in \YNg$.
Put $u \ceq \RX[B][\Curve]\;\bar \xi$ and $w \ceq \RY[B][\Curve]\; \bar \psi$.
By construction, we have
\begin{align*}
	\ninnerprod{T \, \bar \xi,\bar \psi}
	&= 
	\ninnerprod{\RJ[\ConfSpace]\at_\Curve \; \RX[B][\Curve]\; \bar \xi, \RY[B][\Curve]\;\bar \psi}
	= 
	\textstyle
	\int_\Torus\!
		\ninnerprod{
		\diff{\strongss}{\Curve} \cD_\Curve u
		, 
		\diff{\weakss}{\Curve} \cD_\Curve w
		}
		\, \mu_\Curve
	+ \lot
\end{align*}
With the notation from the proof of \autoref{lem:RightInverse}, we put
$\tilde U \ceq - \varTheta_\Curve^{-1} \, b_\Curve (\xi)$ and
$\tilde W \ceq - \varTheta_\Curve^{-1} \, b_\Curve (\psi)$.
Now we observe that
\begin{align*}
	\diff{\strongss}{\Curve} \cD_\Curve u 
	&= 
	 (\Tangent_\Curve \circ \prx )\; (\diff{\strongss}{\Curve} \xi)
	 + (\diff{\strongss}{\Curve} \Tangent_\Curve) \; (\xi \circ \pry)
	 + \diff{\strongss}{\Curve} \pr_\Curve^\perp \; \tilde U
	\qand
	\\
	\diff{\weakss}{\Curve} \cD_\Curve w
	&= 
	 (\Tangent_\Curve \circ \prx)\; (\diff{\weakss}{\Curve}  \psi)
	 + (\diff{\weakss}{\Curve} \Tangent_\Curve)  \; (\psi \circ  \pry)
	 + \diff{\weakss}{\Curve}  \pr_\Curve^\perp \; \tilde W.
\end{align*}
Writing only the terms of highest order in $\xi$ and $\psi$, we obtain
\begin{align*}
	\textstyle
	\int_\Torus
	\ninnerprod{
		\diff{\strongss}{\Curve}  \cD_{\Curve} u
		, 
		\diff{\weakss}{\Curve}  \cD_{\Curve} w
	} \, \singularmeasure_\Curve
	=
	\int_\Torus
		(\diff{\strongss}{\Curve} \, \xi)
		\,
		(\diff{\weakss}{\Curve} \,\psi)
	\, \singularmeasure_\Curve
	+ \lot
\end{align*}
The latter pairing
is identical to  
$\ninnerprod{\RJ[\TargetSpace]|_{\Curve} \, \bar \xi, \bar \psi}$
up to the term  $\int_{\Circle} \xi \, \psi \, \omega_\Curve + \ninnerprod{U,W}$,
which is a combination of lower order and finite rank, thus represents a compact operator $\XNg \to \YNgd$.
This means that $T$ is a compact perturbation of $\RJ[\TargetSpace]|_{\Curve}$ and thus a Fredholm operator of index $0$.
Hence it suffices to show that $T$ is injective.
Let $\bar \xi \in \ker(T)$ and put $\bar \eta \ceq \Ri[\TargetSpace] \; \bar \xi$. 
A diagram chase in \eqref{eq:RightInverseDiagram} and \eqref{eq:j'J=Ii} yields
\begin{align*}
	\ninnerprod{
	 \RI[\ConfSpace]\at_\Curve \; \RH[B][\Curve] \; \bar \eta,
	 \RH[B][\Curve] \; \bar \eta
	}
	&=
	\ninnerprod{
	 \RI[\ConfSpace]\at_\Curve \; \RH[B][\Curve] \; \Ri[\TargetSpace] \; \bar \xi,
	 \RH[B][\Curve] \; \bar \eta
	}
	=
	\ninnerprod{
	 \RI[\ConfSpace]\at_\Curve \; \Ri[\ConfSpace] \; \RX[B][\Curve] \; \bar \xi,
	 \RH[B][\Curve] \; \bar \eta
	}
	\\
	&=
	\ninnerprod{
	 \Rj[\ConfSpace]\dual \;  \RJ[\ConfSpace]\at_\Curve \;\RX[B][\Curve] \; \bar \xi,
	 \RH[B][\Curve] \; \bar \eta
	}
	=
	\ninnerprod{
	 \RJ[\ConfSpace]\at_\Curve \; \RX[B][\Curve] \; \bar \xi,
	 \Rj[\ConfSpace] \; \RH[B][\Curve] \; \bar \eta
	}
	\\
	&=
	\ninnerprod{
	 \RJ[\ConfSpace]\at_\Curve \; \RX[B][\Curve] \; \bar \xi,
	 \RY[B][\Curve] \; \Rj[\TargetSpace] \; \bar \eta
	}
	=
	\ninnerprod{
		T \, \bar \xi,		
		\Rj[\TargetSpace] \; \bar \eta
	}
	= 0.
\end{align*}
Since $\RH[B][\Curve]$ is injective and since 
$\ninnerprod{\RI[\ConfSpace]\at_\Curve \, \cdot, \cdot }$ is a scalar product on $\HCg$, this implies $\Ri[\TargetSpace] \; \bar \xi =  \bar \eta = 0$.
The injectivity of $\Ri[\TargetSpace]$ yields $\bar \xi = 0$ and we see that $T$ is injective. So as an injective Fredholm operator with index zero, $T$ must also be surjective, hence continuously invertible by the open mapping theorem.
\end{proof}

The invertibility of the saddle point matrix leads to the following generalizations of 
(i) the Moore-Penrose pseudoinverse of a surjective operator between Hilbert spaces
and
(ii) the orthoprojector onto the orthogonal complement of the operator's null space. Being able to reduce the action of these operators to solving a linear saddle point system will be crucial for applications (see Section~\ref{sec:ComputationalTreatment}).

\begin{corollary}\label{cor:PseudoinverseOrthoprojector}
The Moore-Penrose pseudoinverse 
$\RH[\ConstraintMap][\Curve][\pinv] \ceq \RH[\ConstraintMap][\Curve][\adj]\;(\RH[\ConstraintMap][\Curve]\;\RH[\ConstraintMap][\Curve])^{-1}$ of $\RH[\ConstraintMap][\Curve]$
and
the orthoprojector
$\RH[P][\Curve] \ceq \RH[\ConstraintMap][\Curve][\pinv] \; \RH[\ConstraintMap][\Curve]$ with kernel $\ker(\RH[\ConstraintMap][\Curve])$
can be completed to a continuous right inverse 
$\RX[\ConstraintMap][\Curve][\pinv] \ceq \RX[\ConstraintMap][\Curve][\adj]\;(\RX[\ConstraintMap][\Curve]\;\RX[\ConstraintMap][\Curve][\adj])^{-1}$
of
$\RX[\ConstraintMap][\Curve]$
and a continuous projector
$\RX[P][\Curve] = \RX[\ConstraintMap][\Curve][\pinv] \; \RX[\ConstraintMap][\Curve]$.
For $\xi \in \XNg$ and $\tilde u \in \XCg$,
the operators can be evaluated by solving the following saddle point systems
\begin{align*}
	\begin{pmatrix}
		\RJ[\ConfSpace]\at_\Curve & \RY[\ConstraintMap][\Curve][\dual]\\
		\RX[\ConstraintMap][\Curve] &0
	\end{pmatrix}
	\begin{pmatrix}
		\RX[\ConstraintMap][\Curve][\pinv] \, \xi \\
		\lambda
	\end{pmatrix}
	=
	\begin{pmatrix}
		0\\
		\xi
	\end{pmatrix}
	\qand
	\begin{pmatrix}
		\RJ[\ConfSpace]\at_\Curve & \RY[\ConstraintMap][\Curve][\dual]\\
		\RX[\ConstraintMap][\Curve] &0
	\end{pmatrix}
	\begin{pmatrix}
		\RX[P][\Curve] \, \tilde u \\
		\mu
	\end{pmatrix}
	=
	\begin{pmatrix}
		\RJ[\ConfSpace]\at_\Curve \, \tilde u\\
		0
	\end{pmatrix}	
	,
\end{align*}
where $\lambda$, $\mu \in \YNgd$ act as Lagrange multipliers.
\end{corollary}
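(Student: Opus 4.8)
The statement reduces to bookkeeping around the invertible saddle point matrix $\SaddlePointMatrix\at_\Curve$ of \autoref{lem:Saddlepointmatrix}, together with the two invertibilities obtained during its proof: that $\RX[\ConstraintMap][\Curve]\,\RX[\ConstraintMap][\Curve][\adj]\colon \XN\to\XN$ is continuously invertible and, on the Hilbert side, that $\RH[\ConstraintMap][\Curve]\,\RH[\ConstraintMap][\Curve][\adj]$ is invertible (because $\RH[\ConstraintMap][\Curve]$ is onto with closed range). The plan is threefold: (i) deduce from these invertibilities that the displayed formulas define bounded operators with the asserted algebraic properties; (ii) transfer the classical Hilbert-space relations to the $\XC$/$\XN$-level by a diagram chase, which is the precise meaning of ``$\RX[\ConstraintMap][\Curve][\pinv]$ and $\RX[P][\Curve]$ complete $\RH[\ConstraintMap][\Curve][\pinv]$ and $\RH[P][\Curve]$''; (iii) verify the two saddle point recipes by block Gaussian elimination, reusing the identity $\RX[\ConstraintMap][\Curve][\adj] = (\RJ[\ConfSpace]\at_\Curve)^{-1}\,\RY[\ConstraintMap][\Curve][\dual]\,(\RJ[\TargetSpace]\at_\Curve)$ from the proof of \autoref{lem:Saddlepointmatrix}.

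For (i): since $\RX[\ConstraintMap][\Curve]\,\RX[\ConstraintMap][\Curve][\adj]$ is continuously invertible, $\RX[\ConstraintMap][\Curve][\pinv] \ceq \RX[\ConstraintMap][\Curve][\adj]\,(\RX[\ConstraintMap][\Curve]\,\RX[\ConstraintMap][\Curve][\adj])^{-1}$ is bounded and satisfies $\RX[\ConstraintMap][\Curve]\,\RX[\ConstraintMap][\Curve][\pinv] = \id_{\XN}$, hence it is a continuous right inverse of $\RX[\ConstraintMap][\Curve]$ and in particular injective, so $\RX[P][\Curve] \ceq \RX[\ConstraintMap][\Curve][\pinv]\,\RX[\ConstraintMap][\Curve]$ is a bounded idempotent with $\ker(\RX[P][\Curve]) = \ker(\RX[\ConstraintMap][\Curve])$. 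On the Hilbert side, $\RH[\ConstraintMap][\Curve][\adj]$ being the adjoint relative to the scalar products $\RI[\ConfSpace]\at_\Curve$ and $\RI[\TargetSpace]\at_\Curve$ (as already used in \autoref{lem:Saddlepointmatrix}), the operator $\RH[\ConstraintMap][\Curve][\pinv]$ is the usual Moore--Penrose pseudoinverse of the surjection $\RH[\ConstraintMap][\Curve]$, and $\RH[P][\Curve]$ is the associated $\RI[\ConfSpace]\at_\Curve$-orthogonal projector with kernel $\ker(\RH[\ConstraintMap][\Curve])$.

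For (ii) I would first establish compatibility of the generalized adjoints, $\Ri[\ConfSpace]\,\RX[\ConstraintMap][\Curve][\adj] = \RH[\ConstraintMap][\Curve][\adj]\,\Ri[\TargetSpace]$. Starting from the two adjoint formulas and using $\Ri[\ConfSpace]\,(\RJ[\ConfSpace]\at_\Curve)^{-1} = (\RI[\ConfSpace]\at_\Curve)^{-1}\,\Rj[\ConfSpace]\dual$ (which is \eqref{eq:j'J=Ii} rearranged), its $\TargetSpace$-analogue $\Rj[\TargetSpace]\dual\,\RJ[\TargetSpace]\at_\Curve = \RI[\TargetSpace]\at_\Curve\,\Ri[\TargetSpace]$ from \autoref{prop:MetricDefinition2}, and the dualized lower square of \eqref{eq:PhiDiagramm} in the form $\Rj[\ConfSpace]\dual\,\RY[\ConstraintMap][\Curve][\dual] = \RH[\ConstraintMap][\Curve][\dual]\,\Rj[\TargetSpace]\dual$, one rewrites $\Ri[\ConfSpace]\,\RX[\ConstraintMap][\Curve][\adj]$ step by step to $(\RI[\ConfSpace]\at_\Curve)^{-1}\,\RH[\ConstraintMap][\Curve][\dual]\,\RI[\TargetSpace]\at_\Curve\,\Ri[\TargetSpace] = \RH[\ConstraintMap][\Curve][\adj]\,\Ri[\TargetSpace]$. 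Composing on the left with $\RX[\ConstraintMap][\Curve]$ and using the upper square of \eqref{eq:PhiDiagramm} ($\Ri[\TargetSpace]\,\RX[\ConstraintMap][\Curve] = \RH[\ConstraintMap][\Curve]\,\Ri[\ConfSpace]$) yields $\Ri[\TargetSpace]\,\RX[\ConstraintMap][\Curve]\,\RX[\ConstraintMap][\Curve][\adj] = \RH[\ConstraintMap][\Curve]\,\RH[\ConstraintMap][\Curve][\adj]\,\Ri[\TargetSpace]$, hence $(\RH[\ConstraintMap][\Curve]\,\RH[\ConstraintMap][\Curve][\adj])^{-1}\,\Ri[\TargetSpace] = \Ri[\TargetSpace]\,(\RX[\ConstraintMap][\Curve]\,\RX[\ConstraintMap][\Curve][\adj])^{-1}$; assembling these gives $\Ri[\ConfSpace]\,\RX[\ConstraintMap][\Curve][\pinv] = \RH[\ConstraintMap][\Curve][\pinv]\,\Ri[\TargetSpace]$ and, right-composing once more with $\RX[\ConstraintMap][\Curve]$, $\Ri[\ConfSpace]\,\RX[P][\Curve] = \RH[P][\Curve]\,\Ri[\ConfSpace]$, i.e. the $\XC$-operators restrict to the Hilbert ones along the Gelfand triple.

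For (iii): writing $\SaddlePointMatrix\at_\Curve\,(u,\lambda)\transp = (0,\xi)\transp$ as $\RJ[\ConfSpace]\at_\Curve\,u = -\RY[\ConstraintMap][\Curve][\dual]\,\lambda$ and $\RX[\ConstraintMap][\Curve]\,u = \xi$, the first equation together with $(\RJ[\ConfSpace]\at_\Curve)^{-1}\,\RY[\ConstraintMap][\Curve][\dual] = \RX[\ConstraintMap][\Curve][\adj]\,(\RJ[\TargetSpace]\at_\Curve)^{-1}$ forces $u = \RX[\ConstraintMap][\Curve][\adj]\,\nu$ with $\nu \ceq -(\RJ[\TargetSpace]\at_\Curve)^{-1}\,\lambda \in \XN$ (so the multiplier indeed lies in $\YNgd$), and the second equation then reads $\RX[\ConstraintMap][\Curve]\,\RX[\ConstraintMap][\Curve][\adj]\,\nu = \xi$, giving $u = \RX[\ConstraintMap][\Curve][\adj]\,(\RX[\ConstraintMap][\Curve]\,\RX[\ConstraintMap][\Curve][\adj])^{-1}\,\xi = \RX[\ConstraintMap][\Curve][\pinv]\,\xi$; existence and uniqueness of $(u,\lambda)$ are exactly \autoref{lem:Saddlepointmatrix}. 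The second system $\SaddlePointMatrix\at_\Curve\,(v,\mu)\transp = (\RJ[\ConfSpace]\at_\Curve\,\tilde u,0)\transp$ is handled identically: the first line gives $v = \tilde u - \RX[\ConstraintMap][\Curve][\adj]\,\nu$, the second $\RX[\ConstraintMap][\Curve]\,\RX[\ConstraintMap][\Curve][\adj]\,\nu = \RX[\ConstraintMap][\Curve]\,\tilde u$, hence $v = \tilde u - \RX[\ConstraintMap][\Curve][\pinv]\,\RX[\ConstraintMap][\Curve]\,\tilde u = (\id - \RX[P][\Curve])\,\tilde u$, which is the $\RI[\ConstraintMfld]\at_\Curve$-orthogonal projection of $\tilde u$ onto $T_\Curve\ConstraintMfld = \ker(\RX[\ConstraintMap][\Curve])$ — precisely the representation of the projected gradient used at the end of the proof of \autoref{theo:ProjectedGradients}, with $\tilde u = \grad(\Energy)\at_\Curve$. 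I expect step (ii) to be the only genuinely delicate point: one must avoid the naive Hilbert-space cancellations and instead insert the commuting triangles of \eqref{eq:j'J=Ii}, \eqref{eq:PhiDiagramm} and \autoref{prop:MetricDefinition2} in the correct order; steps (i) and (iii) are formal manipulations with bounded operators and their bounded inverses.
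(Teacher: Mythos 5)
Your proposal is correct and is essentially the derivation the paper intends: the corollary carries no separate proof in the paper and is meant to follow from \autoref{lem:Saddlepointmatrix} together with the generalized adjoint $\RX[\ConstraintMap][\Curve][\adj]=(\RJ[\ConfSpace]\at_\Curve)^{-1}\,\RY[\ConstraintMap][\Curve][\dual]\,\RJ[\TargetSpace]\at_\Curve$, exactly as in your steps (i) and (iii); and your compatibility chase in (ii) only re-assembles identities already used inside the proof of \autoref{lem:Saddlepointmatrix} (the squares of \eqref{eq:PhiDiagramm}, the relation \eqref{eq:j'J=Ii} and its $\TargetSpace$-analogue from \autoref{prop:MetricDefinition2}), so this is the same route, merely written out in full.

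One point you should state explicitly instead of glossing: your (correct) block elimination of the second saddle point system yields the top component $(\id-\RX[P][\Curve])\,\tilde u$, i.e.\ the projection of $\tilde u$ onto $\ker(\RX[\ConstraintMap][\Curve])$, whereas the corollary as printed labels that unknown $\RX[P][\Curve]\,\tilde u$ with $\RX[P][\Curve]=\RX[\ConstraintMap][\Curve][\pinv]\,\RX[\ConstraintMap][\Curve]$, whose \emph{kernel} is $\ker(\RX[\ConstraintMap][\Curve])$. Since the second block row forces the top unknown into that kernel, the labeling in the display and the definition of $\RX[P][\Curve]$ cannot both be read literally; the system computes the complementary projector, which is also exactly what the proof of \autoref{theo:ProjectedGradients} needs. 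This is a typo in the statement rather than a flaw in your argument, but as you are proving the statement you should say so, rather than presenting your computation as verifying the display verbatim. (Similarly, the factor $(\RH[\ConstraintMap][\Curve]\,\RH[\ConstraintMap][\Curve])^{-1}$ in the statement is missing an adjoint.)
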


\section{Computational treatment}\label{sec:ComputationalTreatment}

For the ease of use, we discretize curves by polygonal lines and approximate the Möbius energy and the Riesz isomorphisms from \autoref{prop:MetricDefinition} by simple quadrature rules. 
In the language of finite element analysis, we employ a nonconforming Ritz--Galerkin scheme because the discrete ansatz space is not a subset of the smooth configuration space.
We try to outline a discrete setting that can be applied also to more general self-avoiding energies; therefore, we do not care about Möbius-invariance of the energy, although Möbius-invariant discretizations have already been proposed (see e.g., \cite{MR1702037} and \cite{2018arXiv180907984B,2019arXiv190406818B}).

\subsection{Spatial discretization}\label{sec:Discretization}

Let $\Triangulation$ denote a partition of $\Circle$ with vertex set $\Vertices(\Triangulation) \subset \Circle$
and edge set $\Edges(\Triangulation) \subset \Vertices(\Triangulation) \times \Vertices(\Triangulation)$. 
Denote the number of edges by $N$.
If the partition is sufficiently fine, i.e., $\MaxRadius(\Triangulation) \ceq \max_{\Edge \in \Edges(\Triangulation)} \nabs{\Edge}$ is sufficiently small,
then we may identify each edge with the closed, oriented interval connecting its end vertices. 
For an edge $\Edge \in \Edges(\Triangulation)$, we denote by $\Edge^\downarrow \in \Vertices(\Triangulation)$ and $\Edge^\uparrow \in \Vertices(\Triangulation)$ its backward and forward boundary vertex, respectively.

Let $\Polygon \colon \Vertices(\Triangulation) \to \AmbSpace$ be an embedded polygon in $\AmbSpace$, i.e.,
there is a piecewise linear embedding $\Curve \colon \Circle \to \AmbSpace$ such that $\Curve|_{\Vertices(\Triangulation)} = \Polygon$ and such that $\Curve$ maps $\Edge$ affinely onto the line segment connecting $\Polygon(\Edge^\downarrow)$ to $\Polygon(\Edge^\uparrow)$.
We denote by $\ConfSpace_\subT$ the set of such embedded polygons
which is an open set in the space of all closed polygons
with~$N$ edges. Since the latter is finite dimensional and isomorphic to~$(\AmbSpace)^N$,
we have $\XCg_\subT = \HCg_\subT = \YCg_\subT \cong (\AmbSpace)^N$. Likewise, we discretize the target spaces by $\XNg_\subT = \HNg_\subT = \YNg_\subT = \set{\lambda \colon \Edges(\Triangulation) \to \R} \times \AmbSpace \cong \R^N \times \AmbSpace$.
By $\EdgeLengths_\Polygon(I) \ceq \nabs{P(\Edge^\downarrow) - P(\Edge^\uparrow)}$, we denote the edge length of edge $I$.

\subsubsection*{Discrete energy}\label{sec:DiscreteEnergy}

There are several possibilities to discretize the Möbius energy $\Energy$.
A~very general approach employs simple quadrature rules and works for reparametrization-invariant energies $\cF$ of the form
$
	\textstyle
	\cF(\Curve)
	=
	\int_\Torus 
	F(\Curve)
	\, \varOmega_\Curve
$
with some energy density $F(\Curve) \colon \Torus \to \R$.
If, for a sufficiently smooth curve $\Curve$,
the integrand $F(\Curve)$ is not too singular around the diagonal of the integration domain $\Torus$, we have
\begin{align}
	\textstyle
	\cF(\Curve)
	\approx
	\sum_{\bar \Edge \cap \bar \OtherEdge = \emptyset}
	\int_{\Edge} \! \int_{\OtherEdge}  F(\Curve) \, \varOmega_\Curve.
	\label{eq:DiscreteEnergy1}	
\end{align}
Typically, the right hand side makes sense also if $\Curve$ is a polygonal line.
Indeed, cutting out the diagonal is somewhat necessary:
An elegant scaling argument in \cite[Figure 2.2]{Strzelecki:756025}) shows that the Möbius energy of a polygonal line with at least one nontrivial turning angle is infinite.

We may exploit parametrization invariance and pull back $F(\Curve)$ along the 
the \emph{local} pa\-ra\-me\-ter\-i\-za\-tion
$\Curve_\Edge \colon \intervalcc{0,1} \to \AmbSpace$, $\Curve_\Edge (s) \ceq P(\Edge^\downarrow) \, (1 - s) + s\, P(\Edge^\uparrow)$ 
and
$\Curve_\OtherEdge \colon \intervalcc{0,1} \to \AmbSpace$, $\Curve_\OtherEdge (t) \ceq P(\OtherEdge^\downarrow) \, (1 - t) + t \, P(\OtherEdge^\uparrow)$
to the unit square. Denoting the pullback by $F_{IJ}(P) \colon \intervalcc{0,1}^2 \to \R$, we have
\begin{align*}
	\textstyle
	\int_{\Edge} \! \int_{\OtherEdge}  F(\Curve) \, \varOmega_\Curve
	=
	\EdgeLengths_\Polygon(\Edge)\,
	\EdgeLengths_\Polygon(\OtherEdge)	
	\int_0^1 \! \int_0^1   F_{IJ}(\Polygon)(s,t) \, \dd s \, \dd t.
\end{align*}
So with a $k$-point quadrature rule $t_1, \dotsc, t_k \in \intervalcc{0,1}$, $\omega_1,\dotsc,\omega_k \in \R$, we 
may discretize $\cF$ by 
$
	\cF_\subT(\Polygon)
	\ceq
	\textstyle
	\sum_{\bar \Edge \cap \bar \OtherEdge = \emptyset}
	W_{IJ}(\Polygon)
$
with the \emph{local contributions}
\begin{align}
	W_{IJ}(\Polygon)
	\ceq
	\textstyle	
	\EdgeLengths_\Polygon(\Edge)\,
	\EdgeLengths_\Polygon(\OtherEdge)	
	\sum_{i=1}^k
	\sum_{j=1}^k	
	F_{IJ}(\Polygon)(t_i,t_j)\, \omega_i \, \omega_j.
	\label{eq:DiscreteEnergyW}
\end{align}
Applying this with $k =1$ to $F=E$ from \eqref{eq:EnergyDensity}, 
one is naturally lead 
to the \emph{vertex energy} ($t_1 = 0$, $\omega_1 = 1$) and
to the \emph{edge energy} ($t_1 = 1/2$, $\omega_1 = 1$) as proposed by Kusner and Sullivan in \cite{MR1470748}.
Scholtes proved in \cite{MR3268981} that the vertex energy for equilateral polygons 
$\Gamma$-converges towards $\Energy$ under refinement of partitions, i.e., for $\MaxRadius(\Triangulation) \to 0$, with respect to the $W^{k,q}$-topology,
$k\in\set{0,1}$, $q\in[1,\infty]$.
Roughly speaking, \hbox{$\Gamma$-con\-ver\-gence} implies that cluster points of minimizers of the discrete energies are minimizers of $\Energy$.
This result justifies the quite harsh variational crimes that one commits by choosing polygonal lines as discrete configurations. Although it is restricted to equilateral polygons (which was one of the reasons for us to include the edge length constraint), we deem it likely that it can be extended to non-equilateral polygons with a uniform bound on $\left.\max\EdgeLengths_\Polygon\middle/\min\EdgeLengths_\Polygon\right.$
as $h\to0$. At least our experiments indicate that the precise distributions of edge lengths does not matter.

We require also the derivative of the discrete energy. Similarly as in Section~\ref{sec:Energy}, the explicit dependence of $E$ on the geodesic distance $\varrho_\Curve$ causes problems: 
Without taking further measures, this would lead to the very high complexity of $\Omega(N^3)$ to assemble the derivative $D\Energy_{\subT}(\Polygon)$ for the vertex energy and edge energy.\footnote{For an optimization method that requires only the projected gradients and that enforces the edge length constraints in each iteration, the contribution of $D \varrho(\Curve)$ to $D \Energy(\Curve)$ can be ignored.}
This can be circumvented by utilizing the identity 
$\Energy(\Curve)
	=
	\textstyle
	4 \!+\!
	\int_\Torus 
	F(\Curve)
	\, \varOmega_\Curve$
with the integrand
\begin{align*}
	F(\Curve)
	\ceq
	\frac{\nabs{\triangle \Tangent_\Curve}^2}{2 \, \nabs{\triangle \Curve}^2}
	+
 	2
	\frac{
		\ninnerprod{\Tangent_\Curve \circ \prx,\Tangent_\Curve \circ \pry}
	}{
		\nabs{\triangle \Curve}^2
	}
	-
	2 
	\frac{
		\ninnerprod{\triangle \Curve,\Tangent_\Curve \circ \prx}\,\ninnerprod{\triangle \Curve,\Tangent_\Curve \circ \pry}
	}{
		\nabs{\triangle \Curve}^4
	}
	,
\end{align*}	
which was derived by Ishizeki and Nagasawa in \cite{MR3273894}.
For the sake of efficiency, we discretize with the midpoint rule, i.e., with $k =1$, $t_1 = 1/2$, and $\omega_1 = 1$.
For this $F$, the local contributions $W_{IJ}(\Polygon)$ depend only on the coordinates of the four points $\Polygon(I^\downarrow)$, $\Polygon(I^\uparrow)$, $\Polygon(J^\downarrow)$, and $\Polygon(J^\uparrow)$.
So the expression of the first and second derivative of $W_{IJ}$ with respect to these four points can once be computed symbolically and compiled into runtime-efficient libraries.
The first and second derivative of $\cF_\subT$ can then be assembled from $DW_{IJ}(P)$ and $D^2W_{IJ}(P)$ as a vector and a matrix of size $\AmbDim\, N$ and $(\AmbDim \, N) \times (\AmbDim \, N)$, respectively. Due to the nonlocal nature of the energy, the matrix $D^2\cF(\Polygon)$ is dense.

\subsubsection*{Discrete inner product}

Next we discretize the inner product $\RI[\ConfSpace]$ from \autoref{prop:MetricDefinition}. Let $U \colon \Vertices(\Triangulation) \to \AmbSpace$ and denote by $u \colon \Circle \to \AmbSpace$ piecewise linear interpolation.
For the computation of the local contribution of the edge pair $(\Edge,\OtherEdge)$ to the Gram matrix, we put
\begin{gather*}
	u_\Edge (s) \ceq U(\Edge^\downarrow) \, (1 - s) + s \, U(\Edge^\uparrow),
	\qand
	u_\OtherEdge (t) \ceq U(\OtherEdge^\downarrow) \, (1 - t) + t \, U(\OtherEdge^\uparrow).
\end{gather*}
The first two terms of $\ninnerprod{\RI[\ConfSpace] \, u, u}$  can now be discretized as follows:
\begin{gather*}
	\textstyle
	\sum_{\substack{
		\Edge \cap \OtherEdge = \emptyset
	}}
	\EdgeLengths_\Polygon(\Edge)\,\EdgeLengths_\Polygon(\OtherEdge)
	\bigabs{
	\frac{u_\Edge(\Edge^\uparrow)-u_\Edge(\Edge^\downarrow)}{\EdgeLengths_\Polygon(\Edge)}
	-
	\frac{u_\OtherEdge(\OtherEdge^\uparrow)-u_\OtherEdge(\OtherEdge^\downarrow)}{\EdgeLengths_\Polygon(\OtherEdge)}
	}^2	
	\sum_{i=1}^k
	\sum_{j=1}^k
	\frac{
		\omega_{i} \, \omega_{j}
	}{
		\nabs{\Curve_\Edge(t_i) - \Curve_\OtherEdge(t_j)}^2
	} 
	\;\;\text{and}
	\\
	\textstyle	
	\sum_{\substack{
		\Edge \cap \OtherEdge = \emptyset
	}}
	\EdgeLengths_\Polygon(\Edge)\,\EdgeLengths_\Polygon(\OtherEdge)
	\sum_{i=1}^k
	\sum_{j=1}^k
	\frac{	\abs{u_\Edge(t_i) - u_\Edge(t_j)}^2
	}{
		\nabs{\Curve_\Edge(t_i) - \Curve_\OtherEdge(t_j)}^2
	}
	\,
	E_{IJ}(\Polygon)(t_i,t_j)
	\, \omega_{i} \, \omega_{j}
	,	
\end{gather*}
where we employ the same quadrature rule as for the discrete Möbius energy.
In the presence of a barycenter constraint, we may simply omit the term $\ninnerprod{ \int_\Circle u \, \LineElement_\Curve,\int_\Circle u \, \LineElement_\Curve}$ without loosing definiteness of the inner product on $\ker(D \ConstraintMap_\subT(\Polygon))$.
By virtue of the polarization formula, this defines the Gram matrix uniquely,
leading to discrete bilinear forms $G_\Polygon = \RI[\ConfSpace_\subT] \at_\Polygon = \RJ[\ConfSpace_\subT] \at_\Polygon$.
The local  matrices are of size $(4 \,\AmbDim) \times (4 \,\AmbDim)$ ($\AmbDim$ coordinates for each of the four vertices belonging to the edge pair $(\Edge,\OtherEdge)$). They can be computed in parallel and added into the global matrix afterwards. 
The resulting global Gram matrix is a dense matrix of size $(\AmbDim \, N) \times (\AmbDim \, N)$.\footnote{In fact, the assembly can be sped up by 
first assembling  the $N \times N$-matrix  $\RI[\ConfSpace_\subT]$ for the case $\AmbDim =1$. This way, the local matrices have only size $4 \times 4$.
Afterwards, the $(\AmbDim \, N) \times (\AmbDim \, N)$ matrix can be obtained as block-diagonal matrix with 
$\AmbDim$ identical blocks of size $N \times N$.
}

\subsubsection*{Discrete constraints}
As for the constraints, we discretize $\ConstraintMap$ by
\begin{align*}
	\ConstraintMap_\subT(\Polygon)
	\ceq
	\Bigparen{
		\;
		\bigparen{ 
			\log ( \EdgeLengths_\Polygon(I))- \log(\ell_0(I))
		}_{\Edge \in \Edges(\Triangulation)}
	\;,\;
	\textstyle
	 \sum_{\Edge \in \Edges(\Triangulation)}
	 	\tfrac{1}{2} \, \EdgeLengths_\Polygon(\Edge) 
	 	\, 
	 	\nparen{ \Polygon(\Edge^\uparrow) + \Polygon(\Edge^\downarrow)}
		\;	 	
	},
\end{align*}
where $\ell_0 \colon \Edges(\Triangulation) \to \intervaloo{0,\infty}$ is a prescribed distribution of desired edge lengths, for example $\ell_0(\Edge)  = L \, \nabs{I}$.
Although restoring feasibility for the edge length constraint comes at a certain cost, it prevents edges from collapsing to points and from being overstretched in the course of optimization. The latter is crucial since the discrete energy is not exactly self-avoiding; it becomes singular only if \emph{quadrature points} approach each other. So overstretched edges make it more likely that the curve tries to form a self-intersection.

Some care should be given to the choice of the target edge lengths $\ell_0$.
A coarse mesh may not be sufficient to preclude self-intersections,
a very fine mesh is expensive as the
computational effort grows quadratically in the number of nodes.
As a rule of thumb, the distance between two neighboring vertices
of a polygon should be strictly smaller than the distance
between any other pairs of vertices.
In principle, it is also possible to drop the edge length constraints; instead one could introduce a global length constraint and one could handle short and long edges by adaptive edge split and edge collapse strategies. We refrained from opting for this route here for the sake of simplicity.

\subsection{Projected gradient}

Once the vector $\RY[\Energy_\subT][\Polygon] = D\Energy_\subT(\Polygon)$,  
and the matrices 
$\RI[\ConfSpace_\subT] \at_\Polygon$
and 
$\RX[\ConstraintMap_\subT][\Polygon] = \RY[\ConstraintMap_\subT][\Polygon] = D \ConstraintMap_\subT(\Polygon)$ have been assembled,
the projected gradient $u \ceq \grad_{\ConstraintMfld_\subT}(\Energy_\subT|_{\ConstraintMfld_\subT})\at_\Polygon$ can be obtained
by solving the following discrete analogue of the linear saddle point system \eqref{eq:SaddlePointEquation}:
\begin{align}
	\begin{pmatrix}
		\RJ[\ConfSpace_\subT]\at_\Polygon 
		&D\ConstraintMap_\subT(\Polygon)\dual \;
		\\
		D\ConstraintMap_\subT(\Polygon) 
		&0
	\end{pmatrix}
	\,
	\begin{pmatrix}
		u \\
		\lambda
	\end{pmatrix}
	=
	\begin{pmatrix}
		\eta\\
		0
	\end{pmatrix}
	\quad
	\text{with}
	\quad
	\eta =  D\Energy_{\subT}(\Polygon).
	\label{eq:DiscreteSaddlePointSystem}
\end{align}
We assemble the saddle point matrix as a dense, symmetric matrix with $(N \,\AmbDim + N  + \AmbDim)$ rows, and solve it via a dense $LU$-factorization.
Hence it costs roughly $\LandO(N^2 \AmbDim^2)$ for the assembly and a further $\LandO(N^3 \AmbDim^3)$ for the factorization. It is not surprising that this  is the most expensive part in the overall optimization process. 
We would like to point out that this can be sped up considerably by more sophisticated methods:
The assembly of the saddle point matrix can be avoided by assembling $D\ConstraintMap_\subT(\Polygon)$ as a sparse matrix and by compressing $\RJ[\ConfSpace_\subT]\at_\Polygon$ in a hierarchical matrix data structure that is efficient for fast matrix-vector multiplication.
Similar techniques can be employed to approximate $\Energy_\Triangulation(\Polygon)$ and $D\Energy_\Triangulation(\Polygon)$ in subquadratic time, but all this is beyond the scope of the present work.

\subsection{Restoring feasibility and time step size rules}
Suppose that $\ConstraintMap_\subT(\Polygon) = 0$ and that $u$ is a feasible search direction, i.e., $D\ConstraintMap_\subT(\Polygon) \,u  = 0$.
The constraint mapping $\ConstraintMap_\subT$ is Lipschitz continuously differentiable.
Hence provided that 
the step size $\tau > 0$ is sufficiently small, the modified Newton method
\begin{align}
	\OtherPolygon_0 = \Polygon + \tau \, u,
	\qquad
	\OtherPolygon_{i+1} = \OtherPolygon_i - D\ConstraintMap_\subT(\Polygon)\pinv \; \ConstraintMap_\subT(\OtherPolygon_i)
	\quad \text{for $i \in \N$}
	\label{eq:ModifiedNewtonBackProjection}
\end{align}
converges quickly to a point $\OtherPolygon_\infty$ that satisfies $\ConstraintMap_\subT(\OtherPolygon_\infty) = 0$. Here $D\ConstraintMap_\subT(\Polygon)\pinv$ denotes the Moore-Penrose pseudoinverse with respect to the inner product $G_\Polygon$ and we utilize \autoref{cor:PseudoinverseOrthoprojector} to evaluate it.\footnote{We employ the modified Newton method (instead of Newton's method) because the saddle point matrix from \eqref{eq:DiscreteSaddlePointSystem} is already factorized, 
so that evaluating $D\ConstraintMap_\subT(\Polygon)\pinv \, \tilde u$ on a given vector $\tilde u$ can be performed quite inexpensively with \autoref{cor:PseudoinverseOrthoprojector}.
Alternatively, also every other scheme for solving $\ConstraintMap_\subT(\OtherPolygon) = 0$ can be employed.}
For a given descending direction $u$, we may apply backtracking line search to find a suitable step size $\tau > 0$:
If the residual $\ConstraintMap_\subT(\OtherPolygon_i)$ is smaller than a prescribed tolerance after a small, prescribed number of iterations, then the point $\OtherPolygon_i$ may serve as the next iterate of the optimization method.
Otherwise we shrink $\tau$ and restart the modified Newton method. 
By shrinking $\tau$ even further, if necessary, we can also achieve that $\OtherPolygon_i$ 
satisfies the Armijo condition
$\Energy_\subT(\OtherPolygon_i) \leq \Energy_\subT(\Polygon) + (\tau/2) \, D\Energy_\subT(\Polygon) \, u$. 
An initial guess for $\tau$ can be obtained, e.g., by collision detection (see, e.g., \cite{doi:10.1111/1467-8659.t01-1-00587}): 
One determines the smallest step size $\tau_*$ such that $\Polygon + \tau_* \, u$ has a self-intersection and starts the backtracking procedure with, e.g., $\tau = \tfrac{2}{3} \tau_*$.
By utilizing suitable space partitioning data structures, this collision detection can be performed in subquadratic time. 
However, we simply cycled over all $O(N^2)$ edge pairs because its runtime is proportional to the runtime of $D\Energy_\subT(\Polygon)$.

\subsection{Optimization methods employed in \protect\autoref{fig:Benchmark}}\label{sec:OptimizationMethods}

\paragraph*{Feasible methods}
Projected $L^2$-, $W^{1,2}$-, $W^{3/2,2}$-, and $W^{2,2}$-flows were simulated both with explicit and implicit time integration schemes. 
We followed the approach above, only replacing $\RJ[\ConfSpace_\subT]\at_\Polygon$ by the Riesz operator corresponding to the particular choice of metric. Armijo backtracking line search automatically determines a stable step size.
For the implicit integration of the $L^2$-gradient flow, we employ the backward Euler method. Since it is not unconditionally stable, Armijo backtracking has to be employed also here. Because backtracking requires the implicit equations to be solved again, this is particularly expensive.

The employed trust region method is a blend of the method from \cite{MR909071} with the
two-dimen\-sional subspace method from \cite{MR772882} (without computing the lowest eigenvalues):
The next iterate is found by minimizing a quadratic model in a trust region within a low-dimensional subspace spanned by the current projected gradient, the projection of the previous gradient onto the current tangent space, and the Newton search direction -- provided the current projected gradient is shorter than a given threshold.
This means that the optimization is mostly driven by gradient and momentum; and the Hessian is utilized only in the end phase of optimization.
Shrinkage and expansion of the trust region is handled as usual, but the radius is of course to be interpreted with respect to the employed inner product.

\paragraph*{Infeasible methods}
In order to compare also to unconstrained optimization methods, 
we applied them to an analogous discretization of the penalized energy
\begin{align*}
	\textstyle
	\Energy_\alpha(\Curve)
	\ceq
	\Energy(\Curve) 
	+ 
	\alpha \,
	\nnorm{\ConstraintMap(\Curve)}_{L^2}^2
	=
	\Energy(\Curve) 
	+ 
	\alpha  \int_\Circle 
	\, \log( \nabs{\Curve'(t)}/L )^2 \, L \, \dd t,
\end{align*}
whose penalty can be interpreted as Hencky's stretch energy.
The optimization methods
were made aware of this penalty by using the metric	
$\RJ[\ConfSpace]\at_\Curve + \alpha \, D\ConstraintMap(\Curve)\dual \, \RI[\Lebesgue[2]]  D\ConstraintMap(\Curve)$ to compute gradients, where $\cI_{\Lebesgue[2]}$ denotes the Riesz operator of $\Lebesgue[2][][\Circle][\R]$.\footnote{This had a negative effect on methods based on the $L^2$-metric, so we omitted this extension in that case.}
As nonlinear conjugate gradient method, we employed the Polak-Ribière method ``with automatic reset'' (method $\mathrm{PR}_+$ in \cite[Section~5.2]{MR2244940}).
L-BFGS was implemented with history length $30$ and as described in \cite[Section~7.2]{MR2244940}. The only difference is that we replace the initial guess for the inverse Hessian by the inverse of the \emph{current} metric (because using a single initial guess turned out to be less efficient).\footnote{We are well-aware that this ad-hoc modification might not superlinearly convergent.}
As for Nesterov's accelerated gradient method (acc. grad.), we followed \cite{MR701288}, but added collision detection to truncate the step sizes (in both steps of the method). Moreover, as suggested in \cite{MR3348171}, we reset the momentum to $0$ whenever an increase of the objective was observed.\footnote{We are also aware that Nesterov's method was designed for convex optimization problems; as a heavy ball method it still serves its purpose to push the optimization through shallow regions of the energy landscape.}  
All these methods were complemented with a line search that tries to find a weak Wolfe-Powell step size.

\appendix

\section{Auxiliaries}

We require some technical results for \SoboSlobo\ spaces on the circle $\Circle$.
Typically, such statements are formulated on $\R^n$ or for sufficiently smooth domains $\varOmega \subset \R^n$,
but standard techniques allow one to port them also to smooth manifolds such as $\Circle$.
Proofs for the following two results can be found, e.g., in \cite[Theorem~5.3.6/1~(ii)]{MR1419319}
and
\cite[2.8.2, Eq.~(19)]{MR503903}.

\begin{lemma}[Chain rule]\label{lem:chain}\label{lem:ChainRule}
Let $\varOmega\subset\R^{n}$ be a bounded $C^{\infty}$-domain,
$\sigma \in \intervaloc{0,1}$, $p \in \intervalcc{1,\infty}$.
If $\psi:\R\to\R$ is Lipschitz continuous
with Lipschitz constant $\varLambda \geq 0$
and $f \in \Sobo[\sigma,p](\varOmega;\R)$
then $\psi \circ f \in \Sobo[\sigma,p](\varOmega;\R)$ and we have
$
	\nseminorm{\psi \circ f}_{\Sobo[\sigma,p]} 
	\leq 
	\varLambda \, \nseminorm{f}_{\Sobo[\sigma,p]}
	.
$
\end{lemma}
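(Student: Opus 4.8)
The plan is to reduce the fractional statement to the classical Gagliardo seminorm on $\varOmega$ and apply the elementary pointwise estimate for Lipschitz functions. First I would recall that, for $\sigma \in \intervaloo{0,1}$ and $p \in \intervalco{1,\infty}$, the Sobolev--Slobodeckij seminorm on a bounded $C^\infty$-domain $\varOmega$ is given by the Gagliardo expression
\begin{align*}
	\nseminorm{f}_{\Sobo[\sigma,p]}^p
	=
	\int_{\varOmega} \int_{\varOmega}
		\frac{\nabs{f(x) - f(y)}^p}{\nabs{x-y}^{n + \sigma p}}
	\, \dd x \, \dd y.
\end{align*}
Since $\psi$ is Lipschitz with constant $\varLambda$, we have the pointwise bound $\nabs{\psi(f(x)) - \psi(f(y))} \leq \varLambda \, \nabs{f(x) - f(y)}$ for all $x$, $y \in \varOmega$. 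Inserting this into the Gagliardo integrand, raising to the $p$-th power, and integrating immediately yields $\nseminorm{\psi \circ f}_{\Sobo[\sigma,p]}^p \leq \varLambda^p \, \nseminorm{f}_{\Sobo[\sigma,p]}^p$, i.e., the desired inequality $\nseminorm{\psi \circ f}_{\Sobo[\sigma,p]} \leq \varLambda \, \nseminorm{f}_{\Sobo[\sigma,p]}$. The membership $\psi \circ f \in \Sobo[\sigma,p](\varOmega;\R)$ then follows since $\psi \circ f \in \Lebesgue[p]$ (because $\psi$ has at most linear growth, being Lipschitz, and $f \in \Lebesgue[p]$, using that $\varOmega$ has finite measure) and its seminorm is finite by the estimate just obtained.

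For the endpoint case $\sigma = 1$ (which is included in the hypothesis $\sigma \in \intervaloc{0,1}$), the seminorm is instead the $\Lebesgue[p]$-norm of the gradient, and one argues via the chain rule for Sobolev functions: $\psi \circ f$ is weakly differentiable with $\nabla(\psi \circ f) = (\psi' \circ f) \, \nabla f$ almost everywhere, where $\nabs{\psi' \circ f} \leq \varLambda$ a.e., so again $\nseminorm{\psi \circ f}_{\Sobo[1,p]} = \nnorm{\nabla(\psi \circ f)}_{\Lebesgue[p]} \leq \varLambda \, \nnorm{\nabla f}_{\Lebesgue[p]} = \varLambda \, \nseminorm{f}_{\Sobo[1,p]}$.

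The only genuine subtlety — and the reason the statement is cited rather than proved from scratch here — is the transfer from a bounded smooth domain $\varOmega \subset \R^n$ to the circle $\Circle$: one uses a finite atlas of charts together with a subordinate partition of unity, controls the contribution of the localization by the product rule (multiplication by a fixed smooth bump function is bounded on $\Sobo[\sigma,p]$), and assembles the global seminorm from the local ones up to equivalence of norms. Since this is exactly the kind of routine localization argument alluded to in the preamble to this appendix, and since the chart maps of $\Circle$ are isometries (so the Lipschitz constant $\varLambda$ is unaffected), I would simply invoke the cited references \cite[Theorem~5.3.6/1~(ii)]{MR1419319} and \cite[2.8.2, Eq.~(19)]{MR503903} for the precise statement on domains and note that the passage to $\Circle$ is standard. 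I do not expect any real obstacle; the pointwise Lipschitz estimate does all the work.
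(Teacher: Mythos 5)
Your argument is correct, but it is worth pointing out that the paper does not prove this lemma at all: it is stated as a known result and simply referred to the literature (Runst--Sickel and Triebel), where it appears in the much more general framework of Besov/Triebel--Lizorkin spaces. Your route is the direct, elementary one: for $\sigma \in \intervaloo{0,1}$ the pointwise Lipschitz bound $\nabs{\psi(f(x))-\psi(f(y))} \leq \varLambda\,\nabs{f(x)-f(y)}$ inserted into the Gagliardo double integral gives the estimate with the sharp constant $\varLambda$, and the $\Lebesgue[p]$-membership follows from the linear growth of $\psi$ and the boundedness of $\varOmega$. This is more transparent and self-contained than the citation, at the price of covering only the \SoboSlobo scale actually needed here rather than the general scales treated in the references. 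Two small points to tidy up: the hypothesis allows $p=\infty$, which your integral formula does not cover, but the same pointwise bound applied to the sup-form of the seminorm $\sup_{x\neq y}\nabs{f(x)-f(y)}/\nabs{x-y}^{\sigma}$ settles that endpoint; and in the case $\sigma=1$ the identity $\nabla(\psi\circ f)=(\psi'\circ f)\,\nabla f$ is delicate for merely Lipschitz $\psi$ (the derivative $\psi'$ need not exist at all values attained by $f$), so it is cleaner to invoke the standard statement that $\psi\circ f\in \Sobo[1,p]$ with $\nabs{\nabla(\psi\circ f)}\leq \varLambda\,\nabs{\nabla f}$ almost everywhere, which is all the estimate requires. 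Your closing remark about transferring the statement to $\Circle$ is not part of the lemma itself (which is formulated on a bounded smooth domain) but matches the paper's stated convention that such localization is routine.
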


\begin{lemma}[Sobolev embedding]\label{lemma:Wembedding}
Let $\varOmega\subset\R^{n}$ be a bounded $C^{\infty}$-domain.
If $s_{0},s_{1}\in\R$, $p_{0},p_{1}\in \intervaloo{1,\infty}$ satisfy
$s_{0} \geq s_{1}$ and $s_{0}-\frac n{p_0} \ge s_{1}-\frac n{p_1}$
then the embedding
$
	W^{s_{0},p_{0}}(\varOmega ;\R) \hookrightarrow  W^{s_{1},p_{1}}(\varOmega ;\R)
$ is well-defined and continuous.
\end{lemma}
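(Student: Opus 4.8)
The plan is to split the proof into two regimes according to whether $p_0 \le p_1$ or $p_0 > p_1$, and in the first (genuinely interesting) regime to pass to the whole space $\R^{n}$, reduce to the critical case $s_0 - n/p_0 = s_1 - n/p_1$, and settle that case by a frequency-localisation argument.

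\textbf{The regime $p_0 > p_1$.} Here $n/p_0 < n/p_1$, so the second hypothesis is automatic and one uses only $s_0 \ge s_1$. Since $\varOmega$ is bounded, it suffices to establish the equal-smoothness inclusion $W^{s,p_0}(\varOmega;\R) \hookrightarrow W^{s,p_1}(\varOmega;\R)$ --- for the fractional part this follows from a Whitney-type decomposition of $\varOmega \times \varOmega$ away from the diagonal together with Hölder's inequality on the resulting finite-measure pieces --- and then to compose with the trivial inclusion $W^{s_0,p_1}(\varOmega;\R) \hookrightarrow W^{s_1,p_1}(\varOmega;\R)$. This is the only step in which boundedness of $\varOmega$ is genuinely indispensable rather than merely convenient.

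\textbf{Reduction to $\R^{n}$ and to the critical exponent.} For $p_0 \le p_1$ I would use that a bounded $C^{\infty}$ domain admits a bounded extension operator $E \colon W^{s,p}(\varOmega;\R) \to W^{s,p}(\R^{n};\R)$ which is a right inverse of the bounded restriction $R$; hence it suffices to prove $W^{s_0,p_0}(\R^{n};\R) \hookrightarrow W^{s_1,p_1}(\R^{n};\R)$. Define $\tilde s$ by $\tilde s - n/p_0 = s_1 - n/p_1$, i.e.\ $\tilde s = s_1 + n/p_0 - n/p_1$; the hypothesis $s_0 - n/p_0 \ge s_1 - n/p_1$ says precisely $\tilde s \le s_0$, so $W^{s_0,p_0}(\R^{n};\R) \hookrightarrow W^{\tilde s,p_0}(\R^{n};\R)$ holds trivially (same integrability, fewer derivatives required on the right). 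It remains to prove the \emph{critical} embedding $W^{\tilde s,p_0}(\R^{n};\R) \hookrightarrow W^{s_1,p_1}(\R^{n};\R)$ with $\tilde s - n/p_0 = s_1 - n/p_1$ and $p_0 \le p_1$, after which one restricts back to $\varOmega$ via $R$.

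\textbf{The critical case.} For non-integer exponents $W^{\sigma,p}(\R^{n};\R)$ coincides with the Besov space $B^{\sigma}_{p,p}(\R^{n};\R)$, whose norm is comparable to $\big( \sum_{j} 2^{j\sigma p}\, \nnorm{\Delta_j f}_{L^{p}(\R^{n})}^{p} \big)^{1/p}$ for a Littlewood--Paley decomposition $f = S_0 f + \sum_{j \ge 1}\Delta_j f$. Bernstein's inequality $\nnorm{\Delta_j f}_{L^{p_1}(\R^{n})} \le C\, 2^{jn(1/p_0 - 1/p_1)}\, \nnorm{\Delta_j f}_{L^{p_0}(\R^{n})}$ then gives
\begin{align*}
	2^{j s_1}\, \nnorm{\Delta_j f}_{L^{p_1}(\R^{n})}
	\le C\, 2^{j(s_1 + n/p_0 - n/p_1)}\, \nnorm{\Delta_j f}_{L^{p_0}(\R^{n})}
	= C\, 2^{j \tilde s}\, \nnorm{\Delta_j f}_{L^{p_0}(\R^{n})} ,
\end{align*}
and taking the $\ell^{p_1}$-norm in $j$ on the left, the $\ell^{p_0}$-norm on the right, using $\ell^{p_0} \hookrightarrow \ell^{p_1}$ (valid since $p_0 \le p_1$) together with the analogous Bernstein estimate for the low-frequency part $S_0 f$, one obtains $\nnorm{f}_{B^{s_1}_{p_1,p_1}} \le C\, \nnorm{f}_{B^{\tilde s}_{p_0,p_0}}$.

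\textbf{Main obstacle.} The delicate point is the case in which $\tilde s$ or $s_1$ is an integer: then $W^{\sigma,p}$ is the Triebel--Lizorkin space $F^{\sigma}_{p,2}$, not $B^{\sigma}_{p,p}$, and the termwise Bernstein estimate no longer closes by the plain inclusion $\ell^{p_0} \hookrightarrow \ell^{p_1}$ --- one must invoke the Fefferman--Stein vector-valued maximal inequality, or, more economically, replace the Littlewood--Paley step entirely by the mapping properties of Bessel potentials: $(\id - \Delta)^{\theta/2}$ identifies $W^{a,p}(\R^{n};\R)$ with $W^{a-\theta,p}(\R^{n};\R)$ up to norm constants, and the Bessel potential of order $\tilde s - s_1 = n/p_0 - n/p_1 \ge 0$ maps $L^{p_0}(\R^{n};\R)$ into $L^{p_1}(\R^{n};\R)$ by the Hardy--Littlewood--Sobolev inequality (and when $\tilde s = s_1$ one has $p_0 = p_1$ and nothing is to prove). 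This route treats the integer and non-integer cases uniformly; it is essentially the argument behind the cited \cite[Theorem~5.3.6/1~(ii)]{MR1419319} and \cite[2.8.2, Eq.~(19)]{MR503903}.
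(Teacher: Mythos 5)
The paper gives no proof of this lemma at all (it simply refers to Triebel's book), so your argument stands on its own merits; its overall architecture (extension to $\R^n$, reduction to the critical line, Littlewood--Paley/Bernstein) is the standard one, but there is a genuine gap in the regime $p_0>p_1$. There you reduce to the equal-smoothness inclusion $W^{s,p_0}(\varOmega)\hookrightarrow W^{s,p_1}(\varOmega)$, and for non-integer $s$ this inclusion is \emph{false}, so no Whitney decomposition can make the Hölder argument close. Indeed, Hölder with exponent $p_0/p_1$ turns the Gagliardo integrand for $p_1$ into the one for $p_0$ times the weight $\nabs{x-y}^{-n}$, which is exactly non-integrable at the diagonal; and this is not merely a failure of the method: writing $W^{s,p}=B^{s}_{p,p}$ and testing with a function whose wavelet (or Littlewood--Paley) coefficients are constant on each dyadic level and supported in $\varOmega$, both norms collapse to the weighted sequence norms $\nnorm{(2^{j(s+n/2)}a_j)_j}_{\ell^{p_0}}$ and $\nnorm{(2^{j(s+n/2)}a_j)_j}_{\ell^{p_1}}$, and $\ell^{p_0}\not\subset\ell^{p_1}$ for $p_1<p_0$. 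The repair is to spend the smoothness gap: for $s_0>s_1$ (the only way this regime occurs in the paper, e.g.\ $W^{3/2+\nu,p}\hookrightarrow W^{3/2,2}$ with $p\geq 2$) chain $B^{s_0}_{p_0,p_0}(\varOmega)\hookrightarrow B^{s_0}_{p_1,p_0}(\varOmega)\hookrightarrow B^{s_1}_{p_1,p_1}(\varOmega)$: the first arrow lowers integrability at fixed smoothness and fixed fine index (this is where boundedness of $\varOmega$ enters and where levelwise Hölder does work), the second lowers smoothness strictly and therefore tolerates any change of fine index. The borderline case $s_0=s_1\notin\Z$, $p_0>p_1$, which the hypotheses as written formally admit, cannot be proved because it is false; it is excluded in the cited source and never needed in the paper.

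A second, smaller point concerns your ``uniform'' treatment of the critical case. The lift $(\id-\Delta)^{\theta/2}$ is an isomorphism $B^{a}_{p,p}\to B^{a-\theta}_{p,p}$ and $H^{a,p}\to H^{a-\theta,p}$, but it does not identify $W^{a,p}$ with $W^{a-\theta,p}$ when exactly one of $a$ and $a-\theta$ is an integer and $p\neq 2$, since the two spaces then live in different scales ($B^{a}_{p,p}$ versus $F^{a}_{p,2}$). Hence Hardy--Littlewood--Sobolev alone does not cover the mixed critical case $\tilde s\notin\Z$, $s_1\in\Z$ with $\tilde s-n/p_0=s_1-n/p_1$ --- which is exactly the case the paper uses (the embedding $W^{1/2-\nu,q}\hookrightarrow L^{\tilde q}$ in its vector-valued embedding lemma). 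There one should invoke the Jawerth--Franke embedding $B^{\tilde s}_{p_0,p_0}(\R^n)\hookrightarrow F^{s_1}_{p_1,2}(\R^n)$ (admissible since the fine index satisfies $p_0\leq p_1$), or the Fefferman--Stein route you mention in passing; your Bernstein argument for the purely non-integer critical case is correct as it stands.
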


\begin{lemma}[Vector-valued Sobolev embedding]\label{lem:vectorWembedding}
Let $\varOmega\subset\R^{n}$ be a bounded $C^{\infty}$-domain.
If $s \in \intervaloc{0,1}$, $p,q \in \intervaloo{1,\infty}$ with $s-\frac{n}{p} = - \frac{n}{q} <0$
then,
for any Banach space $X$, there is a continuous Sobolev embedding
$
	\Sobo[s,p][][\varOmega][X] \hookrightarrow \Lebesgue[q][][\varOmega][X].
$
\end{lemma}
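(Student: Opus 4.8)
The plan is to reduce the $X$-valued statement to the scalar \SoboSlobo{} embedding of \autoref{lemma:Wembedding} by passing to the real-valued function $x \mapsto \nnorm{u(x)}_X$. Throughout I understand $\Sobo[s,p][][\varOmega][X]$ to carry the Gagliardo-type norm $\nnorm{u}_{\Sobo[s,p][][\varOmega][X]}^{p} \ceq \nnorm{u}_{\Lebesgue[p][][\varOmega][X]}^{p} + \int_{\varOmega}\int_{\varOmega} \nnorm{u(x)-u(y)}_X^{p}\,\nabs{x-y}^{-(n+sp)}\,\dd x\,\dd y$, where $\nnorm{\cdot}_{\Lebesgue[p][][\varOmega][X]}$ denotes the Bochner $\Lebesgue[p]$-norm; this is the natural notion of $\Sobo[s,p][][\varOmega][X]$ for a general Banach space and agrees with the usual space when $X = \AmbSpace$. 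Given $u \in \Sobo[s,p][][\varOmega][X]$, set $v \ceq \nnorm{u(\cdot)}_X \colon \varOmega \to \R$. Since $u$ is strongly measurable and the norm $\nnorm{\cdot}_X \colon X \to \R$ is continuous, $v$ is measurable, and evidently $\nnorm{v}_{\Lebesgue[p][][\varOmega][\R]} = \nnorm{u}_{\Lebesgue[p][][\varOmega][X]}$.

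First I would record the pointwise reverse triangle inequality $\bigabs{\nnorm{u(x)}_X - \nnorm{u(y)}_X} \leq \nnorm{u(x)-u(y)}_X$, which holds for all $x$, $y \in \varOmega$; integrating it against the kernel $\nabs{x-y}^{-(n+sp)}$ gives $\nseminorm{v}_{\Sobo[s,p][][\varOmega][\R]} \leq \nseminorm{u}_{\Sobo[s,p][][\varOmega][X]}$, hence $v \in \Sobo[s,p][][\varOmega][\R]$ with $\nnorm{v}_{\Sobo[s,p][][\varOmega][\R]} \leq \nnorm{u}_{\Sobo[s,p][][\varOmega][X]}$. Next I would apply \autoref{lemma:Wembedding} with $(s_0,p_0) \ceq (s,p)$ and $(s_1,p_1) \ceq (0,q)$: its hypotheses hold because $s_0 = s \geq 0 = s_1$ and $s_0 - n/p_0 = s - n/p = -n/q = s_1 - n/p_1$, while $p$, $q \in \intervaloo{1,\infty}$ and $\varOmega$ is a bounded $C^{\infty}$-domain. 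With the convention $\Sobo[0,q][][\varOmega][\R] = \Lebesgue[q][][\varOmega][\R]$ this yields a constant $C = C(\varOmega,n,s,p,q)$ with $\nnorm{v}_{\Lebesgue[q][][\varOmega][\R]} \leq C\,\nnorm{v}_{\Sobo[s,p][][\varOmega][\R]}$. Combining these estimates with $\nnorm{u}_{\Lebesgue[q][][\varOmega][X]} = \nnorm{v}_{\Lebesgue[q][][\varOmega][\R]}$ gives $\nnorm{u}_{\Lebesgue[q][][\varOmega][X]} \leq C\,\nnorm{u}_{\Sobo[s,p][][\varOmega][X]}$, which is exactly the asserted continuous embedding.

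I do not expect a genuine obstacle here; the content is entirely the norm-reduction observation above. The only points deserving a line of care are the measurability and well-definedness of $v$ (standard facts about Bochner-measurable functions) and fixing, once and for all, the meaning of $\Sobo[s,p][][\varOmega][X]$ for an abstract Banach space via the Gagliardo seminorm. In particular, for the sole use of this lemma in the sequel — the case $X = \AmbSpace = \R^{m}$ on a chart of $\Circle$, so $n = 1$ — the claim already follows coordinatewise from \autoref{lemma:Wembedding}, and the argument above is simply the tidiest way to state it in the generality in which it is phrased.
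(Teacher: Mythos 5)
Your proof is correct and follows essentially the same route as the paper: compose $u$ with the $1$-Lipschitz map $\nnorm{\cdot}_X$ to reduce to the scalar case, then apply the scalar embedding of \autoref{lemma:Wembedding} with $s_1=0$, $p_1=q$. The only cosmetic difference is that you verify the Lipschitz-composition estimate by hand via the reverse triangle inequality in the Gagliardo seminorm, whereas the paper cites its chain rule (\autoref{lem:ChainRule}) for the same step.
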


\begin{proof}
We follow the argumentation in Theorem 5.1 from \cite{MR3731026}:
The norm $\psi \ceq \nnorm{\cdot}_{X}$ is Lipschitz with Lipschitz constant $1$. 
Utilizing the Sobolev embedding for \emph{scalar} functions  (\autoref{lemma:Wembedding}) and for $s \in \intervaloc{0,1}$ along with the chain rule (\autoref{lem:ChainRule}), we obtain 
$
	\nnorm{u}_{\Lebesgue[q][][\varOmega][X]}
	=
	\nnorm{\psi \circ u }_{\Lebesgue[q][][\varOmega][\R]}
	\leq
	C \, \nnorm{\psi \circ u }_{\Sobo[s,p][][\varOmega][\R]}
	\leq
	C \, \nnorm{u }_{\Sobo[s,p][][\varOmega][\R]}
$
where $C>0$ is the Sobolev constant of the embedding for scalar functions.
\end{proof}

 The following is essentially a fractional Leibniz rule.
 The first proof seems to be due to Zolesio~\cite{MR0500121}
 who even considers the more general concept of Besov spaces.
 For \SoboSlobo spaces stronger requirements apply
 compared to the case of Triebel--Lizorkin spaces, see
 Runst and Sickel~\cite[Theorem 4.3.1/1~(i),
 Equation~(11)]{MR1419319}.
 We refer to the survey of Behzadan and Holst~\cite{2015arXiv151207379B}
 for further information.

\begin{lemma}[Product rule]\label{lem:MultiplicationLemma}
Let $\varOmega\subset\R^{n}$ be a bounded $C^{\infty}$-domain,
$\sigma_i \in \intervaloo{0,1}$, $p_i \in \intervalcc{1,\infty}$,
for $i \in \set{1,2}$.
Let $b \colon \R^{m_{1}} \times \R^{m_{2}} \to \R^{m}$ be a bounded bilinear mapping.
Then the bilinear mapping
$
	B \colon \Sobo[\sigma_1,p_1][][\varOmega][\R^{m_{1}}] \times \Sobo[\sigma_2,p_2][][\varOmega][\R^{m_{2}}]
	\to 
	\Sobo[\sigma_2,p_2][][\varOmega][\R^{m}]
$, $B(u_1,u_2)(x) = b(u_1(x),u_2(x))$
is well-defined and continuous if  at least one of the following two conditions is satisfied:
\begin{enumerate}
	\item[\textup{(i)}] $\conf{1}>0$, $\conf{2}>0$, $\sigma_1 \geq \sigma_2$, and $\conf{1} \geq \conf{2}$,
	\item[\textup{(ii)}] $\conf{1}>0$, $\sigma_1 > \sigma_2$, and $\conf{1} > \conf{2}$.
\end{enumerate}
\end{lemma}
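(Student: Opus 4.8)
The plan is to peel off the bilinear structure, reduce to a scalar pointwise multiplication inequality, transport the question from $\varOmega$ to $\R^{n}$, and finally invoke the classical multiplication theorem for fractional Sobolev spaces. Since $B$ is bilinear, the bound $\nnorm{B(u_{1},u_{2})}_{W^{\sigma_{2},p_{2}}}\le C\,\nnorm{u_{1}}_{W^{\sigma_{1},p_{1}}}\,\nnorm{u_{2}}_{W^{\sigma_{2},p_{2}}}$ gives both well-definedness and continuity at once, so it suffices to establish it. Writing $b$ in coordinates, $b(\xi,\eta)_{k}=\sum_{i,j}c^{k}_{ij}\,\xi_{i}\,\eta_{j}$ with $\sum_{i,j,k}\nabs{c^{k}_{ij}}\le C\,\nnorm{b}$, the function $B(u_{1},u_{2})$ is a linear combination, with $\nnorm{b}$-controlled coefficients, of the scalar products $(u_{1})_{i}\,(u_{2})_{j}$, where $(u_{1})_{i}\in W^{\sigma_{1},p_{1}}(\varOmega;\R)$ and $(u_{2})_{j}\in W^{\sigma_{2},p_{2}}(\varOmega;\R)$. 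By the triangle inequality in $W^{\sigma_{2},p_{2}}(\varOmega;\R^{m})$ it therefore suffices to prove, for scalar $f$, $g$, that $f\,g\in W^{\sigma_{2},p_{2}}(\varOmega;\R)$ with $\nnorm{f\,g}_{W^{\sigma_{2},p_{2}}}\le C\,\nnorm{f}_{W^{\sigma_{1},p_{1}}}\,\nnorm{g}_{W^{\sigma_{2},p_{2}}}$; the pairs $\nparen{\sigma_{1},p_{1}}$ and $\nparen{\sigma_{2},p_{2}}$ are unchanged, so (i), resp.\ (ii), still holds.

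Next I would transfer this scalar estimate to the whole space. Since $\varOmega$ is a bounded $C^{\infty}$-domain, there is an extension operator $E$ (e.g.\ a Stein or a Rychkov universal extension; at the Hölder endpoint $p=\infty$ the McShane/reflection extension works) that is simultaneously bounded $W^{s,p}(\varOmega;\R)\to W^{s,p}(\R^{n};\R)$ for both pairs $(s,p)\in\set{(\sigma_{1},p_{1}),(\sigma_{2},p_{2})}$. As the restriction of $(Ef)(Eg)$ to $\varOmega$ is $f\,g$, and the restriction map $W^{\sigma_{2},p_{2}}(\R^{n};\R)\to W^{\sigma_{2},p_{2}}(\varOmega;\R)$ has operator norm $\le1$, the claim follows once we know the multiplication estimate $\nnorm{FG}_{W^{\sigma_{2},p_{2}}(\R^{n})}\le C\,\nnorm{F}_{W^{\sigma_{1},p_{1}}(\R^{n})}\,\nnorm{G}_{W^{\sigma_{2},p_{2}}(\R^{n})}$ for $F$, $G\colon\R^{n}\to\R$.

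For this last step I would quote the classical result. Identifying $W^{\sigma,p}(\R^{n})=B^{\sigma}_{p,p}(\R^{n})$ for $0<\sigma<1$, $1\le p<\infty$ (and $W^{\sigma,\infty}(\R^{n})=B^{\sigma}_{\infty,\infty}(\R^{n})$), the desired inequality is an instance of the pointwise multiplication theorem for Besov spaces: for the target index $B^{\sigma_{2}}_{p_{2},p_{2}}$, hypotheses (i) and (ii) are precisely two of the admissible constellations in Runst--Sickel~\cite[Theorem~4.3.1/1]{MR1419319} (see also Zolesio~\cite{MR0500121} for the original argument). Under (i) one has $\conf{1}>0$ and $\conf{2}>0$, so both factor spaces embed into $C^{0}$, and moreover $W^{\sigma_{1},p_{1}}\hookrightarrow W^{\sigma_{2},p_{2}}$ by $\sigma_{1}\ge\sigma_{2}$ and $\conf{1}\ge\conf{2}$ (cf.\ \autoref{lemma:Wembedding}); (ii) is the corresponding strict variant which stays valid even when $\conf{2}\le0$. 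Note finally that $\conf{1}>0$ forces $p_{1}>1$ (since $0<\sigma_{1}<1\le n$), and under (i) also $p_{2}>1$, so the cited statements apply without endpoint issues.

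The main obstacle is concentrated in this last step, which I would not reprove: the $\R^{n}$-multiplication estimate is elementary only under (i). Indeed, under (i) one splits $f(x)g(x)-f(y)g(y)$ and uses Morrey's embedding to bound $\nnorm{f}_{L^{\infty}}$ and $\nnorm{g}_{L^{\infty}}$, obtaining $\seminorm{fg}_{W^{\sigma_{2},p_{2}}}\le\nnorm{f}_{L^{\infty}}\seminorm{g}_{W^{\sigma_{2},p_{2}}}+\nnorm{g}_{L^{\infty}}\seminorm{f}_{W^{\sigma_{2},p_{2}}}$, and then $W^{\sigma_{1},p_{1}}\hookrightarrow W^{\sigma_{2},p_{2}}$ closes the argument. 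Under (ii), however, $g$ need not even have a continuous representative, and the off-diagonal term $\int_{\varOmega}\!\int_{\varOmega}\nabs{x-y}^{-n-\sigma_{2}p_{2}}\,\nabs{F(x)-F(y)}^{p_{2}}\,\nabs{G(y)}^{p_{2}}\,\dd x\,\dd y$ cannot be tamed by interpolating the Hölder continuity of $F$ (available only with exponent $\conf{1}$) against its boundedness: that trick needs $\conf{1}>\sigma_{2}$, which (ii) does not guarantee. A Littlewood--Paley/paraproduct decomposition is therefore unavoidable, and this is exactly why the sharper ``$\conf{i}$''-type conditions are required for \SoboSlobo\ spaces ($q=p$) rather than the weaker ones that suffice for, e.g., Bessel-potential spaces. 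What remains to be done carefully is then just the bookkeeping that matches (i)/(ii) to the precise hypotheses of \cite[Theorem~4.3.1/1]{MR1419319} with second index $q_{i}=p_{i}$, together with the verification that a single extension operator works on both Sobolev scales.
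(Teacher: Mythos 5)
Your proposal is correct and matches what the paper actually does: the paper states this lemma without proof, deferring entirely to Zolesio and to Runst--Sickel~\cite[Theorem~4.3.1/1]{MR1419319}, which is exactly the theorem you reduce to (your added bookkeeping --- componentwise reduction of $b$, a universal extension operator to $\R^{n}$, and the elementary Morrey-based argument for case (i) --- is sound but not part of the paper's treatment). Since the essential analytic content is obtained by the same citation in both cases, there is nothing further to reconcile.
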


The following ``Schauder lemma'', communicated to us by Thorsten Hohage, helps us in \autoref{sec:Metric} to show that the Riesz isomorphism of the metric is invertible.
It does so by allowing us to play invertibility back to an ``elliptic estimate''. See, e.g., \cite[Appendix A, Proposition 6.7]{MR2744150} for a proof.

\begin{lemma}[Schauder lemma]\label{lem:SchauderLemma}
Let $X$ be a Banach space, let $Y$ and $Z$ be normed spaces, and let
$A \colon X \to Y$ be a continuous, injective, linear operator.
Suppose that there exists a $\tilde C \geq 0$ and a compact, linear operator $K \colon X \to Z$ into a further Banach space $Z$ such that
$\nnorm{u}_X \leq \tilde  C \,\paren{  \nnorm{A \, u}_Y + \nnorm{K \, u}_Z}$
holds for all $u \in X$. 
Then $A$ has closed image and there is a further constant $C \geq 0$ such that
\begin{align}
	\nnorm{u}_X \leq C \, \nnorm{A \, u}_Y
	\quad
	\text{holds for all $u \in X$.}
	\label{eq:SchauderEstimate}
\end{align}
\end{lemma}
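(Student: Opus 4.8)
The plan is to argue by contradiction, using the compactness of $K$ to upgrade the assumed a priori estimate into the desired uniform bound, and then to read off closedness of the image almost for free. First I would suppose that no constant $C$ as in \eqref{eq:SchauderEstimate} exists. Then for every $n \in \N$ there is some $u_n \in X$ with $\nnorm{u_n}_X = 1$ and $\nnorm{A\,u_n}_Y \leq \tfrac{1}{n}$, so that $A\,u_n \to 0$ in $Y$. Since $(u_n)_n$ is bounded in $X$ and $K$ is compact, after passing to a subsequence (not relabeled) we may assume that $(K\,u_n)_n$ converges in $Z$; in particular, $(K\,u_n)_n$ is a Cauchy sequence.

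Next I would feed the differences $u_n - u_m$ into the hypothesised estimate: $\nnorm{u_n - u_m}_X \leq \tilde C\,\bigparen{\nnorm{A(u_n - u_m)}_Y + \nnorm{K(u_n - u_m)}_Z}$. The first term on the right is bounded by $\nnorm{A\,u_n}_Y + \nnorm{A\,u_m}_Y \to 0$, and the second term tends to $0$ because $(K\,u_n)_n$ is Cauchy. Hence $(u_n)_n$ is a Cauchy sequence in the Banach space $X$ and converges to some $u \in X$ with $\nnorm{u}_X = 1$. Continuity of $A$ gives $A\,u_n \to A\,u$, while $A\,u_n \to 0$ forces $A\,u = 0$; injectivity of $A$ then yields $u = 0$, contradicting $\nnorm{u}_X = 1$. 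This establishes \eqref{eq:SchauderEstimate}.

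Finally I would deduce that $A$ has closed image directly from \eqref{eq:SchauderEstimate}: if $y \in \clos(\ima(A))$, choose $u_n \in X$ with $A\,u_n \to y$ in $Y$; then $\nnorm{u_n - u_m}_X \leq C\,\nnorm{A(u_n - u_m)}_Y \to 0$, so $(u_n)_n$ is Cauchy, converges to some $u \in X$, and $y = \lim_n A\,u_n = A\,u \in \ima(A)$. I do not expect any genuine obstacle here; the only point requiring a little care is to invoke the compactness of $K$ to pass to a subsequence along which $(K\,u_n)_n$ is Cauchy \emph{before} exploiting the estimate, since it is the Cauchy property — not mere boundedness of $(u_n)_n$ — that drives the contradiction.
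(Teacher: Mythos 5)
Your proof is correct. The paper does not prove this lemma itself but only refers to the literature (\cite[Appendix A, Proposition 6.7]{MR2744150}), and your argument --- contradiction via a normalized sequence with $\nnorm{A\,u_n}_Y \to 0$, extraction of a subsequence along which $(K\,u_n)_n$ is Cauchy, the estimate turning $(u_n)_n$ into a Cauchy sequence, and injectivity of $A$ yielding the contradiction, followed by deducing closedness of $\ima(A)$ directly from \eqref{eq:SchauderEstimate} --- is exactly the standard proof found in that reference, so there is nothing to add.
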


\section*{Acknowledgments}

This work was partially funded 
by  a postdoc fellowship of the German Academic Exchange Service (H.\,S.),
by DFG-Grant RE \hbox{3930/1--1} (Ph.\,R.),
and
by  DFG-Project 282535003: \emph{Geometric curvature functionals: energy landscape and discrete methods} (both authors).
Both authors wish to thank Armin Schikorra and Thorsten Hohage for fruitful discussions.

\bibliographystyle{abbrvhref}
\begin{small}
\setlength{\parskip}{0.5ex}
\bibliography{Literature}

\begin{thebibliography}{100}

\bibitem{MR3731026}
W.~Arendt and M.~Kreuter.
\newblock \href {http://dx.doi.org/10.4064/sm8757-4-2017} {Mapping theorems for
  {S}obolev spaces of vector-valued functions}.
\newblock {\em Studia Math.}, 240(3):275--299, 2018.

\bibitem{MR2802724}
T.~Ashton, J.~Cantarella, M.~Piatek, and E.~J. Rawdon.
\newblock \href {http://dx.doi.org/10.1080/10586458.2011.544581} {Knot
  tightening by constrained gradient descent}.
\newblock {\em Exp. Math.}, 20(1):57--90, 2011.

\bibitem{MR1470730}
D.~Auckly and L.~Sadun.
\newblock A family of {M}{\"{o}}bius invariant {$2$}-knot energies.
\newblock In {\em Geometric topology ({A}thens, {GA}, 1993)}, volume~2 of {\em
  AMS/IP Stud. Adv. Math.}, pages 235--258. Amer. Math. Soc., Providence, RI,
  1997.

\bibitem{2018arXiv180402206B}
S.~{Bartels} and {\relax Ph}.~Reiter.
\newblock \href {http://dx.doi.org/10.1090/mcom/3633} {{Stability of a simple
  scheme for the approximation of elastic knots and self-avoiding inextensible
  curves}}.
\newblock {\em arXiv e-prints}, 2018.
\newblock To appear in \emph{Mathematics of Computation}.

\bibitem{2019arXiv191107024B}
S.~Bartels and {\relax Ph}.~Reiter.
\newblock \href {http://dx.doi.org/10.1007/s00211-020-01156-6} {Numerical
  solution of a bending-torsion model for elastic rods}.
\newblock {\em Numer. Math.}, 146(4):661--697, 2020.

\bibitem{MR3800032}
S.~Bartels, {\relax Ph}.~Reiter, and J.~Riege.
\newblock \href {http://dx.doi.org/10.1093/imanum/drx021} {A simple scheme for
  the approximation of self-avoiding inextensible curves}.
\newblock {\em IMA J. Numer. Anal.}, 38(2):543--565, 2018.

\bibitem{MR2888014}
M.~Bauer, {\relax Ph}.~Harms, and P.~W. Michor.
\newblock Sobolev metrics on shape space of surfaces.
\newblock {\em J. Geom. Mech.}, 3(4):389--438, 2011.

\bibitem{MR4073208}
M.~Bauer, {\relax Ph}.~Harms, and P.~W. Michor.
\newblock \href {http://dx.doi.org/10.1007/s00526-020-1719-5} {Fractional
  {S}obolev metrics on spaces of immersions}.
\newblock {\em Calc. Var. Partial Differential Equations}, 59(2):Paper No. 62,
  27, 2020.

\bibitem{2015arXiv151207379B}
A.~{Behzadan} and M.~{Holst}.
\newblock \href {http://arxiv.org/abs/1512.07379} {{Multiplication in Sobolev
  Spaces, Revisited}}.
\newblock {\em ArXiv e-prints}, 2015.

\bibitem{BWRAG}
M.~Bergou, M.~Wardetzky, S.~Robinson, B.~Audoly, and E.~Grinspun.
\newblock \href {http://dx.doi.org/10.1145/1360612.1360662} {Discrete elastic
  rods}.
\newblock {\em ACM Trans. Graph.}, 27(3):63:1--63:12, 2008.

\bibitem{MR2887901}
S.~Blatt.
\newblock \href {http://dx.doi.org/10.1142/S0218216511009704} {Boundedness and
  regularizing effects of {O}'{H}ara's knot energies}.
\newblock {\em J. Knot Theory Ramifications}, 21(1):1250010, 9, 2012.

\bibitem{MR2875646}
S.~Blatt.
\newblock \href {http://dx.doi.org/10.1007/s00526-011-0416-9} {The gradient
  flow of the {M}{\"{o}}bius energy near local minimizers}.
\newblock {\em Calc. Var. Partial Differential Equations}, 43(3-4):403--439,
  2012.

\bibitem{blatt2016gradient}
S.~Blatt.
\newblock \href {http://dx.doi.org/10.2140/apde.2020.13.901} {The gradient flow
  of the {M}{\"{o}}bius energy: {$\varepsilon$}-regularity and consequences}.
\newblock {\em Anal. PDE}, 13(3):901--941, 2020.

\bibitem{blatt2020}
S.~Blatt, A.~Gilsbach, {\relax Ph}.~Reiter, and H.~von~der Mosel.
\newblock Symmetric critial knots for the {M}{\"{o}}bius energy.
\newblock In preparation.

\bibitem{2018arXiv180907984B}
S.~{Blatt}, A.~{Ishizeki}, and T.~{Nagasawa}.
\newblock \href {http://arxiv.org/abs/1809.07984} {{A M{\"o}bius invariant
  discretization of O'Hara's M{\"o}bius energy}}.
\newblock {\em arXiv e-prints}, 2018.

\bibitem{2019arXiv190406818B}
S.~{Blatt}, A.~{Ishizeki}, and T.~{Nagasawa}.
\newblock \href {http://arxiv.org/abs/1904.06818} {{A M{\"o}bius invariant
  discretization and decomposition of the M{\"o}bius energy}}.
\newblock {\em arXiv e-prints}, 2019.

\bibitem{MR3461038}
S.~Blatt, {\relax Ph}.~Reiter, and A.~Schikorra.
\newblock \href {http://dx.doi.org/10.1090/tran/6603} {Harmonic analysis meets
  critical knots. {C}ritical points of the {M}{\"{o}}bius energy are smooth}.
\newblock {\em Trans. Amer. Math. Soc.}, 368(9):6391--6438, 2016.

\bibitem{Blatt2018}
S.~Blatt and N.~Vorderobermeier.
\newblock \href {http://dx.doi.org/10.1007/s00526-018-1443-6} {On the
  analyticity of critical points of the {M}{\"o}bius energy}.
\newblock {\em Calculus of Variations and Partial Differential Equations},
  58(1):16, 2018.

\bibitem{MR1317077}
G.~Buck and J.~Orloff.
\newblock \href {http://dx.doi.org/10.1016/0166-8641(94)00024-W} {A simple
  energy function for knots}.
\newblock {\em Topology Appl.}, 61(3):205--214, 1995.

\bibitem{MR909071}
R.~H. Byrd, R.~B. Schnabel, and G.~A. Shultz.
\newblock \href {http://dx.doi.org/10.1137/0724076} {A trust region algorithm
  for nonlinearly constrained optimization}.
\newblock {\em SIAM J. Numer. Anal.}, 24(5):1152--1170, 1987.

\bibitem{CAN}
N.~Clauvelin, B.~Audoly, and S.~Neukirch.
\newblock \href {http://dx.doi.org/10.1016/j.jmps.2009.05.004} {Matched
  asymptotic expansions for twisted elastic knots: a self-contact problem with
  non-trivial contact topology}.
\newblock {\em J. Mech. Phys. Solids}, 57(9):1623--1656, 2009.

\bibitem{CS1}
B.~D. Coleman and D.~Swigon.
\newblock \href {http://dx.doi.org/10.1023/A:1010911113919} {Theory of
  supercoiled elastic rings with self-contact and its application to {DNA}
  plasmids}.
\newblock {\em J. Elasticity}, 60(3):173--221 (2001), 2000.

\bibitem{CS2}
B.~D. Coleman and D.~Swigon.
\newblock \href {http://dx.doi.org/10.1098/rsta.2004.1393} {Theory of
  self-contact in {K}irchhoff rods with applications to supercoiling of knotted
  and unknotted {DNA} plasmids}.
\newblock {\em Philos. Trans. R. Soc. Lond. Ser. A Math. Phys. Eng. Sci.},
  362(1820):1281--1299, 2004.

\bibitem{coyne}
J.~{Coyne}.
\newblock \href {http://dx.doi.org/10.1109/48.50692} {Analysis of the formation
  and elimination of loops in twisted cable}.
\newblock {\em IEEE Journal of Oceanic Engineering}, 15(2):72--83, 1990.

\bibitem{MR2944369}
E.~Di~Nezza, G.~Palatucci, and E.~Valdinoci.
\newblock \href {http://dx.doi.org/10.1016/j.bulsci.2011.12.004} {Hitchhiker's
  guide to the fractional {S}obolev spaces}.
\newblock {\em Bull. Sci. Math.}, 136(5):521--573, 2012.

\bibitem{Eckstein:2007:GSF:1281991.1282017}
I.~Eckstein, J.-P. Pons, Y.~Tong, C.-C.~J. Kuo, and M.~Desbrun.
\newblock Generalized surface flows for mesh processing.
\newblock In {\em Proceedings of the Fifth Eurographics Symposium on Geometry
  Processing}, SGP '07, pages 183--192. Eurographics Association, 2007.

\bibitem{MR1259363}
M.~H. Freedman, Z.-X. He, and Z.~Wang.
\newblock \href {http://dx.doi.org/10.2307/2946626} {M{\"{o}}bius energy of
  knots and unknots}.
\newblock {\em Ann. of Math. (2)}, 139(1):1--50, 1994.

\bibitem{MR928412}
S.~Fukuhara.
\newblock Energy of a knot.
\newblock In {\em A f\^{e}te of topology}, pages 443--451. Academic Press,
  Boston, MA, 1988.

\bibitem{GRvdM}
H.~Gerlach, {\relax Ph}.~Reiter, and H.~von~der Mosel.
\newblock \href {https://doi.org/10.1007/s00205-017-1100-9} {The elastic
  trefoil is the doubly covered circle}.
\newblock {\em Arch. Ration. Mech. Anal.}, 225(1):89--139, 2017.

\bibitem{MR2869515}
H.~Gerlach and H.~von~der Mosel.
\newblock \href {http://dx.doi.org/10.4169/amer.math.monthly.118.10.863} {On
  sphere-filling ropes}.
\newblock {\em Amer. Math. Monthly}, 118(10):863--876, 2011.

\bibitem{MR2807140}
H.~Gerlach and H.~von~der Mosel.
\newblock \href {http://dx.doi.org/10.1007/s00205-010-0390-y} {What are the
  longest ropes on the unit sphere?}
\newblock {\em Arch. Ration. Mech. Anal.}, 201(1):303--342, 2011.

\bibitem{gilsbach2021}
A.~Gilsbach, {\relax Ph}.~Reiter, and H.~von~der Mosel.
\newblock Symmetric elastic knots.
\newblock In preparation.

\bibitem{MR1692638}
O.~Gonzalez and J.~H. Maddocks.
\newblock \href {http://dx.doi.org/10.1073/pnas.96.9.4769} {Global curvature,
  thickness, and the ideal shapes of knots}.
\newblock {\em Proc. Natl. Acad. Sci. USA}, 96(9):4769--4773, 1999.

\bibitem{GPL2}
S.~Goyal, N.~Perkins, and C.~Lee.
\newblock \href {http://dx.doi.org/10.1016/j.ijnonlinmec.2007.10.004}
  {Non-linear dynamic intertwining of rods with self-contact}.
\newblock {\em International Journal of Non-Linear Mechanics}, 43(1):65--73,
  2008.

\bibitem{GPL1}
S.~Goyal, N.~C. Perkins, and C.~L. Lee.
\newblock \href {http://dx.doi.org/10.1016/j.jcp.2005.03.027} {Nonlinear
  dynamics and loop formation in {K}irchhoff rods with implications to the
  mechanics of {DNA} and cables}.
\newblock {\em J. Comput. Phys.}, 209(1):371--389, 2005.

\bibitem{MR701256}
A.~E. Hatcher.
\newblock \href {http://dx.doi.org/10.2307/2007035} {A proof of the {S}male
  conjecture, {$\text{Diff}(S^{3})\simeq \text{O}(4)$}}.
\newblock {\em Ann. of Math. (2)}, 117(3):553--607, 1983.

\bibitem{MR1733697}
Z.-X. He.
\newblock \href
  {http://dx.doi.org/10.1002/(SICI)1097-0312(200004)53:4<399::AID-CPA1>3.3.CO;2-4}
  {The {E}uler-{L}agrange equation and heat flow for the {M}{\"{o}}bius
  energy}.
\newblock {\em Comm. Pure Appl. Math.}, 53(4):399--431, 2000.

\bibitem{doi:10.1111/cgf.12450}
B.~Heeren, M.~Rumpf, P.~Schr{\"o}der, M.~Wardetzky, and B.~Wirth.
\newblock \href {http://dx.doi.org/10.1111/cgf.12450} {Exploring the geometry
  of the space of shells}.
\newblock {\em Computer Graphics Forum}, 33(5):247--256, 2014.

\bibitem{HS2}
K.~A. Hoffman and T.~I. Seidman.
\newblock \href {http://dx.doi.org/10.1016/j.na.2011.05.022} {A variational
  characterization of a hyperelastic rod with hard self-contact}.
\newblock {\em Nonlinear Anal.}, 74(16):5388--5401, 2011.

\bibitem{HS1}
K.~A. Hoffman and T.~I. Seidman.
\newblock \href {http://dx.doi.org/10.1007/s00205-010-0368-9} {A variational
  rod model with a singular nonlocal potential}.
\newblock {\em Arch. Ration. Mech. Anal.}, 200(1):255--284, 2011.

\bibitem{MR3273894}
A.~Ishizeki and T.~Nagasawa.
\newblock \href {http://dx.doi.org/10.2996/kmj/1414674619} {A decomposition
  theorem of the {M}{\"{o}}bius energy {I}: {D}ecomposition and {M}{\"{o}}bius
  invariance}.
\newblock {\em Kodai Math. J.}, 37(3):737--754, 2014.

\bibitem{MR3394390}
A.~Ishizeki and T.~Nagasawa.
\newblock \href {http://dx.doi.org/10.1007/s00208-015-1175-2} {A decomposition
  theorem of the {M}{\"{o}}bius energy {II}: variational formulae and
  estimates}.
\newblock {\em Math. Ann.}, 363(1-2):617--635, 2015.

\bibitem{KvdM}
B.~{K{\"a}fer} and H.~{von der Mosel}.
\newblock \href {http://arxiv.org/abs/2010.03906} {{M{\"o}bius-invariant
  self-avoidance energies for non-smooth sets in arbitrary dimensions}}.
\newblock {\em arXiv e-prints}, 2020.

\bibitem{MR1246479}
D.~Kim and R.~Kusner.
\newblock Torus knots extremizing the {M}{\"{o}}bius energy.
\newblock {\em Experiment. Math.}, 2(1):1--9, 1993.

\bibitem{knappmann-etal2019}
J.~Knappmann, H.~Schumacher, D.~Steenebr{\"u}gge, and H.~von~der Mosel.
\newblock \href {http://arxiv.org/abs/2103.10408} {A speed preserving {H}ilbert
  gradient flow for generalized integral {M}enger curvature}.
\newblock {\em arXiv e-prints}, 2021.

\bibitem{MR3936491}
S.~Kolasi\'{n}ski, P.~Strzelecki, and H.~von~der Mosel.
\newblock \href {http://dx.doi.org/10.4310/CAG.2018.v26.n6.a2} {Compactness and
  isotopy finiteness for submanifolds with uniformly bounded geometric
  curvature energies}.
\newblock {\em Comm. Anal. Geom.}, 26(6):1251--1316, 2018.

\bibitem{kroemer-valdman}
S.~Kr{\"{o}}mer and J.~Valdman.
\newblock \href {http://dx.doi.org/10.1177/1081286519851554} {Global
  injectivity in second-gradient nonlinear elasticity and its approximation
  with penalty terms}.
\newblock {\em Math. Mech. Solids}, 24(11):3644--3673, 2019.

\bibitem{MR1470748}
R.~B. Kusner and J.~M. Sullivan.
\newblock M{\"{o}}bius energies for knots and links, surfaces and submanifolds.
\newblock In {\em Geometric topology ({A}thens, {GA}, 1993)}, volume~2 of {\em
  AMS/IP Stud. Adv. Math.}, pages 570--604. Amer. Math. Soc., Providence, RI,
  1997.

\bibitem{MR1702037}
R.~B. Kusner and J.~M. Sullivan.
\newblock \href {http://dx.doi.org/10.1142/9789812796073_0017}
  {M{\"{o}}bius-invariant knot energies}.
\newblock In {\em Ideal knots}, volume~19 of {\em Series on Knots and
  Everything}, pages 315--352. World Sci. Publ., River Edge, NJ, 1998.

\bibitem{Kwasnicki:2017:TED}
M.~Kwa\'{s}nicki.
\newblock \href {http://dx.doi.org/10.1515/fca-2017-0002} {Ten equivalent
  definitions of the fractional {L}aplace operator}.
\newblock {\em Fract. Calc. Appl. Anal.}, 20(1):7--51, 2017.

\bibitem{MR2729316}
C.-C. Lin and H.~R. Schwetlick.
\newblock \href {http://dx.doi.org/10.1007/s00526-010-0328-0} {On a flow to
  untangle elastic knots}.
\newblock {\em Calc. Var. Partial Differential Equations}, 39(3-4):621--647,
  2010.

\bibitem{maddocks}
J.~H. Maddocks.
\newblock Bifurcation theory, symmetry breaking and homogenization in continuum
  mechanics descriptions of {DNA}. {M}athematical modelling of the physics of
  the double helix.
\newblock In {\em A celebration of mathematical modeling}, pages 113--136.
  Kluwer Acad. Publ., Dordrecht, 2004.

\bibitem{MRM}
R.~S. Manning, K.~A. Rogers, and J.~H. Maddocks.
\newblock \href {http://dx.doi.org/10.1098/rspa.1998.0291} {Isoperimetric
  conjugate points with application to the stability of {DNA} minicircles}.
\newblock {\em R. Soc. Lond. Proc. Ser. A Math. Phys. Eng. Sci.},
  454(1980):3047--3074, 1998.

\bibitem{MR3799622}
K.~Mazowiecka and A.~Schikorra.
\newblock \href {http://dx.doi.org/10.1016/j.jfa.2018.03.016} {Fractional
  div-curl quantities and applications to nonlocal geometric equations}.
\newblock {\em J. Funct. Anal.}, 275(1):1--44, 2018.

\bibitem{MR2201275}
P.~W. Michor and D.~Mumford.
\newblock \href {http://dx.doi.org/10.4171/JEMS/37} {Riemannian geometries on
  spaces of plane curves}.
\newblock {\em J. Eur. Math. Soc. (JEMS)}, 8(1):1--48, 2006.

\bibitem{MR3915937}
T.~Nagasawa.
\newblock \href {http://dx.doi.org/10.1515/9783110571493} {On {M}{\"{o}}bius
  invariant decomposition of the {M}{\"{o}}bius energy}.
\newblock In S.~Blatt, {\relax Ph}.~Reiter, and A.~Schikorra, editors, {\em New
  directions in geometric and applied knot theory}, Partial Differ. Equ. Meas.
  Theory, pages 36--76. De Gruyter, Berlin, 2018.

\bibitem{MR701288}
Y.~E. Nesterov.
\newblock A method for solving the convex programming problem with convergence
  rate {$O(1/k^{2})$}.
\newblock {\em Dokl. Akad. Nauk SSSR}, 269(3):543--547, 1983.

\bibitem{Neuberger:1997:SobolevGradients}
J.~W. Neuberger.
\newblock \href {http://dx.doi.org/10.1007/BFb0092831} {{\em Sobolev gradients
  and differential equations}}, volume 1670 of {\em Lecture Notes in
  Mathematics}.
\newblock Springer-Verlag, Berlin, 1997.

\bibitem{MR2244940}
J.~Nocedal and S.~J. Wright.
\newblock {\em Numerical optimization}.
\newblock Springer Series in Operations Research and Financial Engineering.
  Springer, New York, second edition, 2006.

\bibitem{MR3348171}
B.~O'Donoghue and E.~Cand\`es.
\newblock \href {http://dx.doi.org/10.1007/s10208-013-9150-3} {Adaptive restart
  for accelerated gradient schemes}.
\newblock {\em Found. Comput. Math.}, 15(3):715--732, 2015.

\bibitem{MR1098918}
J.~O'Hara.
\newblock \href {http://dx.doi.org/10.1016/0040-9383(91)90010-2} {Energy of a
  knot}.
\newblock {\em Topology}, 30(2):241--247, 1991.

\bibitem{MR1195506}
J.~O'Hara.
\newblock \href {http://dx.doi.org/10.1016/0166-8641(92)90023-S} {Family of
  energy functionals of knots}.
\newblock {\em Topology Appl.}, 48(2):147--161, 1992.

\bibitem{MR1986069}
J.~O'Hara.
\newblock \href {http://dx.doi.org/10.1142/9789812795304} {{\em Energy of knots
  and conformal geometry}}, volume~33 of {\em Series on Knots and Everything}.
\newblock World Scientific Publishing Co., Inc., River Edge, NJ, 2003.

\bibitem{OHara2020}
J.~{O'Hara}.
\newblock \href {http://arxiv.org/abs/2004.02351} {{Self-repulsiveness of
  energies for closed submanifolds}}.
\newblock {\em arXiv e-prints}, 2020.

\bibitem{MR1702021}
P.~Piera\'{n}ski.
\newblock \href {http://dx.doi.org/10.1142/9789812796073\_0002} {In search of
  ideal knots}.
\newblock In {\em Ideal knots}, volume~19 of {\em Ser. Knots Everything}, pages
  20--41. World Sci. Publ., River Edge, NJ, 1998.

\bibitem{MR1246481}
U.~Pinkall and K.~Polthier.
\newblock Computing discrete minimal surfaces and their conjugates.
\newblock {\em Experiment. Math.}, 2(1):15--36, 1993.

\bibitem{MR2221528}
E.~J. Rawdon and J.~K. Simon.
\newblock \href {http://dx.doi.org/10.1142/S0218216506004543} {Polygonal
  approximation and energy of smooth knots}.
\newblock {\em J. Knot Theory Ramifications}, 15(4):429--451, 2006.

\bibitem{MR2718624}
E.~J. Rawdon and J.~Worthington.
\newblock \href {http://dx.doi.org/10.1142/S0218216510008303} {Error analysis
  of the minimum distance energy of a polygonal knot and the {M}{\"{o}}bius
  energy of an approximating curve}.
\newblock {\em J. Knot Theory Ramifications}, 19(8):975--1000, 2010.

\bibitem{doi:10.1111/1467-8659.t01-1-00587}
S.~Redon, A.~Kheddar, and S.~Coquillart.
\newblock \href {http://dx.doi.org/10.1111/1467-8659.t01-1-00587} {Fast
  continuous collision detection between rigid bodies}.
\newblock {\em Computer Graphics Forum}, 21(3):279--287, 2002.

\bibitem{MR3171936}
R.~L. Ricca.
\newblock \href {http://dx.doi.org/10.1080/03091929.2012.681782} {New energy
  and helicity bounds for knotted and braided magnetic fields}.
\newblock {\em Geophys. Astrophys. Fluid Dyn.}, 107(4):385--402, 2013.

\bibitem{knotplot}
{\relax R. Scharein / Hypnagogic Software}.
\newblock \href {http://knotplot.com/} {Knotplot 1.0 for mac{OS}}, April 5,
  2021.

\bibitem{MR1419319}
T.~Runst and W.~Sickel.
\newblock \href {http://dx.doi.org/10.1515/9783110812411} {{\em Sobolev spaces
  of fractional order, {N}emytskij operators, and nonlinear partial
  differential equations}}, volume~3 of {\em De Gruyter Series in Nonlinear
  Analysis and Applications}.
\newblock Walter de Gruyter \& Co., Berlin, 1996.

\bibitem{MR3268981}
S.~Scholtes.
\newblock \href {http://dx.doi.org/10.1142/S021821651450045X} {Discrete
  {M}{\"{o}}bius energy}.
\newblock {\em J. Knot Theory Ramifications}, 23(9):1450045--1--16, 2014.

\bibitem{1703.06469}
H.~Schumacher.
\newblock \href {http://arxiv.org/abs/arXiv:1703.06469} {On ${H}^2$-gradient
  flows for the {W}illmore energy}.
\newblock {\em arXiv e-prints}, 2017.

\bibitem{MR3915940}
H.~Schumacher.
\newblock Pseudogradient flows of geometric energies.
\newblock In S.~Blatt, {\relax Ph}.~Reiter, and A.~Schikorra, editors, {\em New
  directions in geometric and applied knot theory}, Partial Differ. Equ. Meas.
  Theory, pages 77--108. De Gruyter, Berlin, 2018.

\bibitem{MR2029004}
F.~Schuricht and H.~von~der Mosel.
\newblock \href {http://dx.doi.org/10.1007/s00205-003-0253-x} {Euler-{L}agrange
  equations for nonlinearly elastic rods with self-contact}.
\newblock {\em Arch. Ration. Mech. Anal.}, 168(1):35--82, 2003.

\bibitem{MR772882}
G.~A. Shultz, R.~B. Schnabel, and R.~H. Byrd.
\newblock \href {http://dx.doi.org/10.1137/0722003} {A family of
  trust-region-based algorithms for unconstrained minimization with strong
  global convergence properties}.
\newblock {\em SIAM J. Numer. Anal.}, 22(1):47--67, 1985.

\bibitem{SimonMinimumDistanceEnergy}
J.~Simon.
\newblock \href {http://dx.doi.org/10.1142/S021821659400023X} {Energy functions
  for polygonal knots}.
\newblock {\em J Knot Theory Ramifications}, 3, 09 1996.

\bibitem{ST}
J.~Spillmann and M.~Teschner.
\newblock \href {http://dx.doi.org/10.1111/j.1467-8659.2008.01147.x} {An
  adaptive contact model for the robust simulation of knots}.
\newblock {\em Computer Graphics Forum}, 27(2):497--506, 2008.

\bibitem{SvdH}
E.~L. Starostin and G.~H.~M. van~der Heijden.
\newblock \href {http://dx.doi.org/10.1016/j.jmps.2013.10.014} {Theory of
  equilibria of elastic 2-braids with interstrand interaction}.
\newblock {\em J. Mech. Phys. Solids}, 64:83--132, 2014.

\bibitem{SvdH18}
E.~L. Starostin and G.~H.~M. van~der Heijden.
\newblock Equilibria of elastic cable knots and links.
\newblock In S.~Blatt, {\relax Ph}.~Reiter, and A.~Schikorra, editors, {\em New
  directions in geometric and applied knot theory}, pages 258--275. De Gruyter,
  Berlin, 2018.

\bibitem{ISI:A1996VT33600042}
A.~Stasiak, V.~Katritch, J.~Bednar, D.~Michoud, and J.~Dubochet.
\newblock \href {http://dx.doi.org/{10.1038/384122a0}} {{Electrophoretic
  mobility of DNA knots}}.
\newblock {\em {NATURE}}, {384}({6605}):{122}, {NOV 14} {1996}.

\bibitem{MR2668877}
P.~Strzelecki, M.~Szuma\'{n}ska, and H.~von~der Mosel.
\newblock Regularizing and self-avoidance effects of integral {M}enger
  curvature.
\newblock {\em Ann. Sc. Norm. Super. Pisa Cl. Sci. (5)}, 9(1):145--187, 2010.

\bibitem{MR2197957}
P.~Strzelecki and H.~von~der Mosel.
\newblock \href {http://dx.doi.org/10.1142/9789812703460_0027} {On a
  mathematical model for thick surfaces}.
\newblock In {\em Physical and numerical models in knot theory}, volume~36 of
  {\em Ser. Knots Everything}, pages 547--564. World Sci. Publ., Singapore,
  2005.

\bibitem{MR2739778}
P.~Strzelecki and H.~von~der Mosel.
\newblock \href {http://dx.doi.org/10.1016/j.aim.2010.09.016} {Integral
  {M}enger curvature for surfaces}.
\newblock {\em Adv. Math.}, 226(3):2233--2304, 2011.

\bibitem{MR2902275}
P.~Strzelecki and H.~von~der Mosel.
\newblock \href {http://dx.doi.org/10.1142/S0218216511009960} {Tangent-point
  self-avoidance energies for curves}.
\newblock {\em J. Knot Theory Ramifications}, 21(5):1250044, 28, 2012.

\bibitem{MR3105400}
P.~Strzelecki and H.~von~der Mosel.
\newblock \href {http://dx.doi.org/10.1016/j.physrep.2013.05.003} {Menger
  curvature as a knot energy}.
\newblock {\em Phys. Rep.}, 530(3):257--290, 2013.

\bibitem{MR3078345}
P.~Strzelecki and H.~von~der Mosel.
\newblock \href {http://dx.doi.org/10.1007/s12220-011-9275-z} {Tangent-point
  repulsive potentials for a class of non-smooth {$m$}-dimensional sets in
  {$\mathbb{R}^n$}. {P}art {I}: {S}moothing and self-avoidance effects}.
\newblock {\em J. Geom. Anal.}, 23(3):1085--1139, 2013.

\bibitem{Strzelecki:756025}
P.~Strzelecki and H.~von~der Mosel.
\newblock \href {http://dx.doi.org/10.1515/9783110571493-002} {{G}eometric
  curvature energies: facts, trends, and open problems}.
\newblock In S.~Blatt, {\relax Ph}.~Reiter, and A.~Schikorra, editors, {\em New
  directions in geometric and applied knot theory}, pages 8--35. De Gruyter,
  Berlin, 2018.

\bibitem{MR2744150}
M.~E. Taylor.
\newblock \href {http://dx.doi.org/10.1007/978-1-4419-7055-8} {{\em Partial
  differential equations {I}. {B}asic theory}}, volume 115 of {\em Applied
  Mathematical Sciences}.
\newblock Springer, New York, second edition, 2011.

\bibitem{MR503903}
H.~Triebel.
\newblock {\em Interpolation theory, function spaces, differential operators},
  volume~18 of {\em North-Holland Mathematical Library}.
\newblock North-Holland Publishing Co., Amsterdam-New York, 1978.

\bibitem{MR781540}
H.~Triebel.
\newblock \href {http://dx.doi.org/10.1007/978-3-0346-0416-1} {{\em Theory of
  function spaces}}, volume~78 of {\em Monographs in Mathematics}.
\newblock Birkh{\"{a}}user Verlag, Basel, 1983.

\bibitem{vdHNGT}
G.~van~der Heijden, S.~Neukirch, V.~Goss, and J.~Thompson.
\newblock \href
  {http://dx.doi.org/http://dx.doi.org/10.1016/S0020-7403(02)00183-2}
  {Instability and self-contact phenomena in the writhing of clamped rods}.
\newblock {\em International Journal of Mechanical Sciences}, 45(1):161 -- 196,
  2003.

\bibitem{MR3725391}
J.~H. von Brecht and R.~Blair.
\newblock \href {http://dx.doi.org/10.1088/1751-8121/aa9109} {Dynamics of
  embedded curves by doubly-nonlocal reaction-diffusion systems}.
\newblock {\em J. Phys. A}, 50(47):475203, 57, 2017.

\bibitem{MR1657477}
H.~von~der Mosel.
\newblock Minimizing the elastic energy of knots.
\newblock {\em Asymptot. Anal.}, 18(1-2):49--65, 1998.

\bibitem{MR1674767}
H.~von~der Mosel.
\newblock \href {http://dx.doi.org/10.1016/S0294-1449(99)80010-9} {Elastic
  knots in {E}uclidean {$3$}-space}.
\newblock {\em Ann. Inst. H. Poincar\'{e} Anal. Non Lin\'{e}aire},
  16(2):137--166, 1999.

\bibitem{MR3461787}
S.~W. Walker.
\newblock \href {http://dx.doi.org/10.1016/j.jcp.2016.02.011} {Shape
  optimization of self-avoiding curves}.
\newblock {\em J. Comput. Phys.}, 311:275--298, 2016.

\bibitem{YuSchumacherCrane}
C.~Yu, C.~Brakensiek, H.~Schumacher, and K.~Crane.
\newblock Repulsive surfaces.
\newblock In preparation.

\bibitem{10.1145/3439429}
C.~Yu, H.~Schumacher, and K.~Crane.
\newblock \href {http://dx.doi.org/10.1145/3439429} {Repulsive curves}.
\newblock {\em ACM Trans. Graph.}, 40(2), May 2021.

\bibitem{zajac}
E.~E. Zajac.
\newblock Stability of two planar loop elasticas.
\newblock {\em Trans. ASME Ser. E. J. Appl. Mech.}, 29:136--142, 1962.

\bibitem{MR0500121}
J.~L. Zolesio.
\newblock \href {http://dx.doi.org/10.1017/S0308210500009872} {Multiplication
  dans les espaces de {B}esov}.
\newblock {\em Proc. Roy. Soc. Edinburgh Sect. A}, 78(1-2):113--117, 1977/78.

\end{thebibliography}
\end{small}
\end{document}